\begin{document}

\title{MARKOV PATHS, LOOPS AND FIELDS\\}
\author{Yves Le Jan\\D\'{e}partement de Math\'{e}matiques\\Universit\'{e} Paris Sud 11\\yves.lejan@math.u-psud.fr}
\maketitle

\bigskip

\bigskip
\tableofcontents
\baselineskip                                       =16pt

\chapter*{Introduction}

\bigskip

The purpose of these notes is to explore some simple relations between
Markovian path and loop measures, the Poissonian ensembles of loops they
determine, their occupation fields, uniform spanning trees, determinants, and
Gaussian Markov fields such as the free field. These relations are first
studied in complete generality  in the finite discrete setting, then partly
generalized to specific examples in infinite and continuous spaces.

These notes contain the results published in \cite{Lejaop} where the main
emphasis was put on the study of occupation fields defined by Poissonian
ensembles of Markov loops. These were defined in \cite{LW} for planar Brownian
motion in relation with SLE processes and in \cite{LT} for simple random
walks. They appeared informally already in \cite{Symanz}. For half integral
values $\frac{k}{2}$ of the intensity parameter $\alpha$, these occupation
fields can be identified with the sum of squares of $k$ copies of the
associated free field (i.e. the Gaussian field whose covariance is given by
the Green function)$.$ This is related to Dynkin's isomorphism (cf \cite{Dy},
\cite{MR}, \cite{LJ1}).

As in \cite{Lejaop}, we first present the theory in the elementary framework
of symmetric Markov chains on a finite space. After some generalities on
graphs and symmetric Markov chains, we study the $\sigma$-finite loop measure
associated to a field of conductances. Then we study geodesic loops with an
exposition of results of independent interest, such as the calculation of
Ihara's zeta function. After that, we turn our attention to the Poisson
process of loops and its occupation field, proving also several other
interesting results such as the relation between loop ensembles and spanning
trees given by Wilson algorithm and the reflection positivity property. Spanning trees are related to the fermionic Fock space as Markovian loop
ensembles are related to the bosonic Fock space, represented by the free
field. We
also study the decompositions of the loop ensemble induced by the excursions
into the complement of any given set.

Then we show that some results can be extended to more general Markov
processes defined on continuous spaces. There are no essential difficulties
for the occupation field when points are not polar but other cases are more
problematic. As for the square of the free field, cases for which the Green
function is Hilbert Schmidt such as those corresponding to two and three
dimensional Brownian motion can be dealt with through appropriate renormalization.

We show that the renormalized powers of the occupation field (i.e. the self
intersection local times of the loop ensemble) converge in the case of the two
dimensional Brownian motion and that they can be identified with higher even
Wick powers of the free field when $\alpha$ is a half integer.

At first, we suggest the reader could omit a few sections which are not
essential for the understanding of the main results. These are essentially
some of the generalities on graphs, results about wreath products, infinite
discrete graphs, boundaries, zeta functions, geodesics and geodesic loops. The
section on reflexion positivity, and, to a lesser extent, the one on
decompositions are not central. The last section on continuous spaces is not
written in full detail and may seem difficult to the least experienced readers.

These notes include those of the lecture I gave in St Flour in July 2008 with
some additional material. I choose this opportunity to express my thanks to
Jean Picard, to the audience and to the readers of the preliminary versions
whose suggestions were very useful, in particular to\ Juergen Angst, Cedric
Bordenave, Cedric Boutiller, Antoine Dahlqvist, Thomas Duquesne, Michel Emery,
Jacques Franchi, Liza Jones, Adrien Kassel, Rick Kenyon, Sophie Lemaire,
Thierry Levy, Gregorio Moreno, Jay Rosen (who pointed out a mistake in the expression of renormalization polynomials), Bruno Shapira, Alain Sznitman, Vincent Vigon, Lorenzo Zambotti and Jean Claude Zambrini.

\chapter{Symmetric Markov processes on finite spaces}

Notations: functions and measures on finite (or countable) spaces are often
denoted as vectors and covectors, i.e. with upper and lower indices, respectively.

The multiplication operator defined by a function $f$\ acting on functions or
on measures is in general simply denoted by $f$, but sometimes, to avoid
confusion, it will be denoted by $M_{f}$. The function obtained as the density
of a measure $\mu$ with respect to some other measure $\nu$ is simply denoted
$\frac{\mu}{\nu}$.

\section{Graphs}
\index{conductance}
Our basic object will be a finite space $X$\ and a set of \ non negative
\emph{conductances} $C_{x,y}=C_{y,x}$, indexed by pairs of distinct points of
$X$. This situation allows to define a kind of \textsl{discrete topology and
geometry.} In this first section, we will briefly study the topological aspects.

We say that $\{x,y\}$, for $x\neq y$ belonging to $X$, is a link or an edge
iff $C_{x,y}>0$. An oriented edge $(x,y)$\ is defined by the choice of an
ordering in an edge$.$ We set $-(x,y)=(y,x)$ and if $e=(x,y)$, we denote it
also $(e^{-},e^{+})$. The degree $d_{x}$ of a vertex $x$ is by definition the
number of edges incident at $x$.

The points of $X$ together with the set of non oriented edges $E$\ define a
graph $(X,E)$. \textsl{We assume it is connected}. The set of oriented edges
is denoted $E^{o}$. It will always be viewed as a subset of $X^{2}$, without
reference to any imbedding.

The associated line graph is the oriented graph defined by $E^{o}$ as set of
vertices and in which oriented edges are pairs $(e_{1},e_{2})$ such that
$e_{1}^{+}=e_{2}^{-}$. The mapping $e\rightarrow-e$ is an involution of the
line graph.

An important example is the case in which conductances are equal to zero or
one. Then the conductance matrix is the adjacency matrix of the graph:
$C_{x,y}=1_{\{x,y\}\in E}$

A complete graph is defined by all conductances equal to one.

The complete graph with $n$ vertices is denoted $K_{n}$. The complete graph
$K_{4}$ is the graph defined by the tetrahedron. $K_{5}$ \ is not planar (i.e.
cannot be imbedded in a plane), but $K_{4}$ is.%

\begin{center}
\includegraphics[
height=2.2018in,
width=4.039in
]%
{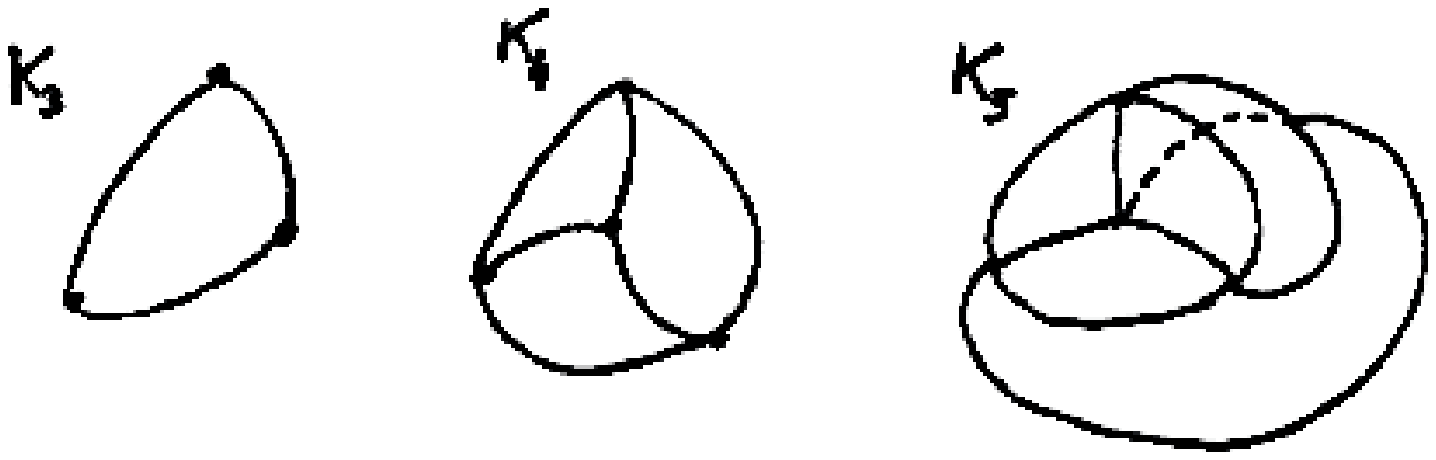}%
\end{center}
\index{geodesic}
A finite discrete path on $X$, say $(x_{0},x_{1},...,x_{n})$ is called a
(discrete) \emph{geodesic arc} iff $\{x_{i},x_{i+1}\}\in E$ (path segment on
the graph) and $x_{i-1}\neq x_{i+1}$ (without backtraking). Geodesic arcs
starting at $x_{0}$ form a \emph{marked tree} $\mathfrak{T}_{x_{0}}$ rooted in
$x_{0}$ (the marks belong to $X$: they are the endpoints of the geodesic
arcs). Oriented edges of $\mathfrak{T}_{x_{0}}$ are defined by pairs of
geodesic arcs of the form:
\index{universal cover}
$((x_{0},x_{1},...,x_{n}),(x_{0},x_{1},...,x_{n},x_{n+1}))$ (the orientation
is defined in reference to the root). $\mathfrak{T}_{x_{0}}$ is a
\emph{universal cover} of $X$ \cite{Mass}.

A (discrete) loop based at $x_{0}\in X$ is by definition a path $\xi=(\xi
_{1},...,\xi_{p(\xi)})$, with $\xi_{1}=x_{0}$, and $\{\xi_{i},\xi_{i+1}\}\in
E$, for all $1\leq i\leq p$ with the convention $\xi_{p+1}=\xi_{1}$. On the
space $\mathfrak{L}_{x_{0}}$\ of discrete loops based at some point $x_{0}$,
we can define an operation of concatenation, which provides a monoid
structure, i.e. is associative with a neutral element (the empty loop). The
concatenation of two closed geodesics (i.e. geodesic loops) based at $x_{0}$
is not directly a closed geodesic. It can involve backtracking ''in the
middle'' but then after cancellation of the two inverse subarcs, we get a
closed geodesic, possibly empty if the two closed geodesics are identical up
to reverse order. With this operation, \emph{closed geodesics based at }%
$x_{0}$\emph{ define a group} $\Gamma_{x_{0}}$. The structure of
$\Gamma_{x_{0}}$ does not depend on the base point and defines the
\index{fundamental group}\emph{fundamental group} $\Gamma$ of the graph (as the graph is connected: see
for example \cite{Mass}). Indeed, any geodesic arc $\gamma_{1}$\ from $x_{0}$
to another point $y_{0}$ of $X$ defines an isomorphism between $\Gamma_{x_{0}%
}$ and $\Gamma_{y_{0}}$. It associates to a closed geodesic $\gamma$ based in
$x_{0}$ the closed geodesic $[\gamma_{1}]^{-1}\gamma\gamma_{1}$ (here
$[\gamma_{1}]^{-1}$ denotes the backward arc). In the case where $x_{0}=y_{0}%
$, it is an interior isomorphism\ (conjugation by $\gamma_{1}$).

There is a natural left action of $\Gamma_{x_{0}}$\ on $\mathfrak{T}_{x_{0}}$.
It can be interpreted as a change of root in the tree \ (with the same mark).
Besides, any geodesic arc between $x_{0}$ and another point $y_{0}$ of $X$
defines an isomorphism between $\mathfrak{T}_{x_{0}}$ and $\mathfrak{T}%
_{y_{0}}$ (change of root, with different marks) .

We have just seen that the universal covering of the finite graph $(X,E)$ at
$x_{0}$\ is a tree $\mathfrak{T}_{x_{0}}$ projecting on $X\mathfrak{.}$ The
fiber at $x_{0}$ is $\Gamma_{x_{0}}$. The groups $\Gamma_{x_{0}},x_{0}\in
X$\ are conjugated in a non canonical way. Note that $X=\Gamma_{x_{0}%
}\backslash\mathfrak{T}_{x_{0}}$ (here the use of the quotient on the left
corresponds to the left action).

\begin{example}
Among graphs, the simplest ones are $r-$regular graphs, in which each point
has $r$ neighbours. A universal covering of any $r-$regular graph is
isomorphic to the $r-$regular tree $\mathfrak{T}^{(r)}$.
\end{example}

\begin{example}
Cayley graphs: a finite group with a set of generators $S=\{g_{1},..g_{k}\}$
such that $S\cap S^{-1}$ is empty defines an oriented $2k$-regular graph.
\end{example}

\bigskip

A \emph{spanning tree} $T$\ is by definition a subgraph of $(X,E)$ which is a
tree and covers all points in $X$.
\index{spanning tree} 
It has necessarily $\left|  X\right|
-1$\ edges, see for example two spanning trees of $K_{4}$.%

\begin{center}
\includegraphics[
height=2.6282in,
width=4.0603in
]%
{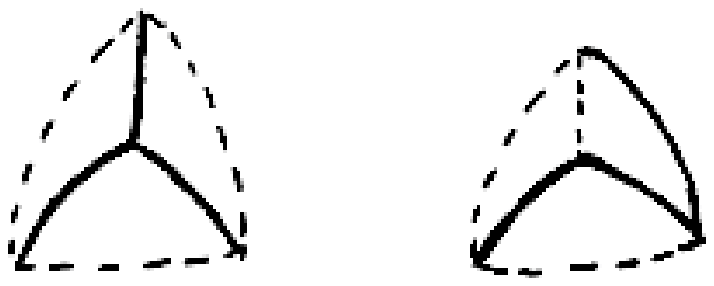}%
\\
Two spanning trees of $K_{4}$%
\end{center}
\bigskip

\ The inverse images of a spanning tree by the canonical projection from a
universal cover $\mathfrak{T}_{x_{0}}$\ onto $X$ form a tesselation on
$\mathfrak{T}_{x_{0}}$, i.e. a partition of $\mathfrak{T}_{x_{0}}$\ in
identical subtrees, which are fundamental domains for the action of
$\Gamma_{x_{0}}$. Conversely, a section of the canonical projection from the
universal cover defines a spanning tree.

Fixing a spanning tree determines a unique geodesic between two points of $X$.
Therefore, it determines\ the conjugation isomorphisms between the various
groups $\Gamma_{x_{0}}$ and the isomorphisms between the universal covers
$\mathfrak{T}_{x_{0}}$.

\begin{remark}
\label{intrinsic}Equivalently, we could have started with an infinite tree
$\mathfrak{T}$ and a group $\Gamma$\ of isomorphisms of this tree such that
the quotient graph $\Gamma\backslash\mathfrak{T}$ is finite.
\end{remark}

\emph{The fundamental group }$\Gamma$\emph{ is a free group} with $\left|
E\right|  -\left|  X\right|  +1=r$ generators. To construct a set of
generators, one considers a spanning tree $T$\ of the graph, and choose an
orientation on each of the $r$ remaining links. This defines $r$ oriented
cycles on the graph and a system of $r$ generators for the fundamental group.
(See \cite{Mass} or Serres (\cite{Ser}) in a more general context).

\begin{example}
Consider $K_{3}$ and $K_{4}$.
\end{example}

Here is a picture of the universal covering of $K_{4}$, and of the action of
the fundamental group with the tesselation defined by a spanning tree.%

\begin{center}
\includegraphics[
scale=0.4
]%
{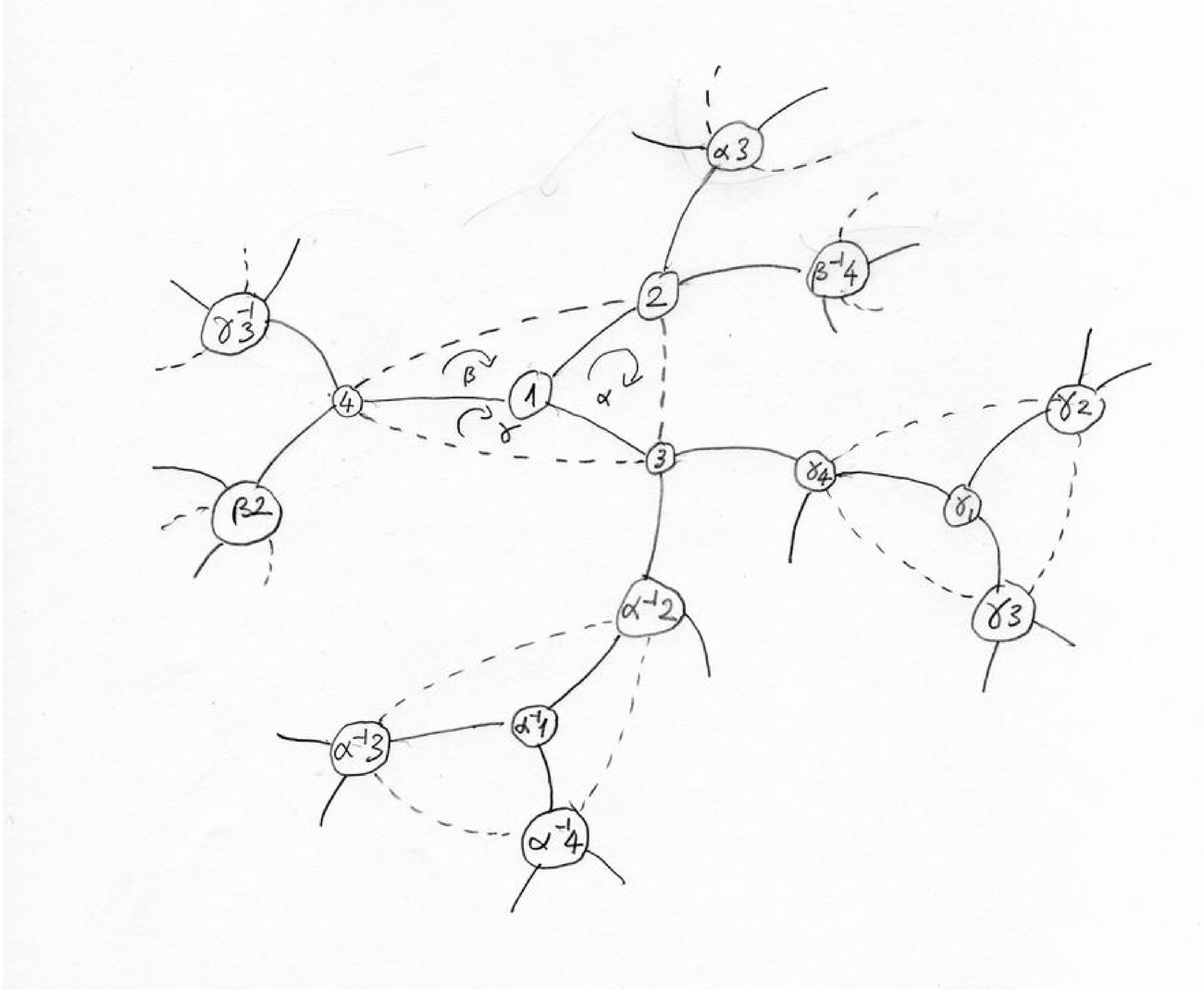}%
\\
Universal cover and tesselation of $K_{4}$%
\end{center}

\bigskip

\index{non-ramified covering}There are various non-ramified coverings, intermediate between $(X,E)$ and the
universal covering. Non ramified means that locally, the covering space is
identical to the graph (same incident edges). Then each oriented path segment
on $X$ can be lifted to the covering in a unique way, given a lift of its
starting point.

Each non ramified covering is (up to an isomorphism) associated with a
subgroup $H$\ of $\Gamma$, defined up to conjugation. More precisely, given a
non ramified covering $\widetilde{X}$, a point $x_{0}$ of $X$ and a point
$\widetilde{x}_{0}$ in the fiber above $x_{0}$, the closed geodesics based at
$x_{0}$ whose lift to the covering\ starting at $\widetilde{x}_{0}$\ are
closed form a subgroup $H_{\widetilde{x}_{0}}$\ of $\Gamma_{x_{0}}$,
canonicaly isomorphic to the fundamental group of $\widetilde{X}$ represented
by closed geodesics based at $\widetilde{x_{0}}$. If we consider a different
point $\widetilde{y_{0}}$, any\ geodesic path segment $\widetilde{\gamma_{1}}%
$\ between $\widetilde{x}_{0}$ and $\widetilde{y}_{0}$ defines an isomorphism
between $\Gamma_{x_{0}}$ and $\Gamma_{y_{0}}$ which exchanges $H_{\widetilde
{x}_{0}}$ and $H_{\widetilde{y}_{0}}$. Denoting $\gamma_{1}$\ the projection
of $\widetilde{\gamma_{1}}$ on $X$ , it associates to a closed geodesic
$\gamma$ based in $x_{0}$ whose lift to the covering is closed the closed
geodesic $[\gamma_{1}]^{-1}\gamma\gamma_{1}$ whose lift to the covering is
also closed.

\begin{example}
By central symmetry, the cube is a two fold covering of the tetrahedron
associated with the group $\mathbb{Z}/2\mathbb{Z}$.
\end{example}

Conversely, if $H$ is a subgroup of $\Gamma_{x_{0}}$, the covering is defined
as the quotient graph $(Y,F)$ with $Y=H\backslash\mathfrak{T}_{x_{0}}$ and $F$
the set of edges defined by the canonical projection from $\mathfrak{T}%
_{x_{0}}$ onto $Y$. $H$ can be interpreted as the group of closed geodesics on
the quotient graph, based at $H_{x_{0}}$, i.e. as the fundamental group of $Y$.

If $H$ is a normal subgroup, the quotient group (also called the covering
group) $H\backslash\Gamma_{x_{0}}$ acts faithfully on\ the fiber at $x_{0}$.
An example is the commutator subgroup $[\Gamma_{x_{0}},\Gamma_{x_{0}}]$. The
associate covering is the maximal Abelian covering at $x_{0}$.

\begin{exercise}
Determine the maximal Abelian cover of the tetrahedron.
\end{exercise}

\section{Energy}

Let us consider a nonnegative function $\kappa$ on $X$. Set $\lambda
_{x}=\kappa_{x}+\sum_{y}C_{x,y}$\ and $P_{y}^{x}=\frac{C_{x,y}}{\lambda_{x}}$.
$P$ is a (sub) stochastic transition matrix which is $\lambda$-symmetric (i.e.
such that $\lambda_{x}P_{y}^{x}=\lambda_{y}P_{x}^{y}$) with $P_{x}^{x}=0$\ for
all $x$ in $X$.

It defines a symmetric irreducible Markov chain $\xi_{n}$.

We can define above it a continuous time $\lambda$-symmetric irreducible
Markov chain $x_{t}$, with exponential holding times of parameter $1$. We have
$x_{t}=\xi_{N_{t}}$, where $N_{t}$ denotes a Poisson process of intensity $1$.
\index{infinitesimal generator}The \emph{infinitesimal generator} is given by $L_{y}^{x}=P_{y}^{x}-\delta
_{y}^{x}$.

We denote by $P_{t}$ its (sub) Markovian semigroup $\exp(Lt)=\sum\frac{t^{k}%
}{k!}L^{k}$. $L$ and $P_{t}$ are $\lambda$-symmetric.

We will use the Markov chain associated with $C,\kappa$, sometimes in discrete
time, sometimes in continuous time (with exponential holding times).

\index{energy}Recall that for any complex function $z^{x},x\in X$, the ``energy''%
\[
e(z)=\left\langle -Lz,\overline{z}\right\rangle _{\lambda}=\sum_{x\in
X}-(Lz)^{x}\overline{z}^{x}\lambda_{x}%
\]
is nonnegative as it can be easily written
\[
e(z)=\sum_{x}\lambda_{x}z^{x}\overline{z}^{x}-\sum_{x,y}C_{x,y}z^{x}%
\overline{z}^{y}=\frac{1}{2}\sum_{x,y}C_{x,y}(z^{x}-z^{y})(\overline{z}%
^{x}-\overline{z}^{y})+\sum_{x}\kappa_{x}z^{x}\overline{z}^{x}%
\]
The Dirichlet space (\cite{Fukutak}) is the space of\ real functions equipped
with the \emph{energy} scalar product
\[
e(f,g)=\frac{1}{2}\sum_{x,y}C_{x,y}(f^{x}-f^{y})(g^{x}-g^{y})+\sum_{x}%
\kappa_{x}f^{x}g^{x}=\sum_{x}\lambda_{x}f^{x}g^{x}-\sum_{x,y}C_{x,y}f^{x}g^{y}%
\]
defined by polarization of $e$.

Note that the non negative symmetric ''\emph{conductance matrix}'' $C$ and the
non negative \emph{equilibrium or ``killing'' measure} $\kappa$ are the free
parameters of the model.
\index{killing measure}
\begin{exercise}
Prove that the eigenfunction associated with the lowest eigenvalue of $-L$\ is
unique and has constant sign by an argument based on the fact that the map
$z\rightarrow\left|  z\right|  $ lowers the energy (which follows easily from
the expression given above).
\end{exercise}

In quantum mechanics, the infinitesimal generator $-L$ is called the
Hamiltonian and its eigenvalues are the energy levels.

One can learn more on graphs and eigenvalues in \cite{Big1}.\bigskip

We have a dichotomy between:

\begin{itemize}
\item[-] the recurrent case where $0$ is the lowest eigenvalue of $-L$, and
the corresponding eigenspace is formed by constants. Equivalently, $P1=1$ and
$\kappa$ vanishes.

\item[-] the transient case where the lowest eigenvalue is positive which
means there is a ''Poincar\'{e} inequality'': For some positive $\varepsilon$,
the energy $e(f,f)$ dominates $\varepsilon\left\langle f,f\right\rangle
_{\lambda}$ for all $f$. Equivalently, as we are on a finite space, $\kappa$
does not vanish. Note however these equivalences doe not hold in general on
infinite spaces, though the dichotomy is still valid.
\end{itemize}

\index{potential operator}In the transient case, we denote by $V$ the associated \emph{potential
operator} $(-L)^{-1}=\int_{0}^{\infty}P_{t}dt$. It can be expressed in terms
of the spectral resolution of $L$. We will denote $\sum_{y}V_{y}^{x}f^{y}$ by
$(Vf)^{x}$\ or $Vf(x)$.

Note that the function $Vf$\ ( called the potential of $f$) is characterized
by the identity
\[
e(Vf,g)=\left\langle f,g\right\rangle _{\lambda}%
\]
valid for all functions $f$ and $g$. The potential operator diverges on
positive functions in the recurrent case. These properties define the
dichotomy transient/recurrent on infinite spaces.

\index{Green function}We denote by $G$ the \emph{Green function} defined on $X^{2}$ as
$G^{x,y}=\frac{V_{y}^{x}}{\lambda_{y}}=\frac{1}{\lambda_{y}}[(I-P)^{-1}%
]_{y}^{x}$ i.e. $G=(M_{\lambda}-C)^{-1}$. It induces a linear bijection from
measures into functions. We will denote $\sum_{y}G^{x,y}\mu_{y}$ by
$(G\mu)^{x}$\ or $G\mu(x)$.

Note that the function $G\mu$\ ( called the potential of $\mu$) is
characterized by the identity
\[
e(f,G\mu)=\left\langle f,\mu\right\rangle
\]
valid for all functions $f$ and measures $\mu$. In particular $G\kappa=1$ as
$\ e(1,f)=\sum f^{x}\kappa_{x}=\left\langle f,1\right\rangle _{\kappa}$.

\begin{example}
The Green function in the case of the complete graph $K_{n}$\ with uniform
killing measure of intensity $c>0$ is given by the matrix
\[
\frac{1}{n+c}(I+\frac{1}{c}J)
\]

where $J$ denotes the $(n,n)$ matrix with all entries equal to $1$.
\end{example}

\begin{proof}
Note first that $M_{\lambda}-C=(n+c)I-J$. The inverse is easily checked.
\end{proof}

See (\cite{Fukutak}) for a development of this theory in a more general setting.

In the recurrent case, the potential operator $V$ can be defined on the space
$\lambda^{\perp}$ of functions $f$ such that $\left\langle f,1\right\rangle
_{\lambda}=0$ as the inverse of the restriction of $I-P$ to $\lambda^{\perp}$.
The Green operator $G$ maps the space of measures of total charge zero onto
$\lambda^{\perp}$: setting for any signed measure $\nu$ of total charge zero
$G\nu=V\frac{\nu}{\lambda}$, we have for any function $f$, $\left\langle
\nu,f\right\rangle =e(G\nu,f)$ (as $e(G\nu,1)=0$) and in particular$\ f^{x}%
-f^{y}=e(G(\delta_{x}-\delta_{y}),f)$.

\begin{exercise}
\label{greencomp}In the case of the complete graph $K_{n}$, show that the
Green operator is given by:%
\[
G\nu(x)=\frac{\nu_{x}}{n}%
\]
\bigskip
\end{exercise}

\begin{remark}
Markov chains with different holding times parameters are associated with the
same energy form. If $q$ is any positive function on $X$, the Markov chain
with $y_{t}$\ holding times parameter $q^{x}$, $x\in X$ is obtained from
$x_{t}$ by time change: $y_{t}=x_{\sigma_{t}}$, where $\sigma_{t}$ is the
right continuous increasing family of stopping times defined by $\int
_{0}^{\sigma_{t}}q^{-1}(x_{s})ds=t$. Its semigroup is $q^{-1}\lambda
$-symmetric with infinitesimal generator given by $qL$. The potential opertor
is different but the Green function does not change. In particular, if we set
$q^{x}=\lambda_{x}$ for all $x$, the duality measure is the counting measure
and the potential operator $V$ is given by the Green function $G$. The
associated rescaled Markov chain will be used in the next chapters.
\end{remark}

\section{Feynman-Kac formula}

A discrete analogue of the Feynman-Kac formula can be given as follows: Let
$s$ be any function on $X$ taking values in $(0,1]$. Then, for the discrete
Markov chain $\xi_{n}$\ associated with $P$, it is a straightforward
consequence of the Markov property that:
\[
\mathbb{E}_{x}(\prod_{j=0}^{n-1}s(\xi_{j})1_{\{\xi_{n}=y\}})=[(M_{s}%
P)^{n}]_{y}^{x}%
\]
Similarly, for the continuous time Markov chain $x_{t}$ (with exponential
holding times), we have the Feynman-Kac formula:

\begin{proposition}
If $k(x)$ is a nonnegative function defined on $X$,%
\[
\mathbb{E}_{x}(e^{-\int_{0}^{t}k(x_{s})ds}1_{\{x_{t}=y\}})=[\exp
(t(L-M_{k})]_{y}^{x}.
\]
\end{proposition}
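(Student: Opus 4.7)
The plan is to show that $u_t(x,y) := \mathbb{E}_{x}\!\left(e^{-\int_{0}^{t}k(x_{s})ds}1_{\{x_{t}=y\}}\right)$, viewed as a family of matrices indexed by $t \ge 0$, satisfies the linear ODE $\frac{d}{dt} u_t = (L - M_k) u_t$ with initial condition $u_0 = I$. Since $L - M_k$ is a bounded operator on the finite-dimensional space $X$, the unique solution is $u_t = \exp(t(L - M_k))$, which yields the formula.

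First I would condition on the first jump time $T_1$ of $x_t$, which is exponential of parameter $1$ and independent of the embedded chain $\xi_n$. On $\{T_1 > t\}$ the trajectory is constant equal to $x$, contributing $e^{-t}e^{-tk(x)}\delta_y^x$. On $\{T_1 = s \le t\}$, the chain jumps to $z$ with probability $P_z^x$, and the strong Markov property reduces the remainder to $u_{t-s}(z,y)$. This gives the renewal identity
$$u_t(x,y) = e^{-t(1+k(x))}\delta_y^x + \int_0^t e^{-s(1+k(x))} \sum_z P_z^x\, u_{t-s}(z,y)\, ds.$$

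Next, substituting $r = t-s$ and multiplying by $e^{t(1+k(x))}$ rewrites this as
$$e^{t(1+k(x))} u_t(x,y) = \delta_y^x + \int_0^t e^{r(1+k(x))} \sum_z P_z^x\, u_r(z,y)\, dr,$$
whose right-hand side is differentiable in $t$. Differentiating and using $L = P - I$ yields
$$\tfrac{d}{dt} u_t(x,y) = \sum_z P_z^x u_t(z,y) - (1+k(x)) u_t(x,y) = \bigl((L - M_k) u_t\bigr)(x,y),$$
with $u_0 = I$. There is no real obstacle; the only point requiring a word is continuity of $s \mapsto u_s$, which is immediate from dominated convergence since the integrand in the definition of $u_s$ is bounded by $1$. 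An alternative route, perhaps more in the spirit of the preceding discrete statement, is to condition on $N_t = n$ and on the successive jump times of the Poisson process, apply the discrete Feynman--Kac formula $\mathbb{E}_x(\prod_{j=0}^{n-1} s(\xi_j) 1_{\{\xi_n = y\}}) = [(M_s P)^n]_y^x$ after discretizing the time integral, and pass to the limit; but the ODE approach is shorter and avoids any limiting argument.
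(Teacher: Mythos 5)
Your proof is correct, and it follows the same overall strategy as the paper: identify the matrix family $u_t$ as the solution of a linear ODE driven by $L-M_k$ with $u_0=I$, and invoke uniqueness/the matrix exponential. The difference lies in how the ODE is obtained. The paper decomposes at time $t$, writing $V_{t+\Delta t}=V_t\cdot(\text{short-time kernel})$ by the Markov property, and then establishes the infinitesimal estimates $\mathbb{E}_y(e^{-\int_0^{\Delta t}k(x_s)ds}1_{\{x_{\Delta t}=y\}})-1\sim-(k(y)+1)\Delta t$ and $\mathbb{E}_z(\cdots 1_{\{x_{\Delta t}=y\}})\sim P^z_y\,\Delta t$ by considering the first and second jump times in the small interval (a verification the paper in fact leaves as an exercise). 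You instead condition on the first jump of the whole trajectory, which yields an exact renewal integral equation; after the change of variables $r=t-s$ you differentiate an exact identity, so no asymptotic estimate is needed, only continuity of $t\mapsto u_t$, which you correctly dispose of by dominated convergence. Your route thus derives the backward equation $\frac{d}{dt}u_t=(L-M_k)u_t$ cleanly, at the cost of setting up the renewal identity, whereas the paper's increment argument is shorter to state but hides the small-time analysis in an exercise; both are fully rigorous in this finite-state, bounded-generator setting.
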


\begin{proof}
It is enough to check, by differentiating the first member $V(t)$\ with
respect to $t$, that $V^{\prime}(t)=(L-M_{k})V(t)$.

Precisely, if we set $(V_{t})_{y}^{x}=\mathbb{E}_{x}(e^{-\int_{0}%
^{t}k(x_{s})ds}1_{\{x_{t}=y\}})$, by the Markov property,
\begin{align*}
(V_{t+\Delta t})_{y}^{x}  &  =\mathbb{E}_{x}(e^{-\int_{0}^{t}k(x_{s}%
)ds}\mathbb{E}_{X_{t}}(e^{-\int_{0}^{\Delta t}k(x_{s})ds}1_{\{x_{\Delta
t}=y\}})\\
&  =\sum_{z\in X}\mathbb{E}_{x}(e^{-\int_{0}^{t}k(x_{s})ds}1_{\{x_{t}%
=z\}}\mathbb{E}_{z}(e^{-\int_{0}^{\Delta t}k(x_{s})ds}1_{\{x_{\Delta t}%
=y\}})\\
&  =\sum_{z\in X}(V_{t})_{z}^{x}\mathbb{E}_{z}(e^{-\int_{0}^{\Delta t}%
k(x_{s})ds}1_{\{x_{\Delta t}=y\}}).
\end{align*}
Then one verifies easily by considering the first and second times of jump
that as $\Delta t$ goes to zero, $\mathbb{E}_{y}(e^{-\int_{0}^{\Delta
t}k(x_{s})ds}1_{\{x_{\Delta t}=y\}})-1$ is equivalent to $-(k(y)+1)\Delta t$
and for $z\neq y$, $\mathbb{E}_{z}(e^{-\int_{0}^{\Delta t}k(x_{s}%
)ds}1_{\{x_{\Delta t}=y\}})$ is equivalent to $P_{y}^{z}\Delta t$.
\end{proof}

\begin{exercise}
Verify the last assertion of the proof.
\end{exercise}

For any nonnegative measure $\chi$, set $V_{\chi}=(-L+M_{_{\frac{\chi}%
{\lambda}}})^{-1}$ and $G_{\chi}=V_{\chi}M_{\frac{1}{\lambda}}=(M_{\lambda
}+M_{\chi}-C)^{-1}$. It is a symmetric nonnegative function on $X\times X$.
$G_{0}$ is the Green function $G$, and $G_{\chi}$ can be viewed as the Green
function of the energy form $e_{\chi}=e+\left\|  \quad\right\|  _{L^{2}(\chi
)}^{2}$.

Note that $e_{\chi}$ has the same conductances $C$ as $e,$ but $\chi$ is added
to the killing measure. Note also that $V_{\chi}$ is not the potential of the
Markov chain associated with $e_{\chi}$ when one takes exponential holding
times of parameter $1$: the holding time expectation at $x$ becomes
$\frac{1}{1+\chi(x)}$. But the Green function is intrinsic i.e. invariant
under a change of time scale. Still, we have by Feynman Kac formula
\[
\int_{0}^{\infty}\mathbb{E}_{x}(e^{-\int_{0}^{t}\frac{\chi}{\lambda}(x_{s}%
)ds}1_{\{x_{t}=y\}})dt=[V_{\chi}]_{y}^{x}.
\]
We have also the ''generalized resolvent equation'' $V-V_{\chi}=VM_{\frac{\chi
}{\lambda}}V_{\chi}=V_{\chi}M_{\frac{\chi}{\lambda}}V$. Then,
\begin{equation}
G-G_{\chi}=GM_{\chi}G_{\chi}=G_{\chi}M_{\chi}G \label{Res}%
\end{equation}

\begin{exercise}
Prove the generalized resolvent equation.
\end{exercise}

Note that the recurrent Green operator $G$ defined on signed measures of zero
charge is the limit of the transient Green operator $G_{\chi}$, as
$\chi\rightarrow0$.

\section{Recurrent extension of a transient chain}

It will be convenient to add a cemetery point $\Delta$ to $X$, and extend $C$,
$\lambda$ and $G$ to $X^{\Delta}=\{X\cup\Delta\}$ by setting , $\lambda
_{\Delta}=\sum_{x\in X}\kappa_{x}$, $C_{x,\Delta}=\kappa_{x}$\ and
$G^{x,\Delta}=G^{\Delta,x}=G^{\Delta,\Delta}=0$ for all $x\in X$. Note that
$\lambda(X^{\Delta})=\sum_{X\times X}C_{x,y}+2\sum_{X}\kappa_{x}%
=\lambda(X)+\lambda_{\Delta}$.

One can consider the recurrent ''resurrected'' Markov chain defined by the
extensions of the conductances to $X^{\Delta}$. An energy $e^{\Delta}$\ is
defined by the formula
\[
e^{\Delta}(z)=\frac{1}{2}\sum_{x,y\in X^{\Delta}}C_{x,y}(z^{x}-z^{y}%
)(\overline{z}^{x}-\overline{z}^{y})
\]
From the irreducibility assumption, it follows that $e^{\Delta}$ vanishes only
on constants. We denote by $P^{\Delta}$ the transition kernel on $X^{\Delta}$
defined by%
\[
\lbrack P^{\Delta}]_{y}^{x}=\frac{C_{x,y}}{\sum_{y\in X^{\Delta}}C_{x,y}%
}=\frac{C_{x,y}}{\lambda_{x}}%
\]
Note that $P^{\Delta}1=1$ so that $\lambda$ is now an invariant measure.with
$\lambda_{x}[P^{\Delta}]_{y}^{x}=\lambda_{y}[P^{\Delta}]_{x}^{y}$ on
$X^{\Delta}$. Also
\[
e^{\Delta}(f,g)=\left\langle f-P^{\Delta}f,g\right\rangle _{\lambda}%
\]
Denote $V^{\Delta}$\ and $G^{\Delta}$ the associated potential and Green operators.

Note that for $\mu$ carried by $X$, for all $x\in X$, denoting by
$\varepsilon_{\Delta}$ the unit point mass at $\Delta$,
\begin{align*}
\mu_{x}  &  =e^{\Delta}(G^{\Delta}(\mu-\mu(X)\varepsilon_{\Delta}%
),1_{x})=\lambda_{x}((I-P^{\Delta})G^{\Delta}(\mu-\mu(X)\varepsilon_{\Delta
})(x)\\
&  =\lambda_{x}((I-P)G^{\Delta}(\mu-\mu(X)\varepsilon_{\Delta}))(x)-\kappa
_{x}G^{\Delta}(\mu-\mu(X)\varepsilon_{\Delta})(\Delta).
\end{align*}
Hence, applying $G$ , it follows that on $X^{\Delta}$,
\[
G\mu=G^{\Delta}(\mu-\mu(X)\varepsilon_{\Delta})-G^{\Delta}(\mu-\mu
(X)\varepsilon_{\Delta})(\Delta)G\kappa=G^{\Delta}(\mu-\mu(X)\varepsilon
_{\Delta})-G^{\Delta}(\mu-\mu(X)\varepsilon_{\Delta})(\Delta).
\]
Moreover, as $G^{\Delta}(\mu-\mu(X)\varepsilon_{\Delta})$ is in $\lambda
^{\perp}$, integrating by $\lambda$, we obtain that%
\[
\sum_{x\in X}\lambda_{x}G(\mu)^{x}=-G^{\Delta}(\mu-\mu(X)\varepsilon_{\Delta
})(\Delta)\lambda(X^{\Delta}).
\]
Therefore, $G^{\Delta}(\mu-\mu(X)\varepsilon_{\Delta})(\Delta
)=\frac{-\left\langle \lambda,G\mu\right\rangle }{\lambda(X^{\Delta})}$ and we
get the following:

\begin{proposition}
For any measure $\mu$ on $X$, $G^{\Delta}(\mu-\mu(X)\varepsilon_{\Delta
})=-\frac{\left\langle \lambda,G\mu\right\rangle }{\lambda(X^{\Delta})}+G\mu.$
\end{proposition}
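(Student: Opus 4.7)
The plan is to extract the function $h := G^{\Delta}(\mu - \mu(X)\varepsilon_{\Delta})$ on $X^{\Delta}$ by first recovering its restriction to $X$ up to an additive constant, then pinning the constant down using the defining constraint $h \in \lambda^{\perp}$.

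First I would use the characterizing identity of $G^{\Delta}$, namely $e^{\Delta}(G^{\Delta}\nu, f) = \langle f, \nu\rangle_{X^\Delta}$ valid for zero-charge $\nu$: applied to $\nu = \mu - \mu(X)\varepsilon_{\Delta}$ and $f = 1_x$ with $x \in X$, this gives
\[
\mu_{x} = e^{\Delta}(h, 1_x) = \lambda_x\,(I - P^{\Delta}) h(x).
\]
Next I would compare $P^{\Delta}$ to $P$. Since $P^{\Delta}$ differs from $P$ only by the additional term sending mass $\kappa_x/\lambda_x$ from $x$ to $\Delta$, one has
\[
\lambda_x (I - P^{\Delta}) h(x) = \lambda_x (I - P) h(x) - \kappa_x\, h(\Delta),
\]
so that on $X$ we obtain $\mu + \kappa\, h(\Delta) = M_{\lambda}(I - P) h$. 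Applying the transient Green operator $G = [M_{\lambda}(I-P)]^{-1}$ and using the already-established identity $G\kappa = 1$ then yields
\[
h(x) = G\mu(x) + h(\Delta), \qquad x \in X.
\]
Since $G\mu(\Delta) = 0$ by convention, this formula extends trivially to $x = \Delta$.

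It only remains to compute $h(\Delta)$. The image of the recurrent Green operator $G^{\Delta}$ lies in $\lambda^{\perp}$ on $X^{\Delta}$, so $\langle \lambda, h\rangle_{X^{\Delta}} = 0$. Integrating the identity $h = G\mu + h(\Delta)$ against $\lambda$ over $X^{\Delta}$ and using $G\mu(\Delta) = 0$ gives
\[
0 = \langle \lambda, G\mu\rangle + h(\Delta)\,\lambda(X^{\Delta}),
\]
whence $h(\Delta) = -\langle \lambda, G\mu\rangle / \lambda(X^{\Delta})$, and substituting back yields the claimed formula.

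The only delicate step is the bookkeeping between $P$ and $P^{\Delta}$ on row $x \in X$: the row sum changes from $1 - \kappa_x/\lambda_x$ to $1$, so care is needed to identify exactly the contribution of the killing measure $\kappa$ as the mass sent to $\Delta$. Once that relation is correctly recorded, everything else is driven by the two key inputs already in hand, namely $G\kappa = 1$ and the vanishing of $\lambda$-integrals on the image of $G^{\Delta}$.
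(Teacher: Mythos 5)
Your proof is correct and follows essentially the same route as the paper: apply the defining identity $e^{\Delta}(G^{\Delta}\nu,1_x)=\nu_x$, split $P^{\Delta}$ into $P$ plus the jump to $\Delta$ of weight $\kappa_x/\lambda_x$, invert with $G$ using $G\kappa=1$, and fix the constant $h(\Delta)$ from the fact that the image of $G^{\Delta}$ on zero-charge measures lies in $\lambda^{\perp}$. No gaps; the bookkeeping between $P$ and $P^{\Delta}$ that you flag is exactly the step the paper carries out, with the same sign convention.
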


This type of extension can be done in a more general context ( See \cite{LJ4}
and Dellacherie-Meyer \cite{DelMey})

\bigskip

\begin{remark}
Conversely, a recurrent chain can be killed at any point $x_{0}$ of $X$,
defining a Green function $G^{X-\{x_{0}\}}$ on $X-\{x_{0}\}$. Then, for any
$\mu$ carried by $X-\{x_{0}\}$,
\[
G^{X-\{x_{0}\}}\mu=G(\mu-\mu(X)\varepsilon_{x_{0}})-G(\mu-\mu(X)\varepsilon
_{x_{0}})(x_{0}).
\]
This transient chain allows to recover the recurrent one by the above procedure.
\end{remark}

\begin{exercise}
Consider a transient process which is killed with probability $p$ at each
passage in $\Delta$. Determine the associated energy and Green operator.
\end{exercise}

\section{Transfer matrix\label{difforms}}

Let us suppose in this section that we are in the \emph{recurrent case}: We
can define a scalar product on the space $\mathbb{A}$ of functions on $E^{o}$
(oriented edges) as follows

$\left\langle \omega,\eta\right\rangle _{\mathbb{A}}=\frac{1}{2}\sum
_{x,y}C_{x,y}\omega^{x,y}\eta^{x,y}.$ Denoting as in \cite{Lyo2}%
\ \ $df^{u,v}=f^{v}-f^{u}$, we note that $\left\langle df,dg\right\rangle
_{\mathbb{A}}=e(f,g)$. In particular
\[
\left\langle df,dG(\delta_{y}-\delta_{x})\right\rangle _{\mathbb{A}}=df^{x,y}%
\]
Denote $\mathbb{A}_{-}$, ($\mathbb{A}_{+}$) the space of real valued functions
on $E^{o}$ odd (even) for orientation reversal. Note that the spaces
$\mathbb{A}_{+}$ and $\mathbb{A}_{-}$ are orthogonal for the scalar product
defined on $\mathbb{A}$. The space $\mathbb{A}_{-}$ should be viewed as the
space of ''discrete differential forms''.

Following this analogy, define for any $\alpha$ in $\mathbb{A}_{-}$, define
$d^{\ast}\alpha$ by $(d^{\ast}\alpha)^{x}=-\sum_{y\in X}P_{y}^{x}\alpha^{x,y}%
$. Note it belongs to $\lambda^{\perp}$ as $\sum_{x,y}C_{x,y}\alpha^{x,y}$ vanishes.

We have
\begin{align*}
\left\langle \alpha,df\right\rangle _{\mathbb{A}}  &  =\frac{1}{2}\sum
_{x,y}\lambda_{x}P_{y}^{x}\alpha^{x,y}(f^{y}-f^{x})\\
&  =\frac{1}{2}\sum_{x\in X}(d^{\ast}\alpha)^{x}f^{x}\lambda_{x}-\frac{1}%
{2}\sum_{x,y}\lambda_{x}P_{y}^{x}\alpha^{y,x}f^{y}=\sum_{x\in X}(d^{\ast
}\alpha)^{x}f^{x}\lambda_{x}%
\end{align*}
as the two terms of the difference are in fact opposite since $\alpha$ is skew
symmetric. The image of $d$ and the kernel of $d^{\ast}$are therefore
orthogonal in $\mathbb{A}_{-}$. We say $\alpha$ in $\mathbb{A}_{-}$\ is
harmonic iff $d^{\ast}\alpha=0$.

Moreover,
\[
e(f,f)=\left\langle df,df\right\rangle _{\mathbb{A}}=\sum_{x\in X}(d^{\ast
}df)^{x}f^{x}\lambda_{x}.
\]

Note also that for any function $f$ ,%
\[
d^{\ast}df=-Pf+f=-Lf.
\]

$d$ is the discrete analogue of the differential and $d^{\ast}$ the analogue
of its adjoint, depending on the metric which is here defined by the
conductances. \bigskip$L\,$\ is a discrete version of the Laplacian.

\begin{proposition}
The projection of any $\alpha$ in $\mathbb{A}_{-}$\ on the image of $d$ is
$dVd^{\ast}(\alpha)$.
\end{proposition}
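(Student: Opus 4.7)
The plan is to verify directly the two defining properties of an orthogonal projection: that $dVd^{*}\alpha$ lies in the image of $d$, which is trivial, and that $\alpha - dVd^{*}\alpha$ is orthogonal in $\mathbb{A}$ to every element of the form $df$. Once both are established, uniqueness of the orthogonal projection gives the conclusion.

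First I would check that $Vd^{*}\alpha$ is well defined. The excerpt already notes that $d^{*}\alpha \in \lambda^{\perp}$ for any $\alpha \in \mathbb{A}_-$ (because $\sum_{x,y} C_{x,y}\alpha^{x,y} = 0$ by skew-symmetry). In the recurrent case $V$ is precisely the inverse of $I - P$ on $\lambda^{\perp}$, so $Vd^{*}\alpha$ makes sense and satisfies $-LVd^{*}\alpha = d^{*}\alpha$.

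Next I would compute, for an arbitrary function $f$,
\[
\langle \alpha, df\rangle_{\mathbb{A}} - \langle dVd^{*}\alpha, df\rangle_{\mathbb{A}}
= \langle d^{*}\alpha, f\rangle_{\lambda} - e(Vd^{*}\alpha, f).
\]
The first equality uses the adjointness relation $\langle \alpha, df\rangle_{\mathbb{A}} = \sum_{x} (d^{*}\alpha)^{x} f^{x} \lambda_{x}$ derived just before the proposition, and the identity $\langle dg, df\rangle_{\mathbb{A}} = e(g,f)$. To handle the second term I would use the recurrent Green operator: the signed measure $\mu = (d^{*}\alpha)\,\lambda$ has total charge zero, so by the earlier definition $G\mu = V(d^{*}\alpha)$ and the characterization of $G\mu$ gives $e(Vd^{*}\alpha, f) = e(G\mu, f) = \langle f,\mu\rangle = \langle d^{*}\alpha, f\rangle_{\lambda}$. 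The right-hand side thus vanishes, so $\alpha - dVd^{*}\alpha$ is orthogonal to every exact form.

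Since $dVd^{*}\alpha$ is manifestly in the image of $d$, and its complement to $\alpha$ is orthogonal to that image, $dVd^{*}\alpha$ is the orthogonal projection of $\alpha$ onto $\mathrm{Im}(d)$, as claimed. The only delicate point I see is handling the recurrent case cleanly: one must remember that $V$ is defined only on $\lambda^{\perp}$ and invoke the recurrent Green operator's characterizing identity $e(G\mu,f) = \langle \mu, f\rangle$ on zero-charge measures; that is where the proof differs from the transient case but it is precisely the setting assumed throughout this section.
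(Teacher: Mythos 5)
Your proof is correct and is essentially the paper's own argument: the paper's one-line proof chains the same identities $\left\langle \alpha,dg\right\rangle_{\mathbb{A}}=\left\langle d^{\ast}\alpha,g\right\rangle_{\lambda}=e(Vd^{\ast}\alpha,g)=\left\langle dVd^{\ast}(\alpha),dg\right\rangle_{\mathbb{A}}$ to show orthogonality of $\alpha-dVd^{\ast}\alpha$ to $\operatorname{Im}(d)$. Your extra care in justifying $e(Vd^{\ast}\alpha,f)=\left\langle d^{\ast}\alpha,f\right\rangle_{\lambda}$ via the recurrent Green operator on the zero-charge measure $(d^{\ast}\alpha)\lambda$ is exactly the right way to make the middle equality rigorous in the recurrent setting assumed in this section.
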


\begin{proof}
Indeed, for any function $g,$ $\left\langle \alpha,dg\right\rangle
_{\mathbb{A}}=\left\langle d^{\ast}\alpha,g\right\rangle _{\lambda}%
=e(Vd^{\ast}\alpha,g)=\left\langle dVd^{\ast}(\alpha),dg\right\rangle
_{\mathbb{A}}$.\newline 
\end{proof}

We now can come to the definition of the transfer matrix: Set $\alpha
_{(u,v)}^{x,y}=\pm\frac{1}{C_{u,v}}$ if $(x,y)=\pm(u,v)$ and $0$ elsewhere.
Then $\lambda_{x}d^{\ast}\alpha_{(u,v)}(x)=\delta_{v}^{x}-\delta_{u}^{x}$ and
$dVd^{\ast}(\alpha_{(u,v)})=dG(\delta_{v}-\delta_{u})$. Note that given any
orientation of the graph, the family $\{\alpha_{(u,v)}^{\ast}=\sqrt{C_{u,v}%
}\alpha_{(u,v)},\;(u,v)\in E^{+}\}$ is an orthonormal basis of $\mathbb{A}%
_{-}$ (here $E^{+}$ denotes the set of positively oriented edges).

\index{transfer matrix}The symmetric transfer matrix $K^{(x,y),(u,v)}$, indexed by pairs of oriented
edges, is defined to be
\[
K^{(x,y),(u,v)}=[dG(\delta_{v}-\delta_{u})]^{x,y}=G(\delta_{v}-\delta_{u}%
)^{y}-G(\delta_{v}-\delta_{u})^{x}=<dG(\delta_{y}-\delta_{x}),dG(\delta
_{v}-\delta_{u})>_{\mathbb{A}}%
\]
for $x,y,u,v\in X$, with $C_{x,y}C_{u,v}>0$.

As $dG((d^{\ast}\alpha)\lambda)=dVd^{\ast}(\alpha)$ is the projection
$\Pi(\alpha)$\ of $\alpha$ on the image of $d$ in $\mathbb{A}_{-}$, we have also:%

\[
<\alpha_{(u,v)},\Pi(\alpha_{(u^{\prime},v^{\prime})})>_{\mathbb{A}}%
=<\alpha_{(u,v)},dG(\delta_{v^{\prime}}-\delta_{u^{\prime}})>_{\mathbb{A}%
}=K^{(u,v),(u^{\prime},v^{\prime})}%
\]

For every oriented edge $h=(x,y)$ in \ $X$, set $K^{h}=dG(\delta^{y}%
-\delta^{x})$. We have $\left\langle K^{h},K^{g}\right\rangle _{\mathbb{A}%
}=K^{h,g}.$We can view $dG$ as a linear operator mapping the space measures of
total charge zero into $\mathbb{A}_{-}$. As measures of the form $\delta
_{y}-\delta_{x}$ span the space of measures of total charge zero, it is
determined by the transfer matrix.

Note that $d^{\ast}dG\upsilon=\upsilon/\lambda$ for any $\upsilon$ of total
charge zero and that for all $\alpha$ in $\mathbb{A}_{-}$, $(d^{\ast}%
\alpha)\lambda$ has total charge zero.

\bigskip

Consider now, in the transient case, the transfer matrix associated with
$G^{\Delta}$.\newline We see that for $x$\ and $y$ in $X$, $G^{\Delta}%
(\delta_{x}-\delta_{y})^{u}-G^{\Delta}(\delta_{x}-\delta_{y})^{v}=G(\delta
_{x}-\delta_{y})^{u}-G(\delta_{x}-\delta_{y})^{v}$.\newline We can see also
that $G^{\Delta}(\delta_{x}-\delta_{\Delta})=G\delta_{x}-\frac{\left\langle
\lambda,G\delta_{x}\right\rangle }{\lambda(X^{\Delta})}$. So the same identity
holds in $X^{\Delta}$.\newline Therefore, as $G^{x,\Delta}=0$, in all cases,
\[
K^{(x,y),(u,v)}=G^{x,u}+G^{y,v}-G^{x,v}-G^{y,u}%
\]

\begin{exercise}
Cohomology and complex transition matrices.

Consider, in the recurrent case, $\omega\in\mathbb{A}_{-}$ such that $d^{\ast
}\omega=0$. Note that the space $H^{1}$\ of such $\omega$'s is isomorphic to
the first cohomology space, defined as the quotient $\mathbb{A}_{-}%
/\operatorname{Im}(d)$. Prove that $P(I+i\omega)$ is $\lambda-$self adjoint on
$X$, maps $1$ onto $1$ and that we have $\mathbb{E}_{x}(\prod_{j=0}%
^{n-1}(1+\omega(\xi_{j},\xi_{j+1}))1_{\{\xi_{n}=y\}})=[(P(I+i\omega))^{n}%
]_{y}^{x}$.
\end{exercise}

\chapter{Loop measures}

\section{A measure on based loops}

We denote by $\mathbb{P}^{x}$\ the family of probability laws on piecewise
constant paths defined by $P_{t}$.%
\[
\mathbb{P}^{x}(\gamma(t_{1})=x_{1},...,\gamma(t_{h})=x_{h})=P_{t_{1}}%
(x,x_{1})P_{t_{2}-t_{1}}(x_{1},x_{2})\ldots P_{t_{h}-t_{h-1}}(x_{h-1},x_{h})
\]
The corresponding process is a Markov chain in continuous time. It can also be
constructed as the process $\xi_{Nt}$, where $\xi_{n}$ is the discrete time
Markov chain starting at $x$, with transition matrix $P$, and $N_{t}$ an
independent Poisson process.

In the transient case, the lifetime is a.s. finite and denoting by $p(\gamma)$
the number of jumps and $T_{i}$ the jump times, we have:%
\begin{multline*}
\mathbb{P}^{x}(p(\gamma)=k,\gamma_{T_{1}}=x_{1},...,\gamma_{T_{k}}=x_{k}%
,T_{1}\in dt_{1},...,T_{k}\in dt_{k})\\
=\frac{C_{x,x_{1}}...C_{x_{k-1},x_{k}}\kappa_{x_{k}}}{\lambda_{x}%
\lambda_{x_{1}}...\lambda_{x_{k}}}1_{\{0<t_{1}<...<t_{k}\}}e^{-t_{k}}%
dt_{1}...dt_{k}%
\end{multline*}

\bigskip

\index{based loops}For any integer $p\geq2$, let us define a \emph{based loop} with $p$ points in
$X$ as a couple $l=(\xi,\tau)=((\xi_{m},1\leq m\leq p),(\tau_{m},1\leq m\leq
p+1)\mathbb{)}$ in $X^{p}\times\mathbb{R}_{+}^{p+1}$, and set $\xi_{p+1}%
=\xi_{1}$ (equivalently, we can parametrize the associated discrete based loop
by $\mathbb{Z}/p\mathbb{Z}$). The integer $p$ represents the number of points
in the discrete based loop $\xi=(\xi_{1},...,\xi_{p(\xi)})$ and will be
denoted $p(\xi)$, and the $\tau_{m}$ are holding times.  Note however that two time parameters are attached to the base point
since the based loops do not in general end or start with a jump.

Based loops with one point $(p=1)$\ are simply given by a pair $(\xi,\tau
)$\ in $X\times\mathbb{R}_{+}$.

Based loops have a natural time parametrization $l(t)$ and a time period
$T(\xi)=\sum_{i=1}^{p(\xi)+1}\tau_{i}$. If we denote $\sum_{i=1}^{m}\tau_{i}%
$\ by $T_{m}$: $l(t)=\xi_{m}$ on $[T_{m-1},T_{m})$ (with by convention\ $T_{0}%
=0$ and $\xi_{1}=\xi_{p+1}$).

\index{bridge measure}Let $\mathbb{P}_{t}^{x,y}$ denote the (non normalized) ''bridge measure'' on
piecewise constant paths from $x$ to $y$\ of duration $t$ constructed as follows:

If $t_{1}<t_{2}<...<t_{h}<t$,%
\[
\mathbb{P}_{t}^{x,y}(l(t_{1})=x_{1},...,l(t_{h})=x_{h})=[P_{t_{1}}]_{x_{1}%
}^{x_{h}}[P_{t_{2}-t_{1}}]_{x_{2}}^{x_{1}}...[P_{t-t_{h}}]_{y}^{x_{h}}%
\frac{1}{\lambda_{y}}%
\]
Its mass is $p_{t}^{x,y}=\frac{[P_{t}]_{y}^{x}}{\lambda_{y}}$. For any
measurable set $A$\ of piecewise constant paths indexed by $[0\;t]$, we can
also write
\[
\mathbb{P}_{t}^{x,y}(A)=\mathbb{P}_{x}(A\cap\{x_{t}=y\})\frac{1}{\lambda_{y}%
}.
\]
\begin{exercise}
Prove that $\mathbb{P}_{t}^{y,x}$\ is the image of $\mathbb{P}_{t}^{x,y}$\ by
the operation of time reversal on paths indexed by $[0\;t]$.
\end{exercise}

A $\sigma$-finite measure $\mu$ is defined on based loops by%
\[
\mu=\sum_{x\in X}\int_{0}^{\infty}\frac{1}{t}\mathbb{P}_{t}^{x,x}\lambda
_{x}dt
\]
\begin{remark}
The introduction of the factor $\frac{1}{t}$ will be justified in the
following. See in particular formula \ref{gems}. It can be interpreted as the
normalization of the uniform measure on the loop, according to which the base
point is chosen.\bigskip
\end{remark}

From the expression of the bridge measure, we see that by definition of $\mu$,
if $t_{1}<t_{2}<...<t_{h}<t$,%
\begin{equation}
\mu(l(t_{1})=x_{1},...,l(t_{h})=x_{h},T\in dt)=[P_{t_{1}+t-t_{h}}]_{x_{1}%
}^{x_{h}}[P_{t_{2}-t_{1}}]_{x_{2}}^{x_{1}}...[P_{t_{h}-t_{h-1}}]_{x_{h}%
}^{x_{h-1}}\frac{1}{t}dt. \label{mu1}%
\end{equation}

Note also that for $k>1$, using the second expression of $\mathbb{P}_{t}%
^{x,y}$ and the fact that conditionally on $N_{t}=k$, the jump times are
distributed like an increasingly reordered $k-$uniform sample of $[0\;t]$%
\begin{align*}
\lambda_{x}\mathbb{P}_{t}^{x,x}(p  &  =k,\xi_{1}=x_{1},\xi_{2}=x_{2}%
,...,\xi_{_{k}}=x_{k},T_{1}\in dt_{1},...,T_{k}\in dt_{k})\\
&  =1_{\{x=x_{1}\}}e^{-t}\frac{t^{k}}{k!}P_{x_{2}}^{x_{1}}P_{x_{3}}^{x_{2}%
}...P_{x_{1}}^{x_{k}}1_{\{0<t_{1}<...t_{k}<t\}}\frac{k!}{t^{k}}dt_{1}%
...dt_{k}\\
&  =1_{\{x=x_{1}\}}P_{x_{2}}^{x_{1}}P_{x_{3}}^{x_{2}}...P_{x}^{x_{k}%
}1_{\{0<t_{1}<...t_{k}<t\}}e^{-t}dt_{1}...dt_{k}%
\end{align*}
Therefore,
\begin{align}
\mu(p  &  =k,\xi_{1}=x_{1},..,\xi_{k}=x_{k},T_{1}\in dt_{1},..,T_{k}\in
dt_{k},T\in dt)\label{mu3}\\
&  =P_{x_{2}}^{x_{1}}..P_{x_{1}}^{x_{k}}\frac{1_{\{0<t_{1}<...<t_{k}<t\}}}%
{t}e^{-t}dt_{1}...dt_{k}dt
\end{align}

for $k>1$.

Moreover, for one point-loops, $\mu\{p(\xi)=1,\xi_{1}=x_{1},\tau_{1}\in
dt\}=\frac{e^{-t}}{t}dt.$

It is clear on these formulas that for any positive constant $c$, the energy
forms $e$ and $ce$ define the same loop measure.

\section{First properties}

Note that the loop measure is invariant under time reversal.

If $D$ is a subset of $X$, the restriction of $\mu$ to loops contained in $D$,
denoted $\mu^{D}$ is clearly the loop measure induced by the Markov chain
killed at the exit of $D$. This can be called the \textsl{restriction
property}.

Let us recall that this killed Markov chain is defined by the restriction of
$\lambda$ to $D$ and the restriction $P^{D}$ of $P$ to $D^{2}$ (or
equivalently by the restriction $e_{D}$ of the Dirichlet form $e$ to functions
vanishing outside $D$).

As $\int\frac{t^{k-1}}{k!}e^{-t}dt=\frac{1}{k}$, it follows from (\ref{mu3})
that for $k>1$, on based loops,%
\begin{equation}
\mu(p(\xi)=k,\xi_{1}=x_{1},...,\xi_{k}=x_{k})=\frac{1}{k}P_{x_{2}}^{x_{1}%
}...P_{x_{1}}^{x_{k}}. \label{de}%
\end{equation}
In particular, we obtain that, for $k\geq2$
\[
\mu(p=k)=\frac{1}{k}Tr(P^{k})
\]
and therefore, as $Tr(P)=0$, in the transient case:
\begin{equation}
\mu(p>1)=\sum_{2}^{\infty}\frac{1}{k}Tr(P^{k})=-\log(\det(I-P))=\log
(\det(G)\prod_{x}\lambda_{x}) \label{logdet1}%
\end{equation}
since (denoting $M_{\lambda}$ the diagonal matrix with entries $\lambda_{x}$),
we have
\[
{\det(I-P)=\frac{\det(M_{\lambda}-C)}{\det(M_{\lambda})}}%
\]

Note that $\det(G)$ is defined as the determinant of the matrix $G^{x,y}$. It
is the determinant of the matrix representing the scalar product defined on
$\mathbb{R}^{\left|  X\right|  }$ (more precisely, on the space of measures on
$X$) by $G$ in any basis, orthonormal with respect to the natural euclidean
scalar product on $\mathbb{R}^{\left|  X\right|  }$.

Moreover
\[
\int p(l)1_{\{p>1\}}\mu(dl)=\sum_{2}^{\infty}Tr(P^{k})=Tr((I-P)^{-1}P)=Tr(GC)
\]

\section{Loops and pointed loops}

It is clear on formula \eqref{mu1}  that $\mu$ is invariant under the time
shift that acts naturally on based loops.\bigskip

\index{loop}A loop is defined as an equivalence class of based loops for this shift.
Therefore, $\mu$ induces a\textsl{ measure\ on loops also denoted by }$\mu$.

A loop is defined by the discrete loop $\xi^{%
{{}^\circ}%
}$\ formed by the $\xi_{i}$ in circular order, (i.e. up to translation) and
the associated holding times. We clearly have:%
\[
\mu(\xi^{%
{{}^\circ}%
}=(x_{1},x_{2},...,x_{k})^{%
{{}^\circ}%
})=P_{x_{2}}^{x_{1}}...P_{x_{1}}^{x_{k}} \label{gems}%
\]

\index{pointed loops}However, loops are not easy to parametrize, that is why we will work mostly
with based loops or with \textsl{pointed loops}$.$ These are defined as based
loops ending with a jump, or equivalently as loops with a starting point. They
can be parametrized by a based discrete loop and by the holding times at each
point. Calculations are easier if we work with based or pointed loops, even
though we will deal only with functions independent of the base point.\bigskip

The parameters of the pointed loop naturally associated with a based loop are
$\xi_{1},...,\xi_{p}$ and
\[
\tau_{1}+\tau_{p+1}{=\tau_{1}^{\ast},\tau_{i}=\tau_{i}^{\ast},\ 2\leq i\leq p}%
\]
An elementary change of variables, shows the expression of $\mu$ on pointed
loops can be written:%

\begin{equation}
\mu(p=k,\xi_{i}=x_{i},{\tau_{i}^{\ast}\in dt}_{i})=P_{x_{2}}^{x_{1}%
}...P_{x_{1}}^{x_{k}}\frac{t_{1}}{\sum t_{i}}e^{-\sum t_{i}}dt_{1}...dt_{k}.
\label{dd}%
\end{equation}
Trivial ($p=1$) pointed loops and trivial based loops coincide.

Note that loop functionals can be written%
\[
\Phi(l%
{{}^\circ}%
)=\sum1_{\{p=k\}}\Phi_{k}((\xi_{i},\tau_{i}^{\ast}),i=1,...,k)
\]
with $\Phi_{k}$ invariant under circular permutation of the variables
$(\xi_{i},\tau_{i}^{\ast})$.

Then, for non negative $\Phi_{k}$
\[
\int\Phi_{k}(l^{%
{{}^\circ}%
})\mu(dl)=\int\Phi_{k}((x_{i},t_{i})i=1,...,k)P_{x_{2}}^{x_{1}}...P_{x_{1}%
}^{x_{k}}e^{-\sum t_{i}}\frac{t_{1}}{\sum t_{i}}dt_{1}...dt_{k}%
\]
and by invariance under circular permutation, the term $t_{1}$ can be replaced
by any $t_{i}$. Therefore, adding up and dividing by $k$, we get that%

\[
\int\Phi_{k}(l^{%
{{}^\circ}%
})\mu(dl)=\int\frac{1}{k}\Phi_{k}((x_{i},t_{i})i=1,...,k)P_{x_{2}}^{x_{1}%
}...P_{x_{1}}^{x_{k}}e^{-\sum t_{i}}dt_{1}...dt_{k}.
\]

The expression on the right side, applied to any pointed loop functional
defines a different measure on pointed loops, we will denote by $\mu^{\ast}$.
It induces the same measure as $\mu$ on loops.

We see on this expression that conditionally on the discrete loop, the holding
times of the loop are independent exponential variables.%

\begin{equation}
\mu^{\ast}(p=k,\xi_{i}=x_{i},{\tau_{i}^{\ast}\in dt}_{i})=\frac{1}{k}%
\prod_{i\in\mathbb{Z}/p\mathbb{Z}}\frac{C_{\xi_{i},\xi_{i+1}}}{\lambda
_{\xi_{i}}}e^{-t_{i}}dt_{i} \label{ddd}%
\end{equation}

Conditionally on $p(\xi)=k,$ $T$ is a gamma variable of density $\frac{t^{k-1}%
}{(k-1)!}e^{-t}$ on $\mathbb{R}_{+}$ and ${(\frac{{\tau_{i}^{\ast}}}%
{T},\ 1\leq i\leq k)}$ an independent ordered $k$-sample of the uniform
distribution on $(0,T)$ (whence the factor $\frac{1}{t}$). Both are
independent, conditionally on the number of points $p$ of the discrete loop.
We see that $\mu$ on based loops is obtained from $\mu$ on the loops by
choosing the base point uniformly. On the other hand, it induces a choice of
$\xi_{1}$ biased by the size of the $\tau_{i}^{\ast}$'s, different from
$\mu^{\ast}$ for which this choice is uniform (whence the factor $\frac{1}{k}%
$). But we will consider only loop functionals for which $\mu$ and $\mu^{\ast
}$ coincide.

It will be convenient to rescale the holding time at each $\xi_{i}$\ by
$\lambda_{\xi_{i}}$ and set
\[
\widehat{\tau}_{i}=\frac{\tau_{i}^{\ast}}{\lambda_{\xi_{i}}}.
\]

The discrete part of the loop is the most important, though we will see that
to establish a connection with Gaussian fields it is necessary to consider
occupation times. The simplest variables are the number of jumps from $x$ to
$y$, defined for every oriented edge $(x,y)$%
\[
N_{x,y}=\#\{i:\xi_{i}=x,\xi_{i+1}=y\}
\]
(recall the convention $\xi_{p+1}=\xi_{1})$ and%
\[
N_{x}=\sum_{y}N_{x,y}%
\]
Note that $N_{x}=\#\{i\geq1:\xi_{i}=x\}$\ except for trivial one point loops
for which it vanishes.

Then, the measure on pointed loops (\ref{dd}) can be rewritten as:%
\begin{align}
\mu^{\ast}(p  &  =1,\xi=x,\widehat{\tau}\in dt)=e^{-\lambda_{x}t}\frac{dt}%
{t}\text{ and }\label{d}\\
\mu^{\ast}(p  &  =k,\xi_{i}=x_{i},\widehat{\tau}_{i}\in dt_{i})=\frac{1}%
{k}\prod_{x,y}C_{x,y}^{N_{x,y}}\prod_{x}\lambda_{x}^{-N_{x}}\prod
_{i\in\mathbb{Z}/p\mathbb{Z}}\lambda_{\xi_{i}}e^{-\lambda_{\xi_{i}}t_{i}%
}dt_{i}.
\end{align}

\index{bridge measure}Another \textsl{bridge measure }$\mu^{x,y}$ can be defined on paths $\gamma$
from $x$ to $y$:
\[
\mu^{x,y}(d\gamma)=\int_{0}^{\infty}\mathbb{P}_{t}^{x,y}(d\gamma)dt.
\]
Note that the mass of $\mu^{x,y}$ is $G^{x,y}$. We also have, with similar
notations as the one defined for loops, $p$ denoting the number of jumps%
\begin{multline*}
\mu^{x,y}(p(\gamma)=k,\gamma_{T_{1}}=x_{1},...,\gamma_{T_{k-1}}=x_{k-1}%
,T_{1}\in dt_{1},...,T_{k}\in dt_{k},T\in dt)\\
=\frac{C_{x,x_{1}}C_{x_{1},x_{2}}...C_{x_{k-1},y}}{\lambda_{x}\lambda_{x_{1}%
}...\lambda_{y}}1_{\{0<t_{1}<...<t_{k}<t\}}e^{-t}dt_{1}...dt_{k}dt.
\end{multline*}
From now on, we will assume, unless otherwise specified, that we are in the
\emph{transient case.}

For any $x\neq y$ in $X$ and $s\in\lbrack0,1]$, setting $P_{v}^{(s),u}%
=P_{v}^{u}$ if $(u,v)\neq(x,y)$ and $P_{y}^{(s),x}=sP_{y}^{x}$, we can prove
in the same way as (\ref{logdet1})\ that:%
\[
\mu(s^{N_{x,y}}1_{\{p>1\}})=-\log(\det(I-P^{(s)})).
\]
Differentiating in $s=1$, and remenbering that for any invertible matrix
function $M(s)$, $\frac{d}{ds}\log(\det(M(s))=Tr(M^{\prime}(s)M(s)^{-1})$,\ it
follows that:%
\[
\mu(N_{x,y})=[(I-P)^{-1}]_{x}^{y}P_{y}^{x}=G^{x,y}C_{x,y}%
\]
and
\begin{equation}
\mu(N_{x})=\sum_{y}\mu(N_{x,y})=\lambda_{x}G^{x,x}-1 \label{munx}%
\end{equation}
(as $G(M_{\lambda}-C)=Id$).

\begin{exercise}
Show that more generally
\[
\mu(N_{x,y}(N_{x,y}-1)...(N_{x,y}-k+1))=(k-1)!(G^{x,y}C_{x,y})^{k}.
\]

Hint: Show that if $M^{\prime\prime}(s)$ vanishes,
\[
\frac{d^{n}}{ds^{n}}\log(\det(M(s)))=(-1)^{n-1}(n-1)!Tr((M^{\prime
}(s)M(s)^{-1})^{n}).
\]
\end{exercise}

\begin{exercise}
Show that more generally, if $x_{i},y_{i}$ are $n$ distinct oriented edges:%

\[
\mu(\prod N_{x_{i},y_{i}})=\prod C_{x_{i},y_{i}}\frac{1}{n}\sum_{\sigma
\in\mathcal{S}_{n}}\prod G^{y_{\sigma(i)},x_{\sigma(i+1)}}%
\]

Hint: Introduce $[P^{(s_{1},...,s_{n})}]_{y}^{x}$ equal to $P_{y}^{x}$ if
$(x,y)\neq(x_{i},y_{i})$ for all $i$, and equal to $s_{i}P_{y_{i}}^{x_{i}}$ if
$(x,y)=(x_{i},y_{i})$.
\end{exercise}

We finally note that if $C_{x,y}>0$, any path segment on the graph starting at
$x$ and ending at $y$ can be naturally extended into a loop by adding a jump
from $y$ to $x$. We have the following

\begin{proposition}
\label{bridg}
 For $C_{x,y}>0$, the natural extension of $\mu^{x,y}$ to loops
coincides with $\frac{N_{y,x}(l)}{C_{x,y}}\mu(dl)$.
\end{proposition}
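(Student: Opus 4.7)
My plan is to test both measures against an arbitrary loop-invariant functional $\Phi$ (viewed as a cyclic-invariant function on pointed loops) and to reduce both sides to the same expression using formula~(\ref{ddd}) for $\mu^{\ast}$ together with the explicit bridge density given in the text. Throughout I will freely use that, on loop-invariant integrands, $\int\Phi\,d\mu=\int\Phi(l(\eta))\,d\mu^{\ast}(\eta)$, as established in the section on pointed loops.

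First I recast the bridge measure in pointed-loop coordinates. For a path $\gamma$ from $x$ to $y$ with $k\geq1$ jumps through states $(x,x_{1},\ldots,x_{k-1},y)$, changing variables from the jump times $T_{1},\ldots,T_{k}$ and total time $T$ to the $p=k+1$ holding times $(u_{1},\ldots,u_{p})$ (Jacobian $1$) gives
\[
\mu^{x,y}(d\gamma)=\frac{C_{x,x_{1}}\cdots C_{x_{k-1},y}}{\lambda_{x}\lambda_{x_{1}}\cdots\lambda_{y}}\,e^{-\sum u_{j}}\prod du_{j}.
\]
Interpret $\eta=(x,x_{1},\ldots,x_{k-1},y)$ as a pointed loop of length $p$ closed by the jump $y\to x$; this is legitimate because $C_{x,y}=C_{y,x}>0$, which also forces $x\neq y$ so $k=0$ does not occur. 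Multiplying numerator and denominator by $C_{y,x}$ and comparing with (\ref{ddd}) yields, on pointed loops with $\eta_{1}=x,\eta_{p}=y$,
\[
\mu^{x,y}(d\gamma)=\frac{p}{C_{x,y}}\cdot\frac{1}{p}\prod_{i=1}^{p}\frac{C_{\eta_{i},\eta_{i+1}}}{\lambda_{\eta_{i}}}\,e^{-\sum u_{j}}\prod du_{j}=\frac{p}{C_{x,y}}\,d\mu^{\ast}(\eta).
\]
Consequently the pushforward of $\mu^{x,y}$ by the extension map, tested against $\Phi$, equals $\frac{1}{C_{x,y}}\sum_{p\geq2}\sum_{\eta_{1}=x,\eta_{p}=y}\int\Phi(l(\eta))\prod_{i}\frac{C_{\eta_{i},\eta_{i+1}}}{\lambda_{\eta_{i}}}\,e^{-\sum u_{j}}\prod du_{j}$, where the $1/p$ from $d\mu^{\ast}$ has cancelled the factor~$p$.

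For the right-hand side, I expand $N_{y,x}(\eta)=\sum_{i=1}^{p}1_{\{\eta_{i}=y,\,\eta_{i+1}=x\}}$ and, inside each summand, apply the cyclic shift $\sigma_{i}:(\eta_{1},\ldots,\eta_{p};u_{1},\ldots,u_{p})\mapsto(\eta_{i+1},\ldots,\eta_{i};u_{i+1},\ldots,u_{i})$. Because the density~(\ref{ddd}) and the factor $e^{-\sum u_{j}}\prod du_{j}$ are manifestly cyclic-invariant and $\Phi$ is loop-invariant, $\sigma_{i}$ preserves the integrand; it is moreover a bijection from $\{\eta_{i}=y,\eta_{i+1}=x\}$ onto $\{\eta'_{1}=x,\eta'_{p}=y\}$. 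Each of the $p$ summands therefore equals $\sum_{\eta_{1}=x,\eta_{p}=y}\int\Phi(l(\eta))\frac{1}{p}\prod_{i}\frac{C_{\eta_{i},\eta_{i+1}}}{\lambda_{\eta_{i}}}\,e^{-\sum u_{j}}\prod du_{j}$, and summing on $i=1,\ldots,p$ produces a factor $p$ that cancels the $1/p$ in the pointed-loop density. The result coincides exactly with the expression obtained for the pushforward, proving the proposition.

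The main delicate point is the cyclic-shift bookkeeping against the $1/p$ weight in~(\ref{ddd}); once the identity $d\mu^{x,y}=(p/C_{x,y})\,d\mu^{\ast}$ on pointed loops with $\eta_{1}=x,\eta_{p}=y$ is secured, the symmetry $C_{x,y}=C_{y,x}$ is precisely what makes the constants on the two sides agree, and there is no genuine further obstacle.
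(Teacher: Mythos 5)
Your proof is correct and is essentially the paper's argument written out in full: the cyclic shifts $\sigma_i$ that re-root the loop at a $y\to x$ jump are exactly the paper's observation that a loop $l$ corresponds to $N_{y,x}(l)$ distinct bridges obtained by cutting one such jump, and the rest is the comparison of the explicit densities, with the $1/p$ in $\mu^{\ast}$ absorbed by the choice of cutting point. The identification $d\mu^{x,y}=(p/C_{x,y})\,d\mu^{\ast}$ on pointed loops with $\eta_1=x$, $\eta_p=y$ is a clean way to organize the bookkeeping the paper leaves implicit.
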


\begin{proof}
The first assertion follows from the formulas, noticing that a loop $l$\ can
be associated to $N_{y,x}(l)$ distinct bridges from $x$ to $y$, \ obtained by
''cutting'' one jump from $y$ to $x$.
\end{proof}

Note that a) shows that the loop measure induces bridge measures $\mu^{x,y}$
when ${C_{x,y}>0}$. If $C_{x,y}$ vanishes, an arbitrarily small positive
perturbation creating a non vanishing conductance between $x$ and $y$ allows
to do it. More precisely, denoting by $e^{(\varepsilon)}$ the energy form
equal to $e$ except for the additional conductance $C_{x,y}^{(\varepsilon
)}=\varepsilon$, \ $\mu^{x,y}$ can be represented as $\frac{d}{d\varepsilon
}\mu^{e^{(\varepsilon)}}|_{\varepsilon=0}$.

\section{Occupation field\label{Occup}}

\index{occupation field, local times}To each loop $l^{%
{{}^\circ}%
}$ we associate local times, i.e. an occupation field $\{\widehat{l_{x}},x\in
X\}$ defined by
\[
\widehat{l}^{x}=\int_{0}^{T(l)}1_{\{l(s)=x\}}\frac{1}{\lambda_{l(s)}}%
ds=\sum_{i=1}^{p(l)}1_{\{\xi_{i}=x\}}\widehat{\tau_{i}}%
\]
for any representative $l=(\xi_{i},\tau_{i}^{\ast})$ of $l^{\circ}$.

For a path $\gamma$, $\widehat{\gamma}$ is defined in the same way.\newline
Note that%
\begin{equation}
\mu((1-e^{-\alpha\widehat{l}^{x}})1_{\{p=1\}})=\int_{0}^{\infty}%
e^{-t}(1-e^{-\frac{\alpha}{\lambda_{x}}t})\frac{dt}{t}=\log(1+\frac{\alpha
}{\lambda_{x}}). \label{triv}%
\end{equation}
The proof goes by expanding $1-e^{-\frac{\alpha}{\lambda_{x}}t}$ before the
integration, assuming first that $\alpha$ is small and then by analyticity of
both members, or more elegantly, noticing that $\int_{a}^{b}(e^{-cx}%
-e^{-dx})\frac{dx}{x}$ is symmetric in $(a,b)$\ and $(c,d)$, by Fubini's theorem.

In particular, $\mu(\widehat{l}^{x}1_{\{p=1\}})=\frac{1}{\lambda_{x}}$.

From formula \eqref{ddd}  , we get easily that the joint conditional
distribution of $(\widehat{l}^{x},\ x\in X)$ given $(N_{x},\ x\in X)$ is a
product of gamma distributions. In particular, from the expression of the
moments of a gamma distribution, we get that for any function $\Phi$ of the
discrete loop and $k\geq1$,
\[
\mu((\widehat{l}^{x})^{k}1_{\{p>1\}}\Phi)=\lambda_{x}^{-k}\mu((N_{x}%
+k-1)...(N_{x}+1)N_{x}\Phi).
\]
In particular, by \eqref{munx}  $\mu(\widehat{l}^{x})=\frac{1}{\lambda_{x}%
}[\mu(N_{x})+1]=G^{x,x}$.

Note that functions of $\widehat{l}$ are not the only functions naturally
defined on the loops. \index{multiple local times}Other such variables of interest are, for $n\geq2$, the
multiple local times, defined as follows:%
\[
\widehat{l}^{x_{1},...,x_{n}}=\sum_{j=0}^{n-1}\int_{0<t_{1}<...<t_{n}%
<T}1_{\{l(t_{1})=x_{1+j},...,l(t_{n-j})=x_{n},...,l(t_{n})=x_{j}\}}%
\prod\frac{1}{\lambda_{x_{i}}}dt_{i}.
\]
It is easy to check that, when the points $x_{i}$ are distinct,
\begin{equation}
\widehat{l}^{x_{1},...,x_{n}}=\sum_{j=0}^{n-1}\sum_{1\leq i_{1}<...<i_{n}\leq
p(l)}\prod_{l=1}^{n}1_{\{\xi_{i_{l}}=x_{l+j}\}}\widehat{\tau_{i_{l}}}.
\label{tamul}%
\end{equation}
Note that in general $\widehat{l}^{x_{1},...,x_{k}}$ cannot be expressed in
terms of $\widehat{l}$, but
\[
\widehat{l}^{x_{1}}...\widehat{l}^{x_{n}}=\frac{1}{n}\sum_{\sigma
\in\mathcal{S}_{n}}\widehat{l}^{x_{\sigma(1)},...,x_{\sigma(n)}}.
\]

In particular, $\widehat{l}^{x,...,x}=\frac{1}{(n-1)!}[\widehat{l}^{x}]^{n}$.
It can be viewed as a $n$-th self intersection local time.

One can deduce from the definitions of $\mu$\ the following:

\begin{proposition}
$\mu(\widehat{l}^{x_{1},...,x_{n}})=G^{x_{1},x_{2}}G^{x_{2},x_{3}}%
...G^{x_{n},x_{1}}.$

In particular, $\mu(\widehat{l}^{x_{1}}...\widehat{l}^{x_{n}})=\frac{1}{n}%
\sum_{\sigma\in\mathcal{S}_{n}}G^{x_{\sigma(1)},x_{\sigma(2)}}G^{x_{\sigma
(2)},x_{\sigma(3)}}...G^{x_{\sigma(n)},x_{\sigma(1)}}.$
\end{proposition}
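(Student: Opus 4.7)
The plan is to compute $\mu(\widehat l^{x_1,\ldots,x_n})$ directly from the definition $\mu=\sum_x\int_0^\infty\frac{1}{t}\mathbb P_t^{x,x}\lambda_x\,dt$ combined with the multi-time formula for $\mathbb P_t^{x,x}$, and to show that the sum over $j\in\{0,\ldots,n-1\}$ built into the definition of $\widehat l^{x_1,\ldots,x_n}$ is exactly what is needed to absorb the awkward $1/t$ weight and restore full cyclic symmetry.

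Concretely, I would fix $j$ and plug the bridge formula in, turning the $j$-th contribution into
\[
\prod_{i=1}^n\frac{1}{\lambda_{x_i}}\sum_x\int_0^\infty\frac{dt}{t}\int_{0<t_1<\cdots<t_n<t}[P_{t_1}]_{y_1}^{x}[P_{t_2-t_1}]_{y_2}^{y_1}\cdots[P_{t-t_n}]_{x}^{y_n}\,dt_1\cdots dt_n,
\]
where $y_i=x_{((i+j-1)\bmod n)+1}$. The sum on $x$ combines the first and last factors by Chapman--Kolmogorov into $[P_{t_1+t-t_n}]_{y_1}^{y_n}$. Changing variables to the $n-1$ inter-visit gaps $u_l=t_{l+1}-t_l$, the wrap-around duration $v=t_1+(t-t_n)$, and the base shift $s=t_1\in[0,v]$ (Jacobian $1$, with $t=v+u_1+\cdots+u_{n-1}$), the integration in $s$ produces a factor $v$, and the $j$-th summand becomes
\[
\prod_{i=1}^n\frac{1}{\lambda_{x_i}}\int_{\mathbb R_+^n}\frac{v}{v+u_1+\cdots+u_{n-1}}\,[P_v]_{y_1}^{y_n}\prod_{l=1}^{n-1}[P_{u_l}]_{y_{l+1}}^{y_l}\,dv\,du_1\cdots du_{n-1}.
\]

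The key observation is that as $j$ runs through $\{0,\ldots,n-1\}$, the wrap-around edge $(y_n,y_1)$ ranges over the $n$ oriented edges $(x_m,x_{m+1})$ of the cycle (indices mod $n$) exactly once, while the remaining $n-1$ edges are precisely the others. Renaming the integration variable attached to edge $(x_m,x_{m+1})$ as $v_m$, the sum over $j$ assembles into
\[
\prod_{i=1}^n\frac{1}{\lambda_{x_i}}\int_{\mathbb R_+^n}\frac{v_1+\cdots+v_n}{v_1+\cdots+v_n}\,\prod_{m=1}^{n}[P_{v_m}]_{x_{m+1}}^{x_m}\,dv_1\cdots dv_n,
\]
whose weight collapses to $1$. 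The integral then factors completely, each one-dimensional piece yielding $\int_0^\infty[P_s]_{x_{m+1}}^{x_m}\,ds=V_{x_{m+1}}^{x_m}=\lambda_{x_{m+1}}G^{x_m,x_{m+1}}$; the product of the $\lambda_{x_{m+1}}$'s over the cycle cancels the prefactor $\prod_i\lambda_{x_i}^{-1}$, giving $G^{x_1,x_2}G^{x_2,x_3}\cdots G^{x_n,x_1}$.

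The second formula follows at once from the first by applying $\mu$ to the polarization identity $\widehat l^{x_1}\cdots\widehat l^{x_n}=\frac{1}{n}\sum_{\sigma\in\mathcal S_n}\widehat l^{x_{\sigma(1)},\ldots,x_{\sigma(n)}}$ stated just above. The only step that requires real care is the cyclic symmetrization turning $v/(v+\sum u_l)$ into $1$; I would make it rigorous by tracking explicitly the bijection between $j\in\{0,\ldots,n-1\}$ and the choice of wrap-around edge, which is where the factor $1/t$ in the definition of $\mu$ plays its decisive role.
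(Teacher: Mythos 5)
Your proposal is correct and follows essentially the same route as the paper's own proof: plug the bridge formula into the definition of $\mu$, use Chapman--Kolmogorov over the base point, change variables to the cyclic inter-visit durations plus a base-shift whose integration produces the factor that, summed over the $j$-shifts in the definition of $\widehat{l}^{x_1,\ldots,x_n}$, cancels the $1/t$ weight, and then factorize the remaining integral into the product of Green functions. The polarization step for the second identity is also exactly the paper's.
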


\begin{proof}
Let us denote $\frac{1}{\lambda_{y}}$ $[P_{t}]_{y}^{x}$ by $p_{t}^{x,y}$ or
$p_{t}(x,y)$.\ From the definition of $\widehat{l}^{x_{1},...,x_{n}}$ and
$\mu$, $\mu(\widehat{l}^{x_{1},...,x_{n}})$ equals:
\[
\sum_{x}\lambda_{x}\sum_{j=0}^{n-1}\int\int_{\{0<t_{1}<...<t_{n}<t\}}%
\frac{1}{t}p_{t_{1}}(x,x_{1+j})\ldots p_{t-t_{n}}(x_{n+j},x)\prod dt_{i}dt.
\]
where sums of indices $k+j$ are computed $\operatorname{mod}(n)$. By the
semigroup property, it equals
\[
\sum_{j=0}^{n-1}\int\int_{\{0<t_{1}<...<t_{n}<t\}}\frac{1}{t}p_{t_{2}-t_{1}%
}(x_{1+j},x_{2+j})\ldots p_{t_{1}+t-t_{n}}(x_{n+j},x_{1+j})\prod dt_{i}dt.
\]

Performing the change of variables $v_{2}=t_{2}-t_{1},.., v_{n}=t_{n}%
-t_{n-1},v_{1}=t_{1}+t-t_{n}$, and $v=t_{1}$, we obtain:
\begin{align*}
\sum_{j=0}^{n-1}\int_{\{0<v<v_{1},0<v_{i}\}}  &  \frac{1}{v_{1}+...+v_{n}%
}p_{v_{2}}(x_{1+j},x_{2+j})\ldots p_{v_{1}}(x_{n+j},x_{1+j})\prod dv_{i}dv\\
&  =\sum_{j=0}^{n-1}\int_{\{0<v_{i}\}}\frac{v_{1}}{v_{1}+...+v_{n}}p_{v_{2}%
}(x_{1+j},x_{2+j})\ldots p_{v_{1}}(x_{n+j},x_{1+j})\prod dv_{i}\\
&  =\sum_{j=1}^{n}\int_{\{0<v_{i}\}}\frac{v_{j}}{v_{1}+...+v_{n}}p_{v_{2}%
}(x_{1},x_{2})\ldots p_{v_{1}}(x_{n},x_{1})\prod dv_{i}\\
&  =\int_{\{0<v_{i}\}}p_{v_{2}}(x_{1},x_{2})\ldots p_{v_{1}}(x_{n},x_{1})\prod
dv_{i}\\
&  =G^{x_{1},x_{2}}G^{x_{2},x_{3}}...G^{x_{n},x_{1}}.
\end{align*}
Note that another proof can be derived from formula \eqref{tamul}  .
\end{proof}

\begin{exercise}
\label{shuf}(Shuffle product) Given two positive integers $n>k$, let $\mathcal{P}_{n,k}%
$\ be the family of partitions of $\{1,2,...n\}$\ into $k$ consecutive non
empty intervals $I_{l}=(i_{l},i_{l}+1,...,i_{l+1}-1)$\ with $i_{1}%
=1<i_{2}<...<i_{k}<i_{k+1}=n+1$.\newline Show that%
\[
\widehat{l}^{x_{1},...,x_{n}}\widehat{l}^{y_{1},...,y_{m}}=\sum_{j=0}%
^{m-1}\sum_{k=1}^{\inf(n,m)}\sum_{I\in\mathcal{P}_{n,k}}\sum_{J\in
\mathcal{P}_{m,k}}\widehat{l}^{x_{I_{1}},y_{j+J_{1}},x_{I_{2}},...y_{j+J_{k}}}%
\]
where for example the term $y_{j+J_{1}}$ appearing in the upper index should
be read as ${j+j_{1}}$, \ldots, ${j+j_{2}-1}$.
\end{exercise}

Similarly, we can define $N_{(x_{1},y_{1}),...(x_{n},y_{n)}}$ to be
\[
\sum_{j=0}^{n-1}\sum_{1\leq i_{1}<\ldots<i_{n}\leq p(l)}\prod_{l=1}%
^{n}1_{\{\xi_{i_{l}}=x_{l+j},\xi_{i_{l}+1}=y_{l+j}\}}.
\]

If $(x_{i},y_{i})=(x,y)$ for all $i$, it equals $\frac{N_{x,y}(N_{x,y}%
-1)...(N_{x,y}-n+1)}{(n-1)!}.$

Notice that
\[
\prod N_{(x_{i},y_{i})}=\frac{1}{n}\sum_{\sigma\in\mathcal{S}_{n}%
}N_{(x_{\sigma(1)},y_{\sigma(1)}),...(x_{\sigma(n)},y)}.
\]

Then we have the following:

\begin{proposition}
$\int N_{(x_{1},y_{1}),...,(x_{n},y_{n})}(l)\mu(dl)=\Big(          \prod
C_{x_{i},y_{i}}\Big)          G^{y_{1},x_{2}}G^{y_{2},x_{3}}...G^{y_{n},x_{1}}.$
\end{proposition}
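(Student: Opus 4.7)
The plan is to mimic the proof of the preceding proposition on multiple local times, but working with the discrete skeleton of the loop rather than the occupation field. Using formula \eqref{de} to integrate out the holding times, the starting point is
\[
\int N_{(x_1,y_1),\ldots,(x_n,y_n)}(l)\,\mu(dl) \;=\; \sum_{k\ge n}\frac{1}{k}\sum_{(z_1,\ldots,z_k)} N_{(x_1,y_1),\ldots,(x_n,y_n)}(z_1,\ldots,z_k)\,P^{z_1}_{z_2}\cdots P^{z_k}_{z_1},
\]
into which I would insert the definition of $N_{(x_1,y_1),\ldots,(x_n,y_n)}$ as a double sum over the cyclic shift $j\in\{0,\ldots,n-1\}$ and the positions $1\le i_1<\cdots<i_n\le k$ of the $n$ prescribed jumps.

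For fixed $(j,i_1,\ldots,i_n)$, I would next sum over the remaining coordinates $z_\cdot$ using the Chapman--Kolmogorov identity: the $n$ mandatory transitions contribute $\prod_l P^{x_{a_l}}_{y_{a_l}}$ with $a_l\equiv l+j\pmod n$, and the $n$ cyclically arranged free segments of lengths $m_l\ge 0$ (with $\sum m_l = k-n$) contribute $\prod_l [P^{m_l}]^{y_{a_l}}_{x_{a_{l+1}}}$. Cyclic invariance under the shift $a_l$ lets me collapse the first product to $\prod_l P^{x_l}_{y_l}$, and an elementary count shows that, for each admissible gap vector $(m_1,\ldots,m_n)$, there are exactly $m_n+1$ admissible position vectors $(i_1,\ldots,i_n)$ (one for each choice of $i_1\in\{1,\ldots,m_n+1\}$).

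The combinatorial heart of the argument — and what I expect to be the main obstacle to nail down cleanly — is the observation that as $j$ varies over $\{0,\ldots,n-1\}$, the role of "wrap-around gap" cycles through all $n$ gaps, so that
\[
\sum_{j=0}^{n-1}(m_n(j)+1)\;=\;\sum_{l=1}^n m_l \,+\, n \;=\; (k-n)+n \;=\; k,
\]
exactly cancelling the factor $\frac{1}{k}$ from \eqref{de}. This is precisely the discrete analogue of the identity $\sum v_j/(v_1+\cdots+v_n)=1$ used to kill the factor $\frac{1}{t}$ in the proof of the previous proposition.

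After this cancellation, the remaining sum decouples as a product over the $n$ gaps of independent geometric series
\[
\sum_{m\ge 0}[P^m]^{y_l}_{x_{l+1}} \;=\; [(I-P)^{-1}]^{y_l}_{x_{l+1}} \;=\; V^{y_l}_{x_{l+1}},
\]
giving
\[
\int N_{(x_1,y_1),\ldots,(x_n,y_n)}\,d\mu \;=\; \prod_{l=1}^n P^{x_l}_{y_l}\,\prod_{l=1}^n V^{y_l}_{x_{l+1}}.
\]
Finally, substituting $P^x_y=C_{x,y}/\lambda_x$ and $V^x_y=\lambda_y G^{x,y}$, the diagonal factors $\prod_l \lambda_{x_l}^{-1}\cdot\prod_l \lambda_{x_{l+1}}$ telescope cyclically to $1$, leaving $\bigl(\prod_l C_{x_l,y_l}\bigr)\prod_l G^{y_l,x_{l+1}}$ as claimed (indices read $\bmod\, n$).
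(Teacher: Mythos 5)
Your argument is correct, and the steps that you flag as delicate do check out: for a fixed shift $j$ and a fixed gap vector the number of admissible position vectors is indeed $m_n+1$ (the base index $i_1$ ranges over $\{1,\dots,m_n+1\}$), and, holding the intrinsic assignment of gap lengths $\nu_a$ to the pairs $(y_a,x_{a+1})$ fixed while $j$ varies, the wrap-around weight is $\nu_j+1$, so summing over $j$ gives $\sum_a\nu_a+n=k$ and cancels the $\frac1k$ of \eqref{de}; the unconstrained sums over the gap lengths then produce $V=(I-P)^{-1}$ in each slot, and the $\lambda$'s cancel cyclically as you say. The paper leaves this proof as an exercise; the route it suggests by analogy with the preceding proposition on $\widehat{l}^{x_1,\dots,x_n}$ is the continuous-time one (bridge densities, semigroup property, and the change of variables yielding the $v_j/(v_1+\cdots+v_n)$ cancellation of the $\frac1t$), and your computation is the exact discrete-time counterpart of that argument — arguably the more economical one here, since $N_{(x_1,y_1),\dots,(x_n,y_n)}$ does not see the holding times, so integrating them out first via \eqref{de} loses nothing. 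Two cosmetic remarks: \eqref{de} is stated for $k>1$, but loops with fewer than $\max(n,2)$ jumps contribute nothing since the functional vanishes on them; and all terms are nonnegative, so the interchange of the sums over $(j,i_1,\dots,i_n)$, the skeleton, and $k$ is justified by Tonelli, with the geometric series converging because the chain is transient.
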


The proof is left as exercise.

\begin{exercise}
\label{self}For $x_{1}=x_{2}=...=x_{k}$, we could define different self
intersection local times
\[
\widehat{l}^{x,(k)}=\sum_{1\leq i_{1}<..<i_{k}\leq p(l)}\prod_{l=1}%
^{k}1_{\{\xi_{i_{l}}=x\}}\widehat{\tau_{i_{l}}}%
\]
which vanish on $N_{x}<k$. Note that
\[
\widehat{l}^{x,(2)}=\frac{1}{2}((\widehat{l}^{x})^{2}-\sum_{i=1}%
^{p(l)}1_{\{\xi_{i}=x\}}(\widehat{\tau_{i}})^{2}.
\]

\begin{enumerate}
\item For any function $\Phi$ of the discrete loop, show that
\[
\mu(\widehat{l}^{x,2}\Phi)=\lambda_{x}^{-2}\mu\Big(          \frac{N_{x}%
(N_{x}-1)}{2}1_{\{N_{x}\geq2\}}\Phi\Big)          .
\]

\item More generally prove in a similar way that
\[
\mu(\widehat{l}^{x,(k)}\Phi)=\lambda_{x}^{-k}\mu\Big(          \frac{N_{x}%
(N_{x}-1)...(N_{x}-k+1)}{k!}1_{\{N_{x}\geq k\}}\Phi\Big)          .
\]
\end{enumerate}
\end{exercise}

Let us come back to the occupation field to compute its Laplace transform.
From the Feynman-Kac formula, it comes easily that, denoting $M_{\frac{\chi
}{\lambda}}$ the diagonal matrix with coefficients $\frac{\chi_{x}}%
{\lambda_{x}}$
\[
\mathbb{P}_{t}^{x,x}(e^{-\left\langle \widehat{l},\chi\right\rangle
}-1)=\frac{1}{\lambda_{x}}\Big(          \exp(t(P-I-M_{_{\frac{\chi}{\lambda}%
}}))_{x}^{x}-\exp(t(P-I))_{x}^{x}\Big)          .
\]
Integrating in $t$ after expanding, we get from the definition of $\mu$ (first
for $\chi$ small enough):%
\begin{align*}
\int(e^{-\left\langle \widehat{l},\chi\right\rangle }-1)d\mu(l) =  &
\sum_{k=1}^{\infty}\int_{0}^{\infty}[Tr((P-M_{_{\frac{\chi}{\lambda}}}%
)^{k})-Tr((P)^{k})]\frac{t^{k-1}}{k!}e^{-t}dt\\
=  &  \sum_{k=1}^{\infty}\frac{1}{k}[Tr((P-M_{_{\frac{\chi}{\lambda}}}%
)^{k})-Tr((P)^{k})]\\
=  &  -Tr(\log(I-P+M_{_{\frac{\chi}{\lambda}}}))+Tr(\log(I-P)).
\end{align*}
Hence, as $Tr(\log)=\log(\det)$%
\begin{align*}
\int(e^{-\left\langle \widehat{l},\chi\right\rangle }-1)d\mu(l)  &  =\log
[\det(-L(-L+M_{\chi/\lambda})^{-1})]\\
&  =-\log\det(I+VM_{\frac{\chi}{\lambda}})=\log\det(I+GM_{\chi})
\end{align*}
which now holds for all non negative $\chi$ as both members are analytic in
$\chi$. Besides, by the ''resolvent'' equation (\ref{Res}):%
\begin{equation}
\det(I+GM_{\chi})^{-1}=\det(I-G_{\chi}M_{\chi})=\frac{\det(G_{\chi})}{\det
(G)}. \label{F1}%
\end{equation}
Note that $\det(I+GM_{\chi})=\det(I+M_{\sqrt{\chi}}GM_{\sqrt{\chi}})$ and
$\det(I-G_{\chi}M_{\chi})=\det(I-M_{\sqrt{\chi}}G_{\chi}M_{\sqrt{\chi}})$, so
we can deal with symmetric matrices. Finally we have

\begin{proposition}
\label{LAPLT}$\mu(e^{-\left\langle \widehat{l},\chi\right\rangle }%
-1)=-\log(\det(I+M_{\sqrt{\chi}}GM_{\sqrt{\chi}}))=\log(\frac{\det(G_{\chi}%
)}{\det(G)})$
\end{proposition}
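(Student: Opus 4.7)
The plan is to compute the Laplace transform $\mu(e^{-\langle \widehat{l},\chi\rangle}-1)$ by first unpacking the definition of $\mu$ as a $\sigma$-finite integral over based loops, then applying the Feynman--Kac formula to the bridge measures, and finally recognizing the resulting trace expansion as a logarithm of a determinant.

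First I would apply Feynman--Kac (the proposition proved earlier) to write
\[
\lambda_x\,\mathbb{P}_t^{x,x}\bigl(e^{-\langle \widehat{l},\chi\rangle}\bigr)
=\bigl[\exp\!\bigl(t(L-M_{\chi/\lambda})\bigr)\bigr]_x^x
=\bigl[\exp\!\bigl(t(P-I-M_{\chi/\lambda})\bigr)\bigr]_x^x,
\]
so that $\lambda_x\,\mathbb{P}_t^{x,x}\bigl(e^{-\langle \widehat{l},\chi\rangle}-1\bigr)$ equals the difference between the above expression and $\bigl[\exp(t(P-I))\bigr]_x^x$. Summing on $x$ and using the defining formula $\mu=\sum_x\int_0^\infty \frac{1}{t}\mathbb{P}_t^{x,x}\lambda_x\,dt$, the sum-over-$x$ turns into a trace:
\[
\int\bigl(e^{-\langle\widehat{l},\chi\rangle}-1\bigr)\,d\mu
=\int_0^\infty\frac{e^{-t}}{t}\Bigl(\mathrm{Tr}\,e^{t(P-M_{\chi/\lambda})}-\mathrm{Tr}\,e^{tP}\Bigr)\,dt.
\]
Expanding both exponentials in power series and using $\int_0^\infty t^{k-1}e^{-t}\,dt=(k-1)!$, each $k$-th term contributes $\frac{1}{k}\bigl(\mathrm{Tr}(P-M_{\chi/\lambda})^k-\mathrm{Tr}\,P^k\bigr)$, whose total sum telescopes through $\sum_k \frac{1}{k}\mathrm{Tr}\,A^k=-\mathrm{Tr}\log(I-A)=-\log\det(I-A)$ into $\log\det(I-P)-\log\det(I-P+M_{\chi/\lambda})$.

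Next I would rewrite this using the conductance/Green data. Factoring $M_\lambda$ gives $I-P+M_{\chi/\lambda}=M_\lambda^{-1}(M_\lambda+M_\chi-C)=M_\lambda^{-1}G_\chi^{-1}$, and similarly $I-P=M_\lambda^{-1}G^{-1}$, so
\[
-\log\det(I-P+M_{\chi/\lambda})+\log\det(I-P)
=\log\frac{\det G_\chi}{\det G}.
\]
To reach the symmetric form, I would invoke the generalized resolvent equation (\ref{Res}): $G-G_\chi=GM_\chi G_\chi$, which gives $G_\chi(I+M_\chi G)=G$, hence $\det G_\chi/\det G=\det(I+GM_\chi)^{-1}$. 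Since $GM_\chi$ and $M_{\sqrt\chi}GM_{\sqrt\chi}$ have the same nonzero spectrum, $\det(I+GM_\chi)=\det(I+M_{\sqrt\chi}GM_{\sqrt\chi})$, which is the symmetric form in the statement.

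The only genuine subtlety is justifying the Fubini/series manipulations: the expansion is valid unconditionally for $\chi$ small enough (where the geometric series for $\log\det$ converges absolutely), and then one extends to all nonnegative $\chi$ by noting both sides are analytic in $\chi$ (the left-hand side by dominated convergence, the right-hand side because $\det(I+M_{\sqrt\chi}GM_{\sqrt\chi})>0$ for all $\chi\ge 0$ since $G$ is positive definite in the transient case). This analytic continuation is the only step requiring care; everything else is a chain of identities organized around the $\sum \frac{1}{k}\mathrm{Tr}\,A^k=-\log\det(I-A)$ motif already used in \eqref{logdet1}.
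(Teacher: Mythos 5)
Your proposal is correct and follows essentially the same route as the paper: Feynman--Kac applied to the bridge measures, the $\int_0^\infty t^{k-1}e^{-t}dt$ expansion turning the $\frac1t$-integral into $\sum_k\frac1k[\mathrm{Tr}(P-M_{\chi/\lambda})^k-\mathrm{Tr}\,P^k]=-\log\det(I-P+M_{\chi/\lambda})+\log\det(I-P)$, validity first for small $\chi$ with extension by analyticity, and the resolvent equation (\ref{Res}) to pass between $\det(I+GM_\chi)^{-1}$ and $\det(G_\chi)/\det(G)$. The only difference is cosmetic ordering — you reach $\log\frac{\det G_\chi}{\det G}$ directly by factoring $I-P+M_{\chi/\lambda}=M_\lambda^{-1}G_\chi^{-1}$ and then recover the symmetric determinant form, whereas the paper goes through $\det(I+GM_\chi)$ first — so the two arguments coincide in substance.
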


Note that in particular $\mu(e^{-t\widehat{l}^{x}}-1)=-\log(1+tG^{x,x})$.
Consequently, the image measure of $\mu$ by $\widehat{l}^{x}$ is
$\displaystyle          {1_{\{s>0\}}\frac{1}{s}\exp(-\frac{s}{G^{x,x}})ds}$.

Considering the Laguerre-type polynomials $D_{k}$ with generating function
\[
\sum_{1}^{\infty}t^{k}D_{k}(u)=e^{\frac{ut}{1+t}}-1
\]
and setting $\sigma_{x}=G^{x,x}$, we have:

\begin{proposition}
The variables $\frac{1}{\sqrt{k}}\sigma_{x}^{k}D_{k}(\frac{\widehat{l}^{x}%
}{\sigma_{x}})$ are orthonormal in $L^{2}(\mu)$ for $k>0$, and more generally%
\[
\mathbb{E}(\sigma_{x}^{k}D_{k}(\frac{\widehat{l}^{x}}{\sigma_{x}})\sigma
_{y}^{j}D_{j}(\frac{\widehat{l}^{y}}{\sigma_{y}}))=\frac{1}{k}\delta
_{k,j}(G^{x,y})^{2k}.
\]
\end{proposition}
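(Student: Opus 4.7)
The plan is to compute the joint generating series
\[
F(t,s) \;=\; \sum_{k,j\ge 1} t^k s^j\,\mu\!\left(\sigma_x^{k}\,D_k(\hat l^x/\sigma_x)\,\sigma_y^{j}\,D_j(\hat l^y/\sigma_y)\right)
\]
in closed form and then identify coefficients. Using the generating-function definition of the $D_k$, the partial sums over $k$ and $j$ collapse: if I set $a = t/(1+t\sigma_x)$ and $b = s/(1+s\sigma_y)$, then
\[
\sum_{k\ge 1} t^{k}\sigma_x^{k}\,D_k(\hat l^x/\sigma_x) \;=\; \exp\!\Bigl(\tfrac{t\,\hat l^x}{1+t\sigma_x}\Bigr)-1 \;=\; e^{a\hat l^x}-1,
\]
and similarly for $y$, so that $F(t,s) = \mu\bigl((e^{a\hat l^x}-1)(e^{b\hat l^y}-1)\bigr)$.

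Next I use the algebraic identity $(A-1)(B-1) = (AB-1) - (A-1) - (B-1)$, which turns $F(t,s)$ into a signed sum of three integrals of the form $\mu(e^{-\langle \hat l,\chi\rangle}-1)$, each handled by Proposition \ref{LAPLT} (extended to signed, sufficiently small $\chi$ by the joint analyticity of both sides in $\chi$). For $\chi$ supported on $\{x,y\}$ the matrix $M_{\sqrt{\chi}}GM_{\sqrt{\chi}}$ is $2\times 2$, and a direct expansion gives
\[
\det\!\bigl(I+M_{\sqrt{\chi}}GM_{\sqrt{\chi}}\bigr) \;=\; (1+uG^{x,x})(1+vG^{y,y}) - uv\,(G^{x,y})^2
\]
for $\chi = u\delta_x + v\delta_y$. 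Applying this with $(u,v) = (-a,-b)$, $(-a,0)$ and $(0,-b)$ and combining the three logarithms yields, after the diagonal factors $(1-a\sigma_x)(1-b\sigma_y)$ cancel,
\[
F(t,s) \;=\; -\log\!\left(1-\frac{ab\,(G^{x,y})^2}{(1-a\sigma_x)(1-b\sigma_y)}\right).
\]

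The last step is a clean substitution: since $1-a\sigma_x = (1+t\sigma_x)^{-1}$ and $1-b\sigma_y = (1+s\sigma_y)^{-1}$, the fraction inside the logarithm collapses miraculously to $ts\,(G^{x,y})^2$, giving
\[
F(t,s) \;=\; -\log\!\bigl(1-ts\,(G^{x,y})^2\bigr) \;=\; \sum_{k\ge 1}\frac{(ts)^{k}(G^{x,y})^{2k}}{k}.
\]
Matching the coefficient of $t^{k}s^{j}$ on both sides of the definition of $F$ produces exactly $\tfrac{1}{k}\delta_{k,j}(G^{x,y})^{2k}$, which is the stated formula; setting $x=y$ gives the orthonormality of the $\tfrac{1}{\sqrt k}\sigma_x^{k}D_k(\hat l^{x}/\sigma_x)$. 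The only delicate point is the analytic continuation of Proposition \ref{LAPLT} to negative $\chi$, but this is harmless for $|t|,|s|$ small since both members of \eqref{F1} are rational in $\chi$, after which the power series identity is valid as a formal identity in $t,s$.
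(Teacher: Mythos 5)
Your proof is correct and is essentially the paper's own argument: both compute $\mu\bigl((e^{t\widehat{l}^{x}/(1+t\sigma_{x})}-1)(e^{s\widehat{l}^{y}/(1+s\sigma_{y})}-1)\bigr)$ by applying Proposition \ref{LAPLT} to a small signed measure supported on $\{x,y\}$, reduce the resulting $2\times 2$ determinant to $-\log(1-st(G^{x,y})^{2})$ after the diagonal logarithms cancel, and then identify the coefficients of $t^{k}s^{j}$. The only differences are cosmetic: your explicit substitution $a=t/(1+t\sigma_{x})$, $b=s/(1+s\sigma_{y})$ and the remark justifying the extension of Proposition \ref{LAPLT} to signed $\chi$ by analyticity, a point the paper leaves implicit.
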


\begin{proof}
By proposition \ref{LAPLT} ,%
\begin{multline*}
\int(1-e^{\frac{\widehat{l}^{x}t}{1+\sigma_{x}t}})(1-e^{\frac{\widehat{l}%
^{y}s}{1+\sigma_{y}s}})\mu(dl)\\
=\log(1-\frac{\sigma_{x}t}{1+\sigma_{x}t})+\log(1-\frac{\sigma_{y}s}%
{1+\sigma_{y}s})-\log\det\Big(
\begin{array}
[c]{cc}%
1-\frac{\sigma_{x}t}{1+\sigma_{x}t} & -\frac{tG^{x,y}}{1+\sigma_{x}t}\\
-\frac{sG^{x,y}}{1+\sigma_{y}s} & 1-\frac{\sigma_{y}s}{1+\sigma_{y}s}%
\end{array}
\Big) \\
=-\log(1-st(G^{x,y})^{2}).
\end{multline*}

The proposition follows by expanding both sides in powers of $s$ and $t$, and
identifying the coefficients.
\end{proof}

Note finally that if $\chi$ has support in $D$, by the restriction property%
\[
\mu(1_{\{\widehat{l(}X\backslash D)=0\}}(e^{-<\widehat{l},\chi>}%
-1))=-\log(\det(I+M_{\sqrt{\chi}}G^{D}M_{\sqrt{\chi}}))=\log\Big(
\frac{\det(G_{\chi}^{D})}{\det(G^{D})}\Big)          .
\]
Here the determinants are taken on matrices indexed by $D$ and $G^{D}$ denotes
the Green function of the process killed on leaving $D$.

For paths we have $\mathbb{P}_{t}^{x,y}(e^{-\left\langle \widehat{l}%
,\chi\right\rangle })=\frac{1}{\lambda_{y}}\exp(t(L-M_{_{\frac{\chi}{\lambda}%
}}))_{x,y}$. Hence
\[
\mu^{x,y}(e^{-\left\langle \widehat{\gamma},\chi\right\rangle })=\frac{1}%
{\lambda_{y}}((I-P+M_{\chi/\lambda})^{-1})_{x,y}=[G_{\chi}]^{x,y}.
\]
In particular, note that from the resolvent equation (\ref{Res}), we get that%
\[
G^{y,x}=[G_{\varepsilon\delta_{x}}]^{y,x}+\varepsilon\lbrack G_{\varepsilon
\delta_{x}}]^{y,x}G^{x,x}.
\]
Hence $\frac{[G_{\varepsilon\delta_{x}}]^{y,x}}{G^{y,x}}=\frac{1}%
{1+\varepsilon G^{x,x}}$ and therefore, we obtain:

\begin{proposition}
\label{mul}Under the probability $\frac{\mu^{y,x}}{G^{y,x}}$, $\widehat{l}%
_{x}$ follows an exponential distribution of mean $G^{x,x}$.
\end{proposition}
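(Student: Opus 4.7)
The plan is to read off the conditional law of $\widehat{l}^{x}$ from the Laplace transform formulas already derived just above the statement. Essentially all the ingredients are assembled; only their combination remains.

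First I would specialize the general Laplace transform for the bridge measure to the measure $\chi = \varepsilon\delta_{x}$, for $\varepsilon \geq 0$. The excerpt establishes that for any nonnegative measure $\chi$,
\[
\mu^{y,x}\bigl(e^{-\langle \widehat{\gamma},\chi\rangle}\bigr) = [G_{\chi}]^{y,x}.
\]
Taking $\chi = \varepsilon\delta_{x}$ and noting $\langle \widehat{\gamma},\varepsilon\delta_{x}\rangle = \varepsilon\,\widehat{\gamma}^{x}$, this yields
\[
\mu^{y,x}\bigl(e^{-\varepsilon \widehat{l}^{x}}\bigr) = [G_{\varepsilon\delta_{x}}]^{y,x}.
\]

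Next I would invoke the identity that has just been derived from the resolvent equation,
\[
\frac{[G_{\varepsilon\delta_{x}}]^{y,x}}{G^{y,x}} = \frac{1}{1+\varepsilon G^{x,x}}.
\]
Dividing the previous display by $G^{y,x}$ therefore gives
\[
\frac{1}{G^{y,x}}\int e^{-\varepsilon \widehat{l}^{x}}\,\mu^{y,x}(d\gamma) = \frac{1}{1+\varepsilon G^{x,x}},
\]
valid for every $\varepsilon \geq 0$. Since $\mu^{y,x}$ has total mass $G^{y,x}$, the left-hand side is exactly the Laplace transform of the law of $\widehat{l}^{x}$ under the probability measure $\mu^{y,x}/G^{y,x}$.

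Finally I would recognize the right-hand side as the Laplace transform of the exponential distribution on $\mathbb{R}_{+}$ with mean $G^{x,x}$ (i.e. rate $1/G^{x,x}$), and invoke uniqueness of the Laplace transform on $\mathbb{R}_{+}$ to conclude. There is no real obstacle here: the only small care needed is to verify that $\mu^{y,x}$ is finite with total mass $G^{y,x}$ (noted just after its definition) so that one is genuinely dealing with a probability measure, and that $G^{y,x} > 0$ in the transient case so the normalization makes sense; both are immediate from the irreducibility assumption.
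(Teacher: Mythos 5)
Your argument is exactly the paper's: it specializes the bridge Laplace transform $\mu^{y,x}(e^{-\langle\widehat{\gamma},\chi\rangle})=[G_{\chi}]^{y,x}$ to $\chi=\varepsilon\delta_{x}$, uses the resolvent identity $\frac{[G_{\varepsilon\delta_{x}}]^{y,x}}{G^{y,x}}=\frac{1}{1+\varepsilon G^{x,x}}$, and identifies the resulting Laplace transform as that of an exponential law of mean $G^{x,x}$. Correct and complete; nothing to add beyond the routine checks on the mass and positivity of $G^{y,x}$ that you already note.
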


Also $\mathbb{E}^{x}(e^{-\left\langle \widehat{\gamma},\chi\right\rangle
})=\sum_{y}[G_{\chi}]^{x,y}\kappa_{y}$ i.e. $[G_{\chi}\kappa]^{x}$.

Finally, let us note that a direct calculation shows the following result,
analogous to proposition \ref{bridg}\ in which the case $x=y$ was left aside.

\begin{proposition}
\label{bridg2}On loops passing through $x$, $\mu^{x,x}(dl)=\widehat{l}^{x}\mu(dl).$
\end{proposition}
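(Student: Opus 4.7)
The plan is to verify the identity by pairing both sides against an arbitrary bounded, shift-invariant test functional $F$ on loops, using the Markov (re-rooting) decomposition of bridges at the point $x$.

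First, I would unfold the left-hand side with the definition of $\mu$ and the expression $\widehat{l}^{x}=\int_{0}^{T}1_{\{l(s)=x\}}\lambda_{x}^{-1}ds$:
\[
\int F(l)\,\widehat{l}^{x}\,\mu(dl)=\sum_{y\in X}\lambda_{y}\int_{0}^{\infty}\frac{1}{t}\int_{0}^{t}\mathbb{P}_{t}^{y,y}\bigl(F\cdot 1_{\{l(s)=x\}}\bigr)\,\frac{1}{\lambda_{x}}\,ds\,dt.
\]
At this point the main step is the Markov decomposition of a bridge at an intermediate time: using
\[
\mathbb{P}_{t}^{y,y}\bigl(A_{1},\,l(s)=x,\,A_{2}\bigr)=\mathbb{P}_{s}^{y,x}(A_{1})\,\lambda_{x}\,\mathbb{P}_{t-s}^{x,y}(A_{2}),
\]
which follows from the very definition of the (non-normalized) bridge measures together with the Markov property, I would split each sample path of $\mathbb{P}_{t}^{y,y}$ passing through $x$ at time $s$ into a piece $\gamma_{1}$ from $y$ to $x$ of duration $s$ and a piece $\gamma_{2}$ from $x$ to $y$ of duration $t-s$. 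The factor $\lambda_{x}$ gained here cancels the $\lambda_{x}^{-1}$ coming from $\widehat{l}^{x}$.

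Next I would exploit shift-invariance of $F$: the based loop $\gamma_{1}\oplus\gamma_{2}$ rooted at $y$ and the based loop $\gamma_{2}\oplus\gamma_{1}$ rooted at $x$ represent the same unbased loop, so $F(\gamma_{1}\oplus\gamma_{2})=F(\gamma_{2}\oplus\gamma_{1})$. Summing over $y$ with weight $\lambda_{y}$ and applying the same bridge decomposition in reverse,
\[
\sum_{y\in X}\lambda_{y}\int F(\gamma_{2}\oplus\gamma_{1})\,\mathbb{P}_{t-s}^{x,y}(d\gamma_{2})\,\mathbb{P}_{s}^{y,x}(d\gamma_{1})=\mathbb{P}_{t}^{x,x}(F),
\]
where the $y$-sum marginalizes out the value of a bridge from $x$ to $x$ at the intermediate time $t-s$. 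The integrand on the right is now independent of $s$, and the inner $\int_{0}^{t}ds$ exactly cancels the prefactor $\tfrac{1}{t}$ from the definition of $\mu$. One is left with $\int_{0}^{\infty}\mathbb{P}_{t}^{x,x}(F)\,dt=\mu^{x,x}(F)$, which gives the desired identity since $F$ was arbitrary.

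The hard part is really the bookkeeping with the base points and constants: being careful that the two applications of the Markov decomposition (splitting a $y$-based bridge, then regluing it as an $x$-based bridge via $\sum_{y}\lambda_{y}$) have consistent $\lambda$-normalizations, and that the remaining factor from $\widehat{l}^{x}=\int 1_{\{l(s)=x\}}\lambda_{x}^{-1}ds$ is exactly what is needed to absorb the $\lambda_{x}$ produced by the decomposition. Everything else is a clean change of variables. Since both measures are $\sigma$-finite on loops and the above holds for every bounded measurable shift-invariant $F$, the equality of measures in Proposition~\ref{bridg2} follows.
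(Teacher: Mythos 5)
Your argument is correct: the bridge decomposition at an intermediate time $s$ with $l(s)=x$, the $\lambda$-normalizations, and the cancellation of $\frac{1}{t}$ against $\int_{0}^{t}ds$ all check out, and testing against shift-invariant functionals correctly handles the passage from based loops to loops (including the trivial-loop case). This is essentially the "direct calculation" the paper invokes for this proposition (its stated alternative being verification on multiple local times), so your proposal matches the intended proof.
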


An alternative way to prove the proposition is to check it on multiple local
times, using exercise \ref{shuf}. It can be shown that the algebra formed by
linear combinations of multiple local times generates the loop $\sigma$-field.
Indeed, the discrete loop can be recovered by taking the multiple local time
it indexes and noting it is the unique one of maximal index length among non
vanishing multiple local times indexed by multiplets in which consecutive
points are distinct. Then it is easy to get the holding times as the product
of any of their powers can be obtained from a multiple local time.

\begin{remark}
Propositions \ref{bridg} and \ref{bridg2} can be generalized: For example, if
$x_{i}\;$are $n$ points, $\widehat{l}^{x_{1},...,x_{n}}\mu(dl)$ can be
obtained as the the image by circular concatenation of the product of the
bridge measures $\mu^{x_{i},x_{i+1}}(dl)$ and $\prod\widehat{l}^{x_{i}}%
\mu(dl)$ can be obtained as the sum of the images, by concatenation in all
circular orders, of the product of the bridge measures $\mu^{y_{\sigma
(i)},x_{\sigma(i+1)}}(dl)$. If $(x_{i},y_{i})$ are $n$ oriented edges,
$\prod\frac{N_{x_{i},y_{i}}(l)}{C_{x_{i},y_{i}}}\mu(dl)$ can be obtained as
the sum of the images, by concatenation in all circular orders $\sigma$, of
the product of the bridge measures $\mu^{y_{\sigma(i)},x_{\sigma(i+1)}}%
(dl)$.One can also evaluate expressions of the form $\prod\widehat{l}^{z_{j}%
}\prod\frac{N_{x_{i},y_{i}}(l)}{C_{x_{i},y_{i}}}\mu(dl)$ as a sum of images,
by concatenation in all circular orders, of a product of bridge measures .
\end{remark}

\section{Wreath products}

The following construction gives an interesting information about the number
of distinct points visited by the loop, which is more difficult to evaluate
than the occupation measure.

Associate to each point $x$ of $X$ an integer $n_{x}$. Let $Z$ be the product
of all the groups $\mathbb{Z}/n_{x}\mathbb{Z}$. On the wreath product space
$X\times Z$, define a set of conductances $\widetilde{C}_{(x,z),(x^{\prime
},z^{\prime})}$ by:%

\[
\widetilde{C}_{(x,z),(x^{\prime},z^{\prime})}=\frac{1}{n_{x}n_{x^{\prime}}%
}C_{x,x^{\prime}}\prod_{y\neq x,x^{\prime}}1_{\{z_{y}=z_{y}^{\prime}\}}%
\]
and set $\widetilde{\kappa}_{(x,z)}=\kappa_{x}$. This means in particular that
in the associated Markov chain, the first coordinate is an autonomous Markov
chain on $X$ and that in a jump, the $Z$-configuration can be modified only at
the point from which or to which the first coordinate jumps.

Denote by $\widetilde{e}$ the corresponding energy form. Note that
$\widetilde{\lambda}_{(x,z)}=\lambda_{x}$.

Then, denoting $\widetilde{\mu}$ the loop measure and $\widetilde{P}$ the
transition matrix on $X\times Z$ defined by $\widetilde{e}$, we have the following

\begin{proposition}
$\prod_{x\in X}n_{x}\int1_{\{p>1\}}\prod_{x,\;N_{x}(l)>0}\frac{1}{n_{x}}%
\mu(dl)=\widetilde{\mu}(p>1)=-\log(\det(I-\widetilde{P}))$. In particular, if
$n_{x}=n$ for all $x$,
\[
n^{\left|  X\right|  }\int1_{\{p>1\}}n^{-\#\{x,\;N_{x}(l)>0\}}\mu
(dl)=\widetilde{\mu}(p>1)=-\log(\det(I-\widetilde{P})).
\]
\end{proposition}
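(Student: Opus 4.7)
The plan is to treat the two equalities separately. The second, $\widetilde{\mu}(p>1)=-\log\det(I-\widetilde{P})$, is just \eqref{logdet1} applied to the symmetric irreducible Markov chain on $X\times Z$ determined by $(\widetilde{C},\widetilde{\kappa})$, so the substance lies in the first equality. For it I would apply \eqref{de} on the wreath product: the $\widetilde\mu$-weight of each discrete loop of length $k$ in $X\times Z$ is $\frac{1}{k}\prod_i\widetilde P$, and I would factor the sum over such loops as a sum over their $X$-projections $\xi=(\xi_1,\ldots,\xi_k)$ together with a sum over compatible lifts $(z^{(1)},\ldots,z^{(k)})\in Z^k$.

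The explicit form of $\widetilde P$ gives
\[
\prod_{i=1}^{k}\widetilde{P}^{(\xi_i,z^{(i)})}_{(\xi_{i+1},z^{(i+1)})}=\Big(\prod_{x\in X}n_{x}^{-2N_{x}(\xi)}\Big)\prod_{i=1}^{k}P^{\xi_{i}}_{\xi_{i+1}}\prod_{i=1}^{k}\prod_{y\neq\xi_i,\xi_{i+1}}1_{\{z^{(i)}_{y}=z^{(i+1)}_{y}\}},
\]
so once the lifts are summed out, the residual $X$-side factor $\frac{1}{k}\prod_i P^{\xi_i}_{\xi_{i+1}}$ is exactly the weight that \eqref{de} assigns to the discrete loop $\xi$ under $\mu$.

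The combinatorial heart of the argument is to count, coordinate by coordinate $y\in X$, the number of sequences $(z^{(i)}_y)_{i=1}^k$ in $\mathbb{Z}/n_y\mathbb{Z}$ satisfying the indicator constraints. When $y\notin V(\xi):=\{\xi_i\}$ the sequence must be globally constant, giving $n_y$ choices. When $y\in V(\xi)$ with visits at cyclic positions $i_1<\cdots<i_{N_y}$, the value can change exactly at the transitions $i_j-1\to i_j$ and $i_j\to i_j+1$; because $P^x_x=0$ forces $\xi_i\neq\xi_{i+1}$, these $2N_y$ change-allowed transitions are all distinct and split the cyclic sequence into $2N_y$ nonempty constant arcs (a singleton $\{i_j\}$ at each visit and an arc $\{i_j+1,\ldots,i_{j+1}-1\}$ between consecutive visits), each carrying an independent free value in $\mathbb{Z}/n_y\mathbb{Z}$, hence $n_y^{2N_y}$ lifts at coordinate $y$.

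Multiplying these lift counts by the prefactor $\prod_x n_x^{-2N_x(\xi)}$ makes the contributions at visited vertices cancel, leaving $\prod_{y\notin V(\xi)}n_y=\prod_y n_y\cdot\prod_{y\in V(\xi)}n_y^{-1}$. Reassembling via \eqref{de} on $X$ then yields
\[
\widetilde{\mu}(p>1)=\prod_y n_y\cdot\int 1_{\{p>1\}}\prod_{x:\,N_x(l)>0}\frac{1}{n_x}\,\mu(dl),
\]
which is the first equality. The only genuinely delicate step is the degree-of-freedom count at a visited vertex: it relies essentially on $\xi_i\neq\xi_{i+1}$, so that at each visit two truly separate free parameters appear (the value during the visit and the value on the following inter-visit arc), not one.
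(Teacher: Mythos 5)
Your proof is correct and takes essentially the same route as the paper: project loops on $X\times Z$ onto $X$, sum out the fiber coordinate by coordinate (the paper phrases this as the uniform resampling of $z_x,z_y$ at each jump, giving $[\widetilde{P}^{k}]_{(x,z)}^{(x,z)}=\sum P_{x_{1}}^{x}\cdots P_{x}^{x_{k-1}}\prod_{y\ \mathrm{visited}}\frac{1}{n_{y}}$ and leaving the count as an exercise), and conclude via \eqref{de} and \eqref{logdet1}. Your arc-counting at visited vertices, which correctly isolates the role of $\xi_i\neq\xi_{i+1}$ in making the $2N_y$ free transitions distinct, supplies exactly the detail the paper omits.
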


\begin{proof}
Each time the Markov chain on $X\times Z$ defined by $\widetilde{e}$\ jumps
from a point above $x$ to a point above $y$, $z_{x}$ and $z_{y}$ are resampled
according to the uniform distribution on $\mathbb{Z}/n_{x}\mathbb{Z\times
Z}/n_{y}\mathbb{Z}$, while the other indices $z_{w}$ are unchanged. It follows
that
\[
[\widetilde{P}^{k}]_{(x,z)}^{(x,z)}=\sum_{x_{1},...,x_{k-1}}P_{x_{1}}%
^{x}P_{x_{2}}^{x_{1}}...P_{x}^{x_{k-1}}\prod_{y\in\{x,x_{1},...,x_{k-1}%
\}}\frac{1}{n_{y}}.
\]
Note that in the set $\{x,x_{1},...,x_{k-1}\}$, distinct points are counted
only once, even if the path visit them several times. There are $\prod_{x\in
X}n_{x}$ possible values for $z$. The detail of the proof is left as an exercise.
\end{proof}

In the case where $X$ is a group and $P$ defines a random walk, $\widetilde
{P}$ is associated with a random walk on $X\times Z$ equipped with its wreath
product structure (Cf \cite{PitSal}).

\section{Countable spaces}

The assumption of finiteness of $X$ can of course be relaxed. On countable
spaces, the previous results extend easily under spectral gap conditions. In
the transient case we consider here, the Dirichlet space $\mathbb{H}$\ is the
space of all functions $f$\ with finite energy $e(f)$ which are limits in
energy norm of functions with finite support, and the energy defines a
Hilbertian scalar product on $\mathbb{H}$.

The energy of a measure is defined as $\sup_{f\in\mathbb{H}}\frac{\mu(f)^{2}%
}{e(f)}$. Finitely supported measures have finite energy. Measures of finite
energy are elements of the dual $\mathbb{H}^{\ast}$ of the Dirichlet space.
The potential $G\mu$ is well defined\ for all finite energy measures $\mu$, by
the identity $e(f,G\mu)=\left\langle f,\mu\right\rangle $, valid for all $f$
in the Dirichlet space. The energy of the measure $\mu$ equals $e(G\mu
)=\left\langle G\mu,\mu\right\rangle $ (see \cite{Fukutak}\ for more information).

Most important examples of countable graphs are the non ramified covering of
finite graphs (Recall that non ramified means that the projection is locally
one to one, i.e. that the projection on $X$ of each vertex $v$\ of the
covering space has the same number of incident edges as $v$ ). Consider a non
ramified covering graph $(Y,F)$ defined by a normal subgroup $H_{x_{0}}$\ of
$\Gamma_{x_{0}}$. The conductances $C$ and the measure $\lambda$ can be lifted
in an obvious way to $Y$\ as $H_{x_{0}}\backslash\Gamma_{x_{0}}$-periodic
functions but the associated Green function $\widehat{G}$\ or semigroup\ are
non trivial. By applying $M_{\lambda}-C$, it is easy to check the following:

\begin{proposition}
\label{greenuniv} $G^{x,y}=\sum_{\gamma\in H_{x_{0}}\backslash\Gamma_{x_{0}}%
}\widehat{G}^{i(x),\gamma(i(y))}$ for any section $i$ of the canonical
projection from $Y$ onto $X$.
\end{proposition}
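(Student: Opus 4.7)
The plan is to verify the identity by applying $M_\lambda - C$ in the $y$-variable and invoking the characterization $G = (M_\lambda - C)^{-1}$. Write $\pi \colon Y \to X$ for the canonical covering projection and $\mathcal{F} := H_{x_0} \backslash \Gamma_{x_0}$. Since $H_{x_0}$ is normal in $\Gamma_{x_0}$, the left action of $\Gamma_{x_0}$ on the universal cover $\mathfrak{T}_{x_0}$ descends to a free action of $\mathcal{F}$ on $Y = H_{x_0} \backslash \mathfrak{T}_{x_0}$ that is transitive on each fiber. Hence $\gamma \mapsto \gamma(i(y))$ is a bijection from $\mathcal{F}$ onto $\pi^{-1}(y)$, and setting
\[
\Phi(y) := \sum_{\gamma \in \mathcal{F}} \widehat{G}^{i(x), \gamma(i(y))} = \sum_{\tilde{y} \in \pi^{-1}(y)} \widehat{G}^{i(x), \tilde{y}},
\]
the claim reduces to $\Phi = G^{x, \cdot}$.

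Next I would sum, over $\tilde{y} \in \pi^{-1}(y)$, the local identity
\[
\widehat{\lambda}_{\tilde{y}}\, \widehat{G}^{i(x), \tilde{y}} - \sum_{\tilde{y}' \sim \tilde{y}} \widehat{C}_{\tilde{y}, \tilde{y}'}\, \widehat{G}^{i(x), \tilde{y}'} = \delta_{i(x), \tilde{y}}
\]
characterizing $\widehat{G}^{i(x), \cdot}$ on $Y$. The non-ramified hypothesis guarantees that, for each neighbor $y'$ of $y$ in $X$ and each $\tilde{y} \in \pi^{-1}(y)$, there is a unique neighbor $\tilde{y}'$ of $\tilde{y}$ in $Y$ with $\pi(\tilde{y}') = y'$, that $\widehat{C}_{\tilde{y}, \tilde{y}'} = C_{y, y'}$, and that $\tilde{y} \mapsto \tilde{y}'$ is a bijection $\pi^{-1}(y) \to \pi^{-1}(y')$. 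Combined with the periodicity relation $\widehat{\lambda}_{\tilde{y}} = \lambda_y$, the summation collapses to
\[
\lambda_y \Phi(y) - \sum_{y'} C_{y, y'}\, \Phi(y') = \sum_{\tilde{y} \in \pi^{-1}(y)} \delta_{i(x), \tilde{y}} = \delta_{x, y},
\]
since $i(x) \in \pi^{-1}(y)$ precisely when $y = x$. Thus $(M_\lambda - C) \Phi = \delta_x$, and by uniqueness of the inverse, $\Phi(y) = G^{x, y}$.

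The main delicate point I foresee is the absolute convergence of the series defining $\Phi$ when $\mathcal{F}$ is infinite, which is required to legitimize the rearrangements above. I would secure this through the probabilistic representation $\widehat{G}^{i(x), \tilde{y}} = \int_0^\infty \widehat{p}_t(i(x), \tilde{y})\, dt$ with $\widehat{p}_t$ the symmetric transition density on $Y$: since the projection of the lifted continuous-time chain is the original chain on $X$, summation over each fiber yields $\sum_{\tilde{y} \in \pi^{-1}(y)} \widehat{p}_t(i(x), \tilde{y}) = p_t(x, y)$, and integration in $t$ gives $\Phi(y) = G^{x, y} < \infty$ a priori. This probabilistic observation in fact furnishes an alternative (and more conceptual) proof, with the $M_\lambda - C$ route providing an algebraic confirmation.
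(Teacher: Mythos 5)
Your argument is correct and is essentially the paper's own proof: the paper simply says the identity is checked ``by applying $M_{\lambda}-C$'', which is exactly your main computation, with the fiber bijection supplied by the normality of $H_{x_{0}}$ and the unique lifting of edges guaranteed by the non-ramified condition. Your supplementary observation that $\sum_{\tilde{y}\in\pi^{-1}(y)}\widehat{p}_{t}(i(x),\tilde{y})=p_{t}(x,y)$, hence summability of the series, is a sensible (and by itself sufficient) justification that the formal rearrangements are legitimate.
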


Let us consider the universal covering (then $H_{x_{0}}\ $is trivial). It is
easy to check it will be transient even in the recurrent case\ as soon as
$(X,E)$ is not circular.

The expression of the Green function $\widehat{G}$ on a universal covering can
be given exactly when it is a regular tree, i.e. in the regular graph case. In
fact a more general result can be proved as follows:

Given a graph $(X,E)$, set $d_{x}=\sum_{y}1_{\{x,y\}\in E}$ (degree or valency
of the vertex $x$), $D_{x,y}=d_{x}\delta_{x,y}$ and denote $A_{x,y}$ the
incidence matrix $1_{E}(\{x,y\})$.

Consider the Green function associated with $\lambda_{x}=(d_{x}-1)u+\frac{1}%
{u}$, with $0<u<\inf(\frac{1}{d_{x}-1},x\in X)$\ and for $\{x,y\}\in E$,
$C_{x,y}=1$.

\begin{proposition}
\label{greencov} On the universal covering $\mathfrak{T}_{x_{0}}$%
,$\ \widehat{G}$ $^{x,y}=u^{d(x,y)}\frac{u}{1-u^{2}}$.
\end{proposition}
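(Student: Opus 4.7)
The plan is to verify the formula directly from the defining identity of the Green function, namely $(M_\lambda - C)\widehat{G} = I$, exploiting the tree structure of the universal covering.

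First I would observe that the universal covering $\mathfrak{T}_{x_0}$ is a tree in which every vertex $x$ has the same degree $d_x$ as its image in $X$ (non-ramified covering). Consequently, for any two distinct vertices $x,y$ with $n = d(x,y)$, there is a unique geodesic from $x$ to $y$, so exactly one neighbour of $x$ lies at distance $n-1$ from $y$, while the remaining $d_x - 1$ neighbours lie at distance $n+1$. I would then set $f(x,y) = u^{d(x,y)}\,\frac{u}{1-u^{2}}$ and check the two cases:

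\begin{itemize}
\item[(i)] Off-diagonal ($x\neq y$). Using the tree decomposition of neighbours,
\[
\sum_{z\sim x} f(z,y) = \frac{u}{1-u^{2}}\bigl(u^{n-1} + (d_x-1)u^{n+1}\bigr),
\]
so that
\[
\lambda_x f(x,y) - \sum_{z\sim x} f(z,y)
= \frac{u^{n+1}}{1-u^{2}}\bigl[(d_x-1)u + \tfrac1u - \tfrac1u - (d_x-1)u\bigr] = 0.
\]
\item[(ii)] Diagonal ($x=y$). All $d_x$ neighbours are at distance one, giving $\sum_{z\sim x} f(z,x) = d_x u^{2}/(1-u^{2})$, and
\[
\lambda_x f(x,x) - \sum_{z\sim x} f(z,x)
= \frac{(d_x-1)u^{2} + 1 - d_x u^{2}}{1-u^{2}} = 1.
\]
\end{itemize}

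Thus $f$ satisfies $(M_\lambda - C)f(\cdot,y) = \delta_y$ pointwise. To conclude that $f = \widehat{G}$, and not merely a solution of this equation, I would need uniqueness, i.e.\ that $f$ is the \emph{resolvent} $\sum_{k\geq 0} \lambda^{-1}P^{k}$ associated with the chain. This is where the hypothesis $0<u<\inf_x \tfrac{1}{d_x-1}$ enters: it guarantees both that the killing measure $\kappa_x = \lambda_x - d_x = (d_x-1)u + 1/u - d_x$ is nonnegative (indeed its zeros in $u$ are $1$ and $1/(d_x-1)$), so the chain is well defined and transient, and that $f$ decays fast enough along the tree for $\widehat{G}\mu$ to be well defined on measures of finite support. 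Given transience and nonnegativity of $f$, the standard minimality/uniqueness of the Green function (via the identity $e(\widehat{G}\delta_y, g) = g(y)$ characterising $\widehat{G}$ in the Dirichlet space, cited earlier from \cite{Fukutak}) identifies $f$ with $\widehat{G}$.

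The main obstacle is not the algebraic verification, which is routine, but the uniqueness step: on the infinite tree $(M_\lambda - C)$ has a nontrivial kernel in general (bounded harmonic functions), so one must justify that the particular solution $f$ we have written down is the one produced by the semigroup. My preferred route is to note that $f(x,\cdot)$ lies in the Dirichlet space (finite energy, which follows from geometric decay $u^{d(x,y)}$ dominating the $(d_x-1)^{d(x,y)}$ volume growth under the assumption on $u$) and satisfies $e(f(x,\cdot),g) = g(x)$ for every compactly supported $g$, which is the defining property of the potential $\widehat{G}\delta_x$ recalled in the section on Energy.
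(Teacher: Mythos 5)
Your proof is correct and follows essentially the same route as the paper, which likewise verifies on the tree that $\sum_{z}(\lambda_{x}\delta^{x}_{z}-A_{x,z})u^{d(z,y)}$ vanishes for $x\neq y$ and equals $\frac{1}{u}-u$ for $x=y$, i.e. that $u^{d(x,y)}\frac{u}{1-u^{2}}$ inverts $M_{\lambda}-C$. The only difference is that you make explicit the identification step (that this particular solution, having finite energy and satisfying $e(f(x,\cdot),g)=g(x)$, is the Green function in the sense of the Dirichlet-space characterization), which the paper leaves implicit when writing $\widehat{G}=(M_{\lambda}-C)^{-1}$ on the infinite tree; this is a welcome precision rather than a different argument.
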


\begin{proof}
Note first that as $\frac{1}{u}>d_{x}-1$, $\kappa_{x}$ is positive for all
$x$. Then $\widehat{G}=(M_{\lambda}-C)^{-1}$ can be written $\widehat
{G}=[u^{-1}I+(D-I)u-A]^{-1}$.Moreover, since we are on a tree,
\[
\sum_{x}A_{z,x}u^{d(x,y)}=(d_{z}-1)u^{d(z,y)+1}+u^{d(z,y)-1}%
\]
for $z\neq y$, hence $\sum_{x}(\lambda_{z}\delta_{x}^{z}-A_{z,x})u^{d(x,y)}=0$
for $z\neq y$ and one checks it equals $\frac{1}{u}-u$ for $z=y$.
\end{proof}

It follows from proposition \ref{greenuniv} that for any section $i$ of the
canonical projection from $\mathfrak{T}_{x_{0}}$ onto $X$,
\[
\sum_{\gamma\in\Gamma_{x_{0}}}u^{d(i(x),\gamma(i(y)))}=(\frac{1}{u}%
-u)G^{x,y}.
\]

\section{Zeta functions for discrete loops\label{pres}}

We present briefly the terminology of symbolic dynamics (see for example
\cite{ParPol}) in this simple framework: Setting $f(x_{0},x_{1},...,x_{n}%
,...)=\log(P_{x_{0},x_{1}})$, $P$ induces the Ruelle operator $L_{f}%
$\ associated with $f$.

The pressure\ is defined as the logarithm of the highest eigenvalue $\beta
$\ of $P$. It is associated with a unique positive eigenfunction $h$
(normalized in $L^{2}(\lambda)$), by Perron Frobenius theorem. Note that
$Ph=\beta h$ implies $\lambda hP=\beta\lambda h$ by duality and that in the
recurrent case, the pressure vanishes and $h=\frac{1}{\sqrt{\lambda(X)}}$.

In continuous time, the lowest eigenvalue of $-L$ i.e. $1-\beta$ plays the
role of the pressure

The equilibrium measure associated with $f$, $m=h^{2}\lambda$\ is the law of
the stationnary Markov chain defined by the transition probability
$\frac{1}{\beta h_{x}}P_{y}^{x}h_{y}$.

If $P1=1$, i.e. $\kappa=0$, we can consider a Feynman Kac type perturbation
${P^{(\varepsilon\kappa)}=PM_{\frac{\lambda}{\lambda+\varepsilon\kappa}}}$,
with $\varepsilon\downarrow0$ and $\kappa$ a positive measure. Perturbation
theory (Cf for example \cite{Kato}) shows that $\beta^{(\varepsilon\kappa
)}-1=\frac{1}{\lambda(X)}\sum_{x}\frac{\lambda_{x}}{1+\varepsilon\kappa_{x}%
}-1+o(\varepsilon)=-\frac{\varepsilon\kappa(X)}{\lambda(X)}+o(\varepsilon)$
and that $h^{(\varepsilon k)}=\frac{1}{\sqrt{\lambda(X)}}+o(\varepsilon)$.

We deduce from that the asymptotic behaviour of
\[
\int(e^{-\varepsilon\left\langle \widehat{l},\chi\right\rangle }%
-1)d\mu^{(\varepsilon\kappa)}(l)=\log(\det(I-P^{(\varepsilon\kappa)}%
))-\log(\det(I-P^{(\varepsilon(\kappa+\chi))}))
\]
which is equivalent to $-\log(1-\beta^{(\varepsilon(\kappa+\chi))})$
$+\log(1-\beta^{(\varepsilon\kappa)})$ and therefore to $\log(\frac{\kappa
(X)}{\kappa(X)+\chi(X)})$.

The study of relations between the loop measure $\mu$ and the zeta function
${(\det(I-sP))^{-1}}$ and more generally $(\det(I-M_{f}P))^{-1}$ with $f$ a
function on $[0,1]$ can be done in the context of discrete loops.
\[
\exp\Big(          \sum_{\substack{\text{based}\\\text{discrete loops}%
}}\frac{1}{p(\xi)}s^{p(\xi)}\mu(\xi)\Big)          =(\det(I-sP))^{-1}%
\]
can be viewed as a type of zeta function defined for $s\in\lbrack0\;1/\beta)$

Primitive non trivial (based) discrete loops are defined as discrete based
loops which cannot be obtained by the concatenation of $n\geq2$ identical
based loops. Loops are primitive iff they are classes of primitive based loops.

The zeta function has an Euler product expansion: if we denote by $\xi^{\circ
}$ this discrete loop defined by the based discrete loop $\xi$, and set, for
$\xi=(\xi_{1},...,\xi_{k})$, $\mu(\xi^{\circ})=P_{\xi_{2}}^{\xi_{1}}P_{\xi
_{3}}^{\xi_{2}}....P_{\xi_{1}}^{\xi_{k}}$, it can be seen, by taking the logarithm, that:
\[
(\det(I-sP))^{-1}=\exp\Big(          \sum_{_{_{\substack{\text{based}%
\\\text{discrete loops}}}}}\frac{1}{p(\xi)}s^{p(\xi)}\mu(\xi)\Big)
=\prod_{_{\substack{\text{primitive}\\\text{discrete loops}}}}\Big(     1-\int
s^{p(\xi^{%
{{}^\circ}%
})}\mu(\xi^{%
{{}^\circ}%
})\Big)          ^{-1}%
\]

\chapter{Geodesic loops}

\section{Reduction\label{secred}}

\index{reduced path}Given any finite path $\omega$ with starting point $x_{0}$, the reduced path
$\omega^{R}$ is defined as the geodesic arc defined by the endpoint of the
lift of $\omega$ to $\mathfrak{T}_{x_{0}}$.

Tree-contour-like based loops can be defined as discrete based loops whose
lift to the universal covering are still based loops. Each link is followed
the same number of times in opposite directions (backtracking). The reduced
path $\omega^{R}$\ can equivalently be obtained by removing all
tree-contour-like based loops imbedded into it. In particular each loop $l$
based at $x_{0}$ defines an element $l^{R}$ in $\Gamma_{x_{0}}$.%

\begin{center}
\includegraphics[
height=3.0061in,
width=4.7046in
]%
{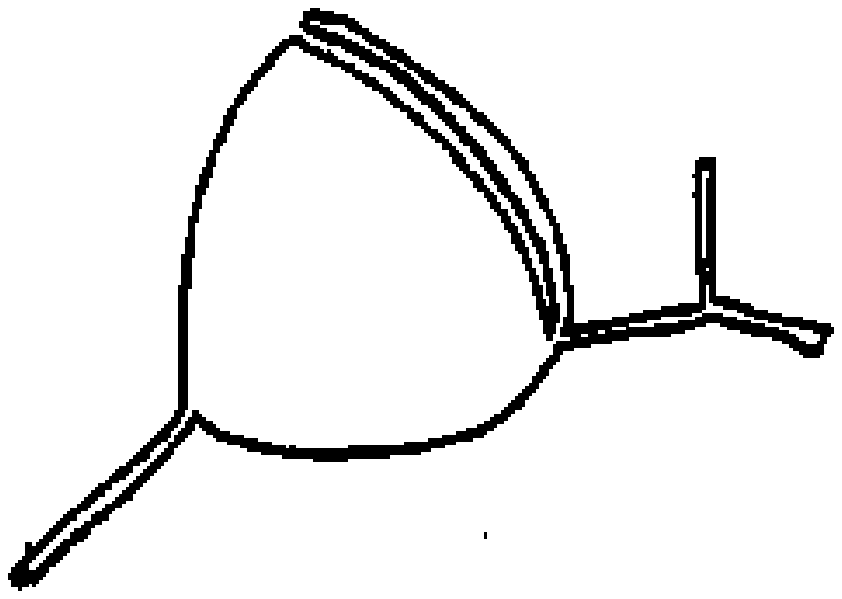}%
\\
Based loop
\end{center}

\index{loop erasure}This procedure is an example of \emph{loop erasure}. In any graph, given a
path $\omega$, the loop erased path $\omega^{LE}$\ is defined by removing
progressively all based loops imbedded in the path, starting from \ the
origin. It produces a self avoiding path (and we see geodesics in
$\mathfrak{T}_{x_{0}}$ are self avoiding paths). Hence any non ramified
covering defines a specific reduction operation by composition of lift, loop
erasure, and projection.%

\begin{center}
\includegraphics[
height=2.6429in,
width=3.8631in
]%
{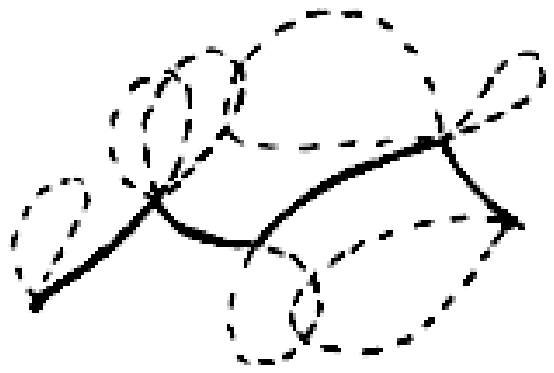}%
\\
Loop erasure
\end{center}

\section{Geodesic loops and conjugacy classes}

Then, we can consider loops i.e. equivalence classes of based loops under the
natural shift.

Geodesic loops are of particular interest. Note their based loops
representatives have to be ''tailess'': If $\gamma$ is a geodesic based loop,
with $\left|  \gamma\right|  =n$, the tail of $\gamma$ is defined as
$\gamma_{1}\gamma_{2}...\gamma_{i}\gamma_{i-1}...\gamma_{1}$ if $i=\sup(j,\gamma_{1}\gamma_{2}%
...\gamma_{j}=\gamma_{n}\gamma_{n-1}...\gamma_{n-j+1})$. The associated
geodesic loop is obtained by removing the tail.

The geodesic loops are clearly in bijection with the set of conjugacy classes
of the fundamental group. Indeed, if we fix a reference point $x_{0}$, a geodesic
loop defines the conjugation class formed of the elements of $\Gamma_{x_{0}}$
obtained by choosing a base point on the loop and a geodesic segment linking
it to $x_{0}$. Any non trivial element of $\Gamma_{x_{0}}$ can be obtained in
this way.

Given a loop, there is a canonical geodesic loop associated with it. It is
obtained by removing all tails imbedded in it. It can be done by removing one
by one all tail edges (i.e. pairs of consecutive inverse oriented edges of the
loop). Note that after removal of a tail edge, another tail edge cannot
disappear, and that new tail edges appear during this process.%

\begin{center}
\includegraphics[
height=5.2157in,
width=3.7663in
]%
{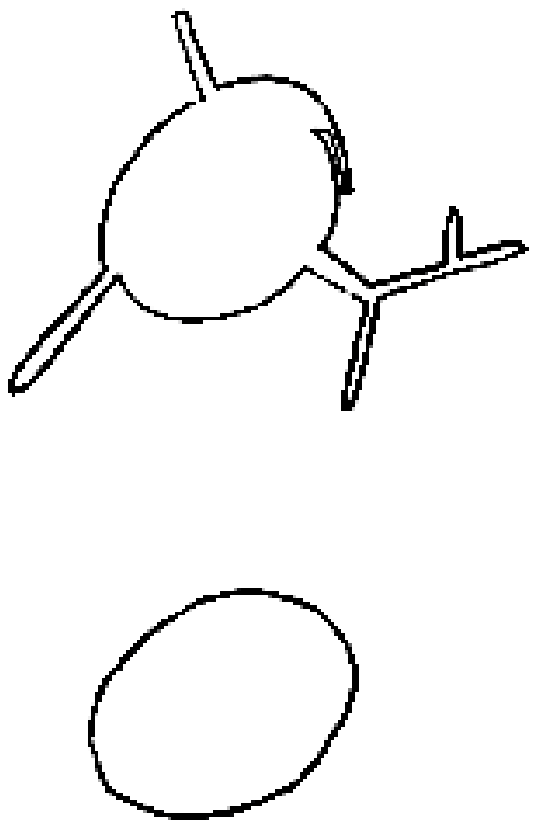}%
\\
Loop and associated geodesic loop
\end{center}

A closed geodesic based at $x_{0}$ is called primitive if it cannot be
obtained as the concatenation of several identical closed geodesic, i.e. if it
is not a non trivial power in $\Gamma_{x_{0}}$. This property is clearly
stable under conjugation. Let $\mathfrak{P}$\ be corresponding set of
primitive geodesic loops. They represent conjugacy classes of primitive
elements of $\Gamma$ (see \cite{Starter}).

\section{Geodesics and boundary}

Geodesics lines (half-lines) on a graph are defined as paths without
backtracking indexed by $\mathbb{Z}$ ($\mathbb{N}$).

Paths and in particular geodesics can be defined on $(X,E)$ or on a universal
cover $\mathfrak{T}$ and lifted or projected on any intermediate covering
space. Two geodesic half lines are said to be confluent if their intersection
is a half line.

Let us now take the point of view described in remark \ref{intrinsic}%
. Equivalence classes of geodesics half lines of $\mathfrak{T}$ for the
confluence relation define the boundary $\partial\mathfrak{T}$ of
$\mathfrak{T}$. A geodesic half-line on $\mathfrak{T}$\ can therefore be
defined by two points: its origin $Or$\ and the boundary point $\theta
$\ towards which it converges. It projects on a geodesic half-line on
$(X,E)=\Gamma\backslash\mathfrak{T}$. The set of geodesic half lines on
$(X,E)$\ is identified with $\Gamma\backslash(\mathfrak{T}\times$
$\partial\mathfrak{T})$ which projects canonically onto $X$.

There is a natural $\sigma$-field on the boundary generated by cylinder sets
$B_{g}$\ defined by half geodesics starting with a given oriented edge $g$.

Given any point $x_{0}$ in $\mathfrak{T}$, \emph{assuming in this subsection
that }$\kappa=0$, one can define a probability measure on the boundary called
the harmonic measure and denoted $\nu_{x_{0}}$: $\nu_{x_{0}}(B_{g})$ is the
probability that the lift of the $P$-Markov chain starting at $x_{0}$ hits
$g^{+}$ after its last visit to $g^{-}$.

Note that $\Gamma$ acts on the boundary in such a way that $\gamma^{\ast}%
(\nu_{x})=\nu_{\gamma(x)}$, for all $\gamma\in\Gamma$ and $x\in\mathfrak{T}$.
This harmonic measure induces a probability on the fiber above $\Gamma x$ in
$\Gamma\backslash(\mathfrak{T}\times\partial\mathfrak{T)}$, i.e. on half
geodesics starting at the projection of $x$ on $X$.

Clearly, in the case of a regular graph, as the universal covering is a
$r-$regular tree, $\nu_{x_{0}}(B_{g})=\frac{1}{r}\frac{1}{(r-1)^{d(x_{0},g-)}%
}$ where $d$ denotes the distance in the tree. When conductances are all
equal, $\nu_{x_{0}}(B_{g})$ can also be computed but is in general distinct
from the visibility measure from $x_{0}$, $\nu_{x_{0}}^{vis}(B_{g})$, defined
as $\frac{1}{d_{x_{0}}}\prod\frac{1}{d_{x_{i}}-1}$, $x_{1},x_{2},...x_{i},...$
being the points of the geodesic segment linking $x_{0}$ to $g^{-}$.
$\nu_{x_{0}}^{vis}$ is also a probability on $\partial\mathfrak{T}$.

There is an obvious canonical shift acting on half geodesics.

Note also that $\sum_{x\in\mathfrak{T}}\delta_{x}^{Or}\nu_{x}^{vis}(d\theta)$
is a shift-invariant and $\Gamma$-invariant measure on the set of
half-geodesics of $\mathfrak{T}$.

It can be shown it induces a canonical shift invariant and $\Gamma$-invariant
probability\ on half geodesics on $X$ obtained by restricting the sum to any
fundamental domain and normalizing by $\left|  X\right|  $. It is independent
of the choice of the domain.

\section{Closed geodesics and associated Zeta function}

Recall that $\mathfrak{P}$\ denotes the set of primitive geodesic loops.

Ihara's zeta function $IZ(u)$ is defined for $0\leq u<1$\ as
\[
IZ(u)=\prod_{\gamma\in\mathfrak{P}}(1-u^{p(\gamma)})^{-1}%
\]
It depends only on the graph.

Note that
\[
u\frac{\frac{d}{du}IZ(u)}{IZ(u)}=\sum_{\gamma\in\mathfrak{P}}\frac{p(\gamma
)u^{p(\gamma)}}{1-u^{p(\gamma)}}=\sum_{\gamma\in\mathfrak{P}}\sum
_{n=1}^{\infty}p(\gamma)u^{np(\gamma)}=\sum_{m=2}^{\infty}N_{m}u^{m}%
\]
where $N_{m}$ denotes the number of \emph{tailess} geodesic based loops of
length $m$. Indeed, each primitive geodesic loop $\gamma$\ traversed $n$ times
still induces $p(\gamma)$ distinct tailess geodesic based loops. Therefore
$IZ(u)$ can also be written\ as $\exp\big(          \sum_{m=2}^{\infty
}\frac{N_{m}u^{m}}{m}\big)          .$

Similarly, one can define $\Pi\Gamma$ to be the set of primitive elements of
the fundamental group $\Gamma$ and the $\Gamma$-zeta function to be:%
\[
\Gamma Z(u)=\prod_{\gamma\in\Pi\Gamma}(1-u^{p(\gamma)})^{-1}.
\]
Note that $u\frac{\frac{d}{du}\Gamma Z(u)}{\Gamma Z(u)}=\sum_{2}^{\infty}%
L_{m}u^{m}$ where $L_{m}$ denotes the number of geodesic based loops of length
$m$. $\Gamma Z(u)$ can also be written\ as $\exp(\sum_{m=2}^{\infty
}\frac{L_{m}u^{m}}{m})$. Recall that $A$ denotes the adjacency matrix of the
graph, and $D$ the diagonal matrix whose entries are given by the degrees of
the vertices.

Assume now that $0<u<\inf(\frac{1}{d_{x}-1},x\in X)$. We will use again the
Green function associated with $\lambda_{x}=(d_{x}-1)u+\frac{1}{u}$\ and, for
$\{x,y\}\in E$, $C_{x,y}=1$.

\begin{theorem}
\begin{enumerate}
\item[a)] $\sum_{2}^{\infty}L_{m}u^{m}=(1-u^{2})Tr([I+(D-I)u^{2}%
-uA]^{-1})-\left|  X\right|  .$

\item[b)] $IZ(u)=(1-u^{2})^{-\chi}\det(I-uA+u^{2}(D-I))^{-1}$ where $\chi=$
denotes the Euler number $\left|  E\right|  -\left|  X\right|  $ of the graph.
\end{enumerate}
\end{theorem}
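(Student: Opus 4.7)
Part (a) is the straightforward conclusion of Propositions~\ref{greencov} and~\ref{greenuniv}. With $\lambda_x=(d_x-1)u+1/u$ and $C_{x,y}=\mathbf 1_{\{x,y\}\in E}$, a direct check gives $M_\lambda-A=u^{-1}M$, where $M=I+(D-I)u^2-uA$, so $G=uM^{-1}$. Applied to the universal cover (where $H_{x_0}=\{e\}$), Proposition~\ref{greenuniv} specialises to $G^{x,x}=\sum_{\gamma\in\Gamma_{x_0}}\widehat G^{i(x),\gamma(i(x))}$, and substituting the tree formula $\widehat G^{v,w}=\frac{u}{1-u^2}u^{d(v,w)}$ from Proposition~\ref{greencov} yields
\[
\frac{1-u^2}{u}\,G^{x,x}\;=\;\sum_{\gamma\in\Gamma_{x_0}}u^{d(i(x),\gamma(i(x)))}\;=\;1\;+\;\sum_{m\ge 2}L_m^x\,u^m,
\]
since each $\gamma$ corresponds bijectively to a closed geodesic at $x$ of length $d(i(x),\gamma(i(x)))$. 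Summing over $x\in X$ and using $\mathrm{Tr}(G)=u\,\mathrm{Tr}(M^{-1})$ gives part~(a).

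For part~(b), my plan is to match logarithmic derivatives. Since $\log IZ(u)=\sum_{m\ge 2}N_m u^m/m$, the claim is equivalent to
\[
\sum_{m\ge 2}N_m u^m\;=\;\frac{2\chi u^2}{1-u^2}\;-\;u\,\mathrm{Tr}\!\bigl(M^{-1}(2u(D-I)-A)\bigr).
\]
To compute $\sum N_m u^m$ combinatorially, I would decompose each $\gamma\in\Gamma_{x_0}\setminus\{e\}$ uniquely as $\gamma=\tau\gamma_0^n\tau^{-1}$, where $\gamma_0$ is a primitive tailless based loop at some $y\in X$, $n\ge 1$, and $\tau$ is a reduced path $x_0\to y$ whose last edge avoids the two forbidden incoming edges at $y$ that would create cancellation. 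Summing over $x_0$ and $\gamma$ factors as
\[
\sum L_m u^m=\sum_{\gamma_0\text{ prim.}}T(\gamma_0,u)\,\frac{u^{p(\gamma_0)}}{1-u^{p(\gamma_0)}},\qquad
\sum N_m u^m=\sum_{\gamma_0\text{ prim.}}\frac{u^{p(\gamma_0)}}{1-u^{p(\gamma_0)}},
\]
where $T(\gamma_0,u)=\sum_{\tau\text{ valid}}u^{2|\tau|}$ can in principle be computed by rerunning the Green-function argument of part~(a) on the universal cover with the two forbidden endpoint edges removed.

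The main obstacle is that $T(\gamma_0,u)$ depends on the two forbidden incoming edges at the base of $\gamma_0$, so it does not factor out as a universal tail contribution. One must regroup contributions over primitive based loops sharing the same base-and-forbidden-edge configuration, and recognise the result as $\mathrm{Tr}\!\bigl((I-uW)^{-1}\bigr)$, where $W$ is the $2|E|\times 2|E|$ non-backtracking edge matrix on oriented edges. The proof is completed by Bass's determinantal identity $\det(I-uW)=(1-u^2)^\chi\det M$, which together with the orbit-expansion formula $\log IZ(u)=-\log\det(I-uW)$ produces~(b). I expect this Bass-type identity to be the technical heart; the universal-cover machinery developed earlier in the chapter makes it natural, but it still requires an independent block-matrix or transfer-operator argument.
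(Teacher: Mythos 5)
Your part a) is correct and is essentially the paper's own argument: with $\lambda_x=(d_x-1)u+\frac{1}{u}$ and unit conductances one has $M_\lambda-C=u^{-1}(I+(D-I)u^{2}-uA)$, and Propositions \ref{greencov} and \ref{greenuniv}, together with the bijection between $\Gamma_{x}$ and geodesic loops based at $x$, give $(\frac{1}{u}-u)Tr(G)=\left|X\right|+\sum_{m\geq2}L_{m}u^{m}$. Your reduction of b) to the logarithmic-derivative identity is also correct.

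For b), however, what you propose is a reduction, not a proof. After rightly observing that the tail contribution $T(\gamma_0,u)$ does not factor (the first tail edge is constrained by the two edges of $\gamma_0$ at its base point), you transfer the whole difficulty to two statements about the non-backtracking edge matrix $W$ (the operator the paper calls $Q$): that $Tr(W^{m})=N_{m}$, hence $\log IZ(u)=-\log\det(I-uW)$, and Bass's identity $\det(I-uW)=(1-u^{2})^{\chi}\det(I+u^{2}(D-I)-uA)$. The first is routine (a tailless geodesic based loop of length $m$ is exactly a closed non-backtracking edge sequence, i.e.\ a diagonal entry of $W^{m}$), but the second carries the entire content of b), and you leave it unproved, explicitly deferring it to ``an independent block-matrix or transfer-operator argument''. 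That is the gap: as written, b) is assumed rather than established. The gap is fillable -- your route is precisely the paper's alternative (Kotani--Sunada) proof, which derives the identity from $(I-u\tau)(I-uQ)=(1-u^{2})I-(I-u\tau)uST$, $(I-uQ)(I-u\tau)S=S(I+u^{2}(D-I)-uA)$, $\det(I-u\tau)=(1-u^{2})^{\left|E\right|}$ and a dimension count -- but that page of linear algebra must actually be done. Note also that the paper's primary proof avoids the edge matrix altogether and resolves the very obstruction you identified in a different way: for a fixed tailless core $l$ with marked point $y$ it introduces $S_{x,y,l}$, the generating function of geodesic loops based at $x$ made of $l$ and a tail ending at $y$, and conditions on the tail length ($0$, $1$, or at least $2$ together with the tail vertex adjacent to $x$), obtaining
\[
\sum_{x}S_{x,y,l}=u^{p(l)}-u^{p(l)+2}+\sum_{x}(d_{x}-1)u^{2}S_{x,y,l},
\]
in which the dependence on the forbidden edges has disappeared; summing over $y$ and over all cores $l$ and inserting $\sum_{y,l}S_{x,y,l}=(\frac{1}{u}-u)G^{x,x}-1$ from part a) gives a closed linear equation for $\sum N_{m}u^{m}$ which yields b) directly, with no determinantal identity on the edge space. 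To complete your write-up you should either prove the Bass identity along the lines above or adopt this recursion; without one of the two, b) is not proved.
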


\begin{proof}
We adapt the approach of Stark-Terras (\cite{Starter})

\begin{enumerate}
\item[a)] As geodesic loops based in $x_{0}$ are in bijection with
$\Gamma_{x_{0}}$, it follows from proposition \ref{greencov}\ and
\ref{greenuniv}\ that

$\left|  V\right|  +\sum_{2}^{\infty}L_{m}u^{m}=(\frac{1}{u}-u)Tr(G)=(1-u^{2}%
)Tr([I+(D-I)u^{2}-uA]^{-1})$

\item[b)] Given a geodesic loop $l$ (possibly empty) and a base point $y$ of
$l$, let $S_{x,y,l}$ be the sum of the coefficients $u^{p(\delta)}$, where
$\delta$ varies on all geodesic loops based at $x$ composed with $l$ and a
tail ending at $y$. If $x=y$, we have $S_{y,y,l}=u^{p(l)}$. Set $S_{x,l}%
=\sum_{y}S_{x,y,l}$.

Clearly, for any section $i$ of the canonical projection from $\mathfrak{T}%
_{x}$ onto $X$,
\[
\sum_{y,l}S_{x,y,l}=\sum_{\gamma\in\Gamma_{x}-\{I\}}u^{d(i(x),\gamma
i(x))}=(\frac{1}{u}-u)G^{x,x}-1.
\]
On the other hand, considering first the tailess case, then the case where the
tail has length $1$, and finally decomposing the case where the tail has
length at least two according to the position of the point of the tail next to
$x$, (denoted $x^{\prime}$), we obtain the expression:
\begin{align*}
\sum_{x}S_{x,y,l}  &  =u^{p(l)}+(d_{y}-2)u^{p(l)+2}+\sum_{x^{\prime}\neq
y}(d_{x^{\prime}}-1)u^{2}S_{x^{\prime},y,l}\\
&  =u^{p(l)}-u^{p(l)+2}+\sum_{x}(d_{x}-1)u^{2}S_{x,y,l}%
\end{align*}
summing in $y$, it comes that
\[
\sum_{x}S_{x,l}=p(l)(u^{p(l)}-u^{p(l)+2})+\sum_{x}(d_{x}-1)u^{2}S_{x,l}.
\]
Then, summing on all geodesic loops $l$
\[
\sum_{x}((\frac{1}{u}-u)G^{x,x}-1)=(1-u^{2})(\sum N_{m}u^{m})+\sum_{x}%
(d_{x}-1)u^{2}((\frac{1}{u}-u)G^{x,x}-1).
\]
Therefore,
\begin{align*}
\sum N_{m}u^{m}=  &  Tr((I-u^{2}(D-I))([I+(D-I)u^{2}-uA]^{-1}-(1-u^{2}%
)^{-1}I))\\
=  &  Tr(I-(2u^{2}(D-I)-uA)[I+(D-I)u^{2}-uA]^{-1}\\
&  -(1-u^{2})^{-1}(I-u^{2}(D-I))\\
=  &  Tr((2u^{2}(D-I)-uA)[I+(D-I)u^{2}-uA]^{-1}+(1-u^{2})^{-1}(u^{2}(D-2I)).
\end{align*}
To conclude note that
\[
\frac{d}{du}\log(\det(I+(D-I)u^{2}-uA))=Tr((2u(D-I)-A)[I+(D-I)u^{2}-uA]^{-1})
\]
and that $u^{2}Tr(D-2I)=2u^{2}\chi$.
\end{enumerate}
\end{proof}

\paragraph{An alternative proof}

Other proofs can be found in the litterature, especially the following one due
to Kotani-Sunada (\cite{Kotasun}):\newline On the line\ graph, we define a
transfer operator $Q$ by $Q_{(x,y)}^{(y^{\prime},z)}=\delta_{y}^{y^{\prime}%
}1_{\{z\neq x\}}$. Then, as $\log(\det((I-uQ)^{-1})=\sum\frac{u^{n}}%
{n}Tr(Q^{n})$ and $Tr(Q^{n})=N_{n}$, we have%
\[
IZ(u)=\det((I-uQ)^{-1}%
\]
Define the linear map $T$, from $\mathbb{A}$ to functions on $X$ by
$T\alpha(x)=\sum_{y,\{x,y\}\in E}\alpha(x,y)$. Define a linear transformation
$\tau$ on $\mathbb{A}$ by $\tau\alpha(e)=\alpha(-e)$. Define $S$ the linear
map from functions on $X$ to $\mathbb{A}$\ defined by $Sf(x,y)=f(y)$. Note
that $T\tau S=D$, $TS=A$, and $Q=-\tau+ST$.\newline Then, for any scalar $u$,
$(I-u\tau)(I-uQ)=(1-u^{2})I-(I-u\tau)uST$ and \
\begin{equation}
(I-uQ)(I-u\tau)=(1-u^{2})I-ST(I-u\tau). \label{dec1}%
\end{equation}
Therefore $T(I-u\tau)(I-uQ)=((1-u^{2})T-uT(I-u\tau)ST)=(I+u^{2}(D-I)-uA)T$ and%
\[
T(I-u\tau)(I-uQ)(I-u\tau)=(I+u^{2}(D-I)-uA)T(I-u\tau).
\]
Moreover $(I-uQ)(I-u\tau)S=S((1-u^{2})I-uT(I-u\tau)S)$ and%
\begin{equation}
(I-uQ)(I-u\tau)S=S(I+u^{2}(D-I)-uA) . \label{dec2}%
\end{equation}
It follows from these two last identities that $\operatorname{Im}(S)$ and
$Ker(T(I-u\tau))$ are stable under $(I-uQ)(I-u\tau)$.

Note that $S$ is the dual of $-T\tau$: Indeed, for any function $f$ on
vertices and $\alpha$ on oriented edges,
\[
\sum_{(x,y)\in E^{O}}\alpha(x,y)Sf(x,y)=\sum_{(x,y)\in E^{O}}\alpha
(x,y)f(y)=\sum_{y}T\tau\alpha(y)f(y).
\]
Therefore, $\dim(\operatorname{Im}(S))+\dim(Ker(T\tau))=2\left|  E\right|  .$

Note also that $\dim(Ker(T\tau))=\dim(Ker(T))=\dim(Ker(T(I-u\tau)))$ (as $u<1$).

Moreover, except for a finite set of $u$'s, $\operatorname{Im}(S)\cap
Ker(T(I-u\tau))=\{0\}$.\newline Indeed $T(I-u\tau)S=A-uD$ which is invertible,
except for a finite set of $u$'s.

Note that (\ref{dec1}) implies that $(I-uQ)(I-u\tau)$ equals $(1-u^{2})I$ on
$Ker(T(I-u\tau)$ and that (\ref{dec2}) implies it equals $S(I+u^{2}%
(D-I)-uA)S^{-1}$ on $\operatorname{Im}(S)$.

It comes that:%
\[
\det((I-u\tau)(I-uQ))=(1-u^{2})^{2\left|  E\right|  -\left|  X\right|  }%
\det(I+u^{2}(D-I)-uA)
\]

On the other hand, $\det((I-u\tau))=(1-u^{2})^{\left|  E\right|  }$, which
allows to conclude.

\chapter{Poisson process of loops}

\section{Definition}

\index{Poissonian loop ensembles}Still following the idea of \cite{LW}, which was already implicitly in germ in
\cite{Symanz}, define, for all positive $\alpha$,\ the Poissonian ensemble of
loops $\mathcal{L}_{\alpha}$ with intensity $\alpha\mu$.

Note also that these Poissonian ensembles can be considered for fixed $\alpha$ or
as a point process of loops indexed by the ''time'' $\alpha$. In that case,
$\mathcal{L}_{\alpha}$ is an increasing set of loops with stationnary
increments. We will denote by $\mathcal{LP}$ the associated Poisson point
process of intensity $\mu(dl)\otimes Leb(d\alpha)$ ($Leb$ denoting Lebesgue
measure on the positive half-line). It is formed by a countable set of pairs
$(l_{i},\alpha_{i})$\ formed by a loop and a time.

We denote by $\mathbb{P}$ its distribution.

Recall that for any functional $\Phi$ on the loop space, vanishing on loops of
arbitrary small length,
\[
\mathbb{E}(e^{i\sum_{l\in\mathcal{L}_{\alpha}}\Phi(l)})=\exp(\alpha
\int(e^{i\Phi(l)}-1)\mu(dl))
\]
and for any positive functional $\Psi$ on the loops space,%
\begin{equation}
\mathbb{E}(e^{-\sum_{l\in\mathcal{L}_{\alpha}}\Psi(l)})=\exp(\alpha
\int(e^{-\Psi(l)}-1)\mu(dl)) \label{Poilapl}%
\end{equation}
It follows that if $\Phi$ is $\mu$-integrable, $\sum_{l\in\mathcal{L}_{\alpha
}}\Phi(l)$ is integrable and
\[
\mathbb{E}(\sum_{l\in\mathcal{L}_{\alpha}}\Phi(l))=\int\Phi(l)\alpha\mu(dl).
\]
And if in addition $\Phi^{2}$ is $\mu$-integrable, $\sum_{l\in\mathcal{L}_{\alpha}}%
\Phi(l)$ is square-integrable and
\[
\mathbb{E(}(\sum_{l\in\mathcal{L}_{\alpha}}\Phi(l)))^{2}=\int\Phi^{2}%
(l)\alpha\mu(dl)+(\int\Phi(l)\alpha\mu(dl))^{2}.
\]

Recall also ''Campbell formula'' (Cf formula 3-13 in \cite{King}): For any
system of non negative or $\mu$-integrable\ loop functionals $F_{i}$,%

\begin{equation}
\mathbb{E}\Big(  \sum_{l_{1}\neq l_{2}...\neq l_{k}\in\mathcal{L}_{\alpha}%
}\prod F_{i}(l_{i})\Big)  =\prod_{1}^{k}\alpha\mu(F_{i}) \label{campb}%
\end{equation}

Note the same results hold for functionals of $\mathcal{LP}$.

Of course, $\mathcal{L}_{\alpha}$ includes trivial loops. The periods
$\tau_{l}$\ of the trivial loops based at any point $x$ form a Poisson process
of intensity $\alpha\frac{e^{-t}}{t}$. It follows directly from this (
\cite{Pitm} and references therein) that we have the following

\begin{proposition}
The sum of these periods $\sum\tau_{l}$ and the set of ''frequencies''
$\frac{\tau_{l}}{\sum\tau_{l}}$ (in decreasing order) are independent and
follow repectively a $\Gamma(\alpha)$ and a $Poisson-Dirichlet(0,\alpha)$ distribution.\bigskip
\end{proposition}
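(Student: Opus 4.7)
The plan is to compute the Laplace transform of the total period and then appeal to the classical identification of normalized Poisson atoms with Lévy measure proportional to $t^{-1}e^{-t}\,dt$ as Poisson--Dirichlet distributed. First I would evaluate $\mathbb{E}[e^{-uS}]$ with $S=\sum_l\tau_l$. Applying the exponential formula \eqref{Poilapl} to $\Psi(l)=u\tau_l$ gives
\[
\mathbb{E}[e^{-uS}] \;=\; \exp\!\Big(\alpha\int_0^\infty(e^{-ut}-1)\,\frac{e^{-t}}{t}\,dt\Big).
\]
Rewriting the integrand as $(e^{-(u+1)t}-e^{-t})/t$ and invoking Frullani's identity $\int_0^\infty(e^{-at}-e^{-bt})\,dt/t=\log(b/a)$ evaluates the integral to $-\log(1+u)$. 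Hence $\mathbb{E}[e^{-uS}]=(1+u)^{-\alpha}$, the Laplace transform of $\Gamma(\alpha)$.

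Next, I would recognise the intensity $\alpha t^{-1}e^{-t}\,dt$ as the Lévy measure, at fixed time $\alpha$, of a standard Gamma subordinator $(G_s)_{s\ge 0}$; the family $(\tau_l)$ is then distributed as the jumps of $(G_s)$ on $[0,\alpha]$ and $S=G_\alpha$. The remaining claims reduce to Kingman's classical theorem (cf.\ \cite{Pitm} and the references therein) that the ranked sequence of normalized jumps of a Gamma subordinator at any fixed time is Poisson--Dirichlet distributed and independent of the total value.

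For a more self-contained derivation, I would use size-biased sampling: pick an atom $\tau^{*}$ with probability proportional to $\tau_l/S$ and apply the Palm formula for Poisson processes to show that $\tau^{*}/S$ is $\mathrm{Beta}(1,\alpha)$-distributed and independent of both $S$ and the residual configuration, which after removal of $\tau^{*}$ is again a Poisson point process with intensity $\alpha t^{-1}e^{-t}\,dt$. Iterating produces the $\mathrm{GEM}(0,\alpha)$ stick-breaking representation of the size-biased frequencies, whose decreasing rearrangement is by definition $PD(0,\alpha)$, and the same recursion gives independence of $S$ from all ratios $\tau_l/S$. The main obstacle is this last step: the $\Gamma(\alpha)$ identification follows immediately from Campbell's exponential formula, but simultaneously establishing the $PD(0,\alpha)$ law together with the independence requires careful Palm calculus combined with the Gamma--Beta algebra.
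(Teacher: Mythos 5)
Your main line of argument — identify the periods as the jumps of a Gamma subordinator at time $\alpha$ (Poisson intensity $\alpha t^{-1}e^{-t}\,dt$), check the $\Gamma(\alpha)$ law of the sum via the exponential formula and Frullani, and invoke the classical Kingman/Pitman result for the $PD(0,\alpha)$ law of the ranked normalized jumps and its independence from the total — is correct and is essentially the paper's own proof, which consists precisely of this identification together with the citation of \cite{Pitm}. One caveat on your optional self-contained sketch: after removing a size-biased atom the residual configuration is \emph{not} again a Poisson process with intensity $\alpha t^{-1}e^{-t}\,dt$ (its total is no longer $\Gamma(\alpha)$; the Palm computation leaves a tilt by $(t+S)^{-1}$), and the correct invariance to iterate is that the residual frequencies rescaled by $1-W_{1}$, with $W_{1}\sim\mathrm{Beta}(1,\alpha)$, reproduce the normalized configuration independently of $W_{1}$ and of the total — this does not affect your primary route.
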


Note that by the restriction property, $\mathcal{L}_{\alpha}^{D}%
=\{l\in\mathcal{L}_{\alpha},l\subseteq D\}$\ is a Poisson process of loops
with intensity $\mu^{D}$, and that $\mathcal{L}_{\alpha}^{D}$ \ is independent
of $\mathcal{L}_{\alpha}\backslash\mathcal{L}_{\alpha}^{D}$.

We denote by $\mathcal{DL}_{\alpha}$ the set of non trivial discrete loops in
$\mathcal{L}_{\alpha}$. Then, $\mathbb{P(}\#\mathcal{DL}_{\alpha
}=k)=e^{-\alpha\mu(p>1)}\frac{\mu(p>1)^{k}}{k!}$ and conditionally to their
number, the discrete loops are independently sampled according to
$\frac{1}{\mu(p>1)}\mu1_{\{p>1\}}$. In particular, if $l_{1},l_{2},...,l_{k}$
are distinct discrete loops%
\begin{multline*}
\mathbb{P(}\mathcal{DL}_{\alpha}=\{l_{1},l_{2},...,l_{k}\})=e^{-\alpha
\mu(p>1)}\alpha^{k}\mu(l_{1})...\mu(l_{k})\\
=\alpha^{k}[\frac{\det(G)}{\prod_{x}\lambda_{x}}]^{\alpha}\prod_{x,y}%
C_{x,y}^{\sum_{1}^{k}N_{x,y}(l_{i})}\prod_{x}\lambda_{x}^{-\sum_{1}^{k}%
N_{x}(l_{i})}.
\end{multline*}

The general result (when the $l_{i}$'s are not necessarily distinct) follows
from the multinomial distibution.

\index{occupation field, local times}We can associate to $\mathcal{L}_{\alpha}$ a $\sigma$-finite measure (in fact
as we will see, it is finite when $X$ is finite, and more generally if $G$ is
trace class) called local time or occupation field
\[
\widehat{\mathcal{L}_{\alpha}}=\sum_{l\in\mathcal{L}_{\alpha}}\widehat{l}.
\]

Then, for any non-negative measure $\chi$ on $X$
\[
\mathbb{E}(e^{-\left\langle \widehat{\mathcal{L}_{\alpha}},\chi\right\rangle
})=\exp\Big(          \alpha\int(e^{-\left\langle \widehat{l},\chi
\right\rangle }-1)d\mu(l)\Big)          .
\]
and therefore by proposition \ref{LAPLT} we have

\begin{corollary}
\label{lapl} $\mathbb{E}(e^{-\left\langle \widehat{\mathcal{L}_{\alpha}}%
,\chi\right\rangle })=\det(I+M_{\sqrt{\chi}}GM_{\sqrt{\chi}})^{-\alpha
}=(\frac{\det(G_{\chi})}{\det(G)})^{\alpha}.$
\end{corollary}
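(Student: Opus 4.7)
The statement is essentially an immediate consequence of two ingredients already established: the Laplace functional formula for Poisson point processes (formula (Poilapl) in the excerpt) and Proposition \ref{LAPLT} evaluating the single-loop integral. My plan is to simply combine them.

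First I would observe that, by the very definition $\widehat{\mathcal{L}_\alpha}=\sum_{l\in\mathcal{L}_\alpha}\widehat{l}$, we have
\[
\langle \widehat{\mathcal{L}_\alpha},\chi\rangle \;=\; \sum_{l\in\mathcal{L}_\alpha}\langle \widehat{l},\chi\rangle,
\]
so that the desired expectation is a Poissonian exponential functional with additive test function $\Psi(l)=\langle \widehat{l},\chi\rangle\ge 0$. Before invoking (Poilapl) I would note that $\Psi$ vanishes on loops of arbitrarily small length (since $\widehat{l}^x\le T(l)/\inf_y \lambda_y$), so the formula applies without integrability issues on the trivial-loop part.

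Then applying (Poilapl) gives
\[
\mathbb{E}\bigl(e^{-\langle \widehat{\mathcal{L}_\alpha},\chi\rangle}\bigr)
\;=\;\exp\!\Big(\alpha\int (e^{-\langle \widehat{l},\chi\rangle}-1)\,\mu(dl)\Big).
\]
Now I substitute Proposition \ref{LAPLT}, which identifies the integral inside the exponential with $-\log\det(I+M_{\sqrt{\chi}}GM_{\sqrt{\chi}})=\log(\det(G_\chi)/\det(G))$. This yields both claimed expressions at once, the equality between them being just the identity $\det(I+M_{\sqrt{\chi}}GM_{\sqrt{\chi}})^{-1}=\det(G_\chi)/\det(G)$ which was recorded in (\ref{F1}) (using the standard fact $\det(I+GM_\chi)=\det(I+M_{\sqrt{\chi}}GM_{\sqrt{\chi}})$ already noted in the paragraph preceding Proposition \ref{LAPLT}).

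There is no real obstacle: the only minor point worth being careful about is the applicability of (Poilapl), i.e. checking that the functional $\Psi$ is $\mu$-integrable enough for the Laplace formula to be used in the form stated; this is transparent once one notes $1-e^{-\Psi}\le \Psi$ and recalls that $\mu(\widehat{l}^x)=G^{x,x}<\infty$, so $\int(1-e^{-\langle \widehat{l},\chi\rangle})\,d\mu\le \langle G\chi,1\rangle<\infty$ and the exponential identity for Poisson processes applies unambiguously.
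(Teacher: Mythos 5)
Your argument is correct and is exactly the paper's proof: the Poisson Laplace functional formula (\ref{Poilapl}) applied to $\Psi(l)=\left\langle \widehat{l},\chi\right\rangle$, followed by Proposition \ref{LAPLT} and the identity (\ref{F1}). The only slight imprecision is your remark that $\Psi$ ``vanishes'' on short loops (it is merely small there), but this is immaterial since (\ref{Poilapl}) is stated for arbitrary non negative loop functionals, so no such condition is needed.
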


\bigskip

Many calculations follow from this result.

Note first that $\mathbb{E}(e^{-t\widehat{\mathcal{L}_{\alpha}}^{x}%
})=(1+tG^{x,x})^{-\alpha}$. Therefore $\widehat{\mathcal{L}_{\alpha}}^{x}$
follows a gamma distribution $\Gamma(\alpha,G^{x,x})$, with density
$1_{\{x>0\}}\frac{e^{-\frac{x}{G^{xx}}}}{\Gamma(\alpha)}\frac{x^{\alpha-1}%
}{(G^{xx})^{\alpha}}$ (in particular, an exponential distribution of mean
$G^{x,x}$\ for $\alpha=1$, as $\widehat{l}^{x}$ under $\frac{\mu^{y,x}%
}{G^{y,x}}$). When we let $\alpha$ vary as a time parameter, we get a family
of gamma subordinators, which can be called a ''multivariate gamma
subordinator''\footnote{A subordinator is an increasing Levy process. See for
example reference \cite{Bert}.}.

We check in particular that $\mathbb{E}(\widehat{\mathcal{L}_{\alpha}}%
^{x})=\alpha G^{x,x}$ which follows directly from $\mu(\widehat{l}%
_{x})=G^{x,x}$.

\begin{exercise}
\label{poidir}If $\mathcal{L}_{\alpha}=\{l_{i}\}$, check that the set of
''frequencies'' $\frac{\widehat{l}_{i}^{x}}{\widehat{\mathcal{L}_{\alpha}}%
^{x}}$ follows a Poisson-Dirichlet distribution of parameters $(0,\alpha)$.
\end{exercise}

Hint: use the $\mu$-distribution of $\widehat{l}^{x}$.\medskip\newline Note
also that for $\alpha>1$,
\[
{\mathbb{E}((1-\exp(-\frac{\widehat{\mathcal{L}_{\alpha}}^{x}}{G^{x,x}}%
))^{-1})=\zeta(\alpha)}.
\]
For two points, it follows easily from corollary \ref{lapl} that:%
\[
\mathbb{E}(e^{-t\widehat{\mathcal{L}_{\alpha}}^{x}}e^{-s\widehat
{\mathcal{L}_{\alpha}}^{y}})=((1+tG^{x,x})(1+sG^{y,y})-st(G^{x,y}%
)^{2})^{-\alpha}%
\]
This allows to compute the joint density of $\widehat{\mathcal{L}_{\alpha}%
}^{x}$ and $\widehat{\mathcal{L}_{\alpha}}^{y}$ in terms of Bessel and Struve functions.

\bigskip

We can condition the loop configuration on the set of associated non trivial
discrete loops by using the restricted $\sigma$-field $\sigma(\mathcal{DL}%
_{\alpha})$\ which contains the variables $N_{x,y}$.\ We see from
\eqref{triv}  and \eqref{d}  \ that%

\[
\mathbb{E}\Big(          e^{-\left\langle \widehat{\mathcal{L}_{\alpha}}%
,\chi\right\rangle }|\mathcal{DL}_{\alpha}\Big)          =\prod_{x}%
(\frac{\lambda_{x}}{\lambda_{x}+\chi_{x}})^{N_{x}^{(\alpha)}+1}%
\]
The distribution of $\{N_{x}^{(\alpha)},x\in X\}$ follows easily, from
corollary \ref{lapl}\ in terms of generating functions:
\begin{equation}
\mathbb{E}(\prod_{x}s_{x}^{N_{x}^{(\alpha)}+1})=\det(\delta_{x,y}%
+\sqrt{\frac{\lambda_{x}(1-s_{x})}{s_{x}}}G_{x,y}\sqrt{\frac{\lambda
_{y}(1-s_{y})}{s_{y}}})^{-\alpha} \label{GF}%
\end{equation}
so that the vector of components $N_{x}^{(\alpha)}$ follows a multivariate
negative binomial distribution (see for example \cite{VJ2}).

It follows in particular that $N_{x}^{(\alpha)}$\ follows a negative binomial
distribution of parameters $-\alpha$ and $\frac{1}{\lambda_{x}G^{xx}}$. Note
that for $\alpha=1$, $N_{x}^{(1)}+1$ follows a geometric distribution of
parameter $\frac{1}{\lambda_{x}G^{xx}}$.

Note finally that in the recurrent case, with the setting and the notations of
subsection \ref{pres}, denoting $\mathcal{L}_{\alpha\varepsilon}%
^{(\varepsilon\kappa)}$ the Poisson process of loops of intensity
$\varepsilon\alpha\mu^{(\varepsilon\kappa)}$, we get that the associated
occupation field converges in distribution towards a random constant following
a Gamma distribution.

Let us recall one important property of Poisson processes.

\begin{proposition}
\label{camca}Given any bounded functional $\Phi$ on loops configurations and any integrable loop functional $F$, we have:%
\[
\mathbb{E}(\sum_{l\in\mathcal{L}_{\alpha}}F(l)\Phi(\mathcal{L}_{\alpha}%
))=\int\mathbb{E}(\Phi(\mathcal{L}_{\alpha}\cup\{l\}))\alpha F(l)\mu(dl).
\]
\end{proposition}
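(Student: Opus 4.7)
The plan is to reduce to exponential functionals of the configuration and then identify both sides by differentiating the Laplace functional formula \eqref{Poilapl}; this is the Slivnyak--Mecke formula specialized to our Poisson process of loops.

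First I would establish the identity when $\Phi$ has the special form $\Phi(\mathcal{L})=\exp(-\sum_{l\in\mathcal{L}}g(l))$ for some non-negative measurable $g$ with $\int(1-e^{-g})\,d\mu<\infty$, and when $F$ is non-negative, bounded, and $\mu$-integrable. For $t\ge 0$ consider
$$\phi(t)=\mathbb{E}\Bigl(\exp\bigl(-\sum_{l\in\mathcal{L}_{\alpha}}(g+tF)(l)\bigr)\Bigr)=\exp\Bigl(\alpha\int(e^{-(g+tF)(l)}-1)\,\mu(dl)\Bigr),$$
where the second equality is \eqref{Poilapl}. Differentiating the closed form at $t=0^{+}$ gives
$$-\phi'(0^{+})=\alpha\,\mathbb{E}(\Phi(\mathcal{L}_{\alpha}))\int F(l)\,e^{-g(l)}\,\mu(dl).$$
On the other hand, differentiating under the expectation (justified by dominated convergence with dominant $\sum_{l}F(l)$, whose expectation $\alpha\int F\,d\mu$ is finite by the first-moment formula recorded in the excerpt) yields
$$-\phi'(0^{+})=\mathbb{E}\Bigl(\sum_{l\in\mathcal{L}_{\alpha}}F(l)\,e^{-\sum_{l'\in\mathcal{L}_{\alpha}}g(l')}\Bigr)=\mathbb{E}\Bigl(\sum_{l}F(l)\,\Phi(\mathcal{L}_{\alpha})\Bigr).$$
Since adjoining an extra loop $l$ multiplies the exponential by $e^{-g(l)}$, we have $\mathbb{E}(\Phi(\mathcal{L}_{\alpha}\cup\{l\}))=e^{-g(l)}\mathbb{E}(\Phi(\mathcal{L}_{\alpha}))$, so comparing the two expressions for $-\phi'(0^{+})$ gives the desired identity for such $\Phi$.

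Second, I would extend to arbitrary bounded measurable $\Phi$ by a functional monotone class argument. The family of exponential functionals above is stable under pointwise multiplication (since $e^{-g_{1}}e^{-g_{2}}=e^{-(g_{1}+g_{2})}$) and generates the $\sigma$-field on the space of locally finite loop configurations. Both sides of the claimed identity are linear in $\Phi$ and continuous under bounded pointwise convergence (the dominating measure being the finite positive measure $\alpha F(l)\,\mu(dl)\otimes\mathbb{P}$), so the monotone class theorem propagates the equality to all bounded measurable $\Phi$. Finally, the restriction that $F\ge 0$ is removed by splitting $F=F^{+}-F^{-}$ and using linearity.

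The only genuine difficulty is the justification of differentiating under the expectation and interchanging the sum with $\mathbb{E}$. This hinges entirely on the integrability estimate $\mathbb{E}(\sum_{l}|F(l)|)=\alpha\int|F|\,d\mu<\infty$, itself a consequence of the hypothesis and of the Campbell formula \eqref{campb} with $k=1$. Once this integrability is in place, and assuming we avoid the infinite reservoir of arbitrarily short trivial loops by approximation (e.g., by first treating $F$ supported on loops of length $\ge\varepsilon$ and letting $\varepsilon\downarrow 0$ by dominated convergence), the argument reduces to a one-line differentiation of the Laplace functional.
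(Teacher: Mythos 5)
Your proof is correct, but it is not the argument the paper gives: the paper proves the proposition by testing it on functionals of the form $\sum_{l_{1}\neq\dots\neq l_{q}\in\mathcal{L}_{\alpha}}\prod_{j=1}^{q}G_{j}(l_{j})$ with the $G_{j}$ bounded and $\mu$-integrable, applying the Campbell formula \eqref{campb} to compute both sides explicitly (each equals $\alpha^{q}\sum_{j}\mu(FG_{j})\prod_{l\neq j}\mu(G_{l})+\alpha^{q+1}\mu(F)\prod_{j}\mu(G_{j})$), and then invoking the fact that these functionals span an algebra separating configurations. You instead test on exponential functionals $\Phi=\exp(-\sum_{l}g(l))$ and differentiate the Laplace functional \eqref{Poilapl} in the direction $F$, using $\mathbb{E}(\Phi(\mathcal{L}_{\alpha}\cup\{l\}))=e^{-g(l)}\mathbb{E}(\Phi(\mathcal{L}_{\alpha}))$ to identify the two expressions for $-\phi'(0^{+})$, and close with a monotone class argument; this is exactly the alternative route the paper proposes in the exercise immediately following the proposition, so it is a legitimate and anticipated variant. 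The trade-off is clear: the Campbell-formula route is purely algebraic, avoids any differentiation or interchange of limits, and exhibits the common value of both sides in closed form, whereas your Laplace-functional route works with a multiplicative generating family and is essentially the Slivnyak--Mecke argument, at the price of justifying differentiation under the expectation, which you handle correctly via the first-moment bound $\mathbb{E}(\sum_{l}|F(l)|)=\alpha\mu(|F|)<\infty$ coming from \eqref{campb} with $k=1$. Your extra $\varepsilon$-truncation to dodge the short trivial loops is harmless but unnecessary, since the integrability hypothesis on $F$ already controls that contribution.
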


\begin{proof}
This is proved by considering first for $\Phi(\mathcal{L}_{\alpha})$ the
functionals of the form $\sum_{l_{1}\neq l_{2}...\neq l_{q}\in\mathcal{L}%
_{\alpha}}\prod_{1}^{q}G_{j}(l_{j}))$ (with $G_{j}$ bounded and $\mu
$-integrable) which span an algebra separating distinct configurations and
applying formula \eqref{campb}  : Then, the common value of both members is
$\alpha^{q}\sum_{1}^{q}\mu(FG_{j})\prod_{l\neq j}\mu(G_{l})+\alpha^{q+1}%
\mu(F)\prod_{1}^{q}\mu(G_{j})$
\end{proof}

\begin{exercise}
Give an alternative proof of this proposition using formula \eqref{Poilapl}  .
\end{exercise}

The above proposition applied to $F(l)=\widehat{l}^{x}$,$N_{x,y}^{(\alpha)}$
and propositions \ref{bridg} and \ref{bridg2}\ yield the following:

\begin{corollary}
\label{camcam}%
\[
\mathbb{E}(\Phi(\mathcal{L}_{\alpha})\widehat{\mathcal{L}_{\alpha}}%
^{x})=\alpha\int\mathbb{E}(\Phi(\mathcal{L}_{\alpha}\cup\{\gamma
\}))\widehat{\gamma}^{x}\mu(d\gamma)=\alpha\int\mathbb{E}(\Phi(\mathcal{L}%
_{\alpha}\cup\{\gamma\}))\mu^{x,x}(d\gamma)
\]
and if $x\neq y$
\[
\mathbb{E}(\Phi(\mathcal{L}_{\alpha})N_{x,y}^{(\alpha)})=\alpha\int
\mathbb{E}(\Phi(\mathcal{L}_{\alpha}\cup\{\gamma\}))N_{x,y}(\gamma)\mu
(d\gamma)=\alpha C_{x,y}\int\mathbb{E}(\Phi(\mathcal{L}_{\alpha}\cup
\{\gamma\}))\mu^{x,y}(d\gamma).
\]
\end{corollary}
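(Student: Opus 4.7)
The plan is to derive each of the two displayed identities in the same two-step pattern: first use the Palm-type formula of Proposition \ref{camca} to convert the expectation of a sum over loops into an integral against $\mu$, then use Proposition \ref{bridg2} (resp.\ Proposition \ref{bridg}) to recognize the weighted loop measure as a bridge measure.

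For the occupation-field identity, I would first apply Proposition \ref{camca} with $F(l)=\widehat{l}^{x}$, since $\widehat{\mathcal{L}_{\alpha}}^{x}=\sum_{l\in\mathcal{L}_{\alpha}}\widehat{l}^{x}$; this yields immediately the first equality
\[
\mathbb{E}(\Phi(\mathcal{L}_{\alpha})\widehat{\mathcal{L}_{\alpha}}^{x})
=\alpha\int\mathbb{E}(\Phi(\mathcal{L}_{\alpha}\cup\{\gamma\}))\,\widehat{\gamma}^{x}\,\mu(d\gamma).
\]
For the second equality I invoke Proposition \ref{bridg2}, which says that $\widehat{\gamma}^{x}\mu(d\gamma)=\mu^{x,x}(d\gamma)$ as measures on the set of loops passing through $x$ (and both sides vanish on loops avoiding $x$); substituting this identity under the integral gives the claim.

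For the jump-count identity I proceed analogously: Proposition \ref{camca} applied to $F(l)=N_{x,y}(l)$ gives
\[
\mathbb{E}(\Phi(\mathcal{L}_{\alpha})\,N_{x,y}^{(\alpha)})
=\alpha\int\mathbb{E}(\Phi(\mathcal{L}_{\alpha}\cup\{\gamma\}))\,N_{x,y}(\gamma)\,\mu(d\gamma),
\]
which is the first equality. For the second equality I use Proposition \ref{bridg} with the roles of $x$ and $y$ swapped: it identifies $N_{x,y}(l)\mu(dl)$ with $C_{x,y}\,\mu^{y,x}(dl)$ as loop measures (the extension being by a single added $x\to y$ jump). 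To recast $\mu^{y,x}$ as $\mu^{x,y}$, I then invoke the time-reversal invariance of $\mu$ together with the fact that the Poisson ensemble $\mathcal{L}_{\alpha}$ is itself invariant under independent time-reversal of each of its loops; under this reversal, $\mu^{y,x}(d\gamma)$ becomes $\mu^{x,y}(d\gamma)$ and $\mathbb{E}(\Phi(\mathcal{L}_{\alpha}\cup\{\gamma\}))$ is unchanged in distribution, yielding the second equality.

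The main bookkeeping point — and really the only non-mechanical step — is the orientation check in the second formula: Proposition \ref{bridg} relates $\mu^{x,y}$ to $N_{y,x}$, not to $N_{x,y}$, so matching the stated identity requires that time-reversal argument. Everything else is a direct substitution, and in particular the sigma-finite character of $\mu$ poses no difficulty here because $N_{x,y}$ and $\widehat{\gamma}^{x}$ both have finite $\mu$-integral, so Proposition \ref{camca} applies without modification.
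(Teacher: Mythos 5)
Your overall route is exactly the paper's: the corollary is obtained by applying Proposition \ref{camca} to $F(l)=\widehat{l}^{x}$ and $F(l)=N_{x,y}(l)$ (both are $\mu$-integrable, since $\mu(\widehat{l}^{x})=G^{x,x}$ and $\mu(N_{x,y})=C_{x,y}G^{x,y}$), and then rewriting the weighted loop measures via Propositions \ref{bridg2} and \ref{bridg}. The first display, and the first equality of the second display, are handled correctly and just as in the paper.

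The weak point is your time-reversal step. What Proposition \ref{bridg} gives directly is $N_{x,y}(\gamma)\mu(d\gamma)=C_{x,y}\mu^{y,x}(d\gamma)$ (after the natural extension of bridges to loops), so the remaining task is to replace $\mu^{y,x}$ by $\mu^{x,y}$ under the integral against $h(\gamma):=\mathbb{E}(\Phi(\mathcal{L}_{\alpha}\cup\{\gamma\}))$. Reversal invariance of $\mu$ does identify $\mu^{y,x}$ with the image of $\mu^{x,y}$ under time reversal $R$, but the invariance in law of $\mathcal{L}_{\alpha}$ under reversal of its own loops only yields $\mathbb{E}(\Phi(R\mathcal{L}_{\alpha}\cup\{\gamma\}))=\mathbb{E}(\Phi(\mathcal{L}_{\alpha}\cup\{\gamma\}))$; it cannot reverse the fixed adjoined loop $\gamma$ inside $\Phi$. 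What you actually need is $h(R\gamma)=h(\gamma)$, i.e. that $\Phi$ is insensitive to the orientation of the added loop, and for a general bounded functional this fails: with $\Phi(\mathcal{L})=\prod_{l\in\mathcal{L}}1_{\{N_{y,x}(l)=0\}}$ the expression written with $\mu^{x,y}$ vanishes while the one weighted by $N_{x,y}(\gamma)$ does not. So either state the last expression with $\mu^{y,x}$ — which is what the substitution gives, and which is interchangeable with the printed form for every functional the corollary is applied to later (occupation fields, networks, and other quantities invariant under reversal of individual loops), since $C$ is symmetric — or add the hypothesis that $\Phi$ is reversal-invariant, after which your argument closes. Apart from this orientation bookkeeping (on which the paper itself is silent), your proof is correct and identical in substance to the paper's.
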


\begin{remark}
Proposition \ref{camca} and corollary \ref{camcam}\ can be easily generalized
to functionals of the Poisson process $\mathcal{LP}$.
\end{remark}

\begin{exercise}
Generalize corollary \ref{camcam} to $\widehat{\mathcal{L}_{\alpha}}%
^{x}\widehat{\mathcal{L}_{\alpha}}^{y}$, for $x\neq y$.
\end{exercise}

\section{Moments and polynomials of the occupation field\label{momts}}

It is easy to check (and well known from the properties of the gamma
distributions) that the moments of $\widehat{\mathcal{L}_{\alpha}}^{x}$ are
related to the factorial\ moments of $N_{x}^{(\alpha)}$\ :%
\[
\mathbb{E}((\widehat{\mathcal{L}_{\alpha}}^{x})^{k}|\mathcal{DL}_{\alpha
})=\frac{(N_{x}^{(\alpha)}+k)(N_{x}^{(\alpha)}+k-1)...(N_{x}^{(\alpha)}%
+1)}{k!\lambda_{x}^{k}}%
\]

\begin{exercise}
Denoting $\mathcal{L}_{\alpha}^{+}$ the set of non trivial loops in
$\mathcal{L}_{\alpha}$, define
\[
\widehat{\mathcal{L}_{\alpha}}^{x,(k)}=\sum_{m=1}^{k}\sum_{k_{1}+...+k_{m}%
=k}\sum_{l_{1}\neq l_{2}...\neq l_{m}\in\mathcal{L}_{\alpha}^{+}}\prod
_{j=1}^{m}\widehat{l_{j}}^{x,(k_{j})}.
\]
Deduce from exercise \ref{self} that $\mathbb{E}(\widehat{\mathcal{L}_{\alpha
}}^{x,(k)}|\mathcal{DL}_{\alpha})=\frac{1}{k!\lambda_{x}^{k}}1_{\{N_{x}\geq
k\}}(N_{x}^{(\alpha)}-k+1)...(N_{x}^{(\alpha)}-1)N_{x}^{(\alpha)}$
\end{exercise}

\bigskip

\index{Laguerre polynomials}It is well known that Laguerre polynomials $L_{k}^{(\alpha-1)}$ with
generating function
\[
\sum_{0}^{\infty}t^{k}L_{k}^{(\alpha-1)}(u)=\frac{e^{-\frac{ut}{1-t}}%
}{(1-t)^{\alpha}}%
\]
are orthogonal for the $\Gamma(\alpha)$ distribution. They have mean zero and
variance $\frac{\Gamma(\alpha+k)}{k!}$. Hence if we set $\sigma_{x}=G^{x,x}%
$and $P_{k}^{\alpha,\sigma}(x)=(-\sigma)^{k}L_{k}^{(\alpha-1)}(\frac{x}%
{\sigma})$, the random variables $P_{k}^{\alpha,\sigma_{x}}(\widehat
{\mathcal{L}_{\alpha}}^{x})$ are orthogonal with mean $0$\ and variance
$\sigma^{2k}\frac{\Gamma(\alpha+k)}{k!}$, for $k>0$.

Note that $P_{1}^{\alpha,\sigma_{x}}(\widehat{\mathcal{L}_{\alpha}}%
^{x})=\widehat{\mathcal{L}_{\alpha}}^{x}-\alpha\sigma_{x}=\widehat
{\mathcal{L}_{\alpha}}^{x}-\mathbb{E}(\widehat{\mathcal{L}_{\alpha}}^{x})$.
\emph{It will be denoted }$\widetilde{\mathcal{L}_{\alpha}}^{x}$.

Moreover, we have $\sum_{0}^{\infty}t^{k}P_{k}^{\alpha,\sigma}(u)=\sum(-\sigma
t)^{k}L_{k}^{(\alpha-1)}(\frac{u}{\sigma})=\frac{e^{\frac{ut}{1+\sigma t}}%
}{(1+\sigma t)^{\alpha}}$

Note that by corollary \ref{lapl},
\begin{multline*}
\mathbb{E}(\frac{e^{\frac{\widehat{\mathcal{L}_{\alpha}}^{x}t}{1+\sigma_{x}t}%
}}{(1+\sigma_{x}t)^{\alpha}}\frac{e^{\frac{\widehat{\mathcal{L}_{\alpha}}%
^{y}s}{1+\sigma_{y}s}}}{(1+\sigma_{y}s)^{\alpha}})\\
=\frac{1}{(1+\sigma_{x}t)^{\alpha}(1+\sigma_{y}s)^{\alpha}}((1-\frac{\sigma
_{x}t}{1+\sigma_{x}t})(1-\frac{\sigma_{y}s}{1+\sigma_{y}s})-\frac{t}%
{1+\sigma_{x}t}\frac{s}{1+\sigma_{y}s}(G^{x,y})^{2})^{-\alpha}\\
=(1-st(G^{x,y})^{2})^{-\alpha}.
\end{multline*}
Therefore, we get, by developping in entire series in $(s,t)$ and identifying
the coefficients:
\begin{equation}
\mathbb{E}(P_{k}^{\alpha,\sigma_{x}}(\widehat{\mathcal{L}_{\alpha}}^{x}%
),P_{l}^{\alpha,\sigma_{y}}(\widehat{\mathcal{L}_{\alpha}}^{y}))=\delta
_{k,l}(G^{x,y})^{2k}\frac{\alpha(\alpha+1)...(\alpha+k-1)}{k!} \label{corr}%
\end{equation}
Let us stress the fact that $G^{x,x}$ and $G^{y,y}$ do not appear on the right
hand side of this formula. This is quite important from the renormalisation
point of view, as we will consider in the last section the two dimensional
Brownian motion for which the Green function diverges on the diagonal.

More generally one can prove similar formulas for products of higher order.

It should also be noted that if we let $\alpha$ increase, $(1+\sigma_{x}t)^{-\alpha}\exp({\frac{\widehat
{\mathcal{L}_{\alpha}}^{x}t}{1+\sigma_{x}t}})$ and $P_{k}^{\alpha,\sigma_{x}%
}(\widehat{\mathcal{L}_{\alpha}}^{x})$ are $\sigma(\mathcal{L}_{\alpha}%
)$-martin\-gales with expectations respectively equal to $1$ and $0$.

Note that since $G_{\chi}M_{\chi}$\ is a contraction, from determinant
expansions given in \cite{VJ1} and \cite{VJ2}, we have%
\begin{equation}
\det(I+M_{\sqrt{\chi}}GM_{\sqrt{\chi}})^{-\alpha}=1+\sum_{k=1}^{\infty
}\frac{(-1)^{k}}{k!}\sum\chi_{i_{1}}...\chi_{i_{k}}Per_{\alpha}(G_{i_{l}%
,i_{m}},1\leq l,m\leq k). \label{expan}%
\end{equation}
\index{ $\alpha$-permanent}The $\alpha$-permanent $Per_{\alpha}$ is defined as $\sum_{\sigma
\in\mathcal{S}_{k}}\alpha^{m(\sigma)}G_{i_{1},i_{\sigma(1)}}...G_{i_{k}%
,i_{\sigma(k)}}$ with $m(\sigma)$ denoting the number of cycles in $\sigma$.
Then, from corollary \ref{lapl}, it follows that:%
\[
\mathbb{E}(\left\langle \widehat{\mathcal{L}_{\alpha}},\chi\right\rangle
^{k})=\sum\chi_{i_{1}}...\chi_{i_{k}}Per_{\alpha}(G_{i_{l},i_{m}},1\leq
l,m\leq k).
\]
Note that an explicit form for the multivariate negative binomial
distribution, and therefore, a series expansion for the density of the
multivariate gamma distribution, follows directly (see \cite{VJ2}) from this
determinant expansion.

It is actually not difficult to give a direct proof of this result. Thus, the
Poisson process of loops provides a natural probabilistic proof and
interpretation of this combinatorial identity (see \cite{VJ2} for an
historical view of the subject).

\bigskip We can show in fact that:

\begin{proposition}
For any $(x_{1},...x_{k})$ in $X^{k}$, $\mathbb{E}(\widehat{\mathcal{L}%
_{\alpha}}^{x_{1}}...\widehat{\mathcal{L}_{\alpha}}^{x_{k}})=Per_{\alpha
}(G^{x_{l},x_{m}},1\leq l,m\leq k)$
\end{proposition}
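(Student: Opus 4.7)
The plan is to derive the moment identity by polarizing the scalar identity already noted right after expansion (\ref{expan}), namely
\[
\mathbb{E}(\langle \widehat{\mathcal{L}_\alpha},\chi\rangle^k) \;=\; \sum_{i_1,\ldots,i_k} \chi_{i_1}\cdots\chi_{i_k}\,\mathrm{Per}_\alpha(G^{i_l,i_m},\,1\le l,m\le k).
\]
This identity itself is obtained by Taylor-expanding both sides of Corollary \ref{lapl} in powers of $\chi$: the left side yields $\sum_{k\ge 0}\frac{(-1)^k}{k!}\mathbb{E}(\langle \widehat{\mathcal{L}_\alpha},\chi\rangle^k)$, while the right side, by the determinantal expansion (\ref{expan}), gives $\sum_{k\ge 0}\frac{(-1)^k}{k!}\sum \chi_{i_1}\cdots\chi_{i_k}\,\mathrm{Per}_\alpha(G^{i_l,i_m})$. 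The convergence of both series for $\chi$ of small enough total mass, together with joint analyticity in the components $\chi_x$, allows one to identify coefficients of each monomial in $\chi$.

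Next, I polarize. Apply the scalar identity with the test measure $\chi = \sum_{j=1}^{k} t_j \delta_{x_j}$, where $t_1,\ldots,t_k$ are formal variables. On the left-hand side,
\[
\langle \widehat{\mathcal{L}_\alpha},\chi\rangle^k \;=\; \Bigl(\sum_{j=1}^k t_j \widehat{\mathcal{L}_\alpha}^{x_j}\Bigr)^{\!k},
\]
and the coefficient of the squarefree monomial $t_1 t_2 \cdots t_k$ equals $k!\,\widehat{\mathcal{L}_\alpha}^{x_1}\cdots \widehat{\mathcal{L}_\alpha}^{x_k}$. On the right-hand side, the substitution $\chi_y = \sum_{j} t_j\,\mathbf{1}_{\{y=x_j\}}$ collapses the sum over $(i_1,\ldots,i_k)\in X^k$ to a sum over $(j_1,\ldots,j_k)\in\{1,\ldots,k\}^k$, yielding
\[
\sum_{j_1,\ldots,j_k} t_{j_1}\cdots t_{j_k}\,\mathrm{Per}_\alpha(G^{x_{j_l},x_{j_m}},\,1\le l,m\le k).
\]
The coefficient of $t_1\cdots t_k$ is $\sum_{\sigma\in \mathcal{S}_k}\mathrm{Per}_\alpha(G^{x_{\sigma(l)},x_{\sigma(m)}})$.

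The last step is to observe that the $\alpha$-permanent is invariant under simultaneous row-and-column permutation: for any $\sigma\in\mathcal{S}_k$ and any symmetric matrix $A=(A_{l,m})$, the definition $\mathrm{Per}_\alpha(A) = \sum_{\tau\in\mathcal{S}_k}\alpha^{m(\tau)}\prod_l A_{l,\tau(l)}$ is unchanged when one replaces $A_{l,m}$ by $A_{\sigma(l),\sigma(m)}$ (change of summation variable $\tau\mapsto \sigma\tau\sigma^{-1}$, which preserves the cycle number $m(\tau)$). Hence each of the $k!$ summands on the right equals $\mathrm{Per}_\alpha(G^{x_l,x_m})$, and dividing by $k!$ gives the claim.

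The only delicate point is the justification of coefficient-matching in the original Laplace-transform expansion, but since $\widehat{\mathcal{L}_\alpha}^{x}$ has all polynomial moments (it is distributed as $\Gamma(\alpha,G^{x,x})$), and since the determinantal expansion (\ref{expan}) converges absolutely when $\chi$ is small enough to make $\|G_\chi M_\chi\|<1$, both power series converge on a nonempty open set, which is enough to conclude. This is routine, so the proof is essentially a bookkeeping exercise with no real obstacle beyond the symmetry of $\mathrm{Per}_\alpha$ under conjugation by $\sigma$.
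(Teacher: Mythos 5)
Your proof is correct, but it takes a genuinely different route from the paper's. You polarize the identity $\mathbb{E}(\left\langle \widehat{\mathcal{L}_{\alpha}},\chi\right\rangle ^{k})=\sum\chi_{i_{1}}\cdots\chi_{i_{k}}Per_{\alpha}(G^{i_{l},i_{m}})$, which itself rests on corollary \ref{lapl} together with the Vere-Jones determinant expansion (\ref{expan}) imported from the literature; the substitution $\chi=\sum_{j}t_{j}\delta_{x_{j}}$, the extraction of the $t_{1}\cdots t_{k}$ coefficient, and the invariance of $Per_{\alpha}$ under simultaneous row-and-column permutation are all sound, and the polarization indeed works even when some of the $x_{j}$ coincide, since the variables $t_{j}$ are attached to indices rather than to points. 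The paper instead argues directly and probabilistically: by Campbell's formula (\ref{campb}) one sums the contributions of all partitions of the $k$ points into finitely many distinct loops of $\mathcal{L}_{\alpha}$, then over circular orderings of the points on each loop, and concludes with $\mu(\widehat{l}^{x_{1},...,x_{m}})=G^{x_{1},x_{2}}G^{x_{2},x_{3}}\cdots G^{x_{m},x_{1}}$; the cycles of the permutation in $Per_{\alpha}$ correspond exactly to the loops and the factor $\alpha^{m(\sigma)}$ to the intensity. The trade-off is this: your argument is a short bookkeeping exercise but uses the combinatorial expansion (\ref{expan}) as an analytic input, whereas the paper's computation is self-contained and is precisely what supports the remark preceding the proposition, namely that the loop ensemble provides a probabilistic proof and interpretation of that combinatorial identity; for that purpose your derivation would be circular, although as a proof of the proposition itself, given the cited expansion, it is perfectly valid.
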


\begin{proof}
The cycles of the permutations in the expression of $Per_{\alpha}$\ are
associated with point configurations on loops. We obtain the result by summing
the contributions of all possible partitions of the points $i_{1}...i_{k}$
into a finite set of distinct loops.\ We can then decompose again the
expression according to ordering of points on each loop. We can conclude by
using the formula $\mu(\widehat{l}^{x_{1},...,x_{m}})=G^{x_{1},x_{2}}%
G^{x_{2},x_{3}}...G^{x_{m},x_{1}}$ and Campbell formula \eqref{campb}  .
\end{proof}

\bigskip

\begin{remark} \label{stirl}
We can actually, in the special case $i_{1}=i_{2}=...=i_{k}=x$, check this
formula in in a different way. From the moments of the Gamma distribution, we
have that $\mathbb{E}((\widehat{\mathcal{L}_{\alpha}}^{x})^{n})=(G^{x,x}%
)^{n}\alpha(\alpha+1)...(\alpha+n-1)$ and the $\alpha$-permanent can be
written $\sum_{1}^{n}d(n,k)\alpha^{k}$ where the coefficients $d(n,k)$ are the
numbers of $n-$permutations with $k$ cycles (Stirling numbers of the first
kind). One checks that $d(n+1,k)=nd(n,k)+d(n,k-1)$.
\index{Stirling numbers}
\end{remark}

\bigskip

Let $\mathcal{S}_{k}^{0}$ be the set of permutations of $k$\ elements without
fixed point. They correspond to configurations without isolated points.

Set $Per_{\alpha}^{0}(G^{i_{l},i_{m}},1\leq l,m\leq k)=\sum_{\sigma
\in\mathcal{S}_{k}^{0}}\alpha^{m(\sigma)}G^{i_{1},i_{\sigma(1)}}%
...G^{i_{k},i_{\sigma(k)}}$. Then an easy calculation shows that:

\begin{corollary}
\label{mom}$\mathbb{E}(\widetilde{\mathcal{L}_{\alpha}}^{i_{1}}...\widetilde
{\mathcal{L}_{\alpha}}^{i_{k}})=Per_{\alpha}^{0}(G^{i_{l},i_{m}},1\leq l,m\leq k)\!$
\end{corollary}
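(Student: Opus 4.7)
The plan is to reduce Corollary \ref{mom} to the preceding proposition on moments of the (uncentered) occupation field by combinatorial inclusion--exclusion, isolating in a permutation its set of fixed points.

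First I would expand the product defining the centered moment:
\[
\mathbb{E}\Big(\prod_{j=1}^{k}\widetilde{\mathcal{L}_{\alpha}}^{i_{j}}\Big)
=\mathbb{E}\Big(\prod_{j=1}^{k}(\widehat{\mathcal{L}_{\alpha}}^{i_{j}}-\alpha G^{i_{j},i_{j}})\Big)
=\sum_{T\subseteq\{1,\dots,k\}}(-1)^{k-|T|}\Big(\prod_{j\notin T}\alpha G^{i_{j},i_{j}}\Big)\,\mathbb{E}\Big(\prod_{j\in T}\widehat{\mathcal{L}_{\alpha}}^{i_{j}}\Big).
\]
Apply the preceding proposition to the last expectation: each is an $\alpha$-permanent $Per_{\alpha}(G^{i_{l},i_{m}},\,l,m\in T)$ of the submatrix indexed by $T$.

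Next, on the permanent side, I would decompose any $\sigma\in\mathcal{S}_{k}$ according to its set of fixed points $F(\sigma)\subseteq\{1,\dots,k\}$. Writing $\sigma=\sigma'\cdot\mathrm{id}_{F}$ with $\sigma'$ a derangement of $\{1,\dots,k\}\setminus F$, each fixed point contributes a factor $\alpha G^{i_{j},i_{j}}$ to $\alpha^{m(\sigma)}\prod_{j}G^{i_{j},i_{\sigma(j)}}$, whence
\[
Per_{\alpha}(G^{i_{l},i_{m}},\,1\le l,m\le k)=\sum_{T\subseteq\{1,\dots,k\}}\Big(\prod_{j\notin T}\alpha G^{i_{j},i_{j}}\Big)\,Per_{\alpha}^{0}(G^{i_{l},i_{m}},\,l,m\in T).
\]
(Here $T$ plays the role of the complement of $F$, i.e.\ the non-fixed indices; and $Per_{\alpha}^{0}$ on the empty set is $1$, on a singleton is $0$.)

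Finally I would invert this relation by the standard signed-subset inclusion--exclusion: substituting the above identity (applied to each sub-index $T$) into the sum coming from the expansion, and using that $\sum_{T\colon S\subseteq T\subseteq\{1,\dots,k\}}(-1)^{k-|T|}$ vanishes unless $S=\{1,\dots,k\}$, all terms collapse except the one with no fixed point, yielding $Per_{\alpha}^{0}(G^{i_{l},i_{m}},\,1\le l,m\le k)$. The only mildly delicate point is bookkeeping the signs in this Möbius inversion on the Boolean lattice; once that is set up, everything is formal, and no new probabilistic input beyond the preceding proposition is required. An alternative, essentially equivalent route would be to differentiate Corollary \ref{lapl} together with the factor $\exp(\alpha\langle\chi,\operatorname{diag}(G)\rangle)$ that recenters $\widehat{\mathcal{L}_{\alpha}}$, but the combinatorial path above seems cleaner.
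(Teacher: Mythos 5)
Your proof is correct and follows essentially the same route as the paper: expand the centered product over subsets, invoke the $\alpha$-permanent formula for the uncentered moments, decompose each permanent over its fixed-point set into $Per_{\alpha}^{0}$ terms, and kill every proper subset's coefficient by the binomial identity $\sum_{T\supseteq S}(-1)^{|T\setminus S|}=0$. Your bookkeeping is in fact slightly cleaner, since you carry the factor $\alpha$ attached to each fixed-point (one-cycle) contribution explicitly, which the paper's displayed identities omit.
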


\begin{proof}
Indeed, the expectation can be written
\[
\sum_{p\leq k}\sum_{I\subseteq\{1,...k\},\left|  I\right|  =p}(-1)^{k-p}%
\prod_{l\in I^{c}}G^{i_{l},i_{l}}Per_{\alpha}(G^{i_{a},i_{b}},a,b\in I)
\]
and
\[
Per_{\alpha}(G^{i_{a},i_{b}},a,b\in I)=\sum_{J\subseteq I}\prod_{j\in
I\backslash J}G^{j,j}Per_{\alpha}^{0}(G^{i_{a},i_{b}},a,b\in J).
\]
Then, expressing $\mathbb{E}(\widetilde{\mathcal{L}_{\alpha}}^{i_{1}%
}...\widetilde{\mathcal{L}_{\alpha}}^{i_{k}})$ in terms of $Per_{\alpha}^{0}%
$'s, we see that if $J\subseteq\{1,...k\}$, ${\left|  J\right|  <k}$, the
coefficient of $Per_{\alpha}^{0}(G^{i_{a},i_{b}},a,b\in J)$ is ${\sum
_{I,I\supseteq J}(-1)^{k-\left|  I\right|  }\prod_{j\in J^{c}}G^{i_{j},i_{j}}%
}$ which vanishes as $(-1)^{-\left|  I\right|  }=(-1)^{\left|  I\right|
}=(-1)^{\left|  J\right|  }(-1)^{\left|  I\backslash J\right|  }$ and
$\sum_{I\supseteq J}(-1)^{\left|  I\backslash J\right|  }=(1-1)^{k-\left|
J\right|  }=0$.
\end{proof}

\bigskip

Set $Q_{k}^{\alpha,\sigma}(u)=P_{k}^{\alpha,\sigma}(u+\alpha\sigma)$ so that
$P_{k}^{\alpha,\sigma}(\widehat{\mathcal{L}_{\alpha}}^{x})=Q_{k}%
^{\alpha,\sigma}(\widetilde{\mathcal{L}_{\alpha}}^{x})$. This quantity will be
called the $n$-th renormalized self intersection local time or the $n$-th
renormalized power of the occupation field and denoted $\widetilde
{\mathcal{L}}_{\alpha}^{x,n}$.

From the recurrence relation of Laguerre polynomials
\[
nL_{n}^{(\alpha-1)}(u)=(-u+2n+\alpha-2)L_{n-1}^{(\alpha-1)}-(n+\alpha
-2)L_{n-2}^{(\alpha-1)},
\]
we get that
\[
nQ_{n}^{\alpha,\sigma}(u)=(u-2\sigma(n-1))Q_{n-1}^{\alpha,\sigma}%
(u)-\sigma^{2}(\alpha+n-2)Q_{n-2}^{\alpha,\sigma}(u).
\]
In particular $Q_{2}^{\alpha,\sigma}(u)=\frac{1}{2}(u^{2}-2\sigma
u-\alpha\sigma^{2})$, $Q_{3}^{\alpha,\sigma}(u)=\frac{1}{6}(u^{3}-6\sigma
u^{2}+3u\sigma^{2}(2-\alpha)+4\sigma^{3}\alpha)$.\newline We have also, from
(\ref{corr})%

\begin{equation}
\mathbb{E}(Q_{k}^{\alpha,\sigma_{x}}(\widetilde{\mathcal{L}_{\alpha}}%
^{x}),Q_{l}^{\alpha,\sigma_{y}}(\widetilde{\mathcal{L}_{\alpha}}^{y}%
))=\delta_{k,l}(G^{x,y})^{2k}\frac{\alpha(\alpha+1)...(\alpha+k-1)}{k!}\!
\label{orth}%
\end{equation}

The comparison of the identity (\ref{orth}) and corollary \ref{mom} \ yields a
combinatorial result which will be extended in the renormalizing procedure
presented in the last section.

The identity (\ref{orth}) can be considered as a polynomial identity in the
variables $\sigma_{x}$, $\sigma_{y}$ and $G^{x,y}$.

Set $Q_{k}^{\alpha,\sigma_{x}}(u)=\sum_{m=0}^{k}q_{m}^{\alpha,k}u^{m}%
\sigma_{x}^{k-m}$, and denote $N_{n,m,r,p}$ the number of ordered
configurations of $n$ black points and $m$ red points on $r$\ non trivial
oriented cycles, such that only $2p$ links are between red and black points.
We have first by corollary \ref{mom}:%
\[
\mathbb{E}((\widetilde{\mathcal{L}_{\alpha}}^{x})^{n}(\widetilde
{\mathcal{L}_{\alpha}}^{y})^{m})=\sum_{r}\sum_{p\leq\inf(m,n)}\alpha
^{r}N_{n,m,r,p}(G^{x,y})^{2p}(\sigma_{x})^{n-p}(\sigma_{y})^{m-p}%
\]
and therefore%
\begin{align}
\sum_{r}\sum_{p\leq m\leq k}\sum_{p\leq n\leq l}\alpha^{r}q_{m}^{\alpha
,k}q_{n}^{\alpha,l}N_{n,m,r,p}  &  =0\text{ \negthinspace unless
}p=l=k.\label{null}\\
\sum_{r}\alpha^{r}q_{k}^{\alpha,k}q_{k}^{\alpha,k}N_{k,k,r,k}  &
=\frac{\alpha(\alpha+1)...(\alpha+k-1)}{k!}. \label{factup}%
\end{align}
Note that one can check directly that $q_{k}^{\alpha,k}=\frac{1}{k!}$, and
$N_{k,k,1,k}=k!(k-1)!$, $N_{k,k,k,k}=k!$ which confirms the identity
(\ref{factup}) above.

\section{\label{hit}Hitting probabilities}

Denote by
\[
\lbrack H^{F}]_{y}^{x}=\mathbb{P}_{x}(x_{T_{F}}=y)
\]
\index{hitting distribution}the hitting distribution of $F$ by the Markov chain starting at $x$ ($H^{F}$
is called the \emph{balayage or Poisson kernel }in Potential theory). Set
$D=F^{c}$ and denote by $e^{D}$, $P^{D}=P|_{D\times D}$, $V^{D}=[(I-P^{D}%
)]^{-1}$ and $G^{D}=[(M_{\lambda}-C)|_{D\times D}]^{-1}$ the energy, the
transition matrix, the potential and the Green function of the Markov chain
killed at the hitting time of $F$.

Denote by $\mathbb{P}_{x}^{D}$\ the law of the killed Markov chain starting at x.

Hitting probabilities can be expressed in terms of Green functions. For $y\in
F$, we have
\[
\lbrack H^{F}]_{y}^{x}=1_{\{x=y\}}+\sum_{0}^{\infty}\sum_{z\in D}[(P^{D}%
)^{k}]_{z}^{x}P_{y}^{z}%
\]

As $G$\ and $G^{D}$ are symmetric, we have $[H^{F}G]_{y}^{x}=[H^{F}G]_{x}^{y}$
so that for any measure $\nu$,
\[
H^{F}(G\nu)=G(\nu H^{F}).
\]
In particular, the \emph{capacitary potential} $H^{F}1$ is the potential of
the \emph{capacitary measure} $\kappa H^{F}$.\newline Therefore we see that
for any function $f$ and measure $\nu$,
\[
e(H^{F}f,G^{D}\nu)=e(H^{F}f,G\nu)-e(H^{F}f,H^{F}G\nu)=\left\langle H^{F}%
f,\nu\right\rangle -e(H^{F}f,G(H^{F}\nu))=0
\]
as $(H^{F})^{2}=H^{F}$.

Equivalently, we have the following:

\begin{proposition}
\label{proj}For any $g$ vanishing on $F$, $e(H^{F}f,g)=0$ so that $I-H^{F}$ is
the {$e$-orthogonal} projection on the space of functions supported in $D$.
\end{proposition}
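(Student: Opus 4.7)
The plan is to exploit the harmonic property of $H^F f$ on $D$. By the strong Markov property applied at the first jump of the embedded discrete chain, for any $x \in D$ one has $H^F f(x) = \sum_y P^x_y H^F f(y)$, i.e., $-L H^F f = (I-P) H^F f$ vanishes on $D$. Combined with the identity $e(z,g) = \langle -Lz, g\rangle_\lambda$ that is immediate from the definition of the energy form, this makes $e(H^F f, g) = \sum_x \lambda_x (-L H^F f)(x)\, g(x)$ split into a sum over $x \in F$ (where $g$ vanishes by hypothesis) and a sum over $x \in D$ (where $-L H^F f$ vanishes), so the total is zero.

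For the projection claim, I would first verify that $H^F$ is idempotent: from any $y \in F$ we have $T_F = 0$, so $H^F f(y) = f(y)$ on $F$, and therefore $H^F(H^F f) = H^F f$ because $H^F$ depends on its argument only through the restriction to $F$. Thus $(I - H^F)^2 = I - H^F$. The range of $I - H^F$ lies in the space of functions supported on $D$, since $(I - H^F) f$ vanishes on $F$ for every $f$; conversely, for any $g$ supported on $D$, the chain at the hitting time of $F$ lands where $g = 0$, so $H^F g = 0$ and $(I - H^F) g = g$ lies in the range. Together with the $e$-orthogonality just proved, this identifies $I - H^F$ as the $e$-orthogonal projection onto the subspace of functions supported on $D$.

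The approach above is more direct than the route suggested by the computation immediately preceding the proposition, which invokes the decomposition $G\nu = G^D \nu + H^F G\nu$ to write a generic $g$ as $G^D\nu$ and then cancels two copies of $\langle H^F f, \nu\rangle$ using $(H^F)^2 = H^F$; both proofs rely on the same underlying fact (strong Markov at $T_F$), but the harmonic viewpoint bypasses any need to invert $G^D$. The only minor subtlety is that $L = P - I$ with $P$ the transition matrix of the embedded discrete chain, so the identity $L H^F f = 0$ on $D$ is literally the Markov property applied to that discrete chain; no real obstacle should arise.
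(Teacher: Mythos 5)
Your argument is correct, and it takes a genuinely different route from the paper's. You prove the orthogonality by observing that $H^{F}f$ is harmonic on $D$ (first-step analysis gives $(I-P)H^{F}f=0$ there, since $H^{F}f=f$ on $F$), and then writing $e(H^{F}f,g)=\left\langle -LH^{F}f,g\right\rangle _{\lambda}$, which splits into a sum over $F$, killed because $g$ vanishes there, and a sum over $D$, killed because $-LH^{F}f$ vanishes there; idempotence of $H^{F}$ and the identification of the range of $I-H^{F}$ with the functions supported in $D$ then give the projection statement. The paper instead represents a function supported in $D$ as a potential $G^{D}\nu$ and computes $e(H^{F}f,G^{D}\nu)=e(H^{F}f,G\nu)-e(H^{F}f,H^{F}G\nu)=\left\langle H^{F}f,\nu\right\rangle -\left\langle H^{F}H^{F}f,\nu\right\rangle =0$, using the characterization $e(h,G\mu)=\left\langle h,\mu\right\rangle$, the symmetry relation $H^{F}(G\nu)=G(\nu H^{F})$, and $(H^{F})^{2}=H^{F}$. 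Your route is more elementary: it requires no representation of $g$ as a potential (hence no invertibility of $G^{D}$) and works pointwise with the generator, making the mechanism transparent. The paper's potential-theoretic phrasing is the form that survives when only Green functions and hitting kernels are at hand: it gives $e(H^{F}f,G^{D}\nu)=0$ directly for measures $\nu$, which is exactly how the statement is reused in the recurrent case and in the continuous-space extensions. Both proofs ultimately rest on the Markov property at $T_{F}$; one minor terminological point is that your harmonicity claim needs only the simple Markov property at the first jump, not the strong Markov property.
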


The energy of the capacitary potential of $F$, $e(H^{F}1,H^{F}1)$ equals the
mass of the capacitary measure $\left\langle \kappa H^{F},1\right\rangle $. It
is called the \emph{capacity} of $F$ and denoted $Cap_{e}(F)$.

Note that some of these results extend without difficulty to the recurrent
case. In particular, for any measure $\nu$ supported in $D$, $G^{D}\nu
=G(\nu-\nu H^{F})$ and $e(H^{F}f,G^{D}\nu)=0$ for all $f$. For further
developments see for example ( \cite{LJ0}) and its references.

The restriction property holds for $\mathcal{L}_{\alpha}$ as it holds for
$\mu$. The set $\mathcal{L}_{\alpha}^{D}$ of loops inside $D$ is associated
with $\mu^{D}$ and is independent of $\mathcal{L}_{\alpha}-\mathcal{L}%
_{\alpha}^{D}$. Therefore, we see from corollary \ref{lapl} that
\[
\mathbb{E}(e^{-\left\langle \widehat{\mathcal{L}_{\alpha}}-\widehat
{\mathcal{L}_{\alpha}^{D}},\chi\right\rangle })=\Big(          \frac{\det
(G_{\chi})}{\det(G)}\frac{\det(G^{D})}{\det(G_{\chi}^{D})}\Big)    ^{\alpha}.
\]

Note that for all $x$, $\mu(\widehat{l}^{x}>0)=\infty$. This is due to trivial
loops and it can be seen directly from the definition of $\mu$ that in this
simple framework the loops of $\mathcal{L}_{\alpha}$ cover the whole space $X$.

Note however that
\begin{align*}
\mu(\widehat{l}(F)>0,p>1)  &  =\mu(p>1)-\mu(\widehat{l(}F)=0,p>1)=\mu
(p>1)-\mu^{D}(p>1)\\
&  =-\log(\frac{\det(I-P)}{\det_{D\times D}(I-P)})=-\log(\frac{\det(G^{D}%
)}{\prod_{x\in F}\lambda_{x}\det(G)}).
\end{align*}
It follows that the probability that no non trivial loop (i.e. a loop which is
not reduced to a point) in $\mathcal{L}_{\alpha}$ intersects $F$ equals
\[
\exp(-\alpha\mu(\{l,\!p(l)>1,\widehat{l}(F)>0\}))=(\frac{\det(G^{D})}%
{\prod_{x\in F}\lambda_{x}\det(G)})^{\alpha}.
\]

Recall Jacobi's identity: for any $(n+p,n+p)$ invertible matrix $A$, denoting
$e_{i}$ the canonical basis,
\begin{align*}
\det(A^{-1})\det(A_{ij},1\leq i,j\leq n)  &  =\det(A^{-1})\det(Ae_{1}%
,...,Ae_{n},e_{n+1},...,e_{n+p})\\
&  =\det(e_{1},...,e_{n},A^{-1}e_{n+1},...,A^{-1}e_{n+p})\\
&  =\det((A^{-1})_{k,l},n\leq k,l\leq n+p).
\end{align*}
In particular, ${\det(G^{D})=\frac{\det(G)}{\det(G|_{F\times F})}}$, we can
also denote ${\frac{\det(G)}{\det_{F\times F}(G)}}$. So we have the

\begin{proposition}
The probability that no non-trivial loop in $\mathcal{L}_{\alpha}$ intersects
$F$ equals
\[
\lbrack\prod_{x\in F}\lambda_{x}\det_{F\times F}(G)]^{-\alpha}.
\]
Moreover ${\mathbb{E}(e^{-\left\langle \widehat{\mathcal{L}_{\alpha}}%
-\widehat{\mathcal{L}_{\alpha}^{D}},\chi\right\rangle })=(\frac{\det_{F\times
F}(G_{\chi})}{\det_{F\times F}(G)})^{\alpha}}.$
\end{proposition}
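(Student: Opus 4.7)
Both identities in the proposition are algebraic consequences of Jacobi's identity (stated just above the statement of the proposition) applied to the two formulas derived immediately before. No new probabilistic input is required.

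For the first assertion, the excerpt has already shown, via the restriction property and formula \eqref{logdet1} applied to $P$ and $P^{D}$, that the probability of the event equals
\[
\exp\bigl(-\alpha\mu\{l : p(l) > 1,\ \widehat{l}(F) > 0\}\bigr) = \left(\frac{\det(G^{D})}{\prod_{x\in F}\lambda_{x}\det(G)}\right)^{\alpha}.
\]
The plan is simply to substitute the Jacobi identity $\det(G^{D}) = \det(G)/\det_{F\times F}(G)$; the two factors of $\det(G)$ cancel and the target form $[\prod_{x\in F}\lambda_{x}\det_{F\times F}(G)]^{-\alpha}$ drops out.

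For the second assertion, I would start from the formula
\[
\mathbb{E}\bigl(e^{-\langle\widehat{\mathcal{L}_{\alpha}}-\widehat{\mathcal{L}_{\alpha}^{D}},\chi\rangle}\bigr) = \left(\frac{\det(G_{\chi})}{\det(G)}\cdot\frac{\det(G^{D})}{\det(G_{\chi}^{D})}\right)^{\alpha},
\]
already displayed in the excerpt as a consequence of the independence of $\mathcal{L}_{\alpha}^{D}$ and $\mathcal{L}_{\alpha}\setminus\mathcal{L}_{\alpha}^{D}$ together with corollary \ref{lapl}. Since $G_{\chi}=(M_{\lambda}+M_{\chi}-C)^{-1}$ differs from $G$ only by a diagonal correction, the same reasoning that produced $\det(G^{D})=\det(G)/\det_{F\times F}(G)$ via Jacobi applies verbatim to $G_{\chi}$, yielding $\det(G_{\chi}^{D}) = \det(G_{\chi})/\det_{F\times F}(G_{\chi})$. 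Substituting both of these into the displayed ratio collapses the ``global'' determinants pairwise and leaves $(\det_{F\times F}(G_{\chi})/\det_{F\times F}(G))^{\alpha}$, as claimed.

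The only point that deserves a small check is that Jacobi's identity applies to $G_{\chi}^{D}$ in exactly the same way as to $G^{D}$; this is immediate from the definitions, since the form $e_{\chi}$ restricted to functions supported in $D$ is represented by the block $(M_{\lambda}+M_{\chi}-C)|_{D\times D}$, whose inverse is precisely $G_{\chi}^{D}$. There is therefore no real analytic or combinatorial obstacle: the proposition is a bookkeeping corollary of the two preceding displayed identities together with Jacobi's identity.
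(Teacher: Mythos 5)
Your proposal is correct and follows exactly the route the paper takes: the proposition is stated as an immediate consequence of the two previously displayed formulas (the expression for $\exp(-\alpha\mu\{p>1,\widehat{l}(F)>0\})$ and the ratio formula for $\mathbb{E}(e^{-\langle\widehat{\mathcal{L}_{\alpha}}-\widehat{\mathcal{L}_{\alpha}^{D}},\chi\rangle})$) combined with Jacobi's identity $\det(G^{D})=\det(G)/\det_{F\times F}(G)$, applied also to $G_{\chi}$ since $G_{\chi}^{D}=[(M_{\lambda}+M_{\chi}-C)|_{D\times D}]^{-1}$. Your added check that Jacobi applies verbatim to $G_{\chi}^{D}$ is exactly the right (and only) point to verify.
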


In particular, it follows that the probability that no non-trivial loop in
$\mathcal{L}_{\alpha}$ visits $x$ equals $(\frac{1}{\lambda_{x}G^{x,x}%
})^{\alpha}$ which is also a consequence of the fact that $N_{x}$ follows a
negative binomial distribution of parameters $-\alpha$ and $\frac{1}%
{\lambda_{x}G^{x,x}}$.

Also, if $F_{1}$ and $F_{2}$ are disjoint,
\begin{align}
\mu(\widehat{l(}F_{1})\widehat{l(}F_{2})>0)  &  =\mu(\widehat{l(}%
F_{1})>0,p>1)+\mu(\widehat{l(}F_{2})>0,p>1)-\mu(\widehat{l(}F_{1}\cup
F_{2})>0,p>1)\label{F1F2}\\
&  =\log(\frac{\det(G)\det(G^{D_{1}\cap D_{2}})}{\det(G^{D_{1}})\det(G^{D_{2}%
})})=\log(\frac{\det_{F_{1}\times F_{1}}(G)\det_{F_{2}\times F_{2}}(G)}%
{\det_{F_{1}\cup F_{2}\times F_{1}\cup F_{2}}(G)}).\nonumber
\end{align}
Therefore the probability that no loop in $\mathcal{L}_{\alpha}$ intersects
$F_{1}$ and $F_{2}$ equals
\[
\exp(-\alpha\mu(\{l,\prod\widehat{l(}F_{i})>0\}))=(\frac{\det(G^{D_{1}}%
)\det(G^{D_{2}})}{\det(G)\det(G^{D_{1}\cap D_{2}})})^{\alpha}=(\frac{\det
_{F_{1}\times F_{1}}(G)\det_{F_{2}\times F_{2}}(G)}{\det_{F_{1}\cup
F_{2}\times F_{1}\cup F_{2}}(G)})^{-\alpha}%
\]
It follows that the probability no loop in $\mathcal{L}_{\alpha}$ visits two
distinct points $x$ and $y$ equals $(\frac{G^{x,x}G^{y,y}-(G^{x,y})^{2}%
}{G^{x,x}G^{y,y}})^{\alpha}$ and in particular $1-\frac{(G^{x,y})^{2}}%
{G^{x,x}G^{y,y}}$ if $\alpha=1$.

\begin{exercise}
Generalize this formula to $n$ disjoint sets:
\[
\mathbb{P}(\nexists l\in\mathcal{L}_{\alpha},\prod\widehat{l(}F_{i})>0)=\Big(
\frac{\det(G)\prod_{i<j}\det(G^{D_{i}\cap D_{j}})}{\prod\det(G^{D_{i}}%
)\prod_{i<j<k}\det(G^{D_{i}\cap D_{j}\cap D_{k}})}\Big)   ^{-\alpha}%
\]
\end{exercise}

Note this yields an interesting determinant product inequality.

\chapter{The Gaussian free field}

\section{Dynkin's Isomorphism}

By a well known calculation on Gaussian measure, if $X$ is finite, for any
$\chi\in\mathbb{R}_{+}^{X}$,
\[
\frac{\sqrt{\det(M_{\lambda}-C)}}{(2\pi)^{\left|  X\right|  /2}}%
\int_{\mathbb{R}^{X}}e^{-\frac{1}{2}\sum\chi_{u}(v^{u})^{2}}e^{-\frac{1}%
{2}e(v)}\Pi_{u\in X}dv^{u}=\sqrt{\frac{\det(G_{\chi})}{\det(G)}}%
\]
and%
\[
\frac{\sqrt{\det(M_{\lambda}-C)}}{(2\pi)^{\left|  X\right|  /2}}%
\int_{\mathbb{R}^{X}}e^{-\frac{1}{2}\sum\chi_{u}(v^{u})^{2}}e^{-\frac{1}%
{2}e(v)}\Pi_{u\in X}dv^{u}=(G_{\chi})^{x,y}\sqrt{\frac{\det(G_{\chi})}%
{\det(G)}}%
\]

\index{free field}This can be easily reformulated by introducing on an independent probability
space the Gaussian free field $\phi$ defined by the covariance $\mathbb{E}_{\phi
}\mathbb{(}\phi^{x}\phi^{y})=G^{x,y}$ (this reformulation cannot be dispensed
when $X$ becomes infinite)

So we have
\[
\mathbb{E}_{\phi}\mathbb{(}e^{-\frac{1}{2}<\phi^{2},\chi>})=\det(I+GM_{_{\chi
}})^{-\frac{1}{2}}=\sqrt{\det(G_{\chi}G^{-1})}%
\]
and%
\[
\mathbb{E}_{\phi}\mathbb{(}\phi^{x}\phi^{y}e^{-\frac{1}{2}<\phi^{2},\chi
>})=(G_{\chi})^{x,y}\sqrt{\det(G_{\chi}G^{-1})}.
\]
Then since sums of exponentials of the form $e^{-\frac{1}{2}<\cdot,\chi>}%
$\ \ are dense in continuous functions on $\mathbb{R}_{+}^{X}$\ the following holds:

\begin{theorem}
\label{iso}

\begin{enumerate}
\item[a)] The fields $\widehat{\mathcal{L}}_{\frac{1}{2}}$ and $\frac{1}%
{2}\phi^{2}$ have the same distribution.

\item[b)] $\mathbb{E}_{\phi}\mathbb{(}(\phi^{x}\phi^{y}F(\frac{1}{2}\phi
^{2}))=\int\mathbb{E}(F(\widehat{\mathcal{L}_{\frac{1}{2}}}+\widehat{\gamma
}))\mu^{x,y}(d\gamma)$ for any bounded functional $F$ of a non negative field.
\end{enumerate}
\end{theorem}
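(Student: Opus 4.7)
The plan is to prove both parts by matching Laplace transforms against exponential test functionals of the form $F(v) = e^{-\langle v,\chi\rangle}$, and then extending to general bounded $F$ by a density/monotone class argument as suggested in the paragraph immediately preceding the statement.

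For part (a), I would compare the two Laplace transforms directly. On the loop side, Corollary \ref{lapl} with $\alpha = 1/2$ gives
\[
\mathbb{E}(e^{-\langle \widehat{\mathcal{L}}_{1/2},\chi\rangle}) = \sqrt{\det(G_\chi)/\det(G)} = \sqrt{\det(G_\chi G^{-1})}.
\]
On the Gaussian side, the computation recalled just before the theorem yields
\[
\mathbb{E}_\phi\bigl(e^{-\frac{1}{2}\langle \phi^2,\chi\rangle}\bigr) = \sqrt{\det(G_\chi G^{-1})}.
\]
These coincide for every nonnegative $\chi$, and since linear combinations of the functionals $v \mapsto e^{-\langle v,\chi\rangle}$ are dense in $C_b(\mathbb{R}_+^X)$ (the space is finite-dimensional, so this is essentially uniqueness of Laplace transforms of positive finite measures), the distributions of $\widehat{\mathcal{L}}_{1/2}$ and $\tfrac12 \phi^2$ coincide.

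For part (b), I would again test against $F(v) = e^{-\langle v,\chi\rangle}$. The Gaussian side gives, by the second Gaussian calculation recalled before the theorem,
\[
\mathbb{E}_\phi\bigl(\phi^x\phi^y\, e^{-\frac12\langle \phi^2,\chi\rangle}\bigr) = (G_\chi)^{x,y}\sqrt{\det(G_\chi G^{-1})}.
\]
On the loop side, $\widehat{\mathcal{L}}_{1/2}$ and the bridge $\gamma$ are defined on independent probability spaces, so by Fubini
\[
\int \mathbb{E}\bigl(e^{-\langle \widehat{\mathcal{L}}_{1/2}+\widehat{\gamma},\chi\rangle}\bigr)\mu^{x,y}(d\gamma) = \mathbb{E}\bigl(e^{-\langle \widehat{\mathcal{L}}_{1/2},\chi\rangle}\bigr)\cdot \mu^{x,y}\bigl(e^{-\langle \widehat{\gamma},\chi\rangle}\bigr).
\]
The first factor is $\sqrt{\det(G_\chi G^{-1})}$ by part (a), and the second is exactly $[G_\chi]^{x,y}$ by the Feynman--Kac formula for bridge measures established earlier, namely $\mu^{x,y}(e^{-\langle \widehat{\gamma},\chi\rangle}) = [G_\chi]^{x,y}$. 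Hence both sides of the identity in (b) agree on exponential test functionals.

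To conclude, I would extend from exponentials to bounded measurable $F$ by a standard monotone class / Stone--Weierstrass argument: the finite linear combinations of $v \mapsto e^{-\langle v,\chi\rangle}$ with $\chi \in \mathbb{R}_+^X$ form an algebra separating points of $\mathbb{R}_+^X$, so they are uniformly dense in $C_0(\mathbb{R}_+^X)$, and both sides of (b) are continuous in $F$ for bounded pointwise convergence (both sides define finite signed measures on $\mathbb{R}_+^X$, since $\mu^{x,y}$ has finite mass $G^{x,y}$ in the transient case). The only subtlety worth checking carefully — and the main conceptual obstacle — is that the finite total mass of $\mu^{x,y}$ is needed for the right-hand side of (b) to define a finite measure, so that the density argument works; this is precisely where the transience assumption in force since Section on bridges enters.
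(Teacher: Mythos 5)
Your proposal is correct and follows exactly the paper's route: the author also proves the theorem by matching the Gaussian identities $\mathbb{E}_{\phi}(e^{-\frac{1}{2}\langle\phi^{2},\chi\rangle})=\sqrt{\det(G_{\chi}G^{-1})}$ and $\mathbb{E}_{\phi}(\phi^{x}\phi^{y}e^{-\frac{1}{2}\langle\phi^{2},\chi\rangle})=(G_{\chi})^{x,y}\sqrt{\det(G_{\chi}G^{-1})}$ against Corollary \ref{lapl} and $\mu^{x,y}(e^{-\langle\widehat{\gamma},\chi\rangle})=[G_{\chi}]^{x,y}$, and then invokes density of sums of exponentials $e^{-\langle\cdot,\chi\rangle}$ in continuous functions on $\mathbb{R}_{+}^{X}$. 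Your additional remarks on Fubini, the finite mass $G^{x,y}$ of $\mu^{x,y}$, and the monotone class extension merely make explicit what the paper leaves implicit.
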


\noindent\textbf{Remarks:}

a) This can be viewed as a version of Dynkin's isomorphism (Cf \cite{Dy}). It
can be extended to non-symmetric generators (Cf \cite{LJ2}).

b) By corollary \ref{camcam}, if $C_{x,y}\neq0$, b) implies that%
\[
\mathbb{E}_{\phi}\mathbb{(}\phi^{x}\phi^{y}F(\frac{1}{2}\phi^{2}%
))=\frac{2}{C_{x,y}}\mathbb{E}(F(\widehat{\mathcal{L}_{\frac{1}{2}}}%
)N_{x,y}^{(\frac{1}{2})})
\]

c) An analogous result can be given when $\alpha$ is any positive half
integer, by using real vector valued Gaussian field, or equivalently complex
fields for integral values of $\alpha$ (in particular $\alpha=1)$: If
$\overrightarrow{\phi}=(\phi_{1},\phi_{2},...,\phi_{k})$ \ are $k$ independent
copies of the real free field, the fields $\widehat{\mathcal{L}}_{\frac{k}{2}%
}$ and $\frac{1}{2}\left\|  \overrightarrow{\phi}\right\|  ^{2}=\frac{1}%
{2}\sum_{1}^{k}\phi_{j}^{2}$ have the same law and $\mathbb{E}%
_{\overrightarrow{\phi}}\mathbb{(}\left\langle \overrightarrow{\phi}%
^{x},\overrightarrow{\phi}^{y}\right\rangle F(\frac{1}{2}\left\|
\phi\right\|  ^{2}))=k\int\mathbb{E}(F(\widehat{\mathcal{L}_{\frac{k}{2}}%
}+\widehat{\gamma}))\mu^{x,y}(d\gamma)$.

The complex free field $\phi_{1}+i\phi_{2}$ will be denoted $\varphi$. $\ $If
we consider $k$ independent copies $\varphi_{j}$ of this field, $\widehat
{\mathcal{L}}_{k}$ and $\frac{1}{2}\left\|  \overrightarrow{\varphi}\right\|
^{2}=\frac{1}{2}\sum_{1}^{k}\varphi_{j}\overline{\varphi_{j}}$ have the same law.

d) Note it implies immediately that the process $\phi^{2}$ is infinitely
divisible. See \cite{EK} and its references for a converse and earlier proofs
of this last fact.

Theorem \ref{iso} suggests the following:

\begin{exercise}
Show that for any bounded functional $F$ of a non negative field, if $x_{i}$
are\ $2k$ points:
\[
\mathbb{E}_\phi(F(\phi^{2})\prod\phi^{x_{i}})=\int\mathbb{E}(F(\widehat
{\mathcal{L}_{\frac{1}{2}}}+\sum_{1}^{k}\widehat{\gamma_{j}}))\sum
_{\text{pairings}}\prod_{pairs}\mu^{y_{j},z_{j}}(d\gamma_{j})
\]

where $\sum_{\text{pairings}}$ means that the $k$ pairs $y_{j},z_{j}$ are
formed with all the $2k$\ points $x_{i}$, in all $\frac{(2k)!}{2^{k}k!}%
$\ possible ways.
\end{exercise}

Hint: As in the proof of theorem \ref{iso}, we take $F$ of the form
$e^{-\frac{1}{2}<\cdot,\chi>}$. Then we use the classical expression for the
expectation of a product of Gaussian variables known as Wick theorem (see for
example \cite{Nev}, \cite{Sim2}).

\bigskip

\begin{exercise}
For any $f$ in the Dirichlet space $\mathbb{H}$ of functions of finite energy
(i.e. for all functions if $X$ is finite), the law of $f+\phi$ is absolutely
continuous with respect to the law of $\phi$, with density $\exp
(<-Lf,\phi>_{m}-\frac{1}{2}e(f))$.
\end{exercise}

\begin{exercise}
\label{Nels} a) Using proposition \ref{proj},\ show (it was observed by Nelson
in the context of the classical (or Brownian) free field) that the Gaussian
field $\phi$ is Markovian: Given any subset $F$ of $X$, denote $\mathcal{H}%
_{F}$ the Gaussian space spanned by $\{\phi^{y},y\in F\}$. Then, for $x\in
D=F^{c}$, the projection of $\phi^{x}$ on $\mathcal{H}_{F}$ (i.e. the
conditional expectation of $\phi^{x}$ given $\sigma(\phi^{y},y\in F)$ ) is
$\sum_{y\in F}[H^{F}]_{y}^{x}\phi^{y}$.

b) Moreover, show that $\phi^{D}=\phi-H^{F}\phi$ is the Gaussian free field
associated with the process killed at the exit of $D$.
\end{exercise}

\section{Wick products}

We have seen in theorem \ref{iso}\ that $L^{2}$ functionals of $\widehat
{\mathcal{L}_{1}}$ can be represented in this space of Gaussian functionals.
In order to prepare the extension of this representation to the more difficult
framework of continuous spaces (which can often be viewed as scaling limits of
discrete spaces), including especially the planar Brownian motion considered
in \cite{LW}, we shall introduce the renormalized (or Wick) powers of $\phi$.\index{Wick powers}
We set $:(\phi^{x})^{n}:=(G^{x,x})^{\frac{n}{2}}H_{n}(\phi^{x}/\sqrt{G^{x,x}%
})$ where $H_{n}$ in the $n$-th Hermite polynomial (characterized by
$\sum\frac{t^{n}}{n!}H_{n}(u)=e^{tu-\frac{t^{2}}{2}}$).These variables are
orthogonal in $L^{2}$ and $E((:(\phi^{x})^{n}:)^{2})=n!G^{x,x}$.

Setting as before $\sigma_{x}=G^{x,x}$, from the relation between Hermite
polynomials $H_{2n}$\ and Laguerre polynomials $L_{n}^{-\frac{1}{2}}$,%

\[
H_{2n}(x)=(-2)^{n}n!L_{n}^{-\frac{1}{2}}(\frac{x^{2}}{2})
\]
it follows that:%
\[
:(\phi^{x})^{2n}:=2^{n}n!P_{n}^{\frac{1}{2},\sigma}((\frac{(\phi^{x})^{2}}%
{2})).
\]
and
\[
\mathbb{E}_{\phi}((:(\phi^{x})^{n}:)^{2})=\sigma_{x}^{n}n!
\]

More generally, if $\phi_{1},\phi_{2},...,\phi_{k}$ \ are $k$ independent
copies of the free field, we can define

$:\prod_{j=1}^{k}(\phi_{j}^{x})^{n_{j}}:\;=\prod_{j=1}^{k}:(\phi_{j}%
^{x})^{n_{j}}:$. Then it follows that:%
\[
:(\sum_{1}^{k}((\phi_{j}^{x})^{2})^{n}:=\sum_{n_{1}+..+n_{k}=n}\frac{n!}%
{n_{1}!...n_{k}!}\prod_{j=1}^{k}:(\phi_{j}^{x})^{2n_{j}}:.
\]

On the other hand, from the generating function of the polynomials
$P_{n}^{\frac{k}{2},\sigma}$, we get easily that%
\[
P_{n}^{\frac{k}{2},\sigma}(\sum_{1}^{k}u_{j})=\sum_{n_{1}+..+n_{k}=n}%
\prod_{j=1}^{k}P_{n_{j}}^{\frac{1}{2},\sigma}%
(u_{j}).
\]
Therefore,%
\begin{equation}
P_{n}^{\frac{k}{2},\sigma}(\frac{\sum(\phi_{j}^{x})^{2}}{2})=\frac{1}{2^{n}%
n!}:(\sum_{1}^{k}(\phi_{j}^{x})^{2})^{n}:. \label{polywick}%
\end{equation}
Note that in particular, $:\sum_{1}^{k}(\phi_{j}^{x})^{2}:$ equals $(\phi
_{j}^{x})^{2}-\sigma^{x}$ These variables are orthogonal in $L^{2}$. Let
$\widetilde{l}^{x}=\widehat{l}^{x}-\sigma^{x}$ be the centered occupation
field. Note that an equivalent formulation of theorem \ref{iso} is that the
fields $\frac{1}{2}:\sum_{1}^{k}\phi_{j}^{2}:$ and $\widetilde{\mathcal{L}%
}_{\frac{k}{2}}$ have the same law.

If we use complex fields $P_{n}^{\frac{k}{2},\sigma}(\frac{\sum\varphi_{j}%
^{x}\overline{\varphi}_{j}^{x}}{2})=\frac{1}{2^{n}n!}:(\sum_{1}^{k}%
(\varphi_{j}^{x}\overline{\varphi}_{j}^{x})^{n}:.$

Let us now consider the relation of higher Wick powers with self intersection
local times.

Recall that the renormalized $n$-th self intersections field $\widetilde
{\mathcal{L}}_{\alpha}^{x,n}=P_{n}^{\alpha,\sigma}(\widehat{\mathcal{L}%
_{\alpha}}^{x})=Q_{n}^{\alpha,\sigma}(\widetilde{\mathcal{L}_{\alpha}}^{x}%
)$\ have been defined by orthonormalization in $L^{2}$\ of the powers of the
occupation time.

Then comes the

\begin{proposition}
a)The fields $\widetilde{\mathcal{L}}_{\frac{k}{2}}^{\cdot,n}$ and
$\frac{1}{n!2^{n}}:(\sum_{1}^{k}\phi_{j}^{2})^{n}:$ have the same law.

In particular $\widetilde{\mathcal{L}}_{k}^{\cdot,n}$ and $\frac{1}{n!2^{n}%
}:(\sum_{1}^{k}\varphi_{j}\overline{\varphi_{j}})^{n}:$ have the same law.
\end{proposition}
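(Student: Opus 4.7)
The plan is to combine the isomorphism of Theorem \ref{iso} (in its vector-valued form, remark c) with the polynomial identity \eqref{polywick} already established. Essentially, the renormalized self-intersection field and the Wick power are both defined by applying a fixed polynomial (namely $P_n^{k/2,\sigma_x}$, respectively a multiple of the Wick normalization) to a common random field, and Theorem \ref{iso} tells us that the two input fields have the same joint distribution.

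In more detail, I would proceed as follows. First, I recall that by the vector-valued Dynkin isomorphism (remark c after Theorem \ref{iso}), the random fields
\[
\bigl(\widehat{\mathcal{L}}_{k/2}^{\,x}\bigr)_{x\in X} \quad\text{and}\quad \Bigl(\tfrac{1}{2}\sum_{j=1}^{k}(\phi_j^{x})^{2}\Bigr)_{x\in X}
\]
have the same law as $X$-indexed processes. Next, I apply, for each $x\in X$, the polynomial $P_n^{k/2,\sigma_x}$ to both sides: this is a deterministic transformation that acts on each coordinate separately, so the resulting fields still have the same law. On the left, by the very definition of the renormalized $n$-th self-intersection local time recalled just before the statement, we obtain $P_n^{k/2,\sigma_x}(\widehat{\mathcal{L}}_{k/2}^{\,x}) = \widetilde{\mathcal{L}}_{k/2}^{\,x,n}$. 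On the right, equation \eqref{polywick} gives
\[
P_n^{k/2,\sigma_x}\!\Bigl(\tfrac{1}{2}\sum_{j=1}^{k}(\phi_j^{x})^{2}\Bigr) = \tfrac{1}{n!\,2^{n}}\,{:}\!\Bigl(\sum_{j=1}^{k}(\phi_j^{x})^{2}\Bigr)^{n}{:}\,,
\]
and the equality of the two fields in distribution follows. The complex case is identical: one uses that $k$ independent complex copies $\varphi_j$ are $2k$ real copies, so Theorem \ref{iso}(c) identifies $\widehat{\mathcal{L}}_k$ with $\tfrac{1}{2}\sum_{j=1}^k \varphi_j\overline{\varphi_j}$ in law, and one applies $P_n^{k,\sigma_x}$ together with \eqref{polywick} (written for $2k$ real components and regrouped into complex pairs).

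There is essentially no analytic obstacle here; the proposition is a bookkeeping consequence of (i) the identification in law of the occupation field with the half square-norm of the vector free field, valid at the level of the joint law of the field, and (ii) the explicit polynomial identity \eqref{polywick} that identifies the renormalization polynomials $P_n^{k/2,\sigma}$ applied to $\tfrac{1}{2}\|\overrightarrow{\phi}\|^2$ with the Wick power $\tfrac{1}{n!\,2^n}{:}\|\overrightarrow{\phi}\|^{2n}{:}\,$. The one small point to check carefully is that Theorem \ref{iso}(c) really provides identity of \emph{joint} laws, so that applying a coordinatewise map preserves the identity; this is immediate from the proof of Theorem \ref{iso}, since Laplace transforms of the two fields agree on all test functions $\chi$ with finite support, hence the joint finite-dimensional marginals coincide. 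The main conceptual content, namely the combinatorial identity \eqref{polywick} between Laguerre-type polynomials in $\tfrac12\|\overrightarrow{\phi}\|^2$ and Wick powers of $\|\overrightarrow{\phi}\|^2$, has already been verified, so no serious calculation remains.
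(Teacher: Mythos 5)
Your proof is correct and follows the same route as the paper: identify $\widehat{\mathcal{L}}_{k/2}$ with $\tfrac12\sum_j\phi_j^2$ in (joint) law via Theorem \ref{iso} (remark c), then apply the polynomial $P_n^{k/2,\sigma_x}$ coordinatewise and invoke the identity (\ref{polywick}), which is exactly the paper's one-line argument spelled out. The complex case is handled the same way, so nothing is missing.
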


This follows directly from (\ref{polywick}).

\begin{remark}
As a consequence, we obtain from \ref{orth}\ and \ref{polywick}\ that:
\begin{equation}
\frac{k(k+2)...(k+2(n-1))}{2^{n}n!}=\sum_{n_{1}+...+n_{k}=n}\prod
\frac{2n_{i}!}{(2^{n_{i}}n_{i}!)^2}\label{freud}%
\end{equation}
Moreover, it can be shown that:
\[
\mathbb{E}(\prod_{j=1}^{r}Q_{k_{j}}^{\alpha,\sigma_{x_{j}}}(\widetilde
{\mathcal{L}_{\alpha}}^{x_{j}}))=\sum_{\sigma\in\mathcal{S}_{k_{1}%
,k_{2},...,k_{j}}}\alpha^{m(\sigma)}G^{y_{1},y_{\sigma(1)}}...G^{y_{k}%
,y_{\sigma(k)}}%
\]
$y_{i}=x_{j}$ for ${\sum_{1}^{j-1}k_{l}+1\leq i\leq\sum_{1}^{j-1}k_{l}+k_{j}}$
and where $\mathcal{S}_{k_{1},k_{2},...,k_{j}}$ denotes the set of
permutations $\sigma$ of $k=\sum k_{j}$ such that

${\sigma(\{\sum_{1}^{j-1}k_{l}+1,...\sum_{1}^{j-1}k_{l}+k_{j}\})\cap\{\sum
_{1}^{j-1}k_{l}+1,...\sum_{1}^{j-1}k_{l}+k_{j}\}}$ is empty for all $j$.
\end{remark}

The identity follows from Wick's theorem when $\alpha$ is an integer, then
extends to all $\alpha$ since both members are polynomials in $\alpha$. The
condition on $\sigma$ indicates that no pairing is allowed inside the same
Wick power. For the proof, one can view each term of the form $:(\varphi
_{l}^{x}\overline{\varphi}_{l}^{x})^{k}:$ as the product of $k$ distinct
pairs, in a given order, then the pairings between $\varphi$'s and
$\overline{\varphi}$'s are defined by an element of $\mathcal{S}_{k_{1}%
,k_{2},...k_{j}}$ and a system of permutations of $\mathcal{S}_{k_{1}%
},...\mathcal{S}_{k_{j}}$. This system of permutations produces multiplicities
that cancel with the $\frac{1}{k_{i}!}$ factors in the expression. Note
finally that $\mathbb{E}(\varphi_{l}^{x}\overline{\varphi}_{l}^{y})=2G^{x,y}$
to cancel the $2^{-k_{i}}$ factors.

\section{The Gaussian Fock space structure}

The Gaussian space $\mathcal{H}$ spanned by $\{\phi^{x},x\in X\}$ is
isomorphic to the dual of the Dirichlet space $\mathbb{H}^{\ast}$ by the
linear map mapping $\phi^{x}$ on $\delta_{x}$. This isomorphism extends into
an isomorphism between \ the space of square integrable functionals of the
Gaussian field and the real symmetric Fock space $\Gamma^{\odot}%
(\mathbb{H}^{\ast})=\overline{\oplus\mathbb{H}^{\ast\odot n}}$\ obtained as
the closure of the sum of all symmetric tensor powers of $\mathbb{H}^{\ast}$
(the zero-th tensor power is $\mathbb{R}$). In the case of discrete spaces,
these symmetric tensor powers can be represented simply as symmetric signed measures
on $X^{n}$ (with no additional constraint in the finite space case). In terms
of the ordinary tensor product $\otimes$, the symmetric tensor product
$\mu_{1}\odot...\odot\mu_{n}$ is defined as $\frac{1}{\sqrt{n!}}\sum
_{\sigma\in\mathcal{S}_{n}}\mu_{\sigma(1)}\otimes...\otimes\mu_{\sigma(n)}$ so that in particular, $\left\|  \mu^{\odot n}\right\|  ^{2}=n!(\sum
G^{x,y}\mu_{x}\mu_{y})^{n}$.
The construction of $\Gamma(\mathbb{H}^{\ast})$\ is known as Bose second
quantization. The isomorphism mentionned above is defined by the following
identification, done for any $\mu$ in $\mathbb{H}^{\ast}$:%
\[
\exp^{\odot}(\mu)\rightleftarrows\exp(\sum_{x}\phi^{x}\mu_{x}-\frac{1}{2}%
\sum_{x,y}G^{x,y}\mu_{x}\mu_{y})
\]
which is an isometry as
\[
\mathbb{E}_{\phi}(\exp(\sum_{x}\phi^{x}\mu_{x}-\frac{1}{2}\sum_{x,y}G^{x,y}%
\mu_{x}\mu_{y})\exp(\sum_{x}\phi^{x}\mu_{x}^{\prime}-\frac{1}{2}\sum
_{x,y}G^{x,y}\mu_{x}^{\prime}\mu_{y}^{\prime}))=\exp(-\sum_{x,y}G^{x,y}\mu
_{x}\mu_{y}^{\prime})).
\]
The proof is completed by observing that linear combination of
\[
\exp(\sum_{x}\phi^{x}\mu_{x}-\frac{1}{2}\sum_{x,y}G^{x,y}\mu_{x}\mu_{y})
\]
form a dense algebra in the space of square integrable functionals of the
Gaussian field.

The $n$-th Wick power $:(\phi^{x})^{n}:$ is the image of the $n$-th symmetric
tensor\ power $\delta_{x}^{\odot n}$. More generally, for any $\mu$ in
$\mathbb{H}^{\ast}$, $:(\sum_{x}\phi^{x}\mu_{x})^{n}:$ is the image of the
$n$-th symmetric tensor\ power $\mu^{\odot n}$. Therefore, $:\phi^{x_{1}}%
\phi^{x_{2}}...\phi^{x_{n}}:$ is the image of $\delta_{x_{1}}\odot
...\odot\delta_{x_{n}}$, and polynomials of the field are associated with the
non completed Fock space $\oplus\mathbb{H}^{\ast\odot n}$.

For any $x\in X$, the anihilation operator $a_{x}$\ and the creation operator
$a_{x}^{\ast}$ are defined as follows, on the uncompleted Fock space
$\oplus\mathbb{H}^{\ast\odot n}$:%
\begin{align*}
a_{x}(\mu_{1}\odot...\odot\mu_{n})  &  =\sum_{k}G\mu_{k}(x)\mu_{1}%
\odot...\odot\mu_{k-1}\odot\mu_{k+1}\odot...\odot\mu_{n}\\
a_{x}^{\ast}(\mu_{1}\odot...\odot\mu_{n})  &  =\delta_{x}\odot\mu_{1}%
\odot...\odot\mu_{n}.
\end{align*}
Moreover, we set $a_{x}1=0$ for all $x$. These operator $a_{x}$ and
$a_{x}^{\ast}$ are clearly dual of each other and verify the commutation
relations:%
\[
\lbrack a_{x},a_{y}^{\ast}]=G^{x,y}\;\;[a_{x}^{\ast},a_{y}^{\ast}%
]=[a_{x},a_{y}]=0.
\]
The isomorphism allows to represent them on polynomials of the field as
follows:
\begin{align*}
a_{x}  &  \rightleftarrows\sum_{y}G^{x,y}\frac{\partial}{\partial\phi^{y}}\\
a_{x}^{\ast}  &  \rightleftarrows\phi_{x}-\sum_{y}G^{x,y}\frac{\partial
}{\partial\phi^{y}}%
\end{align*}

Therefore, the Fock space structure is entirely transported on the space of
square integrable functionals of the free field.

In the case of a complex field $\varphi$, the space of square integrable
functionals of $\varphi$ and $\overline{\varphi}$ is isomorphic to the tensor
product of two copies of the symmetric Fock space $\Gamma^{\odot}%
(\mathbb{H}^{\ast})$, denoted by $\mathcal{F}_{B}$. The complex Fock space
stucture is defined by two commuting sets of creation and anihilation
operators:%
\[
a_{x}=\sqrt{2}\sum_{y}G^{x,y}\frac{\partial}{\partial\varphi^{y}}\;\;\;a_{x}^{\ast
}=\frac{\varphi^{x}}{\sqrt{2}}-\sqrt{2}\sum_{y}G^{x,y}\frac{\partial}{\partial\overline{\varphi}^{y}}%
\]%
\[
b_{x}=\sqrt{2}\sum_{y}G^{x,y}\frac{\partial}{\partial\varphi^{y}}\;\;\;b_{x}^{\ast
}=\frac{\overline{\varphi}^{x}}{\sqrt{2}}-\sqrt{2}\sum_{y}G^{x,y}\frac{\partial}{\partial\varphi^{y}}%
\]
(Recall that if $z=x+iy$, $\frac{\partial}{\partial z}=\frac{\partial
}{\partial x}-i\frac{\partial}{\partial y}$ and $\frac{\partial}%
{\partial\overline{z}}=\frac{\partial}{\partial x}+i\frac{\partial}{\partial
y}$).\newline See \cite{Sim2}, \cite{Nev} for a more general description of
this isomorphism.

\begin{exercise}
Let $A$ and $B$ be two polynomials in $\varphi$ and $\overline{\varphi}$,
identified with finite degree element in $\mathcal{F}_{B}$. Show by recurrence
on the degrees that $AB=\sum\frac{1}{p!}\gamma_{p}(A,B)$ with $\gamma
_{0}(A,B)=A\odot B$ and $\gamma_{p+1}(A,B)=\sum_{x}(\gamma_{p}(a_{x}%
A,b_{x}B)+\gamma_{p}(b_{x}A,a_{x}B))$.
\end{exercise}

\section{The Poissonian Fock space structure}

Another symmetric Fock space structure is defined on the spaces of $L^{2}%
$-functionals of the loop ensemble $\mathcal{LP}$. It is based on the space
$\mathfrak{h}=L^{2}(\mu^{L})$ where $\mu^{L}\ $denotes $\mu\otimes Leb$. For
any $G\in L^{2}(\mu^{L})$ define $G_{(\varepsilon)}%
(t,l)=G(t,l)1_{\{T(l)>\varepsilon\}}1_{\{\left|  t\right|  <\frac{1}%
{\varepsilon}\}}$. Note that $G_{(\varepsilon)}$ is always integrable. Define%
\[
\mathfrak{h}_{0}=\bigcup_{\varepsilon>0}\{G\in L^{\infty}(\mu^{L}%
),\;\exists\varepsilon>0,\;G=G_{(\varepsilon)}\text{ and }\int Gd(\mu
^{L})=0\}.
\]
The algebra $\mathfrak{h}_{0}$ is dense in $L^{2}(\mu^{L})$ (as, for example,
compactly supported square integrable functions with zero integral are dense
in $L^{2}(Leb)$).

Given any $F$ in $\mathfrak{h}_{0}$, $\mathcal{LP}(F)=\sum_{(t_{i},l_{i}%
)\in\mathcal{LP}}F(t_{i},l_{i})$  is well defined and $\mathbb{E}%
(\mathcal{LP}(F)^{2})=\left\langle F,F\right\rangle _{L^{2}(\mu^{L})}$. By
Stone Weierstrass theorem, the algebra generated by $\mathcal{LP}%
(\mathfrak{h}_{0})$ is dense in $L^{2}(\mu^{L})$.

By Campbell formula, the $n$-th chaos, isomorphic to the symmetric tensor
product $\mathfrak{h}^{\odot n}$, can be defined as the closure of the linear
span of functions of $n$ distinct points of $\mathcal{LP}$ of the form
\[
\sum_{\sigma\in\mathcal{S}_{n}}\prod_{1}^{n}G_{\sigma(j)}(l_{j},\alpha_{j})
\]
with $G_{j}$ in $\mathfrak{h}_{0}$.

Denote by $t\mathcal{LP}$ the jump times of the Poisson process $\mathcal{LP}%
$. It follows directly from formula \eqref{campb}  that for%

\[
\Phi=\frac{1}{\sqrt{n!}}\sum_{\sigma\in\mathcal{S}_{n}}\sum_{t_{1}%
<t_{2}<...<t_{n}\in t\mathcal{LP}}\prod_{1}^{n}G_{\sigma(j)}(l_{j}%
,t_{j}))\text{ and }\Phi^{\prime}=\frac{1}{\sqrt{n!}}\sum_{\sigma
\in\mathcal{S}_{n^{\prime}}}\sum_{t_{1}<t_{2}...<t_{n^{\prime}}\in
t\mathcal{LP}}\prod_{1}^{n^{\prime}}G_{\sigma(j^{\prime})}^{\prime
}(l_{j^{\prime}},t_{j^{\prime}})),
\]
with $G_{j}$, $G_{j^{\prime}}^{\prime}$ in $\mathfrak{h}_{0}$,
\[
\mathbb{E}(\Phi\Phi^{\prime})=1_{\{n=n^{\prime}\}}Per(\left\langle
G_{j},G_{j^{\prime}}^{\prime}\right\rangle _{L^{2}(\mu^{L})},1\leq
j,j^{\prime}\leq n)
\]
which equals $1_{\{n=n^{\prime}\}}\left\langle G_{1}\odot G_{2}...\odot
G_{n},G_{1}^{\prime}\odot G_{2}^{\prime}...\odot G_{n^{\prime}}^{\prime
}\right\rangle $, $\odot$ denoting the symmetric tensor product (Cf
\cite{Bourb}, \cite{Nev}.

This proves the existence of an isomorphism $Iso$\ between the algebra
generated by $\mathcal{LP}(\mathfrak{h}_{0})$\ and the tensor algebra
$\oplus\mathfrak{h}_{0}^{\odot n}$\ which extends into an isomorphism between
the space $L^{2}(\mathbb{P}_{\mathcal{LP}})$ of\ square integrable functionals
of the Poisson process of loops $\mathcal{LP}$ and the symmetric (or bosonic)
Fock space $\overline{\oplus\mathfrak{h}^{\odot n}}$. We have
\[
Iso(\frac{1}{\sqrt{n!}}\sum_{\sigma\in\mathcal{S}_{n}}\sum_{t_{1}%
<t_{2}<...<t_{n}\in t\mathcal{LP}}\prod_{1}^{n}G_{\sigma(j)}(l_{j}%
,t_{j})))=G_{1}\odot G_{2}\odot...\odot G_{n}.
\]

This formula extends to $G_{i}\in\mathfrak{h}$. The closure of the space that
functionals of this form generate linearly is by definition the n-th chaos
which is isomorphic to the symmetric tensor product $\mathfrak{h}^{\odot n}$. 

Note that for any $G$ in $\mathfrak{h}_{0}$, the image by this isomorphism
$Iso$\ of the tensor exponential $\exp^{\odot}(G)$ is $\prod_{t_{i}\in
t\mathcal{LP}}(1+G(l_{i},t_{i}))$.

Note also that for all $F$ in $\mathfrak{h}$, $\left\|  \exp^{\odot
}(F)\right\|  ^{2}=e^{\int F^{2}d\mu^{L}}$

\begin{proposition}
For any $F$ in $\mathfrak{h}$, the image by $Iso$ of the tensor exponential
$\exp^{\odot}(F)$ is obtained as the limit in $L^{2}$, as $\varepsilon
\rightarrow0$ of

 $\prod_{t_{i}\in t\mathcal{LP}}(1+F_{(\varepsilon)}%
(l_{i},t_{i}))e^{-\int F_{(\varepsilon)}d\mu^{L}}$.
\end{proposition}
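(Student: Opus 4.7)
The plan is to first show $L^{2}$ Cauchy convergence of $M_{\varepsilon} := \prod_{(l_{i},t_{i})\in \mathcal{LP}}(1+F_{(\varepsilon)}(l_{i},t_{i}))\,e^{-\int F_{(\varepsilon)}\,d\mu^{L}}$ directly from the Poisson Laplace functional, then identify its limit with $Iso(\exp^{\odot}(F))$ by reducing to the already-stated $\mathfrak{h}_{0}$ case via an auxiliary zero-mean perturbation.

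For the $L^{2}$-Cauchy step, observe that $F_{(\varepsilon)}$ is supported on $A_{\varepsilon}=\{T(l)>\varepsilon,\,|t|<1/\varepsilon\}$, which has finite $\mu^{L}$-measure (using that $\mu\{T(l)>\varepsilon\}<\infty$ from the first chapter). Conditioning on the Poisson count $K = \#(\mathcal{LP}\cap A_{\varepsilon})$ and on the iid conditional positions gives
\begin{equation*}
\mathbb{E}\bigl[\,\textstyle\prod(1+F_{(\varepsilon)})\cdot\prod(1+F_{(\varepsilon')})\bigr] = \exp\!\Bigl(\int F_{(\varepsilon)}\,d\mu^{L} + \int F_{(\varepsilon')}\,d\mu^{L} + \langle F_{(\varepsilon)},F_{(\varepsilon')}\rangle_{L^{2}(\mu^{L})}\Bigr),
\end{equation*}
so that $\mathbb{E}[M_{\varepsilon}M_{\varepsilon'}] = \exp\langle F_{(\varepsilon)},F_{(\varepsilon')}\rangle_{L^{2}(\mu^{L})}$. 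Since $F_{(\varepsilon)}\to F$ in $L^{2}(\mu^{L})$ by dominated convergence, the three quantities $\|F_{(\varepsilon)}\|^{2}$, $\|F_{(\varepsilon')}\|^{2}$, $\langle F_{(\varepsilon)},F_{(\varepsilon')}\rangle$ all tend to $\|F\|^{2}$, and hence $\|M_{\varepsilon}-M_{\varepsilon'}\|_{L^{2}(\mathbb{P})}^{2}\to 0$, producing an $L^{2}$ limit $M$ with $\|M\|^{2}=e^{\|F\|^{2}}$.

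For the identification $M = Iso(\exp^{\odot}(F))$, the reproducing identity $\langle \exp^{\odot}(G),\exp^{\odot}(G')\rangle = e^{\langle G,G'\rangle}$ makes $\exp^{\odot}\colon \mathfrak{h}\to \Gamma^{\odot}(\mathfrak{h})$ continuous, so $Iso(\exp^{\odot}(F_{(\varepsilon)}))\to Iso(\exp^{\odot}(F))$ in $L^{2}(\mathbb{P})$; it thus suffices to prove the identity $M_{\varepsilon} = Iso(\exp^{\odot}(F_{(\varepsilon)}))$ for each fixed $\varepsilon$. For this, choose measurable $B_{n}\subset\{T>\varepsilon',|t|<1/\varepsilon'\}$ (some $\varepsilon'<\varepsilon$) disjoint from $\mathrm{supp}(F_{(\varepsilon)})$ with $\mu^{L}(B_{n})\to\infty$, set $c_{n}:=\int F_{(\varepsilon)}\,d\mu^{L}/\mu^{L}(B_{n})$ and $G_{n}:=F_{(\varepsilon)}-c_{n}\mathbf{1}_{B_{n}}$; after a further diagonal level-$N$ truncation of $F_{(\varepsilon)}$ in case it is unbounded, one has $G_{n}\in \mathfrak{h}_{0}$ and $G_{n}\to F_{(\varepsilon)}$ in $L^{2}(\mu^{L})$. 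The stated $\mathfrak{h}_{0}$ case gives
\begin{equation*}
Iso(\exp^{\odot}(G_{n})) \;=\; \textstyle\prod(1+G_{n}) \;=\; (1-c_{n})^{K_{n}}\prod(1+F_{(\varepsilon)}),
\end{equation*}
where $K_{n}=\#(\mathcal{LP}\cap B_{n})$ is Poisson with mean $\mu^{L}(B_{n})$. Since $\mathbb{E}[c_{n}K_{n}]=\int F_{(\varepsilon)}\,d\mu^{L}$ and $\mathrm{Var}(c_{n}K_{n})=c_{n}\int F_{(\varepsilon)}\,d\mu^{L}\to 0$, one has $(1-c_{n})^{K_{n}}\to e^{-\int F_{(\varepsilon)}\,d\mu^{L}}$ in probability, and the $L^{2}$ bound already derived for $M_{\varepsilon}$ supplies the uniform integrability needed to upgrade to $L^{2}$ convergence, giving $\prod(1+G_{n})\to M_{\varepsilon}$. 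Matching the two limits yields $M_{\varepsilon}=Iso(\exp^{\odot}(F_{(\varepsilon)}))$, and then $\varepsilon\to 0$ closes the argument.

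The main obstacle is precisely this identification: the $\mathfrak{h}_{0}$ hypothesis $\int G\,d\mu^{L}=0$ is used essentially in the stated case, so one must produce a zero-mean substitute for $F_{(\varepsilon)}$ whose additional random factor $(1-c_{n})^{K_{n}}$ converges \emph{in $L^{2}(\mathbb{P})$}, not merely in probability, to the deterministic counterterm $e^{-\int F_{(\varepsilon)}\,d\mu^{L}}$, while simultaneously the Fock-space image $Iso(\exp^{\odot}(G_{n}))$ converges to $Iso(\exp^{\odot}(F_{(\varepsilon)}))$. All the delicate moment control, and any extra truncation needed when $F_{(\varepsilon)}$ is only $L^{2}$ and not bounded, is concentrated in this step.
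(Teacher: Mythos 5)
Your proposal is correct, but the identification of the limit follows a genuinely different route from the paper's. Both proofs rest on the same Poisson exponential formula $\mathbb{E}[\prod(1+H)]=\exp(\int H\,d\mu^{L})$, and your Cauchy computation is essentially the paper's second-moment computation $\mathbb{E}[M_{\varepsilon}^{2}]=\exp(\int F_{(\varepsilon)}^{2}d\mu^{L})$; the difference is in how the limit is recognized. The paper never proves the fixed-$\varepsilon$ identity $M_{\varepsilon}=Iso(\exp^{\odot}(F_{(\varepsilon)}))$: it computes $\lim_{\varepsilon\rightarrow0}\mathbb{E}\bigl[M_{\varepsilon}\prod(1+G(l_{i},t_{i}))\bigr]=e^{\int FG\,d\mu^{L}}=\left\langle \exp^{\odot}(F),\exp^{\odot}(G)\right\rangle$ for every $G\in\mathfrak{h}_{0}$ and concludes from convergence of the norms together with weak convergence against this total family of exponentials, which is shorter and needs no auxiliary construction. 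You instead establish the exact identity at each fixed $\varepsilon$ through the zero-mean approximants $G_{n}=F_{(\varepsilon)}-c_{n}1_{B_{n}}$ (plus a truncation when $F_{(\varepsilon)}$ is unbounded) and then let $\varepsilon\rightarrow0$ by continuity of $\exp^{\odot}$; this costs two extra layers of approximation but isolates the stronger intermediate fact that the prelimit variable is itself the image of $\exp^{\odot}(F_{(\varepsilon)})$. One step should be tightened: the uniform integrability you invoke must concern the family $\{\prod(1+G_{n})\}_{n}$, not $M_{\varepsilon}$; the cleanest repair is to use that $B_{n}$ is disjoint from the support of $F_{(\varepsilon)}$, so $(1-c_{n})^{K_{n}}$ is independent of $\prod(1+F_{(\varepsilon)})$ and $\mathbb{E}[((1-c_{n})^{K_{n}}-e^{-\int F_{(\varepsilon)}d\mu^{L}})^{2}]=e^{-2\int F_{(\varepsilon)}d\mu^{L}}(e^{c_{n}\int F_{(\varepsilon)}d\mu^{L}}-1)\rightarrow0$, or equivalently that $\mathbb{E}[(\prod(1+G_{n}))^{2}]=e^{\|G_{n}\|^{2}}\rightarrow e^{\|F_{(\varepsilon)}\|^{2}}$, so convergence in probability plus convergence of second moments yields the $L^{2}$ convergence; with that adjustment (and the routine bookkeeping for the level-$N$ truncation, which follows from the same mixed-moment formula) your argument is complete. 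Note finally that your own inner-product computation, applied to $M_{\varepsilon}$ and $\prod(1+G)$ with $G\in\mathfrak{h}_{0}$, already gives $\mathbb{E}[M_{\varepsilon}\prod(1+G)]=e^{\langle F_{(\varepsilon)},G\rangle}$ and hence the fixed-$\varepsilon$ identity at once by density, which shows how the two arguments could be merged.
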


\begin{proof}
Note first that
\[
\mathbb{E}(\prod_{t_{i}\in t\mathcal{LP}}(1+F_{(\varepsilon)}(l_{i}%
,t_{i}))e^{-\int F_{(\varepsilon)}d\mu^{L}})=1
\]
and%

\begin{multline*}
\mathbb{E}([\prod_{t_{i}\in t\mathcal{LP}}(1+F_{(\varepsilon)}(l_{i}%
,t_{i}))e^{-\int F_{(\varepsilon)}d\mu^{L}}]^{2})\\
=\exp(\int[(1+F_{(\varepsilon)})^{2}-1]d\mu^{L}e^{-2\int F_{(\varepsilon)}%
d\mu^{L}}\\
=\exp(\int F_{(\varepsilon)}^{2}d\mu^{L})
\end{multline*}
converges towards $\exp(\int F^{2}d\mu^{L}).$\newline Then note that for any
$G$ in $\mathfrak{h}_{0}$,
\begin{multline*}
\lim_{\varepsilon\rightarrow0}\mathbb{E}(\prod_{t_{i}\in t\mathcal{LP}%
}(1+G(l_{i},t_{i}))(1+F_{(\varepsilon)}(l_{i},t_{i}))e^{-\int F_{(\varepsilon
)}d\mu^{L}})\\
=\lim_{\varepsilon\rightarrow0}\exp(\int[[1+G][1+F_{(\varepsilon)}%
]-1]e^{-\int(F_{(\varepsilon)})d\mu^{L}}d(\mu^{L})\\
=\lim_{\varepsilon\rightarrow0}\exp(\int F_{(\varepsilon)}Gd\mu^{L})=\exp(\int
FGd\mu^{L})=\left\langle \exp^{\odot}(F),\exp^{\odot}(G)\right\rangle .
\end{multline*}
\end{proof}

\begin{exercise}
For any $F$ in $\mathfrak{h}$, set $F_{\leq\alpha}(l,t)=F(l,t)1_{\{t\leq
\alpha\}}$. Show that $Iso(\exp^{\odot}(F_{\leq\alpha}))$ is a $\sigma
(\mathcal{L}_{\alpha})$-martingale.

Prove that the $\sigma(\mathcal{L}_{\alpha})$-martingale $(1+\sigma
_{x}t)^{-\alpha}\exp(\frac{\widehat{\mathcal{L}_{\alpha}}^{x}t}{1+\sigma_{x}%
t})$ is in this way associated with $F(l,t)=e^{\frac{\widehat{l}^{x}%
t}{1+\sigma_{x}t}}-1$.

Deduce from this an expression of $P_{k}^{\alpha,\sigma_{x}}(\widehat
{\mathcal{L}_{\alpha}}^{x})$ in terms of $\sigma_{x}^{k_{i}}D_{k_{i}%
}(\frac{\widehat{l_{i}}^{x}}{\sigma_{x}})$ (the polynomials defined in section
\ref{Occup}) ,$l_{i}$ denoting distinct loops in $\mathcal{L}_{\alpha}$ and
$k_{i}$ positive integers less than $k$.
\end{exercise}

For any $G$ in $\mathfrak{h}_{0}$, unbounded annihilation and creation
operators $\mathfrak{A}_{G}$ and $\mathfrak{A}_{G}^{\ast}$ are defined on
$\oplus\mathfrak{h}^{\odot n}$\
\[
\mathfrak{A}_{G}(G_{1}\odot...\odot G_{n})=\sum_{k=1}^{n}\left\langle
G,G_{k}\right\rangle _{\mathfrak{h}}G_{1}\odot...G_{k-1}\odot G_{k+1}...\odot
G_{n}%
\]
and%
\[
\mathfrak{A}_{G}^{\ast}(G_{1}\odot...\odot G_{n})=G\odot G_{1}\odot...\odot
G_{n}%
\]
Note that
\begin{align*}
\lbrack\mathfrak{A}_{G}^{\ast},\mathfrak{A}_{F}^{\ast}]  &  =[\mathfrak{A}%
_{G},\mathfrak{A}_{F}]=0\\
\lbrack\mathfrak{A}_{G},\mathfrak{A}_{F}^{\ast}]  &  =\left\langle
F,G\right\rangle _{L^{2}(\mu^{L})}%
\end{align*}
Moreover, $\mathfrak{A}_{G}^{\ast}$\ is adjoint to $\mathfrak{A}_{G}$\ in
$L^{2}(\mathbb{P}_{\mathcal{LP}})$, and the operators $\mathfrak{F}%
_{G}=\mathfrak{A}_{G}^{\ast}+\mathfrak{A}_{G}$ commute.

Note also that the creation operator can be defined directly on the space of
loop configurations: by proposition \ref{camca}\ given any bounded functional
$\Psi$ on loops configurations,%
\[
Iso\mathfrak{A}_{G}^{\ast}Iso^{-1}\Psi(\mathcal{LP})=\int\Psi(\mathcal{LP}%
\cup\{l,t\})G(l,t)d\mu^{L}%
\]
It is enough to verify it for $Iso^{-1}\Psi$ in $\mathfrak{h}_{0}^{\odot n}$.

For any $G$ in $\mathfrak{h}_{0}\cap L^{\infty}$, note that $\mathfrak{F}_{G}$
does not represent the multiplication by $\mathcal{LP}(G)$, though we have,
for all $\Phi$ in $\oplus\mathfrak{h}^{\odot n}$\ and $G$ in $\mathfrak{h}%
_{0}$, $\mathbb{E}(Iso(\Phi)\mathcal{LP}(G))=\left\langle \mathfrak{F}%
_{G}1,\Phi\right\rangle =\left\langle 1,\mathfrak{F}_{G}\Phi\right\rangle $.

The representation of this operator of multiplication in the Fock space
structure can be done as follows:

Setting $M_{G}F=GF,$ for all $\Phi$ in $\oplus\mathfrak{h}^{\odot n}$
\[
(\sum_{t_{i}\in t\mathcal{LP}}G(l_{i},t_{i}))Iso(\Phi)=Iso(\mathfrak{F}%
_{G}\Phi+d\Gamma(M_{G})).
\]
The notation $d\Gamma$ refers to the second quantisation functor $\Gamma$: if
$B$ is any bounded operator on a Hilbert space $\mathfrak{h}$, $\Gamma(B)$ is
defined on $\oplus\mathfrak{h}^{\odot n}$\ by \ the sum of the operators
$B^{\otimes n}$ acting on each symmetric tensor product $\mathfrak{h}^{\odot
n}$ and $d\Gamma(B)=\frac{\partial}{\partial t}\Gamma(e^{tB})|_{t=0}$. In
fact, given any orthonormal basis $E_{k}$ of $\mathfrak{h}$, we have, for any
$\Phi$ in $\oplus\mathfrak{h}^{\odot n}$\ and $G$ in $\mathfrak{h}$
\[
d\Gamma(B)=\sum\mathfrak{A}_{BE_{k}}^{\ast}\mathfrak{A}_{E_{k}}=\sum
\mathfrak{A}_{E_{k}}^{\ast}\mathfrak{A}_{B^{\ast}E_{k}}%
\]
as
\[
\sum\left\langle F,E_{k}\right\rangle _{L^{2}(\mu^{L})}(BE_{k})=\sum
\left\langle F,BE_{k}\right\rangle _{L^{2}(\mu^{L})}E_{k}=BF.
\]

\chapter{Energy variation and representations}
\section{Variation of the energy form}
The loop measure $\mu$ depends on the energy $e$ which is defined by the free
parameters $C,\kappa$. It will sometimes be denoted $\mu_{e}$. We shall denote
$\mathcal{Z}_{e}$ the determinant $\det(G)=\det(M_{\lambda}-C)^{-1}$. Then
$\mu(p>1)=\log(\mathcal{Z}_{e})+\sum_{x\in X}\log(\lambda_{x})$.

$\mathcal{Z}_{e}^{\alpha}$ is called the partition function of $\mathcal{L}%
_{\alpha}$.

We wish to study the dependance of $\mu$ on $C$ and $\kappa$. The following
result is suggested by an analogy with quantum field theory (Cf \cite{Gaw}).

\begin{proposition}
\label{boub}

\begin{enumerate}
\item[i)] $\frac{\partial\mu}{\partial\kappa_{x}}=-\widehat{l}^{x}\mu$.

\item[ii)] If $C_{x,y}>0$, $\frac{\partial\mu}{\partial C_{x,y}}=-T^{x,y}\mu$
with $T^{x,y}(l)=(\widehat{l}^{x}+\widehat{l}^{y})-\frac{N_{x,y}}{C_{x,y}%
}(l)-\frac{N_{y,x}}{C_{x,y}}(l)$.
\end{enumerate}
\end{proposition}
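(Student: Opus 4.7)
The plan is to logarithmically differentiate the explicit density formula (\ref{ddd}) for pointed loops. Since the quantities $\widehat{l}^x$, $N_{x,y}$, $N_{y,x}$ and the functional $T^{x,y}$ are all invariant under circular permutation of $(\xi_i,\tau_i^*)$, it suffices to establish the identities at the level of $\mu^*$, which agrees with $\mu$ on such functionals. The key preliminary observation is that
\[
\prod_{i\in\mathbb{Z}/p\mathbb{Z}} \lambda_{\xi_i} \;=\; \prod_{x\in X} \lambda_x^{N_x},
\]
so that (\ref{ddd}) simplifies on $\{p\ge 2\}$ to
\[
\mu^*(p=k,\xi_i=x_i,\widehat{\tau}_i\in dt_i) \;=\; \frac{1}{k}\Big(\prod_{u,v} C_{u,v}^{N_{u,v}}\Big)\, e^{-\sum_i \lambda_{\xi_i}\,t_i}\,\prod_i dt_i,
\]
and for $p=1$ reduces to $e^{-\lambda_x t}dt/t$ (for $\xi=x$).

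For (i), fix $x\in X$ and note that $\partial\lambda_u/\partial\kappa_x = \delta_u^x$, while the conductance factors do not depend on $\kappa$. Hence on $\{p\ge 2\}$ only the exponent contributes, and
\[
\frac{\partial}{\partial\kappa_x}\log \frac{d\mu^*}{\prod dt_i}
\;=\; -\sum_i \frac{\partial \lambda_{\xi_i}}{\partial\kappa_x}\, t_i
\;=\; -\sum_{i:\,\xi_i=x} \widehat{\tau}_i
\;=\; -\widehat{l}^x.
\]
For a trivial loop at $y$, the density $e^{-\lambda_y t}/t$ gives derivative $-t\,\mathbf{1}_{\{y=x\}}\mu^*$, which equals $-\widehat{l}^x\mu^*$ since $\widehat{l}^x=t\,\mathbf{1}_{\{y=x\}}$ in that case. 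This yields (i).

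For (ii), fix $x\neq y$ with $C_{x,y}>0$. By symmetry $C_{x,y}=C_{y,x}$, so $\partial\lambda_u/\partial C_{x,y} = \delta_u^x+\delta_u^y$. The conductance factor contributes
\[
\frac{\partial}{\partial C_{x,y}}\Big(N_{x,y}\log C_{x,y} + N_{y,x}\log C_{y,x}\Big)
\;=\; \frac{N_{x,y}+N_{y,x}}{C_{x,y}},
\]
while the exponent contributes $-\sum_i (\delta_{\xi_i}^x+\delta_{\xi_i}^y)\,t_i = -(\widehat{l}^x+\widehat{l}^y)$. Adding these,
\[
\frac{\partial}{\partial C_{x,y}}\log\frac{d\mu^*}{\prod dt_i}
\;=\; \frac{N_{x,y}+N_{y,x}}{C_{x,y}} - \widehat{l}^x - \widehat{l}^y
\;=\; -T^{x,y}.
\]
For a trivial loop at $z\in\{x,y\}$ the density $e^{-\lambda_z t}/t$ yields derivative $-t$, and $T^{x,y}=\widehat{l}^x+\widehat{l}^y=t$ in that case; for $z\notin\{x,y\}$ both sides vanish. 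This proves (ii).

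There is no real obstacle: the only subtlety is the cancellation $\prod_i\lambda_{\xi_i}=\prod_x\lambda_x^{N_x}$ which removes the explicit $\lambda$-dependence in the combinatorial factors, leaving only the exponent $-\sum_i\lambda_{\xi_i}t_i$ and the pure conductance factors to differentiate. Once this is noted, (i) and (ii) both follow by a single line of logarithmic differentiation, with the symmetry $C_{x,y}=C_{y,x}$ being the only thing to watch in (ii).
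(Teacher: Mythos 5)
Your proof is correct and follows essentially the same route as the paper: the paper's proof simply recalls the pointed-loop density (formula (\ref{d})) together with $\lambda_x=\kappa_x+\sum_y C_{x,y}$ and declares the result an "elementary calculation", which is exactly the logarithmic differentiation you carry out, including the treatment of trivial loops. The only nitpick is that the formula you actually manipulate is the rescaled-holding-time version (\ref{d}) rather than (\ref{ddd}), but the two are equivalent and your cancellation $\prod_i\lambda_{\xi_i}=\prod_x\lambda_x^{N_x}$ and the resulting computation are correct.
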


\begin{proof}
Recall that by formula (\ref{d}): $\mu^{\ast}(p=1,\xi=x,\widehat{\tau}\in
dt)=e^{-\lambda_{x}t}\frac{dt}{t}$ and
\[
\mu^{\ast}(p=k,\xi_{i}=x_{i},\widehat{\tau}_{i}\in dt_{i})=\frac{1}{k}%
\prod_{x,y}C_{x,y}^{N_{x,y}}\prod_{x}\lambda_{x}^{-N_{x}}\prod_{i\in
\mathbb{Z}/p\mathbb{Z}}\lambda_{\xi_{i}}e^{-\lambda_{\xi_{i}}t_{i}}dt_{i}.
\]
Moreover we have $C_{x,y}=C_{y,x}=\lambda_{x}P_{y}^{x}$ and $\lambda
_{x}=\kappa_{x}+\sum_{y}C_{x,y}.$\newline The two formulas follow by
elementary calculation.
\end{proof}

\medskip Recall that $\mu(\widehat{l}^{x})=G^{x,x}$ and $\mu(N_{x,y}%
)=G^{x,y}C_{x,y}$. So we have $\mu(T^{x,y})=G^{x,x}+G^{y,y}-2G^{x,y}$. Then,
the above proposition allows us to compute all moments of $T$ and $\widehat
{l}$ relative to $\mu_{e}$ (they could be called Schwinger functions).

\begin{exercise}
Use the proposition above to show that:%
\[
\int\widehat{l}^{x}\widehat{l}^{y}\mu(dl)=(G^{x,y})^{2}%
\]%
\[
\int\widehat{l}^{x}T^{y,z}(l)\mu(dl)=(G^{x,y}-G^{x,z})^{2}%
\]
and%
\[
\int T^{x,y}(l)T^{u,v}(l)\mu(dl)=(G^{x,u}+G^{y,v}-G^{x,v}-G^{y,u})^{2}=(K^{(x,y),(u,v)})^{2}%
\]
\end{exercise}

Hint: The calculations are done noticing that for any invertible matrix
function $M(s)$, $\frac{d}{ds}M(s)^{-1}=-M(s)^{-1}M^{\prime}(s)M(s)^{-1}$. The
formula is applied to $M=M_{\lambda}-C$ and $s=\kappa_{x}$ or $C_{x,y}$.

\begin{exercise}
Show that $\int(\frac{1}{2}\sum_{x,y}C_{x,y}T^{x,y}(l)+\sum_{x}\kappa
_{x}\widehat{l}^{x})\mu(dl)=\left|  X\right|  $ and that more generally, for
any $D\subset X$,
\[
\int(\frac{1}{2}\sum_{x,y}C_{x,y\in D}T^{x,y}(l)+\sum_{x\in D}\kappa
_{x}\widehat{l}^{x})\mu(dl)=\left|  D\right|  +\sum_{x\in D,y\in X-D}%
C_{x,y}G^{x,y}
\]
\end{exercise}

Set
\[
T_{x,y}^{(\alpha)}=\sum_{l\in\mathcal{L}_{\alpha}}T_{x,y}(l)=(\widehat
{\mathcal{L}}_{\alpha}^{x}+\widehat{\mathcal{L}}_{\alpha}^{y})-\frac{N_{x,y}%
^{(\alpha)}}{C_{x,y}}-\frac{N_{y,x}^{(\alpha)}}{C_{x,y}}%
\]
and
\[
\widetilde{T}_{x,y}^{(\alpha)}=T_{x,y}^{(\alpha)}-\mathbb{E}(T_{x,y}%
^{(\alpha)})=T_{x,y}^{(\alpha)}-\alpha(G^{x,x}+G^{y,y}-2G^{x,y}).
\]
We can apply proposition \ref{boub} to the Poissonnian loop ensembles, to get
the following

\begin{corollary}
\label{boub2}For any bounded functional $\Phi$ on loop configurations

\begin{enumerate}
\item[i)] $\frac{\partial}{\partial\kappa_{x}}\mathbb{E}(\Phi(\mathcal{L}%
_{\alpha}))=-\mathbb{E}(\Phi(\mathcal{L}_{\alpha})\widetilde{\mathcal{L}%
}_{\alpha}^{x})=\alpha\int\mathbb{E}((\Phi(\mathcal{L}_{\alpha})-\Phi
(\mathcal{L}_{\alpha}\cup\{\gamma\}))\mu^{x,x}(d\gamma).$

\item[ii)] If $C_{x,y}>0$,
\begin{multline*}
\frac{\partial}{\partial C_{x,y}}\mathbb{E}(\Phi(\mathcal{L}_{\alpha
}))=-\mathbb{E}(\widetilde{T}_{x,y}^{(\alpha)}\Phi(\mathcal{L}_{\alpha}))\\
=\alpha\int\mathbb{E}((\Phi(\mathcal{L}_{\alpha})-\Phi(\mathcal{L}_{\alpha
}\cup\{\gamma\}))[\mu^{x,x}(d\gamma)+\mu^{y,y}(d\gamma)-\mu^{x,y}(d\gamma
)-\mu^{y,x}(d\gamma)].
\end{multline*}
\end{enumerate}
\end{corollary}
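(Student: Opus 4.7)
The plan is to combine Proposition~\ref{boub} with the two main tools for handling the Poisson ensemble $\mathcal{L}_\alpha$: the Laplace functional $\mathbb{E}(e^{-\sum_l \Psi(l)})=\exp(\alpha\int(e^{-\Psi}-1)\,d\mu_e)$ of \eqref{Poilapl}, and the Campbell-type identity of Proposition~\ref{camca}. Both equalities in each of i) and ii) will then follow by specialising to exponential cylinder functionals $\Phi(\mathcal{L}_\alpha)=e^{-\sum_l \Psi(l)}$ (with $\Psi$ bounded and supported away from loops of arbitrarily small length, so that the class is stable under the operations involved) and invoking a standard density argument to pass to general bounded $\Phi$.

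For the first equality in i), I would differentiate $\mathbb{E}(\Phi)=\exp\bigl(\alpha\int(e^{-\Psi}-1)\,d\mu_e\bigr)$ in $\kappa_x$ and use $\partial\mu_e/\partial\kappa_x=-\widehat{l}^x\mu_e$ from Proposition~\ref{boub}; this yields $\partial_{\kappa_x}\mathbb{E}(\Phi)=-\mathbb{E}(\Phi)\cdot\alpha\int(e^{-\Psi}-1)\widehat{l}^x\,d\mu_e$. On the other hand, Campbell's formula (Proposition~\ref{camca}) applied to $F(l)=\widehat{l}^x$, together with $\Phi(\mathcal{L}_\alpha\cup\{l\})=\Phi(\mathcal{L}_\alpha)e^{-\Psi(l)}$, gives $\mathbb{E}(\Phi\,\widehat{\mathcal{L}}_\alpha^x)=\mathbb{E}(\Phi)\cdot\alpha\int e^{-\Psi}\widehat{l}^x\,d\mu_e$; subtracting $\mathbb{E}(\Phi)\cdot\alpha G^{x,x}=\mathbb{E}(\Phi)\cdot\mathbb{E}(\widehat{\mathcal{L}}_\alpha^x)$ and recognising $\widetilde{\mathcal{L}}_\alpha^x=\widehat{\mathcal{L}}_\alpha^x-\alpha G^{x,x}$ identifies the derivative with $-\mathbb{E}(\Phi\,\widetilde{\mathcal{L}}_\alpha^x)$. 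The second equality is then immediate from Corollary~\ref{camcam} (or equivalently from Proposition~\ref{bridg2}, which says $\widehat{l}^x\mu(dl)=\mu^{x,x}(dl)$), after moving the $\Phi(\mathcal{L}_\alpha)$ term inside the integral via $\mathbb{E}(\Phi)\cdot\mu(\widehat{l}^x)=\mathbb{E}(\Phi)\cdot\mu^{x,x}(1)$.

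Part ii) follows by the exact same template with $\kappa_x$ replaced by $C_{x,y}$ and $\widehat{l}^x$ replaced by $T^{x,y}(l)=\widehat{l}^x+\widehat{l}^y-\frac{N_{x,y}(l)+N_{y,x}(l)}{C_{x,y}}$. The key extra input is the decomposition
\[
T^{x,y}(l)\,\mu(dl)=\mu^{x,x}(dl)+\mu^{y,y}(dl)-\mu^{x,y}(dl)-\mu^{y,x}(dl),
\]
which is just Proposition~\ref{bridg2} for the first two terms and Proposition~\ref{bridg} (in the form $\mu^{x,y}(d\gamma)=\frac{N_{y,x}}{C_{x,y}}\mu(d\gamma)$ and its transpose) for the last two. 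Centering by $\mathbb{E}(T_{x,y}^{(\alpha)})=\alpha(G^{x,x}+G^{y,y}-2G^{x,y})$ produces $\widetilde{T}_{x,y}^{(\alpha)}$ and converts the Campbell identity into the stated difference $\Phi(\mathcal{L}_\alpha)-\Phi(\mathcal{L}_\alpha\cup\{\gamma\})$ under the combined bridge measures.

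The only genuine obstacle is a bookkeeping one: $\mu$ is $\sigma$-finite with an infinite contribution from trivial loops, so the interchange of $\partial_{\kappa_x}$ (resp.\ $\partial_{C_{x,y}}$) with $\int(e^{-\Psi}-1)\,d\mu_e$ and the application of Campbell's formula must be justified on the subclass of $\Psi$'s for which $\widehat{l}^x(e^{-\Psi}-1)$ and $T^{x,y}(e^{-\Psi}-1)$ are $\mu$-integrable; this is automatic as soon as $\Psi$ vanishes on loops of sufficiently small length, e.g.\ depends on the configuration only through occupation times or edge counts of a finite set of coordinates, and such $\Psi$'s yield an algebra separating Poisson configurations, which is enough to extend the identities to all bounded $\Phi$.
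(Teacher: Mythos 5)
Your proof is correct and follows essentially the same route as the paper: differentiate the expectation via Proposition \ref{boub}, then use the Campbell machinery (Proposition \ref{camca} and Corollary \ref{camcam}, i.e.\ Propositions \ref{bridg} and \ref{bridg2}) to identify the result with $-\mathbb{E}(\Phi\widetilde{\mathcal{L}}_{\alpha}^{x})$, resp.\ $-\mathbb{E}(\widetilde{T}_{x,y}^{(\alpha)}\Phi)$, and with the bridge-measure expressions. The only difference is the choice of test class: you work with exponential functionals through the Laplace functional \eqref{Poilapl} (the variant suggested in the exercise following Proposition \ref{camca}), whereas the paper tests on functionals of the form $\sum_{l_{1}\neq\dots\neq l_{q}\in\mathcal{L}_{\alpha}}\prod_{j}G_{j}(l_{j})$ and applies formula \eqref{campb} directly.
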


The proof is easily performed, taking first $\Phi$ of the form $\sum
_{l_{1}\neq l_{2}...\neq l_{q}\in\mathcal{L}_{\alpha}}\prod_{1}^{q}G_{j}%
(l_{j}))$. We apply Campbell formula to deduce the first half of both
identities, then corollary \ref{camcam} to get the second half.

This result should be put in relation with propositions \ref{bridg}\ and
\ref{bridg2}\ and with the Poissonian Fock space structure defined above.

\begin{exercise}
Show that using theorem \ref{iso}, corollary \ref{boub2} implies that for any
function $F$ of an non-negative field and any edge $(x_{i},y_{i})$:
\begin{align*}
&  \mathbb{E}_{\phi}\mathbb{(}\frac{1}{2}:(\phi^{x}-\phi^{y})^{2}%
:F(\frac{1}{2}\phi^{2}))=\int\mathbb{E}(F(\widehat{\mathcal{L}_{\frac{1}{2}}%
})\widetilde{T}_{x,y}^{(\frac{1}{2})})\\
&  \mathbb{E}_{\phi}\mathbb{(}:\phi^{x}\phi^{y}:F(\frac{1}{2}\phi^{2}%
))=-\int\mathbb{E}(F(\widehat{\mathcal{L}_{\frac{1}{2}}})N_{x,y})
\end{align*}
Hint: Express the Gaussian measure and use the fact that $-\frac{\partial
}{\partial C_{x,y}}Log(\det(G))=G^{x,x}+G^{y,y}-2G^{x,y}$
\end{exercise}

\begin{exercise}
Setting $\widetilde{l}^{x}=\widehat{l}^{x}-G^{x,x}$\ and $\widetilde{T}%
_{x,y}(l)=T_{x,y}(l)-(G^{x,x}+G^{y,y}-2G^{x,y})$, show that we have:%
\begin{align*}
\int\widetilde{l}_{x}\widetilde{l}_{y}\mu(dl) =  &  (G^{x,y})^{2}%
-G^{x,x}G^{y,y}=\det\left(
\begin{array}
[c]{cc}%
\left\langle \phi^{x},\phi^{x}\right\rangle  & \left\langle \phi^{x},\phi
^{y}\right\rangle \\
\left\langle \phi^{y},\phi^{x}\right\rangle  & \left\langle \phi^{y},\phi
^{y}\right\rangle
\end{array}
\right) \\
\int\widetilde{l}_{x}\widetilde{T}_{y,z}(l)\mu(dl) =  &  (G^{x,y}-G^{x,z}%
)^{2}-G^{x,x}(G^{z,z}+G^{y,y}-2G^{y,z})\\
=  &  \det\left(
\begin{array}
[c]{cc}%
\left\langle \phi^{x},\phi^{x}\right\rangle  & \left\langle \phi^{x},\phi
^{y}-\phi^{z}\right\rangle \\
\left\langle \phi^{x},\phi^{y}-\phi^{z}\right\rangle  & \left\langle \phi
^{y},\phi^{y}\right\rangle
\end{array}
\right) \\
\int\widetilde{T}_{x,y}(l)\widetilde{T}_{u,v}(l)\mu(dl) =  &  (G_{x,u}%
+G_{y,v}-G_{x,v}-G_{y,u})^{2}\\
&  -(G^{x,x}+G^{y,y}-2G^{x,y})(G^{u,u}+G^{v,v}-2G^{u,v})\\
=  &  -\det\left(
\begin{array}
[c]{cc}%
\left\langle \phi^{x}-\phi^{y},\phi^{x}-\phi^{y}\right\rangle  & \left\langle
\phi^{x}-\phi^{y},\phi^{u}-\phi^{v}\right\rangle \\
\left\langle \phi^{u}-\phi^{v},\phi^{x}-\phi^{y}\right\rangle  & \left\langle
\phi^{u}-\phi^{v},\phi^{u}-\phi^{v}\right\rangle
\end{array}
\right)  .
\end{align*}
\end{exercise}

\begin{exercise}
For any bounded functional $\Phi$ on loop configurations, give two different
expressions for $\frac{\partial^{2}}{\partial\kappa_{x}\partial\kappa_{y}%
}\mathbb{E}(\Phi(\mathcal{L}_{\alpha}))$, $\frac{\partial^{2}}{\partial
C_{x,y}\partial\kappa_{z}}\mathbb{E}(\Phi(\mathcal{L}_{\alpha}))$ and
$\frac{\partial^{2}}{\partial C_{x,y}\partial C_{u,v}}\mathbb{E}%
(\Phi(\mathcal{L}_{\alpha}))$.
\end{exercise}

The proposition \ref{boub}\ is in fact the infinitesimal form of the following formula.

\begin{proposition}
Consider another energy form $e^{\prime}$ defined on the same graph. Then we
have the following identity:%
\[
\frac{\partial\mu_{e^{\prime}}}{\partial\mu_{e}}=e^{\sum N_{x,y}%
\log(\frac{C_{x,y}^{\prime}}{C_{x,y}})-\sum(\lambda_{x}^{\prime}-\lambda
_{x})\widehat{l}^{x}}.
\]
Consequently
\begin{equation}
\mu_{e}((e^{\sum N_{x,y}\log(\frac{C_{x,y}^{\prime}}{C_{x,y}})-\sum
(\lambda_{x}^{\prime}-\lambda_{x})\widehat{l}^{x}}-1))=\log(\frac{\mathcal{Z}%
_{e^{\prime}}}{\mathcal{Z}_{e}}). \label{mupr}%
\end{equation}
\end{proposition}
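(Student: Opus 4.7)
The plan is to read off the Radon-Nikodym derivative directly from the explicit expressions \eqref{ddd} and \eqref{d} for the pointed loop measure $\mu^{\ast}$, which induces the same integrals as $\mu$ against shift-invariant loop functionals, and then to integrate the resulting identity.

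On non-trivial pointed loops ($p=k\geq 2$), \eqref{ddd} reads
\[
\mu_e^{\ast}(p=k,\xi_i=x_i,\widehat{\tau}_i\in dt_i) = \frac{1}{k}\prod_{x,y}C_{x,y}^{N_{x,y}}\prod_x\lambda_x^{-N_x}\prod_{i\in\mathbb{Z}/p\mathbb{Z}}\lambda_{\xi_i}\,e^{-\lambda_{\xi_i}t_i}\,dt_i.
\]
First I would observe that $\prod_i\lambda_{\xi_i}=\prod_x\lambda_x^{N_x}$ cancels the factor $\prod_x\lambda_x^{-N_x}$, and that $\sum_i\lambda_{\xi_i}\widehat{\tau}_i=\sum_x\lambda_x\widehat{l}^x$, so that this simplifies to
\[
\mu_e^{\ast}(p=k,\xi_i=x_i,\widehat{\tau}_i\in dt_i) = \frac{1}{k}\prod_{x,y}C_{x,y}^{N_{x,y}}\,e^{-\sum_x\lambda_x\widehat{l}^x}\prod_i dt_i.
\]
Writing the same formula with $C',\lambda'$ in place of $C,\lambda$ and taking the quotient, every factor not depending on $(C,\lambda)$ cancels, yielding the claimed density on $\{p>1\}$. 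For trivial loops ($p=1$), \eqref{d} gives $\mu_e^{\ast}=e^{-\lambda_x t}\,dt/t$ and $N_{x,y}\equiv 0$, $\widehat{l}^x=t$, so the ratio equals $\exp(-(\lambda'_x-\lambda_x)\widehat{l}^x)$, again matching the stated density (the $1/t$ factor cancels in the quotient). Hence the Radon-Nikodym identity holds loop-wise, and since both sides are shift-invariant it descends to $\mu_e$ and $\mu_{e'}$ on loop space.

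To derive \eqref{mupr} I would then integrate ``density minus one'' against $\mu_e$, splitting according to $\{p>1\}$ and $\{p=1\}$. On $\{p>1\}$ both measures are finite, and \eqref{logdet1} gives
\[
\int_{\{p>1\}}\!\Bigl(\tfrac{d\mu_{e'}}{d\mu_e}-1\Bigr)d\mu_e = \mu_{e'}(p>1)-\mu_e(p>1) = \log\bigl(\mathcal{Z}_{e'}/\mathcal{Z}_e\bigr)+\sum_x\log(\lambda'_x/\lambda_x).
\]
On $\{p=1\}$, each measure is infinite but the difference reduces, base point by base point, to the Frullani integral already used in \eqref{triv}:
\[
\int_0^\infty\!\frac{e^{-\lambda'_x t}-e^{-\lambda_x t}}{t}\,dt = \log(\lambda_x/\lambda'_x),
\]
so the trivial-loop contribution equals $\sum_x\log(\lambda_x/\lambda'_x)$. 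Adding the two contributions, the $\log\lambda$ terms cancel and exactly $\log(\mathcal{Z}_{e'}/\mathcal{Z}_e)$ survives. The main subtlety will be precisely this $\infty-\infty$ cancellation between the divergent trivial-loop pieces of $\mu_e$ and $\mu_{e'}$: one must keep ``density minus one'' together rather than integrate the two measures separately, after which the identity follows from \eqref{ddd}, \eqref{d}, \eqref{logdet1} and the Frullani integral.
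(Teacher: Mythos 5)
Your proof is correct and follows essentially the same route as the paper: the density is read off the explicit pointed-loop formulas (the paper cites (\ref{dd}), you use its rescaled form (\ref{ddd}) and (\ref{d}), which is the same computation), and the integrated identity (\ref{mupr}) is obtained exactly as in the text by separating the non-trivial loops, handled via (\ref{logdet1}), from the trivial ones, handled by the Frullani integral, with the $\sum_x\log(\lambda'_x/\lambda_x)$ contributions cancelling. Nothing is missing.
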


\begin{proof}
The first formula is a straightforward consequence of (\ref{dd}). The proof of
(\ref{mupr})\ goes by evaluating separately the contribution of trivial loops,
which equals $\sum_{x}\log(\frac{\lambda_{x}}{\lambda_{x}^{\prime}})$.
Indeed,
\[
\mu_{e}((e^{\sum N_{x,y}\log(\frac{C_{x,y}^{\prime}}{C_{x,y}})-\sum
(\lambda_{x}^{\prime}-\lambda_{x})\widehat{l}^{x}}-1)=\ \mu_{e^{\prime}%
}(p>1)-\mu_{e}(p>1)+\ \mu_{e}(1_{\{p=1\}}(e^{\sum(\lambda_{x}^{\prime}%
-\lambda_{x})\widehat{l}^{x}}-1)).
\]

The difference of the first two terms equals $\log(\mathcal{Z}_{e^{\prime}%
})+\sum\log(\lambda_{x}^{\prime})-(\log(\mathcal{Z}_{e})-\sum\log(\lambda
_{x}))$. The last term equals $\sum_{x}\int_{0}^{\infty}(e^{-\frac{\lambda
_{x}^{\prime}-\lambda_{x}}{\lambda_{x}}t}-1)\frac{e^{-t}}{t}dt$\ which can be
computed as before:
\begin{equation}
\mu_{e}(1_{\{p=1\}}(e^{\sum(\lambda_{x}^{\prime}-\lambda_{x})\widehat{l}^{x}%
}-1))=-\sum\log(\frac{\lambda_{x}^{\prime}}{\lambda_{x}}) \label{unpt}%
\end{equation}
\end{proof}

Integrating out the holding times, formula (\ref{mupr})\ can be written
equivalently:%
\begin{equation}
\mu_{e}(\prod_{(x,y)}[\frac{C_{x,y}^{\prime}}{C_{x,y}}]^{N_{x,y}}\prod
_{x}[\frac{\lambda_{x}}{\lambda_{x}^{\prime}}]^{N_{x}+1}-1)=\log
(\frac{\mathcal{Z}_{e^{\prime}}}{\mathcal{Z}_{e}}) \label{F}%
\end{equation}
and therefore%
\begin{equation}
\mathbb{E}(\prod_{(x,y)}[\frac{C_{x,y}^{\prime}}{C_{x,y}}]^{N_{x,y}%
(\mathcal{L}_{\alpha})}\prod_{x}[\frac{\lambda_{x}}{\lambda_{x}^{\prime}%
}]^{N_{x}(\mathcal{L}_{\alpha})+1})=\mathbb{E}(\prod_{(x,y)}[\frac{C_{x,y}%
^{\prime}}{C_{x,y}}]^{N_{x,y}(\mathcal{L}_{\alpha})}e^{-\left\langle
\lambda^{\prime}-\lambda,\widehat{\mathcal{L}_{\alpha}}\right\rangle
})=(\frac{\mathcal{Z}_{e^{\prime}}}{\mathcal{Z}_{e}})^{\alpha} \label{muprim}%
\end{equation}
Note also that $\prod_{(x,y)}[\frac{C_{x,y}^{\prime}}{C_{x,y}}]^{N_{x,y}%
}=\prod_{\{x,y\}}[\frac{C_{x,y}^{\prime}}{C_{x,y}}]^{N_{x,y}+N_{y,x}}$.

\bigskip

\begin{remark}
These $\frac{\mathcal{Z}_{e^{\prime}}}{\mathcal{Z}_{e}}$ determine, when
$e^{\prime}$ varies with $\frac{C^{^{\prime}}}{C}\leq1$ and $\frac{\lambda
^{\prime}}{\lambda}=1$, the Laplace transform of the distribution of the
traversal numbers of non oriented links $N_{x,y}+N_{y,x}$.
\end{remark}

\bigskip

\begin{remark}
( h-transforms) Note that if $C_{x,y}^{^{\prime}}=h^{x}h^{y}C_{x,y}$ and
$\kappa_{x}^{\prime}=-h^{x}(Lh)^{x}\lambda_{x}$ for some positive function $h$
on $E$ such that $Lh\leq0$, as $\lambda^{\prime}=h^{2}\lambda$ and
$[P^{\prime}]_{y}^{x}=\frac{1}{h^{x}}P_{y}^{x}h^{y}$, we have $[G^{\prime
}]^{x,y}=\frac{G^{x,y}}{h^{x}h^{y}}$ and $\frac{\mathcal{Z}_{e^{\prime}}%
}{\mathcal{Z}_{e}}=\frac{1}{\prod(h^{x})^{2}}$.\newline 
\end{remark}

\begin{remark}
\label{baba}Note also that $[\frac{\mathcal{Z}_{e^{\prime}}}{\mathcal{Z}_{e}%
}]^{\frac{1}{2}}=\mathbb{E_\phi(}e^{-\frac{1}{2}[e^{\prime}-e](\phi)})$, if $\phi$
is the Gaussian free field associated with $e$.
\end{remark}
\section{One-forms and representations}
Other variables of interest on the loop space are associated with elements of
the space $\mathbb{A}^{-}$ of\ odd real valued functions $\omega$ on oriented
links\ : $\omega^{x,y}=-\omega^{y,x}$. Let us mention a few elementary results.

The operator $[P^{(\omega)}]_{y}^{x}=P_{y}^{x}\exp(i\omega^{x,y})$ is also
self adjoint in $L^{2}(\lambda)$. The associated loop variable can be written
$\sum_{x,y}\omega^{x,y}N_{x,y}(l)$. We will denote it $\int_{l}\omega$. This
notation will be used even when $\omega$ is not odd. Note that $\int_{l}%
\omega$ is invariant if $\omega$ is replaced by $\omega+dg$ for some $g$. Set
$[G^{(\omega)}]^{x,y}=\frac{[(I-P^{(\omega)})^{-1}]_{y}^{x}}{\lambda_{y}}$. By
an argument similar to the one given above for the occupation field, we have:
\[
\mathbb{P}_{x,x}^{t}(e^{i\int_{l}\omega}-1)=\exp(t(P^{(\omega)}-I))_{x}%
^{x}-\exp(t(P-I))_{x}^{x}.
\]
Integrating in $t$ after expanding, we get from the definition of $\mu$:%
\[
\int(e^{i\int_{l}\omega}-1)d\mu(l)=\sum_{k=1}^{\infty}\frac{1}{k}%
[Tr((P^{(\omega)})^{k})-Tr((P)^{k})].
\]
Hence $\int(e^{i\int_{l}\omega}-1)d\mu(l)=\log[\det(-L(I-P^{(\omega)}%
)^{-1})]=\log(\det(G^{(\omega)}G^{-1}))$

We can now extend the previous formulas (\ref{F})\ and (\ref{muprim}) to
obtain, setting $\det(G^{(\omega)})=\mathcal{Z}_{e,\omega}$%
\begin{equation}
\int(e^{\sum N_{x,y}\log(\frac{C_{x,y}^{^{\prime}}}{C_{x,y}})-\sum(\lambda
_{x}^{^{\prime}}-\lambda_{x})\widehat{l}_{x}+i\int_{l}\omega}-1)\mu
_{e}(dl)=\log(\frac{\mathcal{Z}_{e^{\prime},\omega}}{\mathcal{Z}_{e}})
\label{F4}%
\end{equation}
and%
\begin{equation}
\mathbb{E}(\prod_{x,y}[\frac{C_{x,y}^{\prime}}{C_{x,y}}e^{i\omega_{x,y}%
}]^{N_{x,y}^{(\alpha)}}e^{-\sum(\lambda_{x}^{^{\prime}}-\lambda_{x}%
)\widehat{\mathcal{L}_{\alpha}}^{x}})=(\frac{\mathcal{Z}_{e^{\prime},\omega}%
}{\mathcal{Z}_{e}})^{\alpha} \label{F5}%
\end{equation}

\begin{remark}
\label{complex}The $\alpha$-th power of a complex number is a priori not
univoquely defined as a complex number. But $\log[\det(I-P^{(\omega)})]$ and
therefore $\log(\mathcal{Z}_{e,\omega})$ are well defined as $P^{(\omega)}$ is
a contraction. Then $\mathcal{Z}_{e,\omega}^{\alpha}$ is taken to be
$\exp(\alpha\log(\mathcal{Z}_{e,\omega}))$.
\end{remark}

\begin{remark}
\label{baba2}Note also that if $\varphi=\phi_{1}+i\phi_{2}$ is the complex
Gaussian free field associated with $e$,
\[
\frac{\mathcal{Z}_{e^{\prime},\omega}}{\mathcal{Z}_{e}}=\mathbb{E}_{\varphi
}\mathbb{(}e^{-\frac{1}{2}[\sum(\lambda_{x}^{\prime}-\lambda_{x})\varphi
^{x}\overline{\varphi}^{x}-\sum(C_{x,y}^{\prime}e^{i\omega_{x,y}}%
-C_{x,y})\varphi^{x}\overline{\varphi}^{y]}}).
\]
\end{remark}

To simplify the notations slightly, one could consider more general energy
forms with complex valued conductances so that the discrete one form is
included in $e^{\prime}$. But it is more interesting to generalize the notion
of perturbation of $P$ into $P^{(\omega)}$ as follows:

\index{unitary representation}\begin{definition}
A unitary representation of the graph $(X,E)$ is a family of unitary matrices
$\left[  U^{x,y}\right]  $, with common rank $d_{U}$, indexed by $E^{O}$, such
that $\left[  U^{y,x}\right]  =\left[  U^{x,y}\right]  ^{-1}$.

We set $P^{(U)}=P\otimes U$ (more explicitly $\left[  P^{(U)}\right]
_{x,i}^{y,j}=P_{x}^{y}\left[  U^{x,y}\right]  _{i}^{j}$).
\end{definition}

Similarly, we can define $C^{(U)}=\frac{\lambda}{d_{U}}P^{(U)}$,
$V^{(U)}=(I-P^{(U)})^{-1}$, $G^{(U)}=\frac{d_{U}V^{(U)}}{\lambda}$. One should
think of these matrices as square matrices indexed by $X$, whose entries are multiples of
elements of $SU(d_{U})$.

One forms define one-dimensional representations. The sum and tensor product
of two unitary representations $U$ and $V$ are unitary representations are
defined as usual, and their ranks are respectively $d_{U}+d_{V}$ and
$d_{U}d_{V}$.

\begin{definition}
Given any based loop $l$, if $p(l)\geq2$ and the associated discrete based
loop is $\xi=(\xi_{1},\xi_{2},...,\xi_{p})$, set $\tau_{U}(l)=\frac{1}{d_{U}%
}Tr(\prod U^{\xi_{i},\xi_{i+1}})$, and $\tau_{U}(l)=1$ if $p(l)=1$.

For any set of loops $\mathcal{L}$, we set $\tau_{U}(\mathcal{L})=\prod
_{l\in\mathcal{L}}\tau_{U}(l)$.
\end{definition}

\begin{remark}
\begin{enumerate}

\item[a)] $\left|  \tau_{U}(l)\right|  \leq1.$

\item[b)] $\tau_{U}$ is obviously a functional of the discrete \emph{loop}
$\xi^{%
{{}^\circ}%
}$ contained in $l^{%
{{}^\circ}%
}$.

\item[c)] $\tau_{U}(l)=1$ if $\xi^{%
{{}^\circ}%
}$ is tree-like. In particular it is always the case when the graph is a tree.

\item[d)] If $U$ and $V$ are two unitary representations of the graph,
$\tau_{U+V}=\tau_{U}+\tau_{V}$ and $\tau_{U\otimes V}=\tau_{U}\tau_{V}$.
\end{enumerate}
\end{remark}

From b) and c) above it is easy to get the first part of

\begin{theorem}
\begin{enumerate}

\item[i)] The trace $\tau_{U}(l)$ depends only on the canonical \emph{geodesic
loop} associated with the loop $\xi^{%
{{}^\circ}%
}$, i.e. of the conjugacy class of the element of the fundamental group  defined by the based loop $\xi$.

\item[ii)] The variables $\tau_{U}(l)$ determine, as $U$ varies, the geodesic
loop associated with $l$.
\end{enumerate}
\end{theorem}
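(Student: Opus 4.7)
For assertion (i), I would argue in two steps. First, the cyclic invariance of the trace yields immediately that $\tau_{U}(l)$ depends only on the discrete loop $\xi^{\circ}$, independently of the base point chosen to parametrize it. Second, I would verify that if the discrete based loop contains a tail edge, i.e.\ a consecutive triple $(\xi_{i-1},\xi_{i},\xi_{i+1})$ with $\xi_{i+1}=\xi_{i-1}$, then the corresponding two consecutive factors in the matrix product satisfy $U^{\xi_{i-1},\xi_{i}}U^{\xi_{i},\xi_{i-1}}=I$, by the defining relation $U^{y,x}=(U^{x,y})^{-1}$. They can therefore be suppressed without altering $\tau_{U}$. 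Since, as recalled in Section~\ref{secred} and the preceding discussion, the canonical geodesic loop associated with $\xi^\circ$ is produced by iteratively removing such tail edges, $\tau_{U}$ factors through the map $\xi^{\circ}\mapsto\xi^{\circ,R}$, and therefore depends only on the conjugacy class defined by $l$ in $\Gamma$.

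For assertion (ii), fix a spanning tree $T$ and a reference vertex $x_{0}$. This identifies $\Gamma$ with the free group on $r=|E|-|X|+1$ generators $g_{1},\ldots,g_{r}$ (one per non-tree edge) and identifies geodesic loops with conjugacy classes in $\Gamma$. Moreover, any finite-dimensional unitary representation $\rho$ of $\Gamma$ can be realized by a graph representation $U$: set $U^{x,y}=I$ on every oriented tree edge and $U^{e_{i}}=\rho(g_{i})$ on the $i$-th non-tree edge, with $U^{-e_{i}}=\rho(g_{i})^{-1}$. With this construction
\[
\tau_{U}(l)=\tfrac{1}{d_{U}}\chi_{\rho}(\gamma),
\]
where $\gamma\in\Gamma$ is any representative of the conjugacy class associated with the geodesic loop $l$. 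Therefore assertion (ii) is equivalent to the algebraic statement that the characters of the finite-dimensional unitary representations of $\Gamma$ separate its conjugacy classes.

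To obtain the latter, I would invoke two classical inputs: first, the fact that free groups of finite rank are \emph{conjugacy separable}, i.e.\ any two non-conjugate elements of $\Gamma$ have non-conjugate images in some finite quotient $\Gamma/N$; second, the elementary fact that irreducible characters separate conjugacy classes in any finite group. Combining them, given two distinct geodesic loops $l_{1}\neq l_{2}$ represented by non-conjugate elements $\gamma_{1},\gamma_{2}\in\Gamma$, I would select $N\triangleleft\Gamma$ of finite index so that the images $\bar\gamma_{1},\bar\gamma_{2}\in\Gamma/N$ remain non-conjugate, then pick an irreducible unitary representation $\sigma$ of $\Gamma/N$ with $\chi_{\sigma}(\bar\gamma_{1})\neq\chi_{\sigma}(\bar\gamma_{2})$, and finally lift $\rho=\sigma\circ q$ to $\Gamma$ (where $q\colon\Gamma\to\Gamma/N$ is the quotient map). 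The associated graph representation $U$ then satisfies $\tau_{U}(l_{1})\neq\tau_{U}(l_{2})$.

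The main obstacle is the appeal to conjugacy separability of free groups in (ii); by contrast, (i) and the reduction of (ii) to a statement about characters of $\Gamma$ are essentially immediate from the definitions and from the description of the universal cover already given.
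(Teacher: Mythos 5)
Your proposal is correct and follows essentially the same route as the paper: part (i) via cyclic invariance of the trace and cancellation of backtracking (tail) edges using $U^{y,x}=(U^{x,y})^{-1}$, and part (ii) by fixing a spanning tree to send non-tree edges to generators of the free group $\Gamma$, then combining conjugacy separability of free groups (the CS-property, as in Stebe) with the fact that characters of a finite group separate its conjugacy classes to build the separating representation $U$. No substantive difference from the paper's argument.
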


\begin{proof}
The second assertion follows from the fact that traces of unitary
representations separate the conjugacy classes of finite groups (Cf
\cite{serr2}) and from the so-called CS-property satisfied by free groups (Cf
\cite{Stebe}): given two elements belonging to different conjugacy classes,
there exists a finite quotient of the group in which they are not conjugate.

Let us fix a base point $x_{0}$ in $X$ and a spanning tree $T$. An oriented
edge $(x,y)$ which is not in $T$ defines an element $\gamma_{x,y}$ of the
fundamental group $\Gamma_{x_{0}}$, with $\gamma_{y,x}=\gamma_{x,y}^{-1}$. For
eny edge $(u,v)\in T$, we set $\gamma_{u,v}=I$. For any discrete based loop
$l=(x_{1},x_{2},...,x_{p})$, set $\gamma_{l}=\gamma_{x_{1},x_{2}}%
...\gamma_{x_{p-1},x_{p}}\gamma_{x_{p},x_{1}}$. Then, if two based loops
$l_{1}$ and $l_{2}$ define distinct geodesic loops, there exists a finite
quotient $G=\Gamma_{x_{0}}/H$ of $\Gamma_{x_{0}}$ in which the classes of
their representatives $\gamma_{l_{i}}$ are not conjugate. Denote by
$\overline{\gamma}$ the class of $\gamma$ in $G$. Then there exists a unitary
representation $\rho$ of $G$ such that $Tr(\rho(\gamma_{l_{1}})\neq
Tr(\rho(\gamma_{l_{2}})$. Then take $U_{x,y}=\rho(\overline{\gamma}_{x,y})$.
We see that $\tau_{U}(l_{1})\neq\tau_{U}(l_{2})$.
\end{proof}

\bigskip

Again, by an argument similar to the one given for the occupation field, we
have:
\[
\mathbb{P}_{x,x}^{t}(\tau_{U}-1)=\frac{1}{d_{U}}\sum_{i=1}^{d_{U}}%
\exp(t(P^{(U)}-I))_{x,i}^{x,i}-\exp(t(P-I))_{x}^{x}.
\]
Integrating in $t$ after expanding, we get from the definition of $\mu$:%
\[
\int(\tau_{U}(l)-1)d\mu(l)=\sum_{k=1}^{\infty}\frac{1}{k}[\frac{1}{d_{U}%
}Tr((P^{(U)})^{k})-Tr((P)^{k})].
\]
We can extend $P$ into a matrix $P^{(I_{d_{U}})}=P\otimes I_{d_{U}}$\ indexed
by $X\times\{1,...,d_{U}\}$ by taking its tensor product with the identity on
$\mathbb{R}^{d_{U}}$.

Then:%
\[
\int(\tau_{U}(l)-1)d\mu(l)=\frac{1}{d_{U}}\sum_{k=1}^{\infty}\frac{1}%
{k}[Tr((P^{(U)})^{k})-Tr((P^{(I_{d_{U}})})^{k})].
\]
Hence, as in the case of the occupation field
\[
\int(\tau_{U}(l)-1)d\mu(l)=\frac{1}{d_{U}}\log(\det(V^{(U)})[V\otimes
I_{d_{U}}]^{-1})) =\frac{1}{d_{U}}\log(\det(G^{(U)}))-\log(\det(G))
\]
as $\det(G\otimes I_{d_{U}})=\det(G)^{d_{U}}$. \ 

Then, denoting $\mathcal{Z}_{e,U}$ the $\Big(          \frac{1}{d_{U}}\Big)
$-th power of the determinant of the $(\left|  X\right|  d_{U},\left|
X\right|  d_{U})$ matrix $G^{(U)}$ (well defined by remark \ref{complex}), the
formulas\ (\ref{F4})\ and (\ref{F5})\ extend easily to give the following

\begin{proposition}
\begin{enumerate}

\item[a)] $\int(e^{\sum N_{x,y}\log(\frac{C_{x,y}^{^{\prime}}}{C_{x,y}}%
)-\sum(\lambda_{x}^{^{\prime}}-\lambda_{x})\widehat{l}_{x}}\tau_{U}%
(l)-1)\mu_{e}(dl)=\log(\frac{\mathcal{Z}_{e^{\prime},U}}{\mathcal{Z}_{e}}).$

\item[b)] $\mathbb{E}(\prod_{x,y}[\frac{C_{x,y}^{\prime}}{C_{x,y}}%
]^{N_{x,y}(\mathcal{L}_{\alpha})}e^{-\sum(\lambda_{x}^{^{\prime}}-\lambda
_{x})\widehat{\mathcal{L}_{\alpha}}^{x}}\tau_{U}(\mathcal{L}_{\alpha
}))=(\frac{\mathcal{Z}_{e^{\prime},U}}{\mathcal{Z}_{e}})^{\alpha}.$
\end{enumerate}
\end{proposition}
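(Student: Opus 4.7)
The plan is to reduce part (a) to the identity
$$\int(\tau_U(l) - 1)\, d\mu_e(l) = \tfrac{1}{d_U}\log\det\bigl(G^{(U)}\bigr) - \log\det(G),$$
which is established just above the statement of the proposition, and then apply it with $e$ replaced by $e'$. First I would verify on the explicit form \eqref{ddd} of $\mu^{\ast}$ for $p\geq 2$ that the density
$$e^{\sum_{x,y}N_{x,y}\log(C'_{x,y}/C_{x,y}) - \sum_x(\lambda'_x-\lambda_x)\widehat{l}^{\,x}}$$
transforms $\mu_e|_{\{p\geq 2\}}$ into $\mu_{e'}|_{\{p\geq 2\}}$: the first exponential replaces $\prod C_{x,y}^{N_{x,y}}$ by $\prod (C'_{x,y})^{N_{x,y}}$, and the second transforms each holding-time factor $\lambda_{\xi_i}e^{-\lambda_{\xi_i}t_i}$ into $\lambda_{\xi_i}e^{-\lambda'_{\xi_i}t_i}$, the remaining $\lambda^{-N_x}$ mismatches cancelling identically. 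On $\{p=1\}$ loops, $N_{x,y}=0$ and $\tau_U=1$, so the contribution there is purely $\mu_e(1_{\{p=1\}}(e^{-\sum(\lambda'-\lambda)\widehat{l}^{\,x}}-1))$, already evaluated in \eqref{unpt} as $-\sum_x\log(\lambda'_x/\lambda_x)$.

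Second, I would apply the displayed identity with $e$ replaced by $e'$ (its proof uses only generalities about the loop measure attached to a symmetric Markov chain, so it transports verbatim). Since $\tau_U(l)-1$ vanishes on trivial loops, this yields
$$\int_{\{p\geq 2\}}\tau_U\, d\mu_{e'} = \mu_{e'}(p\geq 2) + \log\mathcal{Z}_{e',U} - \log\mathcal{Z}_{e'} = \sum_x\log\lambda'_x + \log\mathcal{Z}_{e',U},$$
using $\mu_{e'}(p\geq 2) = \log\mathcal{Z}_{e'} + \sum_x\log\lambda'_x$. Subtracting $\mu_e(\{p\geq 2\}) = \log\mathcal{Z}_e + \sum_x\log\lambda_x$ and combining with the trivial-loop term above, the $\lambda$-logarithms cancel exactly and the right-hand side collapses to $\log(\mathcal{Z}_{e',U}/\mathcal{Z}_e)$, which is (a).

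Part (b) is then immediate from the general Poissonian characteristic-functional identity
$$\mathbb{E}\Bigl[\prod_{l\in\mathcal{L}_\alpha}h(l)\Bigr] = \exp\Bigl(\alpha\int(h(l)-1)\,d\mu(l)\Bigr)$$
applied to $h(l) = \tau_U(l)\prod_{x,y}[C'_{x,y}/C_{x,y}]^{N_{x,y}(l)} e^{-\sum_x(\lambda'_x-\lambda_x)\widehat{l}^{\,x}}$; the exponent on the right-hand side is then $\alpha$ times the left-hand side of (a), giving the desired $(\mathcal{Z}_{e',U}/\mathcal{Z}_e)^\alpha$. Convergence issues (when some $C'_{x,y}>C_{x,y}$ or $\lambda'_x<\lambda_x$) and the meaning of the $\alpha$-th power of a complex determinant are handled exactly as in remark \ref{complex} and the derivation of \eqref{F5}.

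The main obstacle is not analytic but combinatorial bookkeeping: one must carefully decompose the integral into the $p=1$ and $p\geq 2$ parts, correctly identify the $\sum_x\log\lambda_x$ and $\sum_x\log\lambda'_x$ offsets coming from $\mu_e(p>1)$, $\mu_{e'}(p>1)$, and the trivial-loop evaluation \eqref{unpt}, and observe that these all cancel. Once this is organized there is no new analytic ingredient beyond what was used for the one-form case \eqref{F4}, since the trace $\tau_U$ plays on general discrete cycles the same multiplicative role that $e^{i\int_l\omega}$ played for a one-dimensional representation, with the additional normalization factor $1/d_U$ producing precisely the $d_U$-th root in $\mathcal{Z}_{e',U} = \det(G^{(U),e'})^{1/d_U}$.
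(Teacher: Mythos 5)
Your proposal is correct and follows essentially the same route as the paper: the change-of-measure density from $\mu_e$ to $\mu_{e'}$ on nontrivial loops, the trivial-loop evaluation (\ref{unpt}), the identity $\int(\tau_U(l)-1)d\mu(l)=\frac{1}{d_U}\log\det(G^{(U)})-\log\det(G)$ applied under $e'$, and the Poisson exponential formula (\ref{Poilapl}) for part b), with the complex $\alpha$-th power handled as in remark \ref{complex}. The paper merely states that (\ref{F4}) and (\ref{F5}) "extend easily"; your write-up supplies exactly the intended bookkeeping, including the cancellation of the $\sum_x\log\lambda_x$ and $\sum_x\log\lambda'_x$ terms.
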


Let us now introduce a new

\begin{definition}
\index{loop network}We say that sets $\Lambda_{i}$ of non-trivial loops are equivalent when the
associated occupation fields are equal and when the total traversal numbers
$\sum_{l\in\Lambda_{i}}N_{x,y}(l)$ are equal for all oriented edges $(x,y)$.
Equivalence classes will be called loop networks on the graph. We denote
$\overline{\Lambda}$ the loop network defined by $\Lambda$.

Similarly, a set $L$\ of non-trivial discrete loops defines a discrete network
characterized by the total traversal numbers.\bigskip
\end{definition}

The expectations computed in \ref{F5} determine the distribution of the
network $\overline{\mathcal{L}_{\alpha}}$\ defined by the loop ensemble
$\mathcal{L}_{\alpha}$. We will denote $B^{e,e^{\prime},\omega}(l)$ the
variables
\[
e^{\sum N_{x,y}(l)\log(\frac{C_{x,y}^{^{\prime}}}{C_{x,y}})-\sum(\lambda
_{x}^{^{\prime}}-\lambda_{x})\widehat{l}_{x}+i\int_{l}\omega}%
\]
and $B^{e,e^{\prime},\omega}(\mathcal{L}_{\alpha})$ the variables
\[
\prod_{l\in\mathcal{L}_{\alpha}}B^{e,e^{\prime},\omega}(l)=\prod
_{x,y}[\frac{C_{x,y}^{\prime}}{C_{x,y}}e^{i\omega_{x,y}}]^{N_{x,y}%
(\mathcal{L}_{\alpha})}e^{-\sum(\lambda_{x}^{^{\prime}}-\lambda_{x}%
)\widehat{\mathcal{L}_{\alpha}}^{x}}.
\]

More generally, we can define $B^{e,e^{\prime},U}(l)$ and $B^{e,e^{\prime}%
,U}(\mathcal{L}_{\alpha})$\ in a similar way as $B^{e,e^{\prime},\omega}(l)$
and $B^{e,e^{\prime},\omega}(\mathcal{L}_{\alpha})$, using $\tau_{U}%
(l)$\ instead of $e^{i\int_{l}\omega}$. Note that for each fixed $e$, when $U$
and $e^{\prime}$ vary with $\frac{C^{^{\prime}}}{C}\leq1$ and $\lambda
^{\prime}=\lambda$, linear combinations of the variables $B^{e,e^{\prime}%
,U}(\mathcal{L}_{\alpha})$\ form an algebra as $B^{e,e_{1}^{\prime},U_{1}%
}B^{e,e_{2}^{\prime},U_{2}}=B^{e,e_{1,2}^{\prime},U_{1}\otimes U_{2}}$, with
$C^{e_{1,2}^{\prime}}=\frac{C^{e_{1}^{\prime}}C^{e_{2}^{\prime}}}{C}$ . In
particular, $B^{e,e_{1}^{\prime},\omega_{1}}B^{e,e_{2}^{\prime},\omega_{2}%
}=B^{e,e_{1,2}^{\prime},\omega_{1}+\omega_{2}}$.

\begin{remark}
Note that the expectations of the variables $B^{e,e^{\prime},\omega
}(\mathcal{L}_{\alpha})$\ determine the law of the network $\overline{\mathcal{L}%
_{\alpha}}$\ defined by the loop ensemble $\mathcal{L}_{\alpha}$.
\end{remark}

To work with $\mu$, we should rather consider linear combinations of the form
${\sum\lambda_{i}(B^{e,e_{i}^{\prime},U_{i}}-1)}$, with $\sum\lambda_{i}=0$,
which form also an algebra.

\begin{remark}
Formulas (\ref{F4})\ and (\ref{F5})\ apply to the calculation of loop indices:
If we have for example a simple random walk on an oriented planar graph, and
if $z^{\prime}$ is a point\ of the dual graph $X^{\prime}$, $\omega
^{(z^{\prime})}$ can be chosen such that for any loop $l$, $\int_{l}%
\omega^{(z^{\prime})}$\ is the winding number of the loop\ around a given
point $z^{\prime}$\ of the dual graph $X^{\prime}$. Then $e^{i\pi\sum
_{l\in\mathcal{L}_{\alpha}}\int_{l}\omega^{(z^{\prime})}}$ is a spin system of
interest. We then get for example that%
\[
\mu\Big(          \int_{l}\omega_{z^{\prime}}\neq0\Big)
=-\frac{1}{2\pi}\int_{0}^{2\pi}\log(\det(G^{(2\pi u\omega^{(z^{\prime})}%
)}G^{-1}))du
\]
and hence%
\[
\mathbb{P(}\sum_{l\in\mathcal{L}_{\alpha}}|\int_{l}\omega^{(z^{\prime}%
)}|=0)=e^{\frac{\alpha}{2\pi}\int_{0}^{2\pi}\log(\det(G^{(2\pi u\omega
^{(z^{\prime})})}G^{-1}))du}.
\]
Conditional distributions of the occupation field with respect to values of
the winding number can also be obtained.
\end{remark}

\bigskip

\chapter{Decompositions}

Note first that with the energy $e$, we can associate a time-rescaled Markov
chain $\widehat{x}_{t}$\ in which holding times at any point $x$ are
exponential times of parameters $\lambda_{x}$: $\widehat{x}_{t}=x_{\tau_{t}}$
with $\tau_{t}=\inf(s,\;\int_{0}^{s}\frac{1}{\lambda_{x_{u}}}du=t)$. For the
time-rescaled Markov chain, local times coincide with the time spent in a
point and the duality measure is simply the counting measure. The potential
operator then essentially coincides with the Green function. The Markov loops
can be time-rescaled as well and we did it in fact already when we introduced
pointed loops. More generally we may introduce different holding time
parameters but it would be rather useless as the random variables we are
interested in are intrinsic, i.e. depend only on $e$.

\section{Traces of Markov chains and energy decomposition}

If $D\subset X$ and we set $F=D^{c}$, the orthogonal decomposition of the
energy $e(f,f)=e(f)$\ into $e^{D}(f-H^{F}f)+e(H^{F}f)$ \ (see proposition
\ref{proj}) leads to the decomposition of the Gaussian free field mentioned
above and also to a decomposition of the time-rescaled Markov chain into the
time-rescaled Markov chain killed at the exit of $D$ and its trace on $F$,
i.e. $\widehat{x}_{t}^{\{F\}}=\widehat{x}_{S_{t}^{F}}$, with $S_{t}^{F}%
=\inf(s,\int_{0}^{s}1_{F}(\widehat{x}_{u})du=t)$.

\begin{proposition}
\index{trace (of a Markov chain)}The trace of the time-rescaled Markov chain on $F$ is the time-rescaled Markov
chain defined by the energy functional $e^{\{F\}}(f)=e(H^{F}f)$ , for which
\[
C_{x,y}^{\{F\}}=C_{x,y}+\sum_{a,b\in D}C_{x,a}C_{b,y}[G^{D}]^{a,b},
\]%
\[
\lambda_{x}^{\{F\}}=\lambda_{x}-\sum_{a,b\in D}C_{x,a}C_{b,x}[G^{D}]^{a,b},
\]
and%
\[
\mathcal{Z}_{e}=\mathcal{Z}_{e^{D}}\mathcal{Z}_{e^{\{F\}}}.
\]
\end{proposition}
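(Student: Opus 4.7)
My plan rests on a block-matrix Schur complement applied to $M_\lambda - C$, combined with an excursion analysis of the trace process. First, I partition $X = F \cup D$ and write
$$
M_\lambda - C = \begin{pmatrix} A & B \\ B^T & D_0 \end{pmatrix}
$$
with $A = M_\lambda^F - C|_{F\times F}$, $B = -C|_{F \times D}$, and $D_0 = M_\lambda^D - C|_{D \times D} = (G^D)^{-1}$.

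The first step will be to compute $e^{\{F\}}(f) = e(H^F f)$ for $f$ defined on $F$. Proposition \ref{proj} implies that $H^F f$ is $e$-harmonic on $D$, i.e. $B^T f + D_0 (H^F f)|_D = 0$, so $(H^F f)|_D = -D_0^{-1} B^T f$. Substituting into the block expansion of $e(H^F f) = \langle H^F f, (M_\lambda - C) H^F f\rangle$, the $D$-block terms collapse and leave the Schur complement
$$
e^{\{F\}}(f) = f^T (A - B D_0^{-1} B^T) f.
$$
The diagonal and off-diagonal entries of $A - B D_0^{-1} B^T$ then yield immediately the claimed formulas for $\lambda^{\{F\}}_x$ and $C^{\{F\}}_{x,y}$. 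To check that $e^{\{F\}}$ is a bona fide energy form, I will verify that the associated killing measure $\kappa^{\{F\}}_x = \lambda^{\{F\}}_x - \sum_{y \in F,\, y\neq x} C^{\{F\}}_{x,y}$ is nonnegative: summing $G^D (M_\lambda - C)|_{D \times D} = I$ against $1_D$ gives $\sum_{b \in D} [G^D]^{a,b}(\kappa_b + \sum_{y \in F} C_{b,y}) = 1$, which after a telescoping calculation reduces $\kappa^{\{F\}}_x$ to $\kappa_x + \sum_{a,b \in D} C_{x,a}[G^D]^{a,b} \kappa_b \geq 0$.

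Next, to identify $\hat{x}^{\{F\}}$ with the time-rescaled chain of $e^{\{F\}}$, I will decompose the trajectory of $\hat{x}$ starting at $x \in F$ into a wait of rate $\lambda_x$ followed either by a direct transition to $F \cup \{\Delta\}$ or by an excursion into $D$. The excursion's hitting distribution on $F \cup \{\Delta\}$ from $a \in D$ is $\sum_b [G^D]^{a,b} C_{b,\cdot}$ (with $\kappa_b$ in place of $C_{b,\Delta}$), which sums to $1$ by the identity above. Accumulating direct and indirect contributions, the effective transition rate to $y \neq x$ in $F$ will be $C^{\{F\}}_{x,y}$ and the effective killing rate $\kappa^{\{F\}}_x$; a geometric summation over excursions that return to $x$ then lengthens the effective holding time, reducing the holding rate from $\lambda_x$ to $\lambda_x - \sum_{a,b \in D} C_{x,a} [G^D]^{a,b} C_{b,x} = \lambda^{\{F\}}_x$, in agreement with the form $e^{\{F\}}$. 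Finally, the factorization $\mathcal{Z}_e = \mathcal{Z}_{e^D} \mathcal{Z}_{e^{\{F\}}}$ will follow from the standard block determinant identity $\det(M_\lambda - C) = \det(D_0) \det(A - B D_0^{-1} B^T)$ by taking reciprocals.

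The main obstacle is the bookkeeping in the excursion analysis: one must carefully distinguish direct transitions from excursion-mediated ones and check that the three rates $C^{\{F\}}$, $\kappa^{\{F\}}$, $\lambda^{\{F\}}$ are mutually consistent (namely $\lambda^{\{F\}}_x = \kappa^{\{F\}}_x + \sum_{y \neq x} C^{\{F\}}_{x,y}$, which follows from the same telescoping identity as in Step 1). Alternatively, one may invoke the classical Dirichlet-form characterization of the trace of a Hunt process on a closed set (cf. \cite{Fukutak}), under which the probabilistic step is replaced by a citation and only the Schur complement computation remains.
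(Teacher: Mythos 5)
Your proposal is correct and follows essentially the same route as the paper: your Schur-complement computation of $e^{\{F\}}$ is the paper's entrywise calculation $e(1_{\{x\}},H^F 1_{\{y\}})$ with the explicit formula $[H^F]_y^a=\sum_{b\in D}[G^D]^{a,b}C_{b,y}$ written in block form, the identification of the trace chain proceeds by the same excursion decomposition with a geometric summation over returns to $x$ (both for the transition probabilities and for the holding times), and the factorization $\mathcal{Z}_e=\mathcal{Z}_{e^D}\mathcal{Z}_{e^{\{F\}}}$ comes from the block-determinant (Schur) identity, which is exactly the Jacobi identity the paper invokes after noting that $G^{\{F\}}$ is the restriction of $G$ to $F$. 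Your extra verification that $\kappa^{\{F\}}_x=\kappa_x+\sum_{a,b\in D}C_{x,a}[G^D]^{a,b}\kappa_b\geq 0$ is a correct and harmless addition not needed in the paper's argument.
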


\begin{proof}
For the second assertion, note first that for any $y\in F$,
\[
\lbrack H^{F}]_{y}^{x}=1_{x=y}+1_{D}(x)\sum_{b\in D}[G^{D}]^{x,b}C_{b,y}.
\]
Moreover, $e(H^{F}f)=e(f,H^{F}f)$, by proposition \ref{proj} and therefore
\[
\lambda_{x}^{\{F\}}=e^{\{F\}}(1_{\{x\}})=e(1_{\{x\}},H^{F}1_{\{x\}}%
)=\lambda_{x}-\sum_{a\in D}C_{x,a}[H^{F}]_{x}^{a}=\lambda_{x}(1-p_{x}%
^{\{F\}})
\]
where $p_{x}^{\{F\}}=\sum_{a,b\in D}P_{a}^{x}[G^{D}]^{a,b}C_{b,x}=\sum_{a\in
D}P_{a}^{x}[H^{F}]_{x}^{a}$ is the probability that the Markov chain starting
at $x$ will first perform an excursion in $D$ and then return to $x$.

Then for distinct $x$ and $y$ in $F$,
\begin{align*}
C_{x,y}^{\{F\}}  &  =-e^{\{F\}}(1_{\{x\}},1_{\{y\}})=-e(1_{\{x\}}%
,H^{F}1_{\{y\}})\\
&  =C_{x,y}+\sum_{a}C_{x,a}[H^{F}]_{y}^{a}=C_{x,y}+\sum_{a,b\in D}%
C_{x,a}C_{b,y}[G^{D}]^{a,b}.
\end{align*}

Note that the graph defined on $F$ by the non-vanishing conductances
$C_{x,y}^{\{F\}}$ has in general more edges than the restiction to $F$ of the
original graph.

For the third assertion, note also that $G^{\{F\}}$ is the restriction of $G$
to $F$ as for all ${x,y\in F}$, $e^{\{F\}}(G\delta_{y|F},1_{\{x\}}%
)=e(G\delta_{y},[H^{F}1_{\{x\}}])=1_{\{x=y\}}$. Hence the determinant
decomposition already given in section \ref{hit} yields the formula. The cases
where $F$ has one point was considered as a special case in \ref{hit}.

For the first assertion note the transition matrix $[P^{\{F\}}]_{y}^{x}$ can
be computed directly and equals%

\[
P_{y}^{x}+\sum_{a,b\in D}P_{a}^{x}V^{D\cup\{x\}}]_{b}^{a}P_{y}^{b}=P_{y}%
^{x}+\sum_{a,b\in D}P_{a}^{x}[G^{D\cup\{x\}}]^{a,b}C_{b,y}.
\]
It can be decomposed according to whether the jump to $y$ occurs from $x$ or
from $D$ and the number of excursions from $x$ to $x$:
\begin{align*}
\lbrack P^{\{F\}}]_{y}^{x}  &  =\sum_{k=0}^{\infty}(\sum_{a,b\in D}P_{a}%
^{x}[V^{D}]_{b}^{a}P_{x}^{b})^{k}(P_{y}^{x}+\sum_{a,b\in D}P_{a}^{x}%
[V^{D}]_{b}^{a}P_{y}^{b})\\
&  =\sum_{k=0}^{\infty}(\sum_{a,b\in D}P_{a}^{x}[G^{D}]^{a,b}C_{b,x}%
)^{k}(P_{y}^{x}+\sum_{a,b\in D}P_{a}^{x}[G^{D}]^{a,b}C_{b,y}).
\end{align*}
The expansion of $\frac{C_{x,y}^{\{F\}}}{\lambda_{x}^{\{F\}}}$ in geometric
series yields exactly the same result.

Finally, remark that the holding times of $\widehat{x}_{t}^{\{F\}}$ at any
point $x\in F$\ are sums of a random number of independent holding times of
$\widehat{x}_{t}$. This random integer counts the excursions from $x$ to $x$
performed by the chain $\widehat{x}_{t}$ during the holding time of
$\widehat{x}_{t}^{\{F\}}$. It follows a geometric distribution of parameter
$1-p_{x}^{\{F\}}$. Therefore, $\frac{1}{\lambda_{x}^{\{F\}}}=\frac{1}%
{\lambda_{x}(1-p_{x}^{\{F\}})}$ is the expectation of the holding times of
$\widehat{x}_{t}^{\{F\}}$ at $x$.
\end{proof}

\section{Excursion theory}

\index{excursions}A loop in $X$ which hits $F$\ can be decomposed into a loop \ in $F$ and its
excursions in $D$ which may come back to their starting point.

More precisely, a loop $l$ hitting $F$\ can be decomposed into its restriction
$l^{\{F\}}=(\xi_{i},\widehat{\tau}_{i})$\ in $F$ (possibly a one point loop),
a family of excursions $\gamma_{\xi_{i},\xi_{i+1}}$\ attached to the jumps of
$l^{\{F\}}$ and systems of i.i.d. excursions $(\gamma_{\xi_{i}}^{h},h\leq
n_{\xi_{i}})$ attached to the points of $l^{\{F\}}$. These sets of excursions
can be empty.

Let $\mu_{D}^{a,b}$\ denote the bridge measure (with mass $[G^{D}]^{a,b}%
$)\ associated with $e^{D}$.\newline Set
\[
\nu_{x,y}^{D}=\frac{1}{C_{x,y}^{\{F\}}}[C_{x,y}\delta_{\emptyset}+\sum_{a,b\in
D}C_{x,a}C_{b,y}\mu_{D}^{a,b}],\quad\nu_{x}^{D}=\frac{1}{\lambda_{x}%
p_{x}^{\{F\}}}(\sum_{a,b\in D}C_{x,a}C_{b,x}\mu_{D}^{a,b})
\]
and note that $\nu_{x,y}^{D}(1)=\nu_{x}^{D}(1)=1$.

Let $\mu^{D}$\ be the restriction of $\mu$ to loops in contained in $D$. It is
the loop measure associated to the process killed at the exit of $D$. We get a
decomposition of $\mu-\mu^{D}$ in terms of the loop measure $\mu^{\{F\}}%
$\ defined on loops of $F$ by the trace of the Markov chain on $F$,
probability measures $\nu_{x,y}^{D}$\ on excursions in $D$ indexed by pairs of
points in $F$ and $\nu_{x}^{D}\ $on excursions in $D$\ indexed by points of
$F$. Moreover, conditionally on $l^{\{F\}}$, the integers $n_{\xi_{i}}$ follow a Poisson distribution of
parameter $\lambda_{\xi_{i}}^{\{F\}}\widehat{\tau}_{i}$ (the total holding
time in $\xi_{i}$ before another point of $F$ is visited) and the conditional
distribution of the rescaled holding times in $\xi_{i}$ before each excursion
$\gamma_{\xi_{i}}^{l}$ is the distribution
$\beta_{n_{\xi_{i}},\widehat{\tau_{i}}}$\ of the increments of a uniform
sample of $n_{\xi_{i}}$ points in $[0\;\widehat{\tau}_{i}]$ put in increasing
order. We denote these holding times by $\widehat{\tau}_{i,h}$ and set
$l=\Lambda(l^{\{F\}},(\gamma_{\xi_{i},\xi_{i+1}}),(n_{\xi_{i}},\gamma_{\xi
_{i}}^{h},\widehat{\tau}_{i,h}))$.

Then $\mu-\mu^{D}$ is the image measure by $\Lambda$ of
\[
\mu^{\{F\}}(dl^{\{F\}})\prod(\nu_{\xi_{i},\xi_{i+1}}^{D})(d\gamma_{\xi_{i}%
,\xi_{i+1}})\prod e^{-\lambda_{\xi_{i}}^{\{F\}}\widehat{\tau}_{i}}\sum
_{k}\frac{[\lambda_{\xi_{i}}^{\{F\}}\widehat{\tau}_{i}]^{k}}{k!}\delta
_{n_{\xi_{i}}}^{k}[\nu_{x}^{D}]^{\otimes k}(d\gamma_{\xi_{i}}^{h}%
)\beta_{k,\widehat{\tau}_{i}}(d\widehat{\tau}_{i,h}).
\]

Note that for$\ x,y$ belonging to $F$, the bridge measure $\mu^{x,y}$ can be
decomposed in the same way, with the same excursion measures.

\paragraph{\textbf{The one point case and the excursion measure}}

If $F$ is reduced to a point $x_{0}$, and $\kappa$ vanishes on $D=\{x_{0}%
\}^{c}$, the decomposition is of course simpler.

First, $\lambda_{x_{0}}=\sum_{a}C_{x_{0},a}+\kappa_{x}$ and $\lambda_{x_{0}%
}^{\{x_{0}\}}=\kappa_{x_{0}}$. Then,
\[
p_{x_{0}}^{\{x_{0}\}}=\sum_{a,b\in D}P_{a}^{x_{0}}[G^{D}]^{a,b}C_{b,x_{0}%
}=\frac{\sum C_{x_{0},a}}{\lambda_{x_{0}}}=1-\frac{\kappa_{x_{0}}}%
{\lambda_{x_{0}}},
\]
as $C_{.,x_{0}}$ is the killing measure of $e^{D}$ and therefore its $G^{D}$
potential equals $1$.

$l^{\{x_{0}\}}$ is a trivial one point loop with rescaled lifetime
$\widehat{\tau}=\widehat{l}^{x_{0}}\frac{\lambda_{x_{0}}}{\kappa_{x_{0}}%
}=\frac{\widehat{l}^{x_{0}}}{1-p_{x_{0}}^{\{x_{0}\}}}$\ and the number of
excursions (all independent with the same distribution $\rho_{x_{0}}%
^{\{x_{0}\}^{c}}$) follows a Poisson distribution of parameter $\kappa_{x_{0}%
}\widehat{\tau}=\lambda_{x_{0}}\widehat{l}^{x_{0}}$.

The non-normalized excursion measure $\rho^{D}=(\lambda_{x_{0}}-\kappa_{x_{0}%
})\nu_{x_{0}}^{D}=\sum_{a,b\in D}C_{x_{0},a}C_{b,x_{0}}\mu_{D}^{a,b}$ verifies
the following property: for any subset $K$ of $D$,%
\[
\rho^{D}(\{\gamma,\widehat{\gamma}(K)>0\})=Cap_{e^{D}}(K).
\]
Indeed, the lefthand side can be expressed as%
\[
\sum_{a,b\in D}C_{x_{0},a}C_{b,x_{0}}[H^{K}G^{D}]^{a,b}=\sum_{a\in D}%
C_{x_{0},a}[H^{K}1]^{a}=e^{D}(1,H^{K}1).
\]
It should be noted that $\rho^{D}$ depends only of $e^{D}$ (i.e. does not
depend on $\kappa_{x_{0}}$).

\begin{proposition}
\label{rod}

\begin{enumerate}
\item[a)] Under $\rho^{D}$, the non-normalized hitting distribution of any
$K\subseteq D$ is the $e^{D}$-capacitary measure of $K$. The same property
holds for the last hitting distribution.

\item[b)] Under $\rho^{D}(d\gamma)$, the conditional distribution of the path
$\gamma$ between $T_{K}(\gamma)$\ (the first time in $K$) and $L_{K}(\gamma
)$\ (the last time in $K)$, given $\gamma_{T_{K}}$\ and $\gamma_{L_{K}}$\ is
$\frac{1}{[G^{D}]^{\gamma_{T_{K}},\gamma_{L_{K}}}}\mu_{D}^{\gamma_{T_{K}%
},\gamma_{L_{K}}}$
\end{enumerate}
\end{proposition}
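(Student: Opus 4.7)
The plan is to reduce both parts to strong Markov for the killed chain combined with time reversal of the bridge measures $\mu_D^{a,b}$.

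\emph{Part (a).} First I will split each bridge $\mu_D^{a,b}$ at the stopping time $T_K$. Strong Markov for the killed chain gives, for $z\in K$,
\[
\mu_D^{a,b}(\gamma_{T_K}=z,\,T_K<\infty)=[H^{K,D}]_z^{a}\,[G^D]^{z,b},
\]
where $[H^{K,D}]_z^{a}=\mathbb{P}_a(T_K<T_{x_0},\,x_{T_K}=z)$. Weighting by $C_{x_0,a}C_{b,x_0}$, summing, and using the symmetry $C_{a,x_0}=C_{x_0,a}$ yields
\[
\rho^D(\gamma_{T_K}=z)=\Big(\sum_{a\in D}C_{a,x_0}[H^{K,D}]_z^{a}\Big)\Big(\sum_{b\in D}[G^D]^{z,b}C_{b,x_0}\Big).
\]
The right-hand factor equals $(G^D\kappa^D)(z)=1$ on $D$: under the standing assumption $\kappa|_D=0$, the killing measure of $e^D$ is $\kappa^D=C_{\cdot,x_0}|_D$, and the general identity $G\kappa=1$ applied to $e^D$ gives the conclusion. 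The left-hand factor is by definition the value at $z$ of the $e^D$-capacitary measure $\kappa^D H^{K,D}$ of $K$. For $L_K$, I will use that $\rho^D$ is invariant under time reversal of paths: $\lambda$-symmetry of the killed semigroup sends $\mu_D^{a,b}$ to $\mu_D^{b,a}$, and the weight $C_{x_0,a}C_{b,x_0}$ is symmetric in $(a,b)$; since reversal sends $L_K$ to $T_K$ (measured from the end), $\gamma_{L_K}$ has the same law as $\gamma_{T_K}$ under $\rho^D$.

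\emph{Part (b).} I will decompose each $\mu_D^{a,b}$ into three conditionally independent pieces, using strong Markov at $T_K$ and, because $L_K$ is not a stopping time, strong Markov applied to the time-reversed path at its first hit of $K$. Strong Markov at $\{\gamma_{T_K}=u\}$ factorizes $\mu_D^{a,b}|_{T_K<\infty}$ into a sub-probability kernel on paths $a\to u$ avoiding $K$ (total mass $[H^{K,D}]_u^a$) and the full bridge $\mu_D^{u,b}$ for the remainder. Inside $\mu_D^{u,b}$, time reversal produces $\mu_D^{b,u}$, under which $L_K$ is the first hit time $T_K$; a second strong Markov application at value $v$ factorizes this as a kernel on paths $b\to v$ avoiding $K$ (mass $[H^{K,D}]_v^b$) tensored with $\mu_D^{v,u}$. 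Reversing back shows that under $\mu_D^{a,b}$, conditionally on $\{\gamma_{T_K}=u,\,\gamma_{L_K}=v\}$, the three segments are independent and the middle segment has law $\mu_D^{u,v}$ (of total mass $[G^D]^{u,v}$). Since this middle law is independent of $a$ and $b$, the weighted sum defining $\rho^D$ leaves it unchanged; normalizing yields the claimed conditional law $\mu_D^{u,v}/[G^D]^{u,v}$.

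\emph{Main obstacle.} The delicate step is $L_K$: it is not a stopping time and strong Markov does not apply to it directly. The route through $\lambda$-symmetry, i.e. $\mu_D^{u,b}\circ\mathrm{rev}=\mu_D^{b,u}$, converts $L_K$ into a first hitting time of $K$ in the reversed chain, where strong Markov delivers the clean factorization. A secondary, purely bookkeeping point is to track the degenerate cases ($a$ or $b$ already in $K$, or zero-duration end pieces when $a=b\in K$ in the trivial sector of $\mu_D^{a,b}$), which contribute consistently to the formulas above.
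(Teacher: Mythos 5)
Your proof is correct and follows essentially the same route as the paper: part (a) is the first-passage decomposition of the bridges at $T_K$ combined with the identity $G^D C_{\cdot,x_0}=1$ (the paper writes your $[H^{K,D}]_z^a$ explicitly as $\sum_c[G^{D-K}]^{a,c}C_{c,z}$) and time-reversal invariance of $\rho^D$ for the last-hitting statement, while part (b) is the same entrance-kernel $\times\ \mu_D^{u,v}\ \times$ exit-kernel factorization obtained by applying the Markov property at $T_K$ and, via reversal, at $L_K$. Your explicit treatment of $L_K$ through $\lambda$-symmetry is just a more detailed spelling-out of what the paper compresses into its two displayed identities.
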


\begin{proof}
\begin{enumerate}

\item[a)] By definition of $\rho^{D}$, the non-normalized hitting distribution
of $K$ is expressed for any $z\in K$ by $\sum_{a,b,c\in D}C_{x_{0}%
,a}C_{b,x_{0}}[G^{D-K}]^{a,c}C_{c,z}[G^{D}]^{z,b}=\sum_{a,c\in D}C_{x_{0}%
,a}[G^{D-K}]^{a,c}C_{c,z}$. $C_{x_{0},a},a\in D$ is the killing measure of
$e^{D}$ and $[G^{D-K}]^{a,c}C_{c,z}$ the $e^{D}$-balayage kernel on $K$. The
case of last hitting distribution follows from the invariance of $\rho^{D}$
under time reversal.

\item[b)] Indeed, on functions of a path after $T_{K}$,
\[
\mu_{D}^{a,b}=\sum_{a,c\in D,z\in K}C_{x_{0},a}[G^{D-K}]^{a,c}C_{c,z}\mu
_{D}^{z,b}%
\]
and therefore on functions of a path restricted to $[T_{K},L_{K]}$, $\rho^{D}$
equals:
\begin{multline*}
\sum_{z\in K}\sum_{a,b,c\in D}C_{x_{0},a}[G^{D-K}]^{a,c}C_{c,z}\mu_{D}%
^{z,b}C_{b,x_{0}}\\
=\sum_{z,t\in K}\sum_{a,b,c,d\in D}C_{x_{0},a}[G^{D-K}]^{a,c}C_{c,z}\mu
_{D}^{z,t}C_{t,d}[G^{D-K}]^{d,b}C_{b,x_{0}}.
\end{multline*}
\end{enumerate}
\end{proof}

\begin{remark}
This construction of $\rho^{D}$\ can be extended to transient chains on
infinite spaces with zero killing measure. There exists a unique measure on
equivalence classes under the shift of doubly infinite paths converging to
infinity on both sides, such that the hitting distribution of any compact set
is given by its capacitary measure (Cf \cite{Hunt}, \cite{Weil}, \cite{Silv},
and the first section of \cite{Sznit} for a recent presentation in the case of
$\mathbb{Z}^{d}$ random walks). Proposition \ref{rod} holds also in this context.

Following \cite{Sznit}, the set of points hit by a Poissonian set of
excursions of intensity $\alpha\rho^{D}$ can be called the interlacement at
level $\alpha$.\index{interlacement}
\end{remark}

The law $\frac{\mu^{x_{0},x_{0}}}{G^{x_{0},x_{0}}}$ can of course be
decomposed in the same way, with the same conditional distribution given
$\widehat{l}^{x_{0}}$. Recall that by proposition \ref{mul},\ $\widehat
{l}^{x_{0}}$ follows an exponential distribution with mean $G^{x_{0},x_{0}}$.

\ $\widehat{\mathcal{L}}_{\alpha}^{x_{0}}$ follows a $\Gamma(\alpha
,G^{x_{0},x_{0}})$ distribution, in particular an exponential distribution
with mean $G^{x_{0},x_{0}}$ for $\alpha=1$. Moreover, the union of the
excursions of all loops of $\mathcal{L}_{a}$ outside $x_{0}$ has obviously the
same Poissonian conditional distribution, given $\widehat{\mathcal{L}}%
_{\alpha}^{x_{0}}=s$ than $\mu$ and $\frac{\mu^{x_{0},x_{0}}}{G^{x,x}}$, given
$\widehat{l}^{x_{0}}=s$. The set of excursions outside $x_{0}$ defined by the
$\frac{\mu^{x_{0},x_{0}}}{G^{x_{0},x_{0}}}$-distributed bridge and by
$\mathcal{L}_{1}$\ are therefore identically distributed, as the total holding
time in $x_{0}$.

\begin{remark}
\label{split}Note finally that by exercise \ref{poidir}, the distribution of
$\mathcal{L}_{1}/\mathcal{L}_{1}^{D}$ can be recovered from a unique sample of
$\frac{\mu^{x_{0},x_{0}}}{G^{x_{0},x_{0}}}$ by splitting the bridge according
to an independent sample $U_{i}$\ of $Poisson-Dirichlet(0,\alpha)$, more
precisely, by splitting the bridge (in fact a based loop) $l$ into based
subloops $l|_{[\sigma_{i},\sigma_{i+1}]}$, with $\sigma_{i}=\inf
(s,\frac{1}{\lambda_{x}}\int_{0}^{s}1_{\{x_{0}\}}(l_{s})ds=\sum_{1}^{i}%
U_{j}\widehat{l}^{x_{0}})$.
\end{remark}

Conversely, a sample of the bridge could be recovered from a sample of the
loop set $\mathcal{L}_{1}/\mathcal{L}_{1}^{D}$\ by concatenation in random
order. This random ordering can be defined by taking a projective limit of the
randomly ordered finite subset of loops $\{l_{i,n}\}$ defined by assuming for
example that $\widehat{l}_{i,n}^{x}>\frac{1}{n}$.

\section{Conditional expectations}

Coming back to the general case, the Poisson process $\mathcal{L}_{\alpha
}^{\{F\}}=\{l^{\{F\}},l\in\mathcal{L}_{\alpha}\}$ has intensity $\mu^{\{F\}}$
and is independent of $\mathcal{L}_{\alpha}^{D}$.

Note that $\widehat{\mathcal{L}_{\alpha}^{\{F\}}}$ is the restriction of
$\widehat{\mathcal{L}_{\alpha}}$ to $F$.

If $\chi$ is carried by $D$ and if we set $e_{\chi}=e+\left\|  \quad\right\|
_{L^{2}(\chi)}$ and denote $[e_{\chi}]^{\{F\}}$ by $e^{\{F,\chi\}}$ we have
\[
C_{x,y}^{\{F,\chi\}}=C_{x,y}+\sum_{a,b}C_{x,a}C_{b,y}[G_{\chi}^{D}%
]^{a,b},\quad p_{x}^{\{F,\chi\}}=\sum_{a,b\in D}P_{a}^{x}[G_{\chi}^{D}%
]^{a,b}C_{b,x}%
\]
and $\lambda_{x}^{\{F,\chi\}}=\lambda_{x}(1-p_{x}^{\{F,\chi\}})$.

More generally, if $e^{\#}$ is such that $C^{\#}=C$\ on $F\times F$, and
$\lambda=\lambda^{\#}$ on $F$ we have:
\[
C_{x,y}^{\#\{F\}}=C_{x,y}+\sum_{a,b}C_{x,a}^{\#}C_{b,y}^{\#}[G^{\#D}%
]^{a,b},\quad p_{x}^{\#\{F\}}=\sum_{a,b\in D}P_{a}^{\#x}[G^{\#D}]^{a,b}%
C_{b,x}^{\#}%
\]
and $\lambda_{x}^{\#\{F\}}=\lambda_{x}(1-p_{x}^{\#\{F\}}).$

If $\chi$ is a measure carried by $D$, we have:%
\begin{align*}
\mathbb{E}(e^{-\left\langle \widehat{\mathcal{L}_{\alpha}},\chi\right\rangle
}|\mathcal{L}_{\alpha}^{\{F\}})  &  =\mathbb{E}(e^{-\left\langle
\widehat{\mathcal{L}_{\alpha}^{D}},\chi\right\rangle })(\prod_{x,y\in F}[\int
e^{-\left\langle \widehat{\mathcal{\gamma}},\chi\right\rangle }\nu_{x,y}%
^{D}(d\gamma)]^{N_{x,y}(\mathcal{L}_{\alpha}^{\{F\}})}\\
&  \times\prod_{x\in F}e^{\lambda_{x}^{\{F\}}[\widehat{\mathcal{L}_{\alpha
}^{\{F\}}}]^{x}\int(e^{-\left\langle \widehat{\mathcal{\gamma}},\chi
\right\rangle }-1)\nu_{x}^{D}(d\gamma)}\\
&  =[\frac{\mathcal{Z}_{e_{\chi}^{D}}}{\mathcal{Z}_{e^{D}}}]^{\alpha}%
(\prod_{x,y\in F}[\frac{C_{x,y}^{\{F,\chi\}}}{C_{x,y}^{\{F\}}}]^{N_{x,y}%
(\mathcal{L}_{\alpha}^{\{F\}})}\prod_{x\in F}e^{[\lambda_{x}^{\{F,\chi
\}}-\lambda_{x}^{\{F\}}]\widehat{\mathcal{L}_{\alpha}^{x}}}.
\end{align*}
(recall that $\widehat{\mathcal{L}_{\alpha}^{\{F\}}}$ is the restriction of
$\widehat{\mathcal{L}_{\alpha}}$ to $F$). Also, if we condition on the set of
discrete loops $\mathcal{DL}_{\alpha}^{\{F\}}$%
\[
\mathbb{E}(e^{-\left\langle \widehat{\mathcal{L}_{\alpha}},\chi\right\rangle
}|\mathcal{DL}_{\alpha}^{\{F\}})=[\frac{\mathcal{Z}_{e_{\chi}^{D}}%
}{\mathcal{Z}_{e^{D}}}]^{\alpha}(\prod_{x,y\in F}[\frac{C_{x,y}^{\{F,\chi\}}%
}{C_{x,y}^{\{F\}}}]^{N_{x,y}(\mathcal{L}_{\alpha}^{\{F\}})}\prod_{x\in
F}[\frac{\lambda_{x}^{\{F\}}}{\lambda_{x}^{\{F,\chi\}}}]^{N_{x}(\mathcal{L}%
_{\alpha}^{\{F\}})+1})
\]
where the last exponent $N_{x}+1$ is obtained by taking into account the loops
which have a trivial trace on $F$ (see formula (\ref{unpt})).

More generally we can show in the same way the following

\begin{proposition}
\label{dec}If $C^{\#}=C$\ on $F\times F$, and $\lambda=\lambda^{\#}$ on $F$,
we denote $B^{e,e^{\#}}$ the multiplicative functional
\[
{\prod_{x,y}[\frac{C_{x,y}^{\#}}{C_{x,y}}]^{N_{x,y}}e^{-\sum_{x\in D}%
\widehat{l_{x}}(\lambda_{x}^{\#}-\lambda_{x})}}.
\]
Then,
\[
\mathbb{E}(B^{e,e^{\#}}|\mathcal{L}_{\alpha}^{\{F\}})=[\frac{\mathcal{Z}%
_{e^{\#D}}}{\mathcal{Z}_{e^{D}}}]^{\alpha}(\prod_{x,y\in F}[\frac{C_{x,y}%
^{\#\{F\}}}{C_{x,y}^{\{F\}}}]^{N_{x,y}(\mathcal{L}_{\alpha}^{\{F\}})}%
\prod_{x\in F}e^{[\lambda_{x}^{\#\{F\}}-\lambda_{x}^{\{F\}}]\widehat
{\mathcal{L}_{\alpha}^{x}}}%
\]
and
\[
\mathbb{E}(B^{e,e^{\#}}|\mathcal{DL}_{\alpha}^{\{F\}})=[\frac{\mathcal{Z}%
_{e^{\#D}}}{\mathcal{Z}_{e^{D}}}]^{\alpha}(\prod_{x,y\in F}[\frac{C_{x,y}%
^{\#\{F\}}}{C_{x,y}^{\{F\}}}]^{N_{x,y}(\mathcal{L}_{\alpha}^{\{F\}})}%
\prod_{x\in F}[\frac{\lambda_{x}^{\{F\}}}{\lambda_{x}^{\#\{F\}}}%
]^{N_{x}(\mathcal{L}_{\alpha}^{\{F\}})+1}.
\]
\end{proposition}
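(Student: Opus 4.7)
The plan is to combine the excursion decomposition described just above the proposition with the restriction/independence properties of the Poissonian loop ensemble, together with the absolute continuity formula \eqref{muprim} applied to a well-chosen subgraph. Write $\mathcal{L}_\alpha = \mathcal{L}_\alpha^D \sqcup (\mathcal{L}_\alpha \setminus \mathcal{L}_\alpha^D)$. The functional $B^{e,e^{\#}}$ is multiplicative over loops, so it factorises accordingly. Since $\mathcal{L}_\alpha^D$ is a Poisson ensemble on the subgraph $D$ of intensity $\mu^D$ and is independent of the loops hitting $F$ (which carry all the information of $\mathcal{L}_\alpha^{\{F\}}$), the first factor contributes unconditionally. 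Applying \eqref{muprim} to the pair $(e^D,e^{\#D})$ and noting that $B^{e,e^{\#}}$ restricted to loops in $D$ coincides with $B^{e^D,e^{\#D}}$ produces the partition-function ratio $[\mathcal{Z}_{e^{\#D}}/\mathcal{Z}_{e^D}]^\alpha$.

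The bulk of the work is then the conditional expectation, given $\mathcal{L}_\alpha^{\{F\}}$, of the $B$-factor contributed by loops that hit $F$. For such a loop I would use the decomposition $l=\Lambda(l^{\{F\}},(\gamma_{\xi_i,\xi_{i+1}}),(n_{\xi_i},\gamma_{\xi_i}^{h},\widehat{\tau}_{i,h}))$ stated in the excursion theorem. Given the trace, the intervening excursions $\gamma_{\xi_i,\xi_{i+1}}$ are independent with law $\nu^D_{\xi_i,\xi_{i+1}}$, and at each trace-visit of $x\in F$ with rescaled holding time $\widehat{\tau}$ there is a Poisson-distributed bunch of i.i.d.~$\nu^D_x$-excursions. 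Because $B^{e,e^{\#}}$ splits along this decomposition (the trace itself contributes $1$, since $C^{\#}=C$ on $F\times F$ and the exponential in $B$ only sees $D$), the conditional expectation is obtained by averaging each excursion factor separately.

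The key identity underlying the computation is
\[
\int B^{e,e^{\#}}(\gamma)\,\mu_D^{a,b}(d\gamma)=[G^{\#D}]^{a,b},
\]
which I would prove by the Feynman--Kac argument used earlier in the notes: writing $\widetilde{P}=C^{\#}/\lambda$ and $\chi=\lambda^{\#}-\lambda$ on $D$, the integral expands as $\int_0^\infty\exp\!\bigl(t(\widetilde P-I-M_{\chi/\lambda})\bigr)^{a,b}\,dt$ and the matrix $\widetilde P-I-M_{\chi/\lambda}=-M_\lambda^{-1}(M_{\lambda^{\#}}-C^{\#})$ inverts to $G^{\#D}M_\lambda$ on $D$. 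Plugging this into the definition of $\nu^D_{x,y}$ and $\nu^D_x$ one gets the clean formulas
\[
\int B\,d\nu^D_{x,y}=\frac{C^{\#\{F\}}_{x,y}}{C^{\{F\}}_{x,y}},\qquad\int B\,d\nu^D_x=\frac{p_x^{\#\{F\}}}{p_x^{\{F\}}},
\]
where the second equality uses $\sum_{a,b}C^{\#}_{x,a}C^{\#}_{b,x}[G^{\#D}]^{a,b}=\lambda_x p_x^{\#\{F\}}$ together with $\lambda^{\#}=\lambda$ on $F$.

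Putting the pieces together, for each trace-jump $x\to y$ one picks up $C^{\#\{F\}}_{x,y}/C^{\{F\}}_{x,y}$, producing the first product in the RHS after taking the product over all trace-jumps of $\mathcal{L}_\alpha^{\{F\}}$. For each trace-holding time $\widehat{\tau}$ at $x\in F$, the compound-Poisson Laplace transform yields a factor $\exp\!\bigl(\text{rate}\cdot\widehat{\tau}\cdot(p_x^{\#\{F\}}/p_x^{\{F\}}-1)\bigr)$, which after using $\lambda_x^{\{F\}}=\lambda_x(1-p_x^{\{F\}})$ and $\lambda_x^{\#\{F\}}=\lambda_x(1-p_x^{\#\{F\}})$ telescopes into $\exp([\lambda^{\#\{F\}}_x-\lambda^{\{F\}}_x]\widehat{\tau})$ (up to a sign that must be checked carefully). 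Summing $\widehat{\tau}$ over all trace-visits of $x$ gives $\widehat{\mathcal{L}_\alpha^x}$, since for $x\in F$ the rescaled local time of $l$ at $x$ equals the trace-chain holding time at $x$. This produces the second formula. The $\mathcal{DL}_\alpha^{\{F\}}$-version is then obtained by integrating out $\widehat{\tau}_i$: conditional on the discrete trace, each holding time is exponential of parameter $\lambda^{\{F\}}_{\xi_i}$, and $\int_0^\infty e^{[\lambda^{\#\{F\}}-\lambda^{\{F\}}]t}\lambda^{\{F\}}e^{-\lambda^{\{F\}}t}dt=\lambda^{\{F\}}/\lambda^{\#\{F\}}$; the $N_x+1$ exponent comes from counting one holding time per trace-visit at $x$ plus the trivial ($p=1$) trace loops at $x$ whose contribution is captured by the extra factor, exactly as in the derivation of \eqref{unpt}. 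The main technical obstacle is the correct bookkeeping of the excursion-rate constant (which governs both the Poisson parameter and the telescoping to $\lambda^{\#\{F\}}-\lambda^{\{F\}}$) and the verification that the Feynman--Kac identity above really produces the reduced conductance $C^{\#\{F\}}$, rather than some intermediate quantity involving $\lambda^{\#}$ off $F$.
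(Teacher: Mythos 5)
Your route is essentially the paper's own: the proposition is presented there as following ``in the same way'' as the preceding computation of $\mathbb{E}(e^{-\langle \widehat{\mathcal{L}_{\alpha}},\chi\rangle}\mid\mathcal{L}_{\alpha}^{\{F\}})$, i.e.\ independence of $\mathcal{L}_{\alpha}^{D}$ (giving $[\mathcal{Z}_{e^{\#D}}/\mathcal{Z}_{e^{D}}]^{\alpha}$ via \eqref{muprim} applied on $D$), the excursion decomposition of a loop hitting $F$ given its trace, and evaluation of the excursion averages. Your key identity $\int B^{e,e^{\#}}(\gamma)\,\mu_{D}^{a,b}(d\gamma)=[G^{\#D}]^{a,b}$ is exactly the needed generalization of $\mu^{x,y}(e^{-\langle\widehat{\gamma},\chi\rangle})=[G_{\chi}]^{x,y}$, and the worry you raise about getting $C^{\#\{F\}}$ rather than an intermediate quantity is unfounded: the entrance and exit jumps of an excursion belong to the loop but not to its trace, so their conductance ratios $C^{\#}_{x,a}/C_{x,a}$, $C^{\#}_{b,y}/C_{b,y}$ are part of the excursion weight, and $\frac{1}{C^{\{F\}}_{x,y}}\bigl[C_{x,y}+\sum_{a,b}C^{\#}_{x,a}C^{\#}_{b,y}[G^{\#D}]^{a,b}\bigr]$ is precisely $C^{\#\{F\}}_{x,y}/C^{\{F\}}_{x,y}$ by the definition recalled just before the proposition; likewise $\int B\,d\nu^{D}_{x}=p_{x}^{\#\{F\}}/p_{x}^{\{F\}}$ since $\lambda^{\#}=\lambda$ on $F$.

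The point you leave open (``the excursion-rate constant \dots up to a sign'') is genuinely the delicate step, so let me close it. Conditionally on the trace holding time $\widehat{\tau}_{i}$ at $\xi_{i}=x$, the number of point-attached excursions is obtained by thinning: the rescaled chain leaves $x$ at rate $\lambda_{x}$, each departure being a return-excursion with probability $p_{x}^{\{F\}}$ and a trace transition otherwise, so conditioning on the total time $\widehat{\tau}_{i}$ makes that number Poisson of parameter $\lambda_{x}p_{x}^{\{F\}}\widehat{\tau}_{i}=(\lambda_{x}-\lambda_{x}^{\{F\}})\widehat{\tau}_{i}$, not $\lambda_{x}^{\{F\}}\widehat{\tau}_{i}$ as the excursion section literally states. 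With this rate your compound-Poisson factor is $\exp\bigl(\lambda_{x}p_{x}^{\{F\}}\widehat{\tau}_{i}\bigl(\tfrac{p_{x}^{\#\{F\}}}{p_{x}^{\{F\}}}-1\bigr)\bigr)=\exp\bigl([\lambda_{x}^{\{F\}}-\lambda_{x}^{\#\{F\}}]\widehat{\tau}_{i}\bigr)$, i.e.\ the exponent is the \emph{negative} of the one printed in the statement. Two checks confirm this is the right sign: only with it does your final integration $\int_{0}^{\infty}e^{-[\lambda^{\#\{F\}}-\lambda^{\{F\}}]t}\lambda^{\{F\}}e^{-\lambda^{\{F\}}t}\,dt=\lambda^{\{F\}}/\lambda^{\#\{F\}}$ reproduce the second display (the printed sign would give $\lambda^{\{F\}}/(2\lambda^{\{F\}}-\lambda^{\#\{F\}})$); and on the two-point graph $X=\{x_{0},a\}$, $F=\{x_{0}\}$, $C_{x_{0},a}=c$, $\kappa_{x_{0}}=k_{0}$, $\kappa_{a}=0$, $e^{\#}=e_{\gamma\delta_{a}}$, a direct computation from the joint Laplace transform gives $\mathbb{E}(e^{-\gamma\widehat{\mathcal{L}}_{1}^{a}}\mid\widehat{\mathcal{L}}_{1}^{x_{0}}=s)=\tfrac{c}{c+\gamma}\,e^{-\frac{c\gamma}{c+\gamma}s}$ with $\lambda^{\#\{F\}}_{x_{0}}-\lambda^{\{F\}}_{x_{0}}=\tfrac{c\gamma}{c+\gamma}>0$. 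So your plan is correct and completes the paper's sketch, provided you use the rate $\lambda_{x}p_{x}^{\{F\}}$; it then yields the first display with $e^{[\lambda_{x}^{\{F\}}-\lambda_{x}^{\#\{F\}}]\widehat{\mathcal{L}_{\alpha}^{x}}}$, correcting a sign slip in the statement as printed.
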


\medskip These decomposition and conditional expectation formulas extend to
include a current $\omega$ in $C^{\#}$. Note that if $\omega$ is closed (i.e.
vanish on every loop) in $D$, one can define $\omega^{F}$ such that
$[Ce^{i\omega}]^{\{F\}}=C^{\{F\}}e^{i\omega^{F}}$. Then%
\[
\mathcal{Z}_{e,\omega}=\mathcal{Z}_{e^{D}}\mathcal{Z}_{e^{\{F\}},\omega^{F}}.
\]
The previous proposition implies the following \textsl{Markov property}:

\begin{remark}
If $D=D_{1}\cup D_{2}$ with $D_{1}$ and $D_{2}$ strongly disconnected, (i.e.
such that for any $(x,y,z)\in D_{1}\times D_{2}\times F$, $C_{x,y}$ and
$C_{x,z}C_{y,z}$ vanish), the restrictions of the network $\overline
{\mathcal{L}_{\alpha}}$ to $D_{1}\cup F$ and $D_{2}\cup F$ are independent
conditionally on the restriction of $\mathcal{L}_{\alpha}$\ to $F$.
\end{remark}

\begin{proof}
This follows from the fact that as $D_{1}$ and $D_{2}$\ are strongly
disconnected, any excursion measure $\nu_{x,y}^{D}$ or $\rho_{x}^{D}$\ from
$F$ into $D=D_{1}\cup D_{2}$ is an excursion measure either in $D_{1}$ or in
$D_{2}$.
\end{proof}

\section{Branching processes with immigration}

An interesting example can be given after extending slightly the scope of the
theory to countable transient symmetric Markov chains: We can take
$X=\mathbb{N}-\{0\}$, $C_{n,n+1}=1$ for all $n\geq1$, \ $\kappa_{n}=0$ for
$n\geq2$ and $\kappa_{1}=1$. $P$ is the transfer matrix of the simple
symmetric random walk killed at $0$.

Then we can apply the previous considerations to check that $\widehat
{\mathcal{L}}_{\alpha}^{n}$ is a branching process with immigration.

The immigration at level $n$ comes from the loops whose infimum is $n$ and the
branching from the excursions to level $n+1$ of the loops existing at level
$n$. Set $F_{n}=\{1,2,...,n\}$ and $D_{n}=F_{n}^{c}$.

From the calculations of conditional expectations made above, we get that for
any positive parameter $\gamma$,%
\[
\mathbb{E}(e^{-\gamma\widehat{\mathcal{L}}_{\alpha}^{n}}||\mathcal{L}_{\alpha
}^{\{F_{n-1}\}})=\mathbb{E}(e^{-\gamma\lbrack\widehat{\mathcal{L}_{\alpha
}^{D_{n-1}}}]^{n}})e^{[\lambda_{n-1}^{\{F_{n-1},\gamma\delta_{n}\}}%
-\lambda_{n-1}^{\{F_{n-1}\}}]\widehat{\mathcal{L}}_{\alpha}^{n-1}}%
\]
($[\widehat{\mathcal{L}_{\alpha}^{D_{n-1}}}]^{n}$ denotes the occupation field
of the trace of $\mathcal{L}_{\alpha}$ on $D_{n-1}$ evaluated at $n$).

From this formula, it is clear that $\widehat{\mathcal{L}}_{\alpha}^{n}$ is a
branching Markov chain with immigration. To be more precise, note that for any
$n,m>0$, the potential operator $V_{m}^{n}$ equals $2(n\wedge m)$ that
$\lambda_{n}=2$ and that $G^{1,1}=1$. Moreover, by the generalized resolvent
equation, $G_{\gamma\delta_{1}}^{1,n}=G^{1,n}-G^{1,1}\gamma G_{\gamma
\delta_{1}}^{1,n}$ so that $G_{\gamma\delta_{1}}^{1,n}=\frac{1}{1+\gamma}$.
For any $n>0$, the restriction of the Markov chain to $D_{n}$ is isomorphic to
the original Markov chain. Then it comes that for all $n$, $p_{n}^{\{F_{n}%
\}}=\frac{1}{2}$, $\lambda_{n}^{\{F_{n}\}}=1$, and $\lambda_{n}^{\{F_{n}%
,\gamma\delta_{n+1}\}}=2-\frac{1}{1+\gamma}=\frac{2\gamma+1}{1+\gamma}$\ so
that the Laplace exponent of the convolution semigroup $\nu_{t}$\ defining the
branching mechanism $\lambda_{n-1}^{\{F_{n-1},\gamma\delta_{n}\}}%
-\lambda_{n-1}^{\{F_{n-1}\}}$equals $\frac{2\gamma+1}{1+\gamma}-1=\frac{\gamma
}{1+\gamma}=\int(1-e^{-\gamma s})e^{-s}ds$. It is the semigroup of a compound
Poisson process whose Levy measure is exponential.

The immigration law (on $\mathbb{R}^{+}$) is a Gamma distribution
$\Gamma(\alpha,G^{1,1})=\Gamma(\alpha,1)$. It is the law of $\widehat
{\mathcal{L}}_{\alpha}^{1}$ and also of $[\widehat{\mathcal{L}_{\alpha
}^{D_{n-1}}}]^{n}$\ for all $n>1$.

The conditional law of $\widehat{\mathcal{L}}_{\alpha}^{n+1}$ given
$\widehat{\mathcal{L}}_{\alpha}^{n}$ is the convolution of the immigration law
$\Gamma(\alpha,1)$ with $\nu_{\widehat{\mathcal{L}}_{\alpha}^{n}}$

\begin{exercise}
Alternatively, we can consider the integer valed process $N_{n}(\mathcal{L}%
_{\alpha}^{\{F_{n}\}})+1$ which is a Galton Watson process with immigration.
In our example, we find the reproduction law $\pi(n)=2^{-n-1}$for all $n\geq0$
(critical binary branching).
\end{exercise}

\begin{exercise}
Show that more generally, if $C_{n,n+1}=[\frac{p}{1-p}]^{n}$, for $n>0$ and
$\kappa_{1}=1,$with $0<p<1$, we get all asymetric simple random walks. Show
that $\lambda_{n}=\frac{p^{n-1}}{(1-p)^{n}}$ and $G^{1,1}=1$. Determine the
distributions of the associated branching and Galton Watson process with immigration.
\end{exercise}

If we consider the occupation field defined by the loops whose infimum equals $1$ (I.e. going through $1$), we
get a branching process without immigration: it is the classical relation
between random walks local times and branching processes.

\section{Another expression for loop hitting distributions}

Let us come back to formula \ref{F1F2}. Setting $F=F_{1}\cup F_{2}$, we see
that this result involves only $\mu^{\{F\}}$ and $e^{\{F\}}$ i.e. it can be
expressed interms of the restrictions of the loops to $F$.

\begin{lemma}
If $X=X_{1}\cup X_{2}$ with $X_{1}\cap X_{2}=\emptyset$,
\[
\log(\frac{\det(G)}{\det(G^{X_{1}})\det(G^{X_{2}})})=\sum_{1}^{\infty}%
\frac{1}{2k}Tr([H_{12}H_{21}]^{k}+[H_{12}H_{21}]^{k})
\]
with $H_{12}=H^{X_{2}}|_{X_{1}}$ and $H_{21}=H^{X_{1}}|_{X_{2}}$.

\begin{proof}%
\begin{align*}
\frac{\det(G)}{\det(G^{X_{1}})\det(G^{X_{2}})}  &  =\left(  \det\Big(
\begin{array}
[c]{cc}%
I_{X_{1}\times X_{1}} & -G^{X_{1}}C_{X_{1}\times X_{2}}\\
-G^{X_{2}}C_{X_{2}\times X_{1}} & I_{X_{2}\times X_{2}}%
\end{array}
\Big)  \right)  ^{-1}\\
&  =\left(  \det\Big(
\begin{array}
[c]{cc}%
I_{X_{1}\times X_{1}} & -H_{12}\\
-H_{21} & I_{X_{2}\times X_{2}}%
\end{array}
\Big)  \right)  ^{-1}.
\end{align*}

The transience implies that either $H_{12}1$, either $H_{21}1$ is strictly
less than $1$, and therefore, $H_{12}H_{21}$ and $H_{12}H_{21}$ are strict
contractions. From the expansion of $-\log(1-x)$, we get that:
\[
\log\Big(          \frac{\det(G)}{\det(G^{X_{1}})\det(G^{X_{2}})}\Big)
=\sum_{1}^{\infty}\frac{1}{k}Tr\left[  \Big(
\begin{array}
[c]{cc}%
0 & -H_{12}\\
-H_{21} & 0
\end{array}
\Big)          ^{k}\right]  .
\]
The result follows, as odd terms have obviously zero trace.
\end{proof}
\end{lemma}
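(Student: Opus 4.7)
The plan is to reduce the identity to a Schur complement computation and then expand a matrix logarithm. Writing everything in block form relative to $X = X_1 \cup X_2$, one has
\[
M_\lambda - C = \begin{pmatrix} A_1 & -C_{12} \\ -C_{21} & A_2 \end{pmatrix},
\]
where $A_i = (M_\lambda - C)|_{X_i\times X_i} = (G^{X_i})^{-1}$ by definition of the killed Green function, and $C_{21} = C_{12}^{\,T}$ by symmetry. The Schur complement identity gives
\[
\det(M_\lambda - C) = \det(A_1)\,\det\!\bigl(A_2 - C_{21} A_1^{-1} C_{12}\bigr),
\]
and since $\det(G) = \det(M_\lambda-C)^{-1}$ while $\det(G^{X_i}) = \det(A_i)^{-1}$, after factoring $A_2 = (G^{X_2})^{-1}$ out of the second determinant I expect to obtain
\[
\frac{\det(G)}{\det(G^{X_1})\det(G^{X_2})} = \det\!\bigl(I - G^{X_2} C_{21}\, G^{X_1} C_{12}\bigr)^{-1}.
\]

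Next, I would recognize the hitting operators. From the formula $[H^F]_y^x = \mathbf{1}_{\{x=y\}} + \mathbf{1}_D(x)\sum_{b\in D}[G^D]^{x,b} C_{b,y}$ recalled earlier in the paper, one reads off the restrictions $H_{12} = G^{X_1} C_{12}$ (as an $X_1\times X_2$ matrix) and $H_{21} = G^{X_2} C_{21}$. Substituting then yields $\det(G)/[\det(G^{X_1})\det(G^{X_2})] = \det(I - H_{21} H_{12})^{-1}$, and symmetrically $\det(I - H_{12} H_{21})^{-1}$.

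Finally, transience ensures that $H_{12} H_{21}$ and $H_{21} H_{12}$ are strict contractions (at least one of $H_{12}\mathbf{1}$, $H_{21}\mathbf{1}$ is strictly less than $1$ in sup-norm), so I can legitimately expand
\[
-\log\det(I - H_{21}H_{12}) = \mathrm{Tr}\bigl(-\log(I - H_{21}H_{12})\bigr) = \sum_{k\ge 1}\frac{1}{k}\,\mathrm{Tr}\bigl((H_{21}H_{12})^k\bigr).
\]
Symmetrizing by means of the rectangular trace identity $\mathrm{Tr}((AB)^k) = \mathrm{Tr}((BA)^k)$ produces the stated form with the $\tfrac{1}{2k}$ coefficients.

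The only delicate point I foresee is the clean passage from the Schur complement $A_2 - C_{21} A_1^{-1} C_{12}$ to the factorization $(G^{X_2})^{-1}(I - H_{21}H_{12})$; this is purely bookkeeping once one identifies the hitting kernel with $G^{X_i} C_{ij}$, but it is where the geometry of the partition enters and it is the only place where one needs to be careful with the row/column conventions of the off-diagonal blocks.
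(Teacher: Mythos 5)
Your proposal is correct and follows essentially the same route as the paper: you identify $H_{12}=G^{X_1}C_{X_1\times X_2}$, $H_{21}=G^{X_2}C_{X_2\times X_1}$, reduce the determinant ratio to $\det\bigl(I-H_{21}H_{12}\bigr)^{-1}$, and expand the logarithm using the contraction property. The only cosmetic difference is bookkeeping — you use a Schur complement and then symmetrize via $\mathrm{Tr}((AB)^k)=\mathrm{Tr}((BA)^k)$, whereas the paper factors out the diagonal blocks, expands $\log$ of the full off-diagonal block matrix, and notes that odd powers have zero trace; the two computations are equivalent.
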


Noting finally that the lemma can be applied to the restrictions of $G$ to
$F_{1}\cup F_{2}$, $F_{1}$ and $F_{2}$, and that hitting distributions of
$F_{1}$ from $F_{2}$ and $F_{2}$ from $F_{1}$\ are the same for the Markov
chain on $X$ and its restriction to $F_{1}\cup F_{2}$, we get finally:

\begin{proposition}
\label{F1FF}If $F_{1}$ and $F_{2}$ are disjoint,
\[
\mu(\widehat{l(}F_{1})\widehat{l(}F_{2})>0)=\sum_{1}^{\infty}\frac{1}%
{2k}Tr([H_{12}H_{21}]^{k}+[H_{12}H_{21}]^{k})
\]
with $H_{12}=H^{F_{2}}|_{F_{1}}$ and $H_{21}=H^{F_{1}}|_{F_{2}}$.
\end{proposition}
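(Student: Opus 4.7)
The plan is to combine the identity \eqref{F1F2} with the preceding Lemma, applied to the trace of the Markov chain on $F := F_1 \cup F_2$. From \eqref{F1F2},
$$\mu(\widehat{l}(F_1)\widehat{l}(F_2)>0) = \log\left(\frac{\det_{F_1\times F_1}(G)\det_{F_2\times F_2}(G)}{\det_{F\times F}(G)}\right),$$
so it suffices to rewrite this ratio in the form to which the Lemma applies.

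Let $\tilde G$ denote the Green function of the trace chain on $F$. From the section on traces, $\tilde G$ equals the restriction $G|_{F\times F}$. Applying Jacobi's identity (in the form recalled in Section~\ref{hit}) inside the trace chain, the Green functions $\tilde G^{F_1}$ and $\tilde G^{F_2}$ of the trace chain killed respectively at $F_2$ and at $F_1$ satisfy $\det(\tilde G^{F_i}) = \det_{F\times F}(G)/\det_{F_{3-i}\times F_{3-i}}(G)$, so that
$$\frac{\det(\tilde G)}{\det(\tilde G^{F_1})\det(\tilde G^{F_2})} = \frac{\det_{F_1\times F_1}(G)\det_{F_2\times F_2}(G)}{\det_{F\times F}(G)}.$$
The Lemma, applied with $X$ replaced by $F$ and the partition $X_1 := F_1$, $X_2 := F_2$, then yields
$$\log\left(\frac{\det(\tilde G)}{\det(\tilde G^{F_1})\det(\tilde G^{F_2})}\right) = \sum_{k=1}^\infty \frac{1}{2k}\,\mathrm{Tr}\bigl([\tilde H_{12}\tilde H_{21}]^k + [\tilde H_{21}\tilde H_{12}]^k\bigr),$$
where $\tilde H_{ij}$ denotes the trace-chain hitting kernel from $F_i$ to $F_j$.

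The remaining step is the identification $\tilde H_{ij} = H_{ij}$. This is the only place where the trace construction really enters: the trace chain is obtained by deleting the excursions of the original chain into $D := F^c$, and these excursions visit neither $F_1$ nor $F_2$; hence the first hit of $F_2$ by a path started at any $x\in F_1$ occurs at the same (random) point of $F_2$ for both processes, by the strong Markov property. This is the conceptually important step, though hardly a serious obstacle; the remainder is bookkeeping of determinantal identities via Jacobi. Substituting $\tilde H_{ij} = H_{ij}$ into the display above yields the stated formula.
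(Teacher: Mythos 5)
Your proposal is correct and follows essentially the paper's own route: the paper likewise applies the preceding lemma to the trace of the chain on $F_{1}\cup F_{2}$ (whose Green function is the restriction $G|_{F\times F}$), matches the determinant ratio from \eqref{F1F2} via Jacobi's identity, and uses that the hitting distributions of $F_{1}$ from $F_{2}$ and of $F_{2}$ from $F_{1}$ are unchanged by taking the trace. You have simply spelled out the determinantal bookkeeping and the strong Markov argument that the paper leaves implicit.
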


\begin{exercise}
\label{bfk}Show that the $k$-th term of the expansion can be interpreted as
the measure of loops with exactly $k$-crossings between $F_{1}$ and $F_{2}$.
\end{exercise}

\begin{exercise}
Prove analogous results for $n$ disjoint sets $F_{i}$.
\end{exercise}

\chapter{Loop erasure and spanning trees.}

\section{Loop erasure}

Recall that an oriented link $g$ is a pair of points $(g^{-},g^{+})$ such that
$C_{g}=C_{g^{-},g^{+}}\neq0$. Define $-g=(g^{+},g^{-})$.

Let $\mu_{\neq}^{x,y}$ be the measure induced by $C$ on discrete self-avoiding
paths between $x$ and $y$: $\mu_{\neq}^{x,y}(x,x_{2},...,x_{n-1}%
,y)=C_{x,x_{2}}C_{x_{1},x_{3}}...C_{x_{n-1},y}$.

\index{loop erasure}Another way to define a measure on discrete self avoiding paths from $x$ to
$y$ from a measure on paths from $x$ to $y$ is loop erasure defined in section
\ref{secred}\ (see also \cite{Law} ,\cite{Quianmp}, \cite{Law2} and
\cite{Marchal}). In this context, the loops, which can be reduced to points,
include holding times, and loop erasure produces a discrete path without
holding times.

We have the following:

\begin{theorem}
The image of $\mu^{x,y}$ by the loop erasure map $\gamma\rightarrow\gamma
^{BE}$ is $\mu_{BE}^{x,y}$ defined on self avoiding paths by $\mu_{BE}%
^{x,y}(\eta)=\mu_{\neq}^{x,y}(\eta)\frac{\det(G)}{\det(G^{\{\eta\}^{c}})}%
=\mu_{\neq}^{x,y}(\eta)\det(G_{|\{\eta\}\times\{\eta\}})$ (Here $\{\eta\}$
denotes the set of points in the path $\eta$) and by $\mu_{BE}^{x,y}%
(\emptyset)=\delta_{y}^{x}G_{x,x}$
\end{theorem}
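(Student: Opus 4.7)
The plan is to combine a one-step structural decomposition of a bridge along its loop-erasure with an induction on the length $n$ of the self-avoiding path $\eta=(\eta_0,\ldots,\eta_n)$, where $\eta_0=x$, $\eta_n=y$. Let $s_0$ denote the last visit of $\gamma$ to $\eta_0$. I would first establish the bijective decomposition: $\gamma^{BE}=\eta$ if and only if $\gamma=\ell_0\cdot(\eta_0\!\to\!\eta_1)\cdot\gamma'$, where $\ell_0=\gamma|_{[0,s_0]}$ is an \emph{arbitrary} bridge from $\eta_0$ to $\eta_0$ in $X$ (it is allowed to visit $\eta_1,\ldots,\eta_n$), and $\gamma'=\gamma|_{[s_0+1,\cdot]}$ is a bridge from $\eta_1$ to $\eta_n$ in the chain killed at $\eta_0$ (so $\gamma'$ avoids $\eta_0$) whose loop erasure is $(\eta_1,\ldots,\eta_n)$. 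Uniqueness of the split follows from the definition of $s_0$; conversely, any such pair $(\ell_0,\gamma')$ yields a path whose loop erasure is $\eta$, because any visit of $\eta_j$ ($j\ge 1$) inside $\ell_0$ occurs before time $s_0+1$ and is therefore dominated, for the ``last visit'' prescription, by the later visits contained in $\gamma'$.

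Second, I would read off from the explicit formula $\mu^{x,y}(\gamma)=\prod_{\text{edges}}C / \prod_{\text{vertices}}\lambda$ that the measure factorizes multiplicatively:
\[
\mu^{x,y}(\gamma)=\mu^{\eta_0,\eta_0}_X(\ell_0)\cdot C_{\eta_0,\eta_1}\cdot\mu^{\eta_1,\eta_n}_{X\setminus\{\eta_0\}}(\gamma').
\]
The bookkeeping of $\lambda$-factors works because the shared vertex $\eta_0$ at the end of $\ell_0$ and the starting vertex $\eta_1$ of $\gamma'$ together contribute the two $\lambda$-factors missing when the edge weight $C_{\eta_0,\eta_1}$ is inserted. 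Summing over all $\ell_0$ (mass $G^{\eta_0,\eta_0}$) and invoking the inductive hypothesis on $\gamma'$ in the killed chain on $X\setminus\{\eta_0\}$ (same conductances, so $\mu_{\neq}$ is unchanged on $(\eta_1,\ldots,\eta_n)$) gives
\[
\mu^{x,y}(\gamma^{BE}=\eta)=\mu_{\neq}^{x,y}(\eta)\cdot G^{\eta_0,\eta_0}\cdot \det\bigl(G^{X\setminus\{\eta_0\}}_{|\{\eta_1,\ldots,\eta_n\}}\bigr),
\]
while the base case $n=0$ ($x=y$, empty path) is the statement $\mu^{x,x}(\text{all})=G^{x,x}$, since every bridge from $x$ to $x$ trivially loop-erases to $(x)$.

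Third, a Schur-complement computation finishes the identification. Writing the principal minor $G_{|\{\eta\}\times\{\eta\}}$ as a block matrix with $\eta_0$ separated from $\{\eta_1,\ldots,\eta_n\}$ and using the identity $G^{X\setminus\{\eta_0\}}(a,b)=G^{a,b}-G^{a,\eta_0}G^{\eta_0,b}/G^{\eta_0,\eta_0}$ (the one-point killing version of the balayage formula $G^D=G-H^FG$ established in Section~\ref{hit}) yields
\[
G^{\eta_0,\eta_0}\cdot\det\bigl(G^{X\setminus\{\eta_0\}}_{|\{\eta_1,\ldots,\eta_n\}}\bigr)=\det\bigl(G_{|\{\eta\}\times\{\eta\}}\bigr).
\]
The equivalent form $\det(G)/\det(G^{\{\eta\}^c})$ is then immediate from Jacobi's identity, as recalled in Section~\ref{hit}.

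The main obstacle is the asymmetry in the decomposition: verifying carefully that $\ell_0$ must be taken unrestricted (bridge in the full space $X$), whereas every subsequent ``loop'' at $\eta_i$ is forced to live in the killed chain on $X\setminus\{\eta_0,\ldots,\eta_{i-1}\}$. It is precisely this asymmetric structure, running one step at a time, that matches the Schur-complement recursion for $\det(G_{|\{\eta\}\times\{\eta\}})$ and would fail if one imposed tree-like avoidance constraints also on $\ell_0$. Once this point is correctly set up, the proof is essentially an iterated Schur complement wrapped around the factorization of $\mu^{x,y}$.
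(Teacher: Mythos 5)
Your proposal is correct and follows essentially the same route as the paper: your last-exit decomposition at the successive points of $\eta$ (an unrestricted bridge at the current point, then a step along $\eta$, then the remainder in the chain killed at the already-erased points) is exactly the paper's recursion $\mu^{x,y}(\gamma^{BE}=\eta)=V_{x}^{x}P_{x_{2}}^{x}\,\mu_{\{x\}^{c}}^{x_{2},y}(\gamma^{BE}=\theta\eta)$, iterated along $\eta$. The only difference is cosmetic: you assemble $\det(G_{|\{\eta\}\times\{\eta\}})$ by iterated Schur complements using $G^{X\setminus\{\eta_0\}}(a,b)=G^{a,b}-G^{a,\eta_0}G^{\eta_0,b}/G^{\eta_0,\eta_0}$, whereas the paper telescopes the factors $[V^{\{\eta_{m-1}\}^{c}}]_{x_{m}}^{x_{m}}=\lambda_{x_{m}}\det(G^{\{\eta_{m-1}\}^{c}})/\det(G^{\{\eta_{m}\}^{c}})$ and then uses Jacobi's identity, which is the same computation in a different guise.
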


\begin{proof}
Set $\eta=(x_{1}=x,x_{2},...,x_{n}=y)$ and $\eta_{m}=(x,...,x_{m})$, for any
$m>1$. Then,%
\[
\mu^{x,y}(\gamma^{BE}=\eta)=\sum_{k=0}^{\infty}[P^{k}]_{x}^{x}P_{x_{2}}^{x}%
\mu_{\{x\}^{c}}^{x_{2},y}(\gamma^{BE}=\theta\eta)
\]
where $\mu_{\{x\}^{c}}^{x_{2},y}$ denotes the bridge measure for the Markov
chain killed as it hits $x$ and $\theta$ the natural shift on discrete paths.
By recurrence, this clearly equals
\[
V_{x}^{x}P_{x_{2}}^{x}[V^{\{x\}^{c}}]_{x_{2}}^{x_{2}}...[V^{\{\eta_{n-1}%
\}^{c}}]_{x_{n-1}}^{x_{n-1}}P_{y}^{x_{n-1}}[V^{\{\eta\}^{c}}]_{y}^{y}%
\lambda_{y}^{-1}=\mu_{\neq}^{x,y}(\eta)\frac{\det(G)}{\det(G^{\{\eta\}^{c}})}%
\]
as
\[
\lbrack V^{\{\eta_{m-1}\}^{c}}]_{x_{m}}^{x_{m}}=\frac{\det([(I-P]|_{\{\eta
_{m}\}^{c}\times\{\eta_{m}\}^{c}})}{\det([(I-P]|_{\{\eta_{m-1}\}^{c}%
\times\{\eta_{m-1}\}^{c}})}=\frac{\det(V^{\{\eta_{m-1}\}^{c}})}{\det
(V^{\{\eta_{m}\}^{c}})}=\frac{\det(G^{\{\eta_{m-1}\}^{c}})}{\det(G^{\{\eta
_{m}\}^{c}})}\lambda^{x_{m}}.
\]
for all $m\leq n-1$.
\end{proof}

\begin{remark}
It is worth noticing that, also the operation of loop erasure clearly depends
on the orientation of the path (as shown in the picture below), the
distribution of the loop erased bridge is reversible.
\end{remark}

\begin{center}
\includegraphics[scale=0.4]{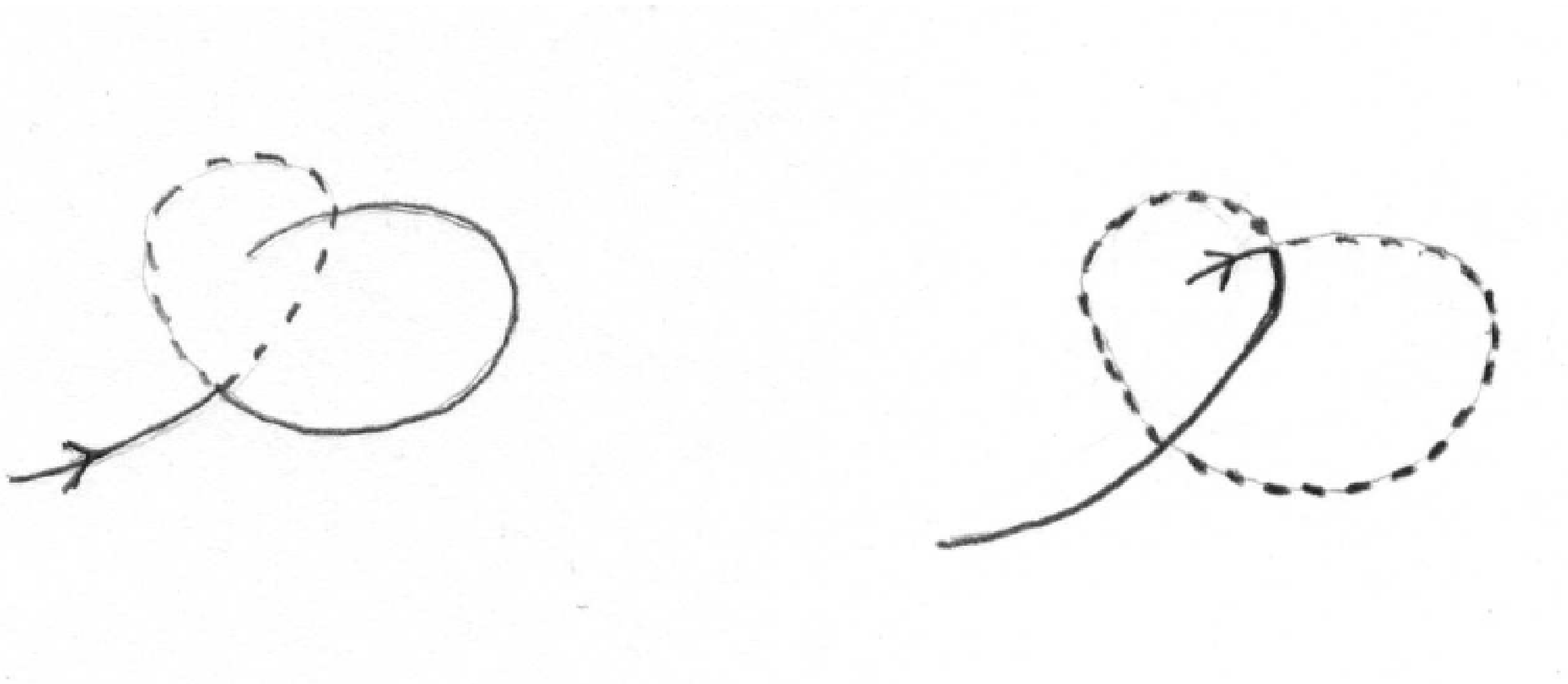}  \newline 
\end{center}

Also, by Feynman-Kac formula, for any self-avoiding path $\eta$:
\begin{align*}
\int e^{-<\widehat{\gamma},\chi>}1_{\{\gamma^{BE}=\eta\}}\mu^{x,y}(d\gamma)
&  =\frac{\det(G_{\chi})}{\det(G_{\chi}^{\{\eta\}^{c}})}\mu_{\neq}^{x,y}%
(\eta)=\det(G_{\chi})_{|\{\eta\}\times\{\eta\}}\mu_{\neq}^{x,y}(\eta)\\
&  =\frac{\det(G_{\chi})_{|\{\eta\}\times\{\eta\}}}{\det(G_{|\{\eta
\}\times\{\eta\}})}\mu_{BE}^{x,y}(\eta).
\end{align*}

Therefore, recalling that by the results of section \ref{hit}\ conditionally
on $\eta$, $\mathcal{L}_{1}/\mathcal{L}_{1}^{\{\eta\}^{c}}$ and $\mathcal{L}%
_{1}^{\{\eta\}^{c}}$ are independent, we see that under $\mu^{x,y}$, the
conditional distribution of $\widehat{\gamma}$ given $\gamma^{BE}=\eta$ is the
distribution of $\widehat{\mathcal{L}}_{1}-\widehat{\mathcal{L}}_{1}%
^{\{\eta\}^{c}}\mathcal{\ }$i.e. the occupation field of the loops of
$\mathcal{L}_{1}$\ which intersect $\eta$.

More generally, it can be shown that

\begin{proposition}
\label{be}The conditional distribution of the network $\overline
{\mathcal{L}_{\gamma}}$ defined by the loops of $\gamma$, given that
$\gamma^{BE}=\eta$, is identical to the distribution of the network defined by
$\mathcal{L}_{1}/\mathcal{L}_{1}^{\{\eta\}^{c}}$ i.e. the loops of
$\mathcal{L}_{1}$\ which intersect $\eta$.
\end{proposition}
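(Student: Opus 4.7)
Both sides of the asserted equality are laws on networks, and, as noted in the preceding chapter, a network law is determined by the expectations of the multiplicative functionals $B^{e,e',U}$ as $e'$ varies over energy forms with $C'\le C$, $\lambda'=\lambda$, and $U$ over finite-dimensional unitary representations of the oriented edges. Fixing a self-avoiding path $\eta=(x=x_{1},\ldots,x_{n}=y)$ and such a pair $(e',U)$, the plan is to prove
\begin{equation*}
\int B^{e,e',U}(\mathcal{L}_{\gamma})\,1_{\{\gamma^{BE}=\eta\}}\,\mu^{x,y}(d\gamma)=\mu^{x,y}_{BE}(\eta)\;\mathbb{E}\bigl[B^{e,e',U}(\mathcal{L}_{1}/\mathcal{L}_{1}^{\{\eta\}^{c}})\bigr].
\end{equation*}

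\textbf{Right-hand side.} The restriction property makes $\mathcal{L}_{1}^{\{\eta\}^{c}}$ and $\mathcal{L}_{1}/\mathcal{L}_{1}^{\{\eta\}^{c}}$ independent, and $B^{e,e',U}$ is multiplicative over disjoint loop families. Combined with the partition-function formula of the preceding section, this gives
\begin{equation*}
\mathbb{E}\bigl[B^{e,e',U}(\mathcal{L}_{1}/\mathcal{L}_{1}^{\{\eta\}^{c}})\bigr]=\frac{\mathcal{Z}_{e',U}\,\mathcal{Z}^{\{\eta\}^{c}}_{e}}{\mathcal{Z}_{e}\,\mathcal{Z}^{\{\eta\}^{c}}_{e',U}}.
\end{equation*}
Applying Jacobi's identity—to the scalar Green function $G$ and, blockwise, to the twisted Green function $G^{(U)}_{e'}$ viewed on $X\times\{1,\ldots,d_{U}\}$—I would reduce this ratio to $[\det(G^{(U)}_{e'}|_{\eta\times\eta})]^{1/d_{U}}/\det(G|_{\eta\times\eta})$. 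Together with the explicit form $\mu^{x,y}_{BE}(\eta)=\mu^{x,y}_{\neq}(\eta)\,\det(G|_{\eta\times\eta})$ from the theorem above, the right-hand side of the target identity thus simplifies to $\mu^{x,y}_{\neq}(\eta)\cdot[\det(G^{(U)}_{e'}|_{\eta\times\eta})]^{1/d_{U}}$.

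\textbf{Left-hand side.} I would repeat verbatim the chronological loop-erasure recursion from the preceding theorem, but with $P$ replaced throughout by its $(e',U)$-twisted version of entries $P^{x}_{y}\,(C'_{x,y}/C_{x,y})\,U^{x,y}$. Each erased loop $\ell_{m}$ is a based loop at $x_{m}$ in $\{\eta_{m-1}\}^{c}$, and its contribution
\begin{equation*}
\tau_{U}(\ell_{m})\prod_{u,v}[C'_{u,v}/C_{u,v}]^{N_{u,v}(\ell_{m})}e^{-\sum(\lambda'-\lambda)\widehat{\ell_{m}}}
\end{equation*}
integrates, by the trace identity underlying Corollary \ref{lapl} extended to representations, to exactly the block trace $\frac{1}{d_{U}}\mathrm{Tr}\bigl([V^{\{\eta_{m-1}\}^{c},(U,e')}]_{x_{m},x_{m}}\bigr)$, the faithful analogue of the scalar $[V^{\{\eta_{m-1}\}^{c}}]^{x_{m}}_{x_{m}}$ of the untwisted proof. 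The bridge-jump factors $P^{x_{m}}_{x_{m+1}}$ and the terminal $\lambda_{y}^{-1}$ remain untwisted because the edges of $\eta$ belong to $\gamma$ itself and not to any $\ell_{m}$.

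\textbf{Main obstacle.} The hard point is the representation-theoretic telescoping. In the scalar case the successive factors $[V^{\{\eta_{m-1}\}^{c}}]^{x_{m}}_{x_{m}}$ collapse by a Schur-complement (equivalently Jacobi) identity into $\det(G|_{\eta\times\eta})$. In the $(U,e')$-twisted case one must verify the block analogue
\begin{equation*}
\det([I-P^{\{\eta_{m-1}\}^{c},(U,e')}])=\det([I-P^{\{\eta_{m}\}^{c},(U,e')}])\cdot\det\bigl([V^{\{\eta_{m-1}\}^{c},(U,e')}]_{x_{m},x_{m}}\bigr),
\end{equation*}
and then show that the product over $m$ of the individual diagonal-block traces $\frac{1}{d_{U}}\mathrm{Tr}[V^{\{\eta_{m-1}\}^{c},(U,e')}]_{x_{m},x_{m}}$ reconstructs the single $n d_{U}\times n d_{U}$ determinant $\det(G^{(U)}_{e'}|_{\eta\times\eta})^{1/d_{U}}$. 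The reconciliation uses that each $\tau_{U}(\ell_{m})$ is the trace of a product of $U$'s around a closed loop at $x_{m}$, hence cyclically invariant and depending only on the diagonal block at $x_{m}$; once this block-level telescoping is checked, the two sides of the target identity match and the proposition follows.
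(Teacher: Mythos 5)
Your overall strategy (test the two network laws against multiplicative functionals, compute both sides as ratios of partition functions, and identify them) is the right one, and your treatment of the right-hand side is essentially the paper's. But there is a genuine gap in your choice of test functionals: you work with $B^{e,e',U}$ for unitary representations of arbitrary rank $d_U$, and for $d_U>1$ this is not a functional of the network at all, since $\prod_{l}\tau_U(l)$ depends on how the erased part is cut into individual loops, not merely on the total traversal numbers and the occupation field. The proposition is an equality of \emph{network} laws only; as a statement about loop families it is false (the erased loops and $\mathcal{L}_1/\mathcal{L}_1^{\{\eta\}^c}$ do not have the same distribution as loop sets — this is exactly why remark \ref{split} and the remark following the corollary to this proposition invoke an auxiliary Poisson--Dirichlet splitting to upgrade the identity). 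Correspondingly, your key computational claims break down for $d_U>1$: the integrated contribution of the loops erased at $x_m$ is \emph{not} the normalized block trace $\frac{1}{d_U}\mathrm{Tr}\bigl([V^{\{\eta_{m-1}\}^c,(U,e')}]_{x_m,x_m}\bigr)$, because the trace of a product of $U$-holonomies is not the product of the traces of the individual erased loops; and the block Schur-complement/Jacobi telescoping produces $\det$ of the $d_U\times d_U$ diagonal blocks, not their normalized traces, so the product of block traces does not reconstruct $\det(G^{(U)}_{e'}|_{\eta\times\eta})^{1/d_U}$. The "main obstacle" you flag is thus not a technical verification left to do, but an identity that is false in the non-abelian case.

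The repair is to restrict to one-dimensional representations, i.e.\ to the functionals $B^{e,e',\omega}$: by the remark in the chapter on energy variation these already determine the law of the network, and they are genuinely network functionals since $\prod_l e^{i\int_l\omega}=e^{i\sum_{x,y}\omega^{x,y}N_{x,y}}$. With $d_U=1$ all your steps become correct — the twisted entries are scalars, the erased contribution at $x_m$ does integrate to the (complex) scalar $[V^{\{\eta_{m-1}\}^c}]^{x_m}_{x_m}$ for the $\omega$-twisted $e'$-chain, and the determinant telescoping goes through exactly as in the untwisted loop-erasure theorem. This is then in substance the paper's proof: it computes $\mu_{e'}^{x,y}\bigl(e^{i\int_\gamma\omega}1_{\{\gamma^{BE}=\eta\}}\bigr)$ twice, once by absolute continuity of $\mu_{e'}^{x,y}$ with respect to $\mu_{e}^{x,y}$ (which factorizes the density into the contribution of $\eta$ and the functional $B^{e,e',\omega}$ of the erased loops), and once by applying the previous theorem to the Markov chain defined by $e'$ perturbed by $\omega$, yielding the ratio $\mathcal{Z}_{e',\omega}/\mathcal{Z}_{[e']^{\{\eta\}^c},\omega}$; comparison with formula (\ref{F5}) and the independence coming from the restriction property then gives the conclusion — your direct re-derivation of the twisted erasure recursion is just a more laborious route to the same two identities.
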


\begin{proof}
Recall the notation $\mathcal{Z}_{e}=\det(G)$. First an elementary calculation
using (\ref{d})\ shows that $\mu_{e^{\prime}}^{x,y}(e^{i\int_{\gamma}\omega
}1_{\{\gamma^{BE}=\eta\}})$ equals
\begin{multline*}
\mu_{e}^{x,y}\Big(  1_{\{\gamma^{BE}=\eta\}}\prod[\frac{C_{\xi_{i},\xi_{i+1}%
}^{\prime}}{C_{\xi_{i},\xi_{i+1}}}e^{i\omega_{\xi_{i},\xi_{i+1}}}%
\frac{\lambda_{\xi_{i}}}{\lambda_{\xi_{i}}^{\prime}}]\Big) \\
\frac{C_{x,x_{2}}^{\prime}C_{x_{1},x_{3}}^{\prime}...C_{x_{n-1},y}^{\prime}%
}{C_{x,x_{2}}C_{x_{1},x_{3}}...C_{x_{n-1},y}}e^{i\int_{\eta}\omega}\mu
_{e}^{x,y}\Big(  \prod_{u\neq v}[\frac{C_{u,v}^{\prime}}{C_{u,v}}%
e^{i\omega_{u,v}}]^{N_{u,v}(\mathcal{L}_{\gamma})}e^{-\left\langle
\lambda^{^{\prime}}-\lambda,\widehat{\mathcal{\gamma}}\right\rangle
}1_{\{\gamma^{BE}=\eta\}}\Big)  .
\end{multline*}
(Note the term $e^{-\left\langle \lambda^{^{\prime}}-\lambda,\widehat
{\mathcal{\gamma}}\right\rangle }$ can be replaced by $\prod_{u}%
(\frac{\lambda_{u}}{\lambda_{u}^{\prime}})^{N_{u}(\gamma)+1}$).\newline
Moreover, by the proof of the previous proposition, applied to the Markov
chain defined by $e^{\prime}$ perturbed by $\omega$, we have also
\[
\mu_{e^{\prime}}^{x,y}(e^{i\int_{\gamma}\omega}1_{\{\gamma^{BE}=\eta
\}})=C_{x,x_{2}}^{\prime}C_{x_{1},x_{3}}^{\prime}...C_{x_{n-1},y}^{\prime
}e^{i\int_{\eta}\omega}\frac{\mathcal{Z}_{e^{\prime},\omega}}{\mathcal{Z}%
_{[e^{\prime}]^{\{\eta\}^{c}},\omega}}.
\]
Therefore,%
\[
\mu_{e}^{x,y}(\prod_{u\neq v}[\frac{C_{u,v}^{\prime}}{C_{u,v}}e^{i\omega
_{u,v}}]^{N_{u,v}(\mathcal{L}_{\gamma})}e^{-\left\langle \lambda^{^{\prime}%
}-\lambda,\widehat{\mathcal{\gamma}}\right\rangle }|\gamma^{BE}=\eta
)=\frac{\mathcal{Z}_{e^{\{\eta\}^{c}}}\mathcal{Z}_{e^{\prime},\omega}%
}{\mathcal{Z}_{e}\mathcal{Z}_{[e^{\prime}]^{\{\eta\}^{c}},\omega}}.
\]
Moreover, by (\ref{F5}) and the properties of the Poisson processes,%
\[
\mathbb{E}(\prod_{u\neq v}[\frac{C_{u,v}^{\prime}}{C_{u,v}}e^{i\omega_{u,v}%
}]^{N_{u,v}(\mathcal{L}_{1}/\mathcal{L}_{1}^{\{\eta\}^{c}})}e^{-\left\langle
\lambda^{^{\prime}}-\lambda,\widehat{\mathcal{L}}_{1}-\widehat{\mathcal{L}%
}_{1}^{\{\eta\}^{c}}\right\rangle })=\frac{\mathcal{Z}_{e^{\{\eta\}^{c}}%
}\mathcal{Z}_{e^{\prime},\omega}}{\mathcal{Z}_{e}\mathcal{Z}_{[e^{\prime
}]^{\{\eta\}^{c}},\omega}}.
\]
It follows that the joint distribution of the traversal numbers and the
occupation field are identical for the set of erased loops and $\mathcal{L}%
_{1}/\mathcal{L}_{1}^{\{\eta\}^{c}}$.
\end{proof}

\medskip

The general study of loop erasure which is done in this chapter yields the
following result when applied to a universal covering $\widehat{X}$. Let
$\widehat{G}$ be the Green function associated with the lift of the Markov chain.

\begin{corollary}
The image of $\mu^{x,y}$ under the reduction map is given as follows: If $c$
is a geodesic arc between $x$ and $y$: $\mu^{x,y}(\{\xi,\xi^{R}=c\})=\prod
C_{c_{i},c_{i+1}}\det(\widehat{G}_{|\{c\}\times\{c\}})$.

Besides, if $\ \widehat{x}$ and $\widehat{y}$ are the endpoints of the lift of
$c$ to a universal covering,
\[
\mu^{x,y}(\{\xi,\xi^{R}=c\})=\widehat{G}_{\widehat{x},\widehat{y}}.
\]
\end{corollary}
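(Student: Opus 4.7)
The plan is to deduce the corollary from the loop-erasure theorem proved just above, applied on the universal covering tree $\mathfrak{T}_{x_{0}}$ projecting onto $X$.

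Step 1 (Lifting bridges). Fix a lift $\hat{x}$ of $x$ in $\mathfrak{T}_{x_{0}}$. Each discrete path on $X$ starting at $x$ has a unique lift starting at $\hat{x}$, and the conductances $\widehat{C}$ and the measure $\widehat{\lambda}$ on $\widehat{X}$ are pulled back from $C$ and $\lambda$ under the covering projection $\pi$. Hence the transition kernel and the holding-time intensities on $\widehat{X}$ project onto those on $X$, and the bridge measure decomposes
\[
\mu^{x,y} \;=\; \sum_{\hat{y}\in\pi^{-1}(y)} \pi_{*}\,\hat{\mu}^{\hat{x},\hat{y}},
\]
consistently with the identity $G^{x,y}=\sum_{\hat{y}}\widehat{G}^{\hat{x},\hat{y}}$ of Proposition \ref{greenuniv}.

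Step 2 (Identifying the preimage of $c$). By the very definition of reduction, $\xi^{R}=c$ if and only if the lift $\hat{\xi}$ of $\xi$ to $\mathfrak{T}_{x_{0}}$ ends at the endpoint $\hat{y}$ of the lifted geodesic $\hat{c}$ starting at $\hat{x}$. Thus the set $\{\xi:\xi^{R}=c\}$ corresponds bijectively to the set of all bridges from $\hat{x}$ to this specific $\hat{y}$, so
\[
\mu^{x,y}(\{\xi:\xi^{R}=c\}) \;=\; \hat{\mu}^{\hat{x},\hat{y}}(\text{all paths}) \;=\; \widehat{G}^{\hat{x},\hat{y}},
\]
which is the second identity asserted by the corollary.

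Step 3 (Loop erasure on the tree). Since $\mathfrak{T}_{x_{0}}$ is a tree, every bridge from $\hat{x}$ to $\hat{y}$ has the same loop-erasure, namely the unique self-avoiding arc $\hat{c}$. Applying the loop-erasure theorem to the lifted Markov chain therefore yields
\[
\widehat{G}^{\hat{x},\hat{y}} \;=\; \hat{\mu}^{\hat{x},\hat{y}}(\{\hat{\xi}^{BE}=\hat{c}\}) \;=\; \hat{\mu}_{\neq}^{\hat{x},\hat{y}}(\hat{c})\,\det\bigl(\widehat{G}_{|\{\hat{c}\}\times\{\hat{c}\}}\bigr).
\]
Equivariance of conductances gives $\hat{\mu}_{\neq}^{\hat{x},\hat{y}}(\hat{c})=\prod_{i}\widehat{C}_{\hat{c}_{i},\hat{c}_{i+1}}=\prod_{i}C_{c_{i},c_{i+1}}$, and with the customary identification of $\hat{c}$ with $c$ in the notation for the determinant we recover the first identity.

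The only real obstacle is the bookkeeping in Step 1: one must verify that the holding-time parametrization and the normalization of $\mathbb{P}_{t}^{x,y}$ pass cleanly through the covering map. This is immediate because a lifted path and its projection share the same jump chain, the same jump times, and the same $\lambda$-weights, so every ingredient in the definition of the bridge measure is preserved. Once this compatibility is granted, Steps 2 and 3 are direct applications of the loop-erasure theorem in the tree, where loop erasure trivially reproduces the unique geodesic.
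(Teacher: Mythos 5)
Your proposal is correct and is essentially the argument the paper intends: the corollary is obtained by applying the loop-erasure theorem to the lifted chain on the universal covering, where every bridge from $\hat{x}$ to $\hat{y}$ loop-erases to the unique tree geodesic, combined with the fact that reduction is lift--erase--project and that the bridge measure $\mu^{x,y}$ decomposes over the fibre $\pi^{-1}(y)$ with total mass $\widehat{G}^{\hat{x},\hat{y}}$ on each piece. The only point you pass over silently, as does the paper, is that the theorem (proved for finite $X$) is being invoked on the countable covering space, where its statement in the form $\mu_{BE}^{x,y}(\eta)=\mu_{\neq}^{x,y}(\eta)\det(G_{|\{\eta\}\times\{\eta\}})$ and its strong-Markov proof carry over verbatim for a transient chain.
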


Note this yields an interesting identity on the Green function $\widehat{G}$.

\begin{exercise}
Check it in the special case treated in proposition \ref{greencov}.
\end{exercise}
Similarly one can define the image of $\mathbb{P}^{x}$ by $BE$ and check it is
given by
\begin{align*}
\mathbb{P}_{BE}^{x}(\eta)  &  =\delta_{x_{1}}^{x}C_{x_{1},x_{2}}%
...C_{x_{n-1},x_{n}}\kappa_{x_{n}}\det(G_{|\{\eta\}-\Delta\times
\{\eta\}-\Delta})\\
&  =\delta_{x_{1}}^{x}C_{x_{1},x_{2}}...C_{x_{n-1},x_{n}}\kappa_{x_{n}%
}\frac{\det(G)}{\det(G^{\{\eta\}^{c}})}%
\end{align*}
for $\eta=(x_{1},...,x_{n},\Delta)$.

Note that in particular, $\mathbb{P}_{BE}^{x}((x,\Delta))=V_{x}^{x}(1-\sum
_{y}P_{y}^{x})=\kappa_{x}G^{x,x}$.

Slightly more generally, que can determine the law of the image, by loop
erasure path killed at it hits a subset $F$, the hitting point being now the
end point of the loop erased path (instead of $\Delta$, unless $F$ is not hit
during the lifetime of the path). If $x\in D=F^{c}$ is the starting point and
$y\in F^{\Delta}$, the probability of $\eta=(x_{1},...,x_{n},y)$ is
\[
\delta_{x_{1}}^{x}C_{x_{1},x_{2}}...C_{x_{n-1},x_{n}}C_{x_{n},y}%
\det(G_{|\{\eta\}-y\times\{\eta\}-y}^{D})=\delta_{x_{1}}^{x}C_{x_{1},x_{2}%
}...C_{x_{n-1},x_{n}}C_{x_{n},y}\frac{\det(G^{D})}{\det(G^{D-\{\eta\}})}%
\]

\section{Wilson algorithm}

\index{spanning tree}Wilson's algorithm (see \cite{Lyo2}) iterates this last construction, starting
with the points $x$ arranged in an arbitrary order. The first step of the
algorithm is the construction of a loop erased path starting at the first
point and ending at $\Delta.$ This loop erased path is the first branch of the
spanning tree. Each step of the algorithm reproduces this first step except it
starts at the first point which is not visited by the already constructed tree
of self avoiding paths, and stops when it hits that tree, or $\Delta$,
producing a new branch of the tree. This algorithm provides a construction,
branch by branch, of a random spanning tree rooted in $\Delta$. It turns out,
as we will show below, that the distribution of this spanning tree is very
simple, and does not depend on the ordering chosen on $X$.%

\begin{figure}
[ptb]
\begin{center}
\includegraphics[
height=2.9776in,
width=3.691in
]%
{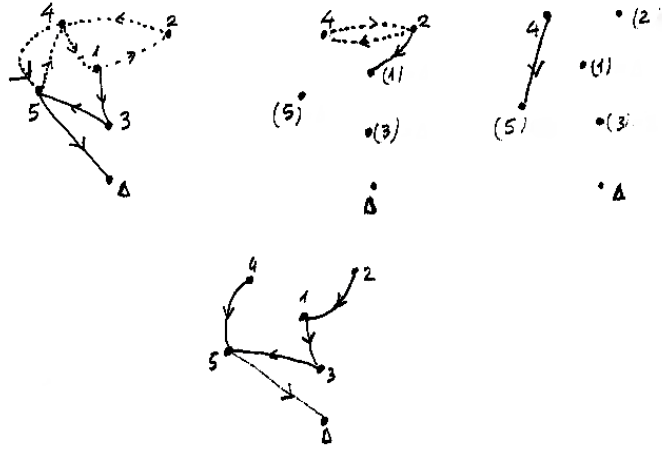}%
\caption{Wilson's algorithm}%
\end{center}
\end{figure}
\bigskip

This law is a probability measure $\mathbb{P}_{ST}^{e}$\ on the set
$ST_{X,\Delta}$\ of spanning trees of $X$ rooted at the cemetery point
$\Delta$ defined by the energy $e$. The weight attached to each oriented link
$g=(x,y)$ of $X\times X$ is the conductance and the weight attached to the
link $(x,\Delta)$ is $\kappa_{x}$ which we can also denote by $C_{x,\Delta}$.
As the determinants simplify in the iteration, the probability of a tree
$\Upsilon$ is given by a simple formula:%

\begin{equation}
\mathbb{P}_{ST}^{e}(\Upsilon)=\mathcal{Z}_{e}\prod_{\xi\in\Upsilon}C_{\xi}
\label{span}%
\end{equation}
It is clearly independent of the ordering chosen initially. Now note that,
since we get a probability
\begin{equation}
\mathcal{Z}_{e}\sum_{\Upsilon\in ST_{X,\Delta}}\prod_{(x,y)\in\Upsilon}%
C_{x,y}\prod_{x,(x,\Delta)\in\Upsilon}\kappa_{x}=1 \label{normtree}%
\end{equation}
or equivalently
\[
\sum_{\Upsilon\in ST_{X,\Delta}}\prod_{(x,y)\in\Upsilon}P_{y}^{x}%
\prod_{x,(x,\Delta)\in\Upsilon}P_{\Delta}^{x}=\frac{1}{\prod_{x\in X}%
\lambda_{x}\mathcal{Z}_{e}}%
\]
Then, it follows that, for any $e^{\prime}$ for which conductances (including
$\kappa^{\prime}$) are positive only on links of $e$,%
\[
\mathbb{E}_{ST}^{e}\left(  \prod_{(x,y)\in\Upsilon}\frac{P_{y}^{\prime x}%
}{P_{y}^{x}}\prod_{x,(x,\Delta)\in\Upsilon}\frac{P_{\Delta}^{\prime x}%
}{P_{\Delta}^{x}}\right)  =\frac{\prod_{x\in X}\lambda_{x}}{\prod_{x\in
X}\lambda_{x}^{\prime}}\frac{\mathcal{Z}_{e}}{\mathcal{Z}_{e^{\prime}}}%
\]
and%
\begin{equation}
\mathbb{E}_{ST}^{e}\left(  \prod_{(x,y)\in\Upsilon}\frac{C_{x,y}^{\prime}%
}{C_{x,y}}\prod_{x,(x,\Delta)\in\Upsilon}\frac{\kappa_{x}^{\prime}}{\kappa
_{x}}\right)  =\frac{\mathcal{Z}_{e}}{\mathcal{Z}_{e^{\prime}}}. \label{rel}%
\end{equation}

Note also that in the case of a graph (i.e. when all conductances are equal to
$1$), all spanning trees have the same probability. The expression of their
cardinal as the determinant $\mathcal{Z}_{e}$\ is known as Cayley's theorem
(see for example \cite{Lyo2}).

The formula (\ref{rel}) shows a kind of duality between random spanning trees
and $\mathcal{L}_{1}$. It can be extended to $\mathcal{L}_{k}$ for any integer
$k$ if we consider the sum (in terms of number of transitions) of $k$
independent spanning trees.
\begin{exercise}
\label{restree}Show that more generally, for any tree $T$\ rooted in $\Delta$,

$\mathbb{P}_{ST}^{e}(\{\Upsilon,\;T\subseteq\Upsilon\})=\det(G_{|\{T\}-\Delta
\times\{T\}-\Delta})\prod_{\xi\in Edges(T)}C_{\xi}$, $\{T\}$ denoting the
vertex set of $T$.

(As usual, $C_{x,\Delta}=\kappa_{x}$. Hint: Run Wilson's algorithm starting
from the leaves of $T$)
\end{exercise}

\begin{exercise}
Using exercise \ref{greencomp}, prove Cayley's Theorem: the complete graph
$K_{n}$ has $n^{n-2}$ spanning trees.
\end{exercise}

The following result follows easily from proposition \ref{be}.

\begin{corollary}
The network defined by the random set of loops $\mathcal{L}_{W}$\ constructed
in this algorithm is independent of the random spanning tree, and independent
of the ordering. It has the same distribution as the network defined by the
loops of $\mathcal{L}_{1}$.
\end{corollary}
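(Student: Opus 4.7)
The strategy is an induction on the steps of Wilson's algorithm, comparing the Wilson construction to the product measure on trees and loops. Write $T_k$ for the partial tree after $k$ branches have been appended, $D_k=X\setminus\{T_k\}$, and $\mathcal{L}_W^{(k)}$ for the loops erased during the first $k$ steps; in parallel, consider sampling $\Upsilon\sim\mathbb{P}_{ST}^e$ together with an independent loop ensemble $\mathcal{L}_1$, and let $T_k^{\Upsilon}$ denote the first $k$ branches of $\Upsilon$ relative to the given ordering. I would prove by induction on $k$ that the joint law of $(T_k,\overline{\mathcal{L}_W^{(k)}})$ under Wilson's procedure equals the joint law of $(T_k^{\Upsilon},\overline{\mathcal{L}_1\setminus\mathcal{L}_1^{D_k}})$ in the product model.

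The base case $k=0$ is trivial, since both sides equal $(\{\Delta\},\emptyset)$. For the inductive step, the walk performed at step $k+1$, once conditioned on its exit endpoint $y\in\{T_k\}\cup\{\Delta\}$, is distributed as a bridge in $D_k$; proposition \ref{be}, applied to the energy form $e^{D_k}$, therefore identifies the conditional law of the newly erased network given the new branch $\eta_{k+1}$ with the network of loops of $\mathcal{L}_1^{D_k}$ that intersect $\eta_{k+1}$. In the product model, the restriction property (used conditionally on $\Upsilon$, which is independent of $\mathcal{L}_1$) makes $\mathcal{L}_1^{D_k}$ independent of $\mathcal{L}_1\setminus\mathcal{L}_1^{D_k}$, and a further application makes $\mathcal{L}_1^{D_k}\setminus\mathcal{L}_1^{D_{k+1}}$, which is precisely the subset of $\mathcal{L}_1^{D_k}$ hitting $\eta_{k+1}$, independent of $\mathcal{L}_1^{D_{k+1}}$. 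The marginal law of the next branch of $\Upsilon$ given the first $k$ coincides with the loop-erased walk distribution in $D_k$ from $x_{k+1}$, by the correctness of Wilson's algorithm already encoded in the derivation of (\ref{span})--(\ref{normtree}). Assembling these ingredients propagates the invariant from $k$ to $k+1$.

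Taking $k$ to be the total number of branches makes $D_k=\emptyset$, so $\mathcal{L}_1\setminus\mathcal{L}_1^{D_k}=\mathcal{L}_1$, and the joint law of $(\Upsilon,\overline{\mathcal{L}_W})$ produced by Wilson's algorithm coincides with the product law. This simultaneously gives the independence of $\Upsilon$ and $\overline{\mathcal{L}_W}$, the identity in law between $\overline{\mathcal{L}_W}$ and $\overline{\mathcal{L}_1}$, and the independence of the joint law from the ordering, since both factors in the product model have ordering-free distributions. The main subtle point is to apply proposition \ref{be} legitimately at each step of Wilson's algorithm even though it is stated only for bridge measures: this is handled by conditioning on the exit endpoint $y$ of the current killed walk, so that the conditional law becomes the bridge measure in $D_k$ to which the proposition directly applies, while the restriction property reduces the loop ensemble appearing in proposition \ref{be} from $\mathcal{L}_1$ to $\mathcal{L}_1^{D_k}$.
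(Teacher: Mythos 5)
Your induction over the branches of Wilson's algorithm, combining proposition \ref{be} (applied to the chain killed outside $D_k$) with the restriction property of the Poisson loop ensemble and the sequential description of $\mathbb{P}_{ST}^{e}$ encoded in (\ref{span})--(\ref{normtree}), is correct and is essentially the argument the paper intends, which it leaves implicit by saying the corollary ``follows easily from proposition \ref{be}''. Your way of making the bridge reduction legitimate (conditioning on the exit point together with the whole new branch, which pins down the last point of the walk in $D_k$, so the pre-exit path becomes a conditioned bridge of the killed chain) is the right fix for the only subtle step.
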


\begin{remark}
Note that proposition \ref{be}\ and its corollary can be made more precise
with the help of remark \ref{split}. The splitting procedure used there with
the help of an auxiliary independent set of Poisson Dirichlet variables allows
to reconstruct the set of loops $\mathcal{L}_{1}/\mathcal{L}_{1}^{\{x\}^{c}}%
$\ by splitting the first erased loop in the proof of the proposition.
Iterating the procedure we can successively reconstruct all sets
$\mathcal{L}_{1}^{\{\eta_{m}\}^{c}}/\mathcal{L}_{1}^{\{\eta_{m+1}\}^{c}}$\ and
finally $\mathcal{L}_{1}/\mathcal{L}_{1}^{\{\eta\}^{c}}$.\ Then, by Wilson
algorithm, we can reconstruct $\mathcal{L}_{1}$.
\end{remark}

\bigskip

Let us now consider the \emph{recurrent case.}

A probability is defined on the non oriented spanning trees by the
conductances: $\mathbb{P}_{ST}^{e}((\mathcal{T})$ is defined by the product of
the conductances of the edges of $\mathcal{T}$ normalized by the sum of these
products on all spanning trees.

Note that any non oriented spanning tree of $X$ along edges of $E$ defines
uniquely an oriented spanning tree $I_{\Delta}(\mathcal{T})$ if we choose a
root $\Delta$. The orientation is taken towards the root which can be viewed
as a cemetery point. Then, if we consider the associated Markov chain killed
as it hits $\Delta$ defined by the energy form $e^{_{\{\Delta\}^{c}}}$, the
previous construction yields a probability $\mathbb{P}_{ST}^{e^{_{\{\Delta
\}^{c}}}}$\ on spanning trees rooted at $\Delta$ which by (\ref{span})
coincides with the image of $\mathbb{P}_{ST}^{e}$ by $I_{\Delta}$. This
implies in particular that the normalizing factor $\mathcal{Z}_{e_{\{\Delta
\}^{c}}}$ is independent of the choice of $\ \Delta$ as it has to be equal to
$(\sum_{T\in ST_{X}}\prod_{\{x,y\}\in T}C_{x,y})^{-1}$. We denote it by
$\mathcal{Z}_{e}^{0}$. This factor can also be expressed in terms of the
recurrent Green operator $G$. Recall it is defined as a scalar product on
measures of zero mass. The determinant of $G$ is defined as the determinant of
its matrix in any orthonormal basis of this hyperplane, with respect to the
natural Euclidean scalar product.

Recall that for any $x\neq\Delta$, $G(\varepsilon_{x}-\varepsilon_{\Delta
})=-\frac{\left\langle \lambda,G^{\{\Delta\}^{c}}\varepsilon_{x}\right\rangle
}{\lambda(X)}+G^{\{\Delta\}^{c}}\varepsilon_{x}$. Therefore, for any
$y\neq\Delta$, $\left\langle \varepsilon_{y}-\varepsilon_{\Delta
},G(\varepsilon_{x}-\varepsilon_{\Delta})\right\rangle =[G^{\{\Delta\}^{c}%
}]^{x,y}$.

The determinant of the matrix $[G^{\{\Delta\}^{c}}]$, equal to $\mathcal{Z}%
_{e}^{0}$, is therefore also the determinant of $G$ in the basis $\{\delta
_{x}-\delta_{\Delta},x\neq\Delta\}$ which is not orthonormal with repect to
the natural euclidean scalar product. An easy calculation shows it equals
\[
\det\Big(          \left\langle \delta_{y}-\delta_{\Delta},\delta_{x}%
-\delta_{\Delta}\right\rangle _{\mathbb{R}^{\left|  X\right|  }},x,y\neq
\Delta\Big)       \det(G)=\left|  X\right|  \det(G).
\]

\begin{exercise}
Prove that if we set $\alpha_{x_{0}}(\mathcal{T})=\prod_{(x,y)\in I_{x_{0}%
}(\mathcal{T})}P_{y}^{x}$ then $\sum_{T\in ST_{X}}\alpha_{x_{0}}(\mathcal{T}%
)$\ is proportional to $\lambda_{x_{0}}$ as $x_{0}$ varies in $X$. More
precisely, it equals $K\lambda_{x_{0}}$, with $K=\frac{\mathcal{Z}_{e}^{0}%
}{\prod_{x\in X}\lambda_{x}}$. This fact is known as the matrix-tree theorem
(\cite{Lyo2}).
\end{exercise}

\begin{exercise}
Check directly that $\mathcal{Z}_{e_{\{x_{0}\}^{c}}}$ is independent of the
choice of $\ x_{0}.$
\end{exercise}
\begin{exercise}
Given a spanning tree $\mathcal{T}$, we say a subset $A$ is wired iff the
restriction of $\mathcal{T}$\ to $A$ is a tree.

a) Let $\widetilde{e}_{A}$ be the recurrent energy form defined on $A$ by the
conductances $C$. Show that $\mathbb{P}_{ST}^{e}(A$ is
wired$)=\frac{\mathcal{Z}_{e}^{0}}{\mathcal{Z}_{e_{A^{c}}}\mathcal{Z}%
_{\widetilde{e}_{A}}^{0}}$ (Hint: Choose a root in $A$. Then use exercise
\ref{restree}\ and identity \ref{normtree}).

b) Show that under $\mathbb{P}_{ST}^{e}$, given that $A$ is wired, the
restriction of the spanning tree of $X$ to $A$ and the spanning tree of
$A^{c}\cup\{\Delta\}$ obtained by rooting at an external point $\Delta$ the
spanning forest induced on $A^{c}$ by restriction of the spanning tree are
independent, with distributions respectively given by $\mathbb{P}%
_{ST}^{\widetilde{e}_{A}}$\ and $\mathbb{P}_{ST}^{e_{A^{c}}}$.

c) Conversely, given such a pair, the spanning tree of $X$ can be recovered by
attaching to $A$\ the roots $y_{i}$\ of the spanning forest of $A^{c}$
independently, according to the distributions $\frac{C_{y_{i},u}}{\sum_{u\in
A}C_{y_{i},u}},\;u\in A$.
\end{exercise}

\section{The transfer current theorem}

Let us come back to the transient case by choosing some root $x_{0}=\Delta$.
As by the strong Markov property, $V_{x}^{y}=\mathbb{P}_{y}(T_{x}<\infty
)V_{x}^{x}$, we have $\frac{G^{y,x}}{G^{x,x}}=\frac{V_{x}^{y}}{V_{x}^{x}%
}=\mathbb{P}_{y}(T_{x}<\infty)$, and therefore
\[
\mathbb{P}_{ST}^{e}((x,y)\in\Upsilon)=\mathbb{P}_{x}(\gamma_{1}^{BE}%
=y)=V_{x}^{x}P_{y}^{x}\mathbb{P}^{y}(T_{x}=\infty)=C_{x,y}G^{x,x}%
(1-\frac{G^{x,y}}{G^{x,x}}).
\]
Directly from the above, we recover Kirchhoff's theorem:%
\begin{align*}
\mathbb{P}_{ST}^{e}(\pm(x,y)\in\Upsilon)  &  =C_{x,y}[G^{x,x}(1-\frac{G^{x,y}%
}{G^{x,x}})+G^{y,y}(1-\frac{G^{y,x}}{G^{y,y}})]\\
&  =C_{x,y}(G^{x,x}+G^{y,y}-2G^{x,y})=C_{x,y}K^{x,y),(x,y)}%
\end{align*}

with the notation introduced in section \ref{difforms}, and this is clearly
independent of the choice of the root.

\begin{exercise}
Give an alternative proof of Kirchhoff's theorem\ by using (\ref{rel}), taking
$C_{x,y}^{\prime}=sC_{x,y}$ and $C_{u,v}^{\prime}=C_{u,v}$ for $\{u,v\}\neq
\{x,y\}$.
\end{exercise}

In order to go further, it is helpful to introduce some elements of exterior
algebra. Recall that in any vector space $E$, in terms of the ordinary tensor
product $\otimes$, the skew symmetric tensor product $v_{1}\wedge v_{2}%
\wedge...\wedge v_{n}$\ of $n$ vectors $v_{1}...v_{n}$ is defined as
$\frac{1}{\sqrt{n!}}\sum_{\sigma\in\mathcal{S}_{n}}(-1)^{m(\sigma)}%
v_{\sigma(1)}\otimes...\otimes v_{\sigma(n)}$. They generate the $n$-th skew
symmetric tensor power of $E$, denoted $E^{\wedge n}$. Obviously,
$v_{\sigma(1)}\wedge...\wedge v_{\sigma(n)}=(-1)^{m(\sigma)}v_{1}\wedge
v_{2}\wedge...\wedge v_{n}$. If the vector space is equipped with a scalar
product $\left\langle .,.\right\rangle $, it extends to tensors and
$\left\langle v_{1}\wedge v_{2}\wedge...\wedge v_{n},v_{1}^{\prime}\wedge
v_{2}^{\prime}\wedge...\wedge v_{n}^{\prime}\right\rangle =\det(\left\langle
v_{i},v_{j}^{\prime}\right\rangle )$.

\bigskip

The following result, which generalizes Kirchoff's theorem, is known as the
transfer current theorem (see for example \cite{Lyo},\ \cite{Lyo2}):

\begin{theorem}
$\mathbb{P}_{ST}^{e}(\pm\xi_{1},...\pm\xi_{k}\in\Upsilon)=(\prod_{1}^{k}%
C_{\xi_{i}})\det(K^{\xi_{i},\xi_{j}}\;1\leq i,j\leq k)$.
\end{theorem}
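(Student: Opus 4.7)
My plan is to derive the formula from identity (\ref{rel}) by perturbing the conductances of the $k$ target edges and reading off a single coefficient. I would introduce parameters $t_1,\ldots,t_k$ in a small neighborhood of $0$ and let $e'$ be the energy form obtained from $e$ by leaving $\kappa$ and all other conductances unchanged while setting $C'_{\xi_i^-,\xi_i^+}=(1-t_i)C_{\xi_i}$ for $i=1,\ldots,k$. Since $1_{\pm\xi_i\in\Upsilon}\in\{0,1\}$ one has $(1-t_i)^{1_{\pm\xi_i\in\Upsilon}}=1-t_i\,1_{\pm\xi_i\in\Upsilon}$, so (\ref{rel}) would become
\begin{equation*}
\frac{\mathcal{Z}_e}{\mathcal{Z}_{e'}}=\mathbb{E}_{ST}^e\prod_{i=1}^{k}\bigl(1-t_i\,1_{\pm\xi_i\in\Upsilon}\bigr)=\sum_{S\subseteq\{1,\ldots,k\}}(-1)^{|S|}\Bigl(\prod_{i\in S}t_i\Bigr)\mathbb{P}_{ST}^e\bigl(\pm\xi_i\in\Upsilon,\,i\in S\bigr),
\end{equation*}
whose coefficient of $t_1\cdots t_k$ equals $(-1)^k\mathbb{P}_{ST}^e(\pm\xi_1,\ldots,\pm\xi_k\in\Upsilon)$; matching this against an independently computed expression for that coefficient will yield the theorem.

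Next I would express $\mathcal{Z}_e/\mathcal{Z}_{e'}$ in terms of the transfer matrix. Writing $v_i=\delta_{\xi_i^+}-\delta_{\xi_i^-}$, a direct check shows that the perturbation amounts to the rank-$k$ identity $M_{\lambda'}-C'=(M_\lambda-C)-\sum_{i=1}^{k}t_iC_{\xi_i}v_iv_i^T$, so $\mathcal{Z}_e/\mathcal{Z}_{e'}=\det\bigl(I-\sum_i t_iC_{\xi_i}(Gv_i)v_i^T\bigr)$. Applying the Weinstein--Aronszajn identity $\det(I-UW^T)=\det(I_k-W^TU)$ with the $i$-th columns of $U$ and $W$ set respectively to $\sqrt{t_iC_{\xi_i}}\,Gv_i$ and $\sqrt{t_iC_{\xi_i}}\,v_i$, together with the identity $v_i^TGv_j=K^{\xi_i,\xi_j}$ coming from the expression of the transfer matrix in Section~\ref{difforms}, would give
\begin{equation*}
\frac{\mathcal{Z}_e}{\mathcal{Z}_{e'}}=\det\bigl(\delta_{ij}-\sqrt{t_it_jC_{\xi_i}C_{\xi_j}}\,K^{\xi_i,\xi_j}\bigr)_{1\le i,j\le k}.
\end{equation*}

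Finally I would extract the coefficient of $t_1\cdots t_k$ from this $k\times k$ determinant. Expanding over $\sigma\in\mathcal{S}_k$, a diagonal position $(i,i)$ contributes either $1$ or $-t_iC_{\xi_i}K^{\xi_i,\xi_i}$, while an off-diagonal position $(i,\sigma(i))$ contributes $-\sqrt{t_it_{\sigma(i)}C_{\xi_i}C_{\xi_{\sigma(i)}}}\,K^{\xi_i,\xi_{\sigma(i)}}$; for the monomial $t_1\cdots t_k$ to appear one must pick the $-t_iC_{\xi_i}K^{\xi_i,\xi_i}$ branch at every fixed point of $\sigma$ and the off-diagonal branch at every non-fixed point, because within any cycle of $\sigma$ the half-powers $\sqrt{t_it_{\sigma(i)}}$ balance to $\prod_{i\in\mathrm{cycle}}t_i$, and the square-root conductance factors collapse similarly to $\prod_{i\in\mathrm{cycle}}C_{\xi_i}$. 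The coefficient would then evaluate to
\begin{equation*}
(-1)^k\Bigl(\prod_{i=1}^{k}C_{\xi_i}\Bigr)\sum_{\sigma\in\mathcal{S}_k}\mathrm{sgn}(\sigma)\prod_{i=1}^{k}K^{\xi_i,\xi_{\sigma(i)}}=(-1)^k\Bigl(\prod_{i=1}^{k}C_{\xi_i}\Bigr)\det\bigl(K^{\xi_i,\xi_j}\bigr)_{1\le i,j\le k},
\end{equation*}
and equating this with $(-1)^k\mathbb{P}_{ST}^e(\pm\xi_1,\ldots,\pm\xi_k\in\Upsilon)$ from the first paragraph concludes the proof. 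The hard part will be the combinatorial bookkeeping in this last step: one must verify carefully that the half-integer powers $\sqrt{t_it_j}$ and $\sqrt{C_{\xi_i}C_{\xi_j}}$ pair up correctly within every cycle of every $\sigma$, so that no spurious fractional powers survive; once this is settled, the rest is a mechanical application of (\ref{rel}) together with the matrix determinant lemma.
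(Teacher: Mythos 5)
Your argument is correct, but it follows a genuinely different route from the paper's. The paper first computes the probability of a \emph{whole} tree: writing the edges of $\Upsilon$ as $\xi_i=(x_{i-1},x_i)$, it observes that $\mathcal{Z}_e^0$ is the determinant of $G$ in the basis $\{\delta_{x_i}-\delta_{x_{i-1}}\}$, so that $\mathbb{P}_{ST}^e(\Upsilon)=\det(\sqrt{C_{\xi_i}}K^{\xi_i,\xi_j}\sqrt{C_{\xi_j}})$ over all $\left|X\right|-1$ edges, and then marginalizes by an exterior-algebra (Cauchy--Binet) computation in the orthonormal basis $\alpha_\xi^\ast$ of $\mathbb{A}_-$, using $\sqrt{C_\xi}K^{\xi,\eta}\sqrt{C_\eta}=\langle\alpha_\xi^\ast,\Pi\alpha_\eta^\ast\rangle$ with $\Pi$ the projection on $\operatorname{Im}(d)$; the sum of squared minors over the edges completing $\xi_1,\dots,\xi_k$ into a spanning tree collapses to the $k\times k$ determinant. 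You instead start from the generating identity (\ref{rel}), realize the perturbation $C_{\xi_i}\mapsto(1-t_i)C_{\xi_i}$ (with $\kappa$ fixed) as the rank-$k$ update $M_{\lambda'}-C'=(M_\lambda-C)-\sum_i t_iC_{\xi_i}v_iv_i^T$ with $v_i=\delta_{\xi_i^+}-\delta_{\xi_i^-}$, apply the matrix determinant lemma, and match the coefficient of $t_1\cdots t_k$; your square-root bookkeeping is sound, and can be shortcut by noting $\det(I-A)=\sum_S(-1)^{\left|S\right|}\det(A_{S\times S})$ with the factors $\sqrt{t_iC_{\xi_i}}$ leaving each row and column of the principal minor, so the determinant is multilinear in the $t_i$ and only $S=\{1,\dots,k\}$ contributes the monomial $t_1\cdots t_k$. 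Your route is essentially the $k$-edge generalization of the exercise that follows Kirchhoff's theorem, and it is close in spirit to Remark \ref{derivZ} and to the Grassmann-variable derivation of the next section, but carried out with elementary linear algebra; it avoids exterior powers and the completion-to-a-tree sum, whereas the paper's proof exhibits $\sqrt{C}K\sqrt{C}$ directly as a Gram matrix of projected edge vectors, which is what makes the determinantal (Fermi) structure of the edge process transparent. One small point: as written you only perturb conductances inside $X$, so your argument covers edges $\xi_i$ not incident to the root; to include edges $(x,\Delta)$ (with $C_{x,\Delta}=\kappa_x$, needed e.g. for the remark on the points wired to the root) you should also allow $\kappa_x\mapsto(1-t_i)\kappa_x$, in which case $v_i$ becomes $-\delta_x$, i.e. the restriction of $\delta_{\Delta}-\delta_x$ to $X$, consistent with the convention $G^{\cdot,\Delta}=0$, and the computation goes through verbatim.
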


\noindent Note this determinant does not depend on the orientation of the links.\newline 

\begin{proof}
Note first that if $\Upsilon$ is a spanning tree rooted in $x_{0}=\Delta$ and
$\xi_{i}=(x_{i-1},x_{i}),$ $1\leq i\leq\left|  X\right|  -1$ are its oriented
edges, the measures $\delta_{x_{i}}-\delta_{x_{i-1}}$ form another basis of
the euclidean hyperplane\ of signed measures with zero charge, which has the
same determinant as the basis $\delta_{x_{i}}-\delta_{x_{0}}$.

Therefore, $\mathcal{Z}_{e}^{0}$\ is also the determinant of the matrix of
$G$\ in this basis, i.e.$\ $%
\[
\mathcal{Z}_{e}^{0}=\det(K^{\xi_{i},\xi_{j}}\;1\leq i,j\leq\left|  X\right|
-1)
\]
and
\begin{multline*}
\mathbb{P}_{ST}^{e}(\Upsilon)=(\prod_{1}^{\left|  X\right|  -1}C_{\xi_{i}%
})\det(K^{\xi_{i},\xi_{j}}\;1\leq i,j\leq\left|  X\right|  -1)\\
=\det(\sqrt{C_{\xi_{i}}}K^{\xi_{i},\xi_{j}}\sqrt{C_{\xi_{j}}}\;1\leq
i,j\leq\left|  X\right|  -1).
\end{multline*}

Recall that $\sqrt{C_{\xi_{i}}}K^{\xi_{i},\xi_{j}}\sqrt{C_{\xi_{j}}%
}=\left\langle \alpha_{\xi_{i}}^{\ast}|\Pi|\alpha_{\xi_{j}}^{\ast
}\right\rangle _{\mathbb{A}_{-}}$, where $\Pi$ denotes the projection on the
space of differentials and that $\alpha_{(\eta)}^{\ast x,y}=\pm\frac{1}%
{\sqrt{C_{\eta}}}$ if $(x,y)=\pm(\eta)$ and $=0$ elsewhere.

To finish the proof of the theorem, it is helpful to use the exterior algebra.
Note first that for any ONB $e_{1},...,e_{\left|  X\right|  -1}$ of the space
of differentials, $\Pi\alpha_{\xi}^{\ast}=\sum\left\langle \alpha_{\xi}^{\ast
}|e_{j}\right\rangle e_{j}$ and $\mathbb{P}_{ST}^{e}(\Upsilon)=\det
(\left\langle \alpha_{\xi_{i}}^{\ast}|e_{j}\right\rangle )^{2}=\left\langle
\alpha_{\xi_{1}}^{\ast}\wedge...\wedge\alpha_{\xi_{\left|  X\right|  -1}%
}^{\ast}|e_{1}\wedge...\wedge e_{\left|  X\right|  -1}\right\rangle
_{\bigwedge^{\left|  X\right|  -1}\mathbb{A}_{-}}^{2}$. Therefore
\begin{multline*}
\mathbb{P}_{ST}^{e}(\xi_{1},,...,\xi_{k}\in\Upsilon)\\
=\sum_{\eta_{k+1},...\eta_{\left|  X\right|  -1}}\left\langle \alpha_{\xi_{1}%
}^{\ast}\wedge...\wedge\alpha_{\xi_{k}}^{\ast}\wedge\alpha_{\eta_{k+1}}^{\ast
}\wedge...\wedge\alpha_{\eta_{\left|  X\right|  -1}}^{\ast}|e_{1}%
\wedge...\wedge e_{\left|  X\right|  -1}\right\rangle _{\bigwedge^{\left|
X\right|  -1}\mathbb{A}_{-}}^{2}%
\end{multline*}
where the sum is on all edges $\eta_{k+1},...,\eta_{\left|  X\right|  -1}$
completing $\xi_{1},,...,\xi_{k}$ into a spanning tree. It can be extended to
all systems of distinct $q=\left|  X\right|  -1-k$ edges $\eta^{\prime}%
=\{\eta_{1}^{\prime},...,\eta_{q}^{\prime}\}$ as all the additional term
vanish. Indeed, an exterior product of $\alpha_{\xi_{1}}^{\ast}$ vanishes as
soon as they form a loop. Hence the expression above equals:
\[
\sum_{\eta^{\prime}}(\sum_{i_{1}<...<i_{k}}\varepsilon_{i_{1}...i_{k}%
}\left\langle \alpha_{\xi_{1}}^{\ast}\wedge...\wedge\alpha_{\xi_{k}}^{\ast
}|e_{i_{1}}\wedge...\wedge e_{i_{k}}\right\rangle \left\langle \alpha
_{\eta_{1}^{\prime}}^{\ast}\wedge...\wedge\alpha_{\eta_{q}^{\prime}}^{\ast
}|e_{i_{1}^{\prime}}\wedge...\wedge e_{i_{q}^{\prime}}\right\rangle )^{2}%
\]
where the $i_{l}^{\prime}$ are the indices complementing $i_{1},...,i_{k}$ put
in increasing order and $\varepsilon_{i_{1}...i_{k}}=(-1)^{(i_{1}%
-1)...(i_{k}-k)}$. Recalling that the $\alpha_{\xi}^{\ast}$ form an
orthonormal base of $\mathbb{A}_{-}$, we see that the sum in $\eta^{\prime}$
of each mixed term in the square vanishes and
\[
\sum_{\eta^{\prime}}\left\langle \alpha_{\eta_{1}^{\prime}}^{\ast}%
\wedge...\wedge\alpha_{\eta_{q}^{\prime}}^{\ast}|e_{i_{1}^{\prime}}%
\wedge...\wedge e_{i_{q}^{\prime}}\right\rangle ^{2}=1.
\]
Hence we obtain finally:
\[
\sum_{i_{1}<i_{2}<...<i_{k}}\left\langle \alpha_{\xi_{1}}^{\ast}%
\wedge...\wedge\alpha_{\xi_{k}}^{\ast}|e_{i_{1}}\wedge...\wedge e_{i_{k}%
}\right\rangle _{\bigwedge^{k}\mathbb{A}_{-}}^{2}=\det(\sqrt{C_{\xi_{i}}%
}K^{\xi_{i},\xi_{j}}\sqrt{C_{\xi_{j}}}\;1\leq i,j\leq k).
\]
\end{proof}

It follows that given any function $g$ on non oriented links,
\begin{align*}
\mathbb{E}_{ST}^{e}(e^{-\sum_{\xi\in\Upsilon}g(\xi)}) &  =\mathbb{E}_{ST}%
^{e}(\prod_{\xi}(1+(e^{-g(\xi)}-1)1_{\xi\in\Upsilon})\\
&  =1+\sum_{k=1}^{\left|  E\right|  }\sum_{\pm\xi_{1}\neq\pm\xi_{2}\neq
...\neq\pm\xi_{k}}\prod(e^{-g(\xi_{i})}-1)\mathbb{P}_{ST}^{e}(\pm\xi
_{1},...,\pm\xi_{k}\in\Upsilon)\\
&  =1+\sum_{k}\sum_{\pm\xi_{1}\neq\pm\xi_{2}\neq...\neq\pm\xi_{k}}%
\prod(e^{-g(\xi_{i})}-1)\det(K^{\xi_{i},\xi_{j}}\;1\leq i,j\leq k)\\
&  =1+\sum Tr((M_{C(e^{-g}-1)}K)^{\wedge k})=\det(I+KM_{C(e^{-g}-1)})
\end{align*}
and we have proved the following

\begin{proposition}
$\mathbb{E}_{ST}^{e}(e^{-\sum_{\xi\in\Upsilon}g(\xi)})=\det(I-M_{\sqrt
{C(1-e^{-g})}}KM_{\sqrt{C(1-e^{-g})}}).$
\end{proposition}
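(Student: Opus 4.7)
The plan is to expand the exponential as a product over edges and sum against the probabilities given by the transfer current theorem. Writing
$$e^{-\sum_{\xi\in\Upsilon}g(\xi)} = \prod_{\xi\in E} \bigl(1+(e^{-g(\xi)}-1)\,1_{\{\xi\in\Upsilon\}}\bigr),$$
and expanding, I would get a sum indexed by finite subsets of non-oriented edges $\{\xi_1,\dots,\xi_k\}$, each contributing $\prod_i(e^{-g(\xi_i)}-1)\,\mathbb{P}_{ST}^e(\pm\xi_1,\ldots,\pm\xi_k\in\Upsilon)$. Applying the transfer current theorem then replaces the probabilities by $\prod_i C_{\xi_i}\,\det(K^{\xi_i,\xi_j};\,1\le i,j\le k)$.

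Next, I would collect the factors $C_{\xi_i}(e^{-g(\xi_i)}-1)$ inside the determinant as a diagonal multiplication $M_{C(e^{-g}-1)}$ acting on the index set of non-oriented edges, so that each term becomes $\det\bigl((KM_{C(e^{-g}-1)})^{\xi_i,\xi_j}\bigr)$. Summing over all finite subsets yields the standard Fredholm-type expansion
$$1+\sum_{k\ge 1} \operatorname{Tr}\bigl((KM_{C(e^{-g}-1)})^{\wedge k}\bigr) = \det\bigl(I+KM_{C(e^{-g}-1)}\bigr).$$
To put the expression in the symmetric form of the statement, I would factor the diagonal matrix as $M_{C(e^{-g}-1)} = -M_{\sqrt{C(1-e^{-g})}} M_{\sqrt{C(1-e^{-g})}}$ and use the identity $\det(I+AB)=\det(I+BA)$ to sandwich $K$ between two copies of $M_{\sqrt{C(1-e^{-g})}}$, producing the symmetric operator whose determinant is $\det(I - M_{\sqrt{C(1-e^{-g})}} K M_{\sqrt{C(1-e^{-g})}})$.

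The only subtle point is bookkeeping on the index set. The sum runs over \emph{non-oriented} edges, whereas $K$ was defined on oriented edges; since $K^{\xi,\eta}$ is insensitive to simultaneous orientation reversal in an obvious way and the determinants $\det(K^{\xi_i,\xi_j})$ appearing in the transfer current theorem do not depend on the chosen orientations, it is consistent to view $K$ as an operator on a vector space indexed by non-oriented edges with inner product weighted by $C$, and to interpret $M_{\sqrt{C(1-e^{-g})}}$ there. This identification is exactly the one implicit in the proof of the transfer current theorem above and requires no further argument. The main, mostly formal obstacle is therefore just verifying that the exterior-power expansion of $\det(I+A)=\sum_k \operatorname{Tr}(A^{\wedge k})$ applies to this finite-rank operator with the correct sign conventions; no genuinely new computation is needed beyond what the transfer current theorem already provides.
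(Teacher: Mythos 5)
Your proposal is correct and follows essentially the same route as the paper: expand $e^{-\sum_{\xi\in\Upsilon}g(\xi)}$ as $\prod_{\xi}(1+(e^{-g(\xi)}-1)1_{\{\xi\in\Upsilon\}})$, apply the transfer current theorem to each term of the expansion, and resum as the principal-minor (exterior-power) expansion of $\det(I+KM_{C(e^{-g}-1)})$, which is then symmetrized via $\det(I+AB)=\det(I+BA)$, exactly as the paper does earlier for $\det(I+GM_{\chi})$. Your explicit remark on viewing $K$ as indexed by non-oriented edges is a harmless clarification of what the paper leaves implicit ("determinants are taken on matrices indexed by $E$").
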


Here determinants are taken on matrices indexed by $E$.\newline 

\begin{remark}
This is an example of the Fermi point processes (also called determinantal
point processes) discussed in \cite{Sosh} and \cite{ShiTak}. It is determined
by the matrix $M_{\sqrt{C}}KM_{\sqrt{C}}$. Note that it follows also easily
from the previous proposition that the set of edges which do not belong to the
spanning tree \ also form a Fermi point process defined by the matrix
$I-M_{\sqrt{C}}KM_{\sqrt{C}}$.

In particular,under $\mathbb{P}_{ST}$, the set of points $x$\ such that
$(x,\Delta)\in\Upsilon$ (i.e. the set of points directly connected to the root
$\Delta$) is a Fermi point process the law of which is determined by the
matrix $Q^{x,y}=\sqrt{\kappa_{x}}G^{x,y}\sqrt{\kappa_{y}}$.

For example, if $X$ is an interval of $\mathbb{Z}$, with $C_{x,y}=0$ iff
$\left|  x-y\right|  >1$, it is easily verified that for $x<y<z$,%
\[
Q^{x,z}=\frac{Q^{x,y}Q^{y,z}}{Q^{y,y}}%
\]

Then using the remark following theorem 6 in \cite{Sosh}, we see that the
spacings of this point process are independent.

The edges of $X$ which do not belong to the spanning tree form a determinantal
process of edges, of the same type, intertwinned with the points connected to
$\Delta$.
\end{remark}

\bigskip

A consequence is that for any spanning tree $T$, if
$\pi_{T}$\ denotes $M_{1_{\{T\}}}$ (the multiplication by the indicator
function of $T$), it follows from the above, by letting $g$ be $m1_{\{T^{c}%
\}}$, $m\rightarrow\infty$ that
\[
\mathbb{P}_{ST}^{e}(T)=\det((I-KM_{C})(I-\pi_{T})+\pi_{T})=\det((I-KM_{C}%
)_{T^{c}\times T^{c}}).
\]

Another consequence is that if $e^{\prime}$ is another energy form on the same
graph,
\[
\mathbb{E}_{ST}^{e}(\prod_{(x,y)\in\Upsilon}\frac{C_{x,y}^{\prime}}{C_{x,y}%
})=\det(I-M_{\sqrt{C-C^{\prime}}}KM_{\sqrt{C-C^{\prime}}%
}).
\]
On the other hand, from (\ref{rel}),\ it also equals $(\sum_{T\in ST_{X}}%
\prod_{\{x,y\}\in T}C_{x,y})\mathcal{Z}_{e}^{0}=\frac{\mathcal{Z}_{e}^{0}%
}{\mathcal{Z}_{e^{\prime}}^{0}}$ so that finally%
\[
\frac{\mathcal{Z}_{e}^{0}}{\mathcal{Z}_{e^{\prime}}^{0}}=\det(I-M_{\sqrt{C-C^{\prime}}}KM_{\sqrt{C-C^{\prime}}}).
\]
Note that indicators of distinct individual edges are negatively correlated.
More generally:

\begin{theorem}
(Negative association) Given any sets disjoint of edges $E_{1}$ and $E_{2}$,
\[
\mathbb{P}_{ST}^{e}(E_{1}\cup E_{2}\subseteq\Upsilon)\leq\mathbb{P}_{ST}%
^{e}(E_{1}\subseteq\Upsilon)\mathbb{P}_{ST}^{e}(E_{2}\subseteq\Upsilon).
\]
\end{theorem}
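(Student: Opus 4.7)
My plan is to reduce the inequality to a minor inequality for a positive semidefinite matrix, and then invoke Hadamard–Fischer.

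First I would pick an arbitrary orientation for each edge in $E_1 \cup E_2$ and form, for any set $E$ of non-oriented edges, the symmetric matrix $M_E$ indexed by these oriented representatives $\xi$, with entries
\[
(M_E)_{\xi,\eta} = \sqrt{C_\xi}\, K^{\xi,\eta}\, \sqrt{C_\eta}.
\]
By the transfer current theorem (and the factorization of $\prod C_{\xi_i}\det(K^{\xi_i,\xi_j})$ written out just before its proof), we have $\mathbb{P}_{ST}^e(E\subseteq\Upsilon)=\det(M_E)$, and this quantity is independent of the chosen orientations since each change of orientation flips a row and the corresponding column.

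The second step is to observe that $M_{E_1\cup E_2}$ is positive semidefinite. This follows directly from the identity established in Section~\ref{difforms} and reused in the proof of the transfer current theorem,
\[
\sqrt{C_\xi}\,K^{\xi,\eta}\,\sqrt{C_\eta} = \langle \alpha_\xi^{\ast},\Pi\,\alpha_\eta^{\ast}\rangle_{\mathbb{A}_-},
\]
where $\Pi$ is the orthogonal projection of $\mathbb{A}_-$ onto the image of $d$. Since $\Pi = \Pi^{\ast} = \Pi^2$, the right-hand side equals $\langle \Pi\alpha_\xi^{\ast},\Pi\alpha_\eta^{\ast}\rangle_{\mathbb{A}_-}$, exhibiting $M_{E_1\cup E_2}$ as a Gram matrix and hence as a symmetric positive semidefinite matrix. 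Note that $M_{E_i}$ is exactly the principal submatrix of $M_{E_1\cup E_2}$ indexed by $E_i$.

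Finally I would apply the Hadamard–Fischer inequality: for any positive semidefinite matrix $M$ block-partitioned with respect to a splitting of its index set into two disjoint parts,
\[
M = \begin{pmatrix} A & B \\ B^{\ast} & D \end{pmatrix},\qquad \det(M)\le \det(A)\det(D).
\]
Applied with the splitting $E_1\sqcup E_2$, this gives $\det(M_{E_1\cup E_2})\le \det(M_{E_1})\det(M_{E_2})$, which is exactly the desired inequality. There is essentially no obstacle once the determinantal representation of $\mathbb{P}_{ST}^e(E\subseteq\Upsilon)$ is recognized; the only subtle point is ensuring the matrix is symmetric PSD (not merely that its principal minors are nonnegative), which the projection identity above settles at no cost.
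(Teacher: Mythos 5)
Your proof is correct and follows essentially the same route as the paper: both rest on the transfer current theorem, which gives $\mathbb{P}_{ST}^{e}(E\subseteq\Upsilon)=\det\bigl(\sqrt{C_{\xi}}K^{\xi,\eta}\sqrt{C_{\eta}}\bigr)_{\xi,\eta\in E}$, together with a Fischer-type determinant inequality for the block decomposition of this matrix along $E_{1}\sqcup E_{2}$. The only difference is the final step: the paper proves the inequality by hand, rewriting the ratio as the determinant of a block matrix with identity diagonal blocks and expanding $\log\det$ as $-\sum_{k}\frac{1}{2k}Tr((FF^{\ast})^{k})\leq 0$, whereas you make the positive semidefiniteness explicit through the Gram representation $\langle\Pi\alpha_{\xi}^{\ast},\Pi\alpha_{\eta}^{\ast}\rangle_{\mathbb{A}_{-}}$ and then invoke Hadamard--Fischer, which also covers the degenerate case of singular diagonal blocks without a separate argument.
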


\begin{proof}
Denote by $K^{\#}(i,j)$ the restriction of $K^{\#}=(\sqrt{C_{\xi}}K^{\xi,\eta
}\sqrt{C_{\eta}},\;\xi,\eta\in E)$ to $E_{i}\times E_{j}$.\ Then,
\[
\frac{\mathbb{P}_{ST}^{e}(E_{1}\cup E_{2}\subseteq\Upsilon)}{\mathbb{P}%
_{ST}^{e}(E_{1}\subseteq\Upsilon)\mathbb{P}_{ST}^{e}(E_{2}\subseteq\Upsilon
)}=\frac{\det(K^{\#})}{\det(K^{\#}(2,2))\det(K^{\#}(2,2))}=\det\left(
\left[
\begin{array}
[c]{cc}%
I & F\\
F^{\ast} & I
\end{array}
\right]  \right)
\]
with $F=K^{\#}(1,1)^{-\frac{1}{2}}K^{\#}(1,2)K^{\#}(2,2)^{-\frac{1}{2}}$%

Finally, note that $\log(\det\left(  \left[
\begin{array}
[c]{cc}%
I & F\\
F^{\ast} & I
\end{array}
\right]  \right)  )=Tr(\log\left(  \left[
\begin{array}
[c]{cc}%
I & F\\
F^{\ast} & I
\end{array}
\right]  \right)  )$

$=-\sum_{1}^{\infty}\frac{1}{2k}Tr((FF^{\ast})^{k})\leq0$.
\end{proof}
\begin{remark}
\label{derivZ}Note that it follows directly from the expression of $P_{ST}$
and from the transfer current theorem that for any set of disjoint edges
$\xi_{1},...,\xi_{k}$:%
\[
\lbrack\mathcal{Z}_{e}^{0}]^{-1}\frac{\partial^{k}}{\partial C_{\xi_{1}%
}...\partial C_{\xi_{k}}}[\mathcal{Z}_{e}^{0}]^{-1}=\det(K_{\xi_{i},\xi_{j}%
},1\leq i,j\leq k).
\]
\end{remark}

\begin{proof}
Note that
\[
\frac{\partial k}{\partial C_{\xi_{1}}...\partial C_{\xi_{k}}}[\mathcal{Z}%
_{e}^{0}]^{-1}=\frac{\partial^{k}}{\partial C_{\xi_{1}}...\partial C_{\xi_{k}%
}}\sum_{T\in ST_{X}}\prod_{\{x,y\}\in T}C_{x,y}=[\mathcal{Z}_{e}^{0}\prod
_{1}^{k}C_{\xi_{i}}]^{-1}\mathbb{P}_{ST}^{e}(\pm\xi_{1},...,\pm\xi_{k}%
\in\Upsilon).
\]
\end{proof}

This result can be proved directly using for example Grassmann variables (as
used in \cite{LJ1}). The transfer current theorem can then be derived
immediately from it as shown in the following section.

\section{The skew-symmetric Fock space}

Consider the real Fermionic Fock space $\Gamma^{\wedge}(\mathbb{H}^{\ast
})=\overline{\oplus\mathbb{H}^{\ast\wedge n}}$\ obtained as the closure of the
sum of all skew-symmetric tensor powers of $\mathbb{H}^{\ast}$ (the zero-th
tensor power is $\mathbb{R}$).

For any $x\in X$, the anihilation operator $c_{x}$\ and the creation operator
$c_{x}^{\ast}$ are defined as follows, on the uncompleted Fock space
$\oplus\mathbb{H}^{\ast\wedge n}$:%
\[
c_{x}(\mu_{1}\wedge...\wedge\mu_{n})=(-1)^{k-1}\sum_{k}G\mu_{k}(x)\mu
_{1}\wedge...\wedge\mu_{k-1}\wedge\mu_{k+1}\wedge...\wedge\mu_{n}%
\]%
\[
c_{x}^{\ast}(\mu_{1}\wedge...\wedge\mu_{n})=\delta_{x}\wedge\mu_{1}%
\wedge...\wedge\mu_{n}%
\]
Note that $c_{y}^{\ast}$ is the dual of $c_{y}$ and that $[c_{x},c_{y}^{\ast
}]^{+}=G(x,y)$ with all others anticommutators vanishing.

We will work on the complex Fermionic Fock space $\mathcal{F}_{F}$ defined the
tensor product of two copies of $\Gamma^{\wedge}(\mathbb{H}^{\ast})$. The
complex Fock space stucture is defined by two anticommuting sets of creation
and anihilation operators. $\mathcal{F}_{F}$\ is generated by the vector $1$
and creation/anihilation operators $c_{x},c_{x}^{\ast},d_{x},d_{x}^{\ast}$
with $[c_{x},c_{y}^{\ast}]^{+}=[d_{x},d_{y}^{\ast}]^{+}=G(x,y)$ and with all
others anticommutators vanishing.

Anticommuting variables $\psi^{x},\overline{\psi}^{x}$ are defined as
operators on the Fermionic Fock space $\mathcal{F}_{F}$ by:%
\[
\psi^{x}=\sqrt{2}(d_{x}+c_{x}^{\ast})\text{ and }\overline{\psi}^{x}=\sqrt
{2}(-c_{x}+d_{x}^{\ast}).
\]
Note that $\overline{\psi}_{x}$ is not the dual of $\psi_{x}$, but there is an
involution $\mathfrak{I}$ on $\mathcal{F}_{F}$ such that $\overline{\psi
}=\mathfrak{I}\psi^{\ast}\mathfrak{I}$.

$\mathfrak{I}$ is defined by its action on each tensor power: it multiplies
each element in $\mathbb{H}^{\ast\wedge m}\otimes\mathbb{H}^{\ast\wedge p}$ by
$(-1)^{m}$.

\begin{exercise}
Show that in contrast with the Bosonic case, all these operators are bounded.
\end{exercise}

Simple calculations yield that:%
\[
\left\langle 1,\psi^{x_{m}}...\psi^{x_{1}}\overline{\psi}^{y_{1}}%
...\overline{\psi}^{y_{n}}1\right\rangle =\delta_{nm}2^{n}\det(G(x_{i},y_{j}))
\]
and that%

\[
\left\langle 1,\exp(\frac{1}{2}e(\psi,\overline{\psi})-\frac{1}{2}e^{\prime
}(\psi,\overline{\psi}))1\right\rangle _{\mathcal{F}_{F}}=\frac{\det(G)}%
{\det(G^{\prime})}.
\]

Indeed, if $e_{i}$ is an orthonormal basis of $\mathbb{H}^{\ast}$, in which
$e^{\prime}$ is diagonal with eigenvalues $\mu_{i}$, the first side equals
$\left\langle 1,\prod_{i}(1+\frac{1}{2}(1-\lambda_{i})\left\langle \psi
,e_{i}\right\rangle \left\langle \overline{\psi},e_{i}\right\rangle
1\right\rangle _{\mathcal{F}_{F}}=\sum_{k}(1+\sum_{i_{1}<...i_{k}}%
(1-\lambda_{i_{1}})...(1-\lambda_{i})=\prod\lambda_{i}$. In particular, for any
positive measure $\chi$ on $X$,%
\[
\left\langle 1,\exp(-\sum_{x}\chi_{x}\psi^{x}\overline{\psi}^{x}%
)1\right\rangle _{\mathcal{F}_{F}}=\frac{\det(G)}{\det(G_{\chi})}=\left\langle
1,\exp(-\sum_{x}\chi_{x}\varphi^{x}\overline{\varphi}^{x})1\right\rangle
_{\mathcal{F}_{B}}^{-1}.
\]
We observe a ''Supersymmetry'' between $\phi$ and $\psi$: for any exponential
or polynomial $F$%
\[
\left\langle 1,F(\phi\overline{\phi}-\psi\overline{\psi})1\right\rangle
_{\mathcal{F}_{B}\otimes\mathcal{F}_{F}}=F(0).
\]
($1$ denotes $1_{(B)}\otimes1_{(F)}$)

\begin{remark}
On a finite graph, $\psi_{x},\overline{\psi}_{x}$ and the whole supersymmetric
complex Fock space structure can also be defined in terms of complex
differential forms defined on $\mathbb{C}^{\left|  X\right|  }$, using
exterior products, interior products and De Rham $\ast$ operator. This
extension of the Gaussian representation of the complex Bosonic Fock space is
explained in the introduction of \cite{LJ1}. It was used for example in
\cite{LJ11}.
\end{remark}

Note that%
\[
E_{ST}(\prod_{(x,y)\in\tau}\frac{C_{x,y}^{\prime}}{C_{x,y}}\prod
_{x,(x,\delta)\in\tau}\frac{\kappa_{x}^{\prime}}{\kappa_{x}})=\frac{\det
(G)}{\det(G^{\prime})}=\left\langle 1,\exp(\frac{1}{2}e(\psi,\overline{\psi
})-\frac{1}{2}e^{\prime}(\psi,\overline{\psi}))1\right\rangle _{\mathcal{F}%
_{F}}.
\]
The Transfer Current Theorem follows directly, by calculation of%
\begin{align*}
P_{ST}((x_{i},y_{i}) &  \in\tau)=\prod C_{x_{i},y_{i}}\frac{\partial^{k}%
}{\partial C_{x_{1},y_{1}}^{\prime}...\partial C_{x_{k},y_{k}}^{\prime}%
}|_{C^{\prime}=C}\left\langle 1,\exp(\frac{1}{2}e(\psi,\overline{\psi
})-\frac{1}{2}e^{\prime}(\psi,\overline{\psi}))1\right\rangle _{\mathcal{F}%
_{F}})\\
&  =\prod2^{-k}C_{x_{i},y_{i}}\left\langle 1,(\prod(\psi^{y_{i}}-\psi^{x_{i}%
})(\overline{\psi}^{y_{i}}-\overline{\psi}^{x_{i}})1\right\rangle
_{\mathcal{F}_{F}})\\
&  =\det(K^{(x_{i},y_{i}),(x_{j},y_{j})})\prod C_{x_{i},y_{i}}.
\end{align*}
\bigskip

The relations we have established can be summarized in the following diagram:

\bigskip%

\begin{tabular}
[c]{ccc}
& (Wilson Algorithm) & \\
Loop\ ensemble $\mathcal{L}_{1}$ & $\longleftrightarrow$ &
Random\ Spanning\ Tree\\
&  & \\
$\updownarrow$ &  & $\updownarrow$\\
&  & \\
Free field $\phi,\overline{\phi}$ & $\longleftrightarrow$ & Grassmann field
$\psi,\overline{\psi}$\\
& (''Supersymmetry'') &
\end{tabular}

\bigskip

NB: $\phi$ and $\psi$ can also be used jointly to represent bridge functionals
(Cf \cite{LJ1}): in particular%
\[
\int F(\widehat{l})\mu_{x,y}(dl)=\left\langle 1,\phi_{x}\overline{\phi}%
_{y}F(\phi\overline{\phi}-\psi\overline{\psi})1\right\rangle _{\mathcal{F}%
_{B}\otimes\mathcal{F}_{F}}=\left\langle 1,\psi_{x}\overline{\psi}_{y}%
F(\phi\overline{\phi}-\psi\overline{\psi})1\right\rangle _{\mathcal{F}%
_{B}\otimes\mathcal{F}_{F}}.
\]

\chapter{Reflection positivity}
\section{Main result}
In this section, we assume there exists a partition of $X$: $X=X^{+}\cup
X^{-}$, $X^{+}\cap X^{-}=\varnothing$ and an involution $\rho$ on $X$ such that:

\begin{enumerate}
\item[a)] $e$ is $\rho$-invariant.

\item[b)] $\rho$ exchanges $X^{+}$ and $X^{-}$.

\item[c)] The $X^{+}\times X^{+}$\ matrix $C_{x,y}^{\pm}=C_{x,\rho(y)}$, is
nonnegative definite.
\end{enumerate}

Then the following holds:

\begin{theorem}
\label{RF}

\begin{enumerate}
\item[i)] For any \emph{positive integer} $d$ and square integrable function
$\Phi$ in

$\sigma(\widehat{\mathcal{L}_{d}}^{x},x\in X^{+})\vee\sigma(N_{x,y}%
^{(d)},x,y\in X^{+})$,
\[
\mathbb{E}(\Phi(\mathcal{L}_{d})\overline{\Phi}(\rho(\mathcal{L}_{d})))\geq0.
\]

\item[ii)] For any square integrable function $\Sigma$ of the free field
$\phi$ restricted to $X^{+}$,
\[
\mathbb{E}_{\phi}(\Sigma(\phi)\overline{\Sigma}(\rho(\phi)))\geq0.
\]

\item[iii)] For any set of edges $\{\xi_{i}\}$ in $X^{+}\times X^{+}$ the
matrix,
\[
K_{i,j}=\mathbb{P}_{ST}(\xi_{i}\in T,\rho\xi_{j}\in T)-\mathbb{P}_{ST}(\xi
_{i}\in T)\mathbb{P}_{ST}(\xi_{j}\in T)
\]
is nonpositive definite.
\end{enumerate}
\end{theorem}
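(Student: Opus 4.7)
The plan is to prove (ii) by a Hubbard--Stratonovich decomposition of the Gaussian measure exploiting the positive semidefiniteness of $C^\pm$, then derive (i) from (ii) via the Gaussian representation of the Laplace transform (remark \ref{baba2}) applied to $d$ independent complex free fields, and finally obtain (iii) from the transfer current theorem combined with (ii).

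For (ii), I split $e(\phi)=e^+(\phi^+)+e^-(\phi^-)+e_{\mathrm{cross}}(\phi)$ with
\[
e_{\mathrm{cross}}(\phi)=-2\sum_{x,y\in X^+}C^\pm_{x,y}\,\phi^+_x\,\phi^-_{\rho(y)}.
\]
Setting $\tilde\phi^+_x=\phi_{\rho(x)}$, reflection symmetry gives $e^-(\phi^-)=e^+(\tilde\phi^+)$. Since $C^\pm$ is symmetric and PSD, it diagonalises as $C^\pm=\sum_k\lambda_k\,e_k e_k^{T}$ with $\lambda_k\ge 0$, so $-\tfrac12 e_{\mathrm{cross}}=\sum_k\lambda_k\,U_k(\phi^+)\,U_k(\tilde\phi^+)$ with $U_k(\psi)=\sum_x e_{k,x}\psi^x$. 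The identity $e^{uv}=\mathbb{E}_W[e^{Wu}e^{\bar Wv}]$ for $u,v\in\mathbb{R}$ and standard complex Gaussian $W$ then gives
\[
e^{-\tfrac12 e(\phi)}=\mathbb{E}_{\vec W}\!\left[e^{-\tfrac12 e^+(\phi^+)+\sum_k\sqrt{\lambda_k}W_k U_k(\phi^+)}\,\cdot\,e^{-\tfrac12 e^+(\tilde\phi^+)+\sum_k\sqrt{\lambda_k}\bar W_k U_k(\tilde\phi^+)}\right].
\]
Integrating against $\Sigma(\phi^+)\bar\Sigma(\tilde\phi^+)$ and changing variables from $\phi^-$ to $\tilde\phi^+$ (unit Jacobian), the two factors become mutual complex conjugates, so $\mathbb{E}_\phi[\Sigma(\phi)\bar\Sigma(\rho\phi)]=\mathbb{E}_{\vec W}|f(\vec W)|^2\ge 0$.

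For (i), the $\sigma$-field $\sigma(\widehat{\mathcal L_d}^x,N^{(d)}_{x,y};x,y\in X^+)$ is generated in $L^2$ by the real multiplicative functionals $B_i=B^{e,e'_i}$ where $e'_i$ modifies $e$ only inside $X^+$; it suffices to prove nonnegativity for $\Phi=\sum_i a_iB_i$ with $a_i\in\mathbb{C}$. Since $B_i(\rho\mathcal L_d)=B^{e,\rho e'_i}(\mathcal L_d)$ and the product $B^{e,e'_i}\cdot B^{e,\rho e'_j}$ equals $B^{e,e''_{ij}}$ for a combined energy $e''_{ij}$ that modifies $e$ only inside $X^+$ and inside $X^-$ (leaving crossing edges untouched), formula (\ref{F5}) together with remark \ref{baba2} applied to $d$ independent copies $\varphi_1,\ldots,\varphi_d$ of the complex free field give
\[
\mathbb{E}\!\left[\Phi(\mathcal L_d)\bar\Phi(\rho\mathcal L_d)\right]=\mathbb{E}_{\vec\varphi}\!\left[\sum_{i,j}a_i\bar a_j\prod_{\ell=1}^{d}e^{-\tfrac12(H^+_i(\varphi_\ell^+)+H^-_j(\varphi_\ell^-))}\right],
\]
where $H^\pm_i$ are the real quadratic forms in $\varphi_\ell|_{X^\pm}$ induced by the perturbations. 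Reflection symmetry of $e$ yields $H^-_j(\varphi_\ell^-)=H^+_j((\rho\varphi_\ell)|_{X^+})$, so the right-hand side factors as $\mathbb{E}_{\vec\varphi}[A(\vec\varphi^+)\,\overline{A((\rho\vec\varphi)|_{X^+})}]$ with $A(\vec\varphi^+)=\sum_i a_i\prod_\ell e^{-\tfrac12 H^+_i(\varphi_\ell^+)}$. The vector-valued version of (ii), obtained by running the same Hubbard--Stratonovich argument jointly on the $2d$ real Gaussian components, delivers nonnegativity.

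For (iii), since $\xi_i\in X^+\times X^+$ and $\rho\xi_j\in X^-\times X^-$ are automatically distinct edges, the transfer current theorem gives
\[
\mathbb{P}_{ST}(\xi_i\in T,\,\rho\xi_j\in T)=C_{\xi_i}C_{\xi_j}\bigl[K^{\xi_i,\xi_i}K^{\xi_j,\xi_j}-(K^{\xi_i,\rho\xi_j})^2\bigr],
\]
using the $\rho$-invariances $C_{\rho\xi_j}=C_{\xi_j}$ and $K^{\rho\xi_j,\rho\xi_j}=K^{\xi_j,\xi_j}$. Subtracting $\mathbb{P}_{ST}(\xi_i\in T)\mathbb{P}_{ST}(\xi_j\in T)=C_{\xi_i}C_{\xi_j}K^{\xi_i,\xi_i}K^{\xi_j,\xi_j}$ yields $K_{ij}=-C_{\xi_i}C_{\xi_j}(K^{\xi_i,\rho\xi_j})^2$. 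Writing $\xi_i=(x_i,y_i)$, the free-field identity $K^{\xi_i,\rho\xi_j}=\mathbb{E}_\phi[(\phi^{y_i}-\phi^{x_i})(\phi^{\rho y_j}-\phi^{\rho x_j})]$ combined with (ii) applied to the linear functionals $\Sigma_i(\phi)=\phi^{y_i}-\phi^{x_i}$ shows that the matrix $(K^{\xi_i,\rho\xi_j})_{i,j}$ is PSD; the Schur product theorem gives that its entrywise square is PSD, and conjugation by the diagonal matrix $\mathrm{diag}(\sqrt{C_{\xi_i}})$ preserves this, so $-K$ is PSD. The main obstacle lies in (i): verifying that the Hamiltonian induced on $X^-$ by the reflected functional coincides with the $\rho$-transport of the $X^+$-Hamiltonian; restricting to real perturbations ($\omega=0$) generates the relevant $\sigma$-field and makes this identification transparent. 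The analytical heart of the theorem is the Hubbard--Stratonovich step in (ii), which collapses without nonnegative definiteness of $C^\pm$; integrality of $\alpha=d$ in (i) is equally essential, since it is what legitimises the use of $d$ independent Gaussian copies.
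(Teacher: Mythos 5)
Your treatments of (ii) and (iii) are sound: the Hubbard--Stratonovich decoupling of the cross energy $-2\sum_{x,y\in X^+}C^{\pm}_{x,y}\phi_x\phi_{\rho(y)}$ is a legitimate alternative to the paper's route (which instead first proves that $G^{\pm}_{x,y}=G^{x,\rho(y)}$ is nonnegative definite and then works with exponential functionals and an auxiliary Gaussian field of covariance $G^{\pm}$), and your Schur-product argument for (iii) is the same computation the paper phrases through Wick squares of that auxiliary field. The genuine gap is in (i). You reduce to real multiplicative functionals $B^{e,e'_i}$ with $\omega=0$, claiming they generate $\sigma(\widehat{\mathcal{L}_{d}}^{x},x\in X^{+})\vee\sigma(N_{x,y}^{(d)},x,y\in X^{+})$. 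They do not: since the perturbed conductances $C'$ are symmetric, $\prod_{(x,y)}[C'_{x,y}/C_{x,y}]^{N_{x,y}}=\prod_{\{x,y\}}[C'_{x,y}/C_{x,y}]^{N_{x,y}+N_{y,x}}$, so such functionals only see the occupation field and the non-oriented traversal numbers $N_{x,y}+N_{y,x}$, a strictly smaller $\sigma$-field than the one in the statement (this smaller $\sigma$-field is exactly the one to which remark c) after the theorem restricts the half-integer case). To capture the oriented counts $N_{x,y}^{(d)}$ you must include the one-forms $\omega_j$ supported on $X^{+}\times X^{+}$, as the paper's proof does.

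Once the phases are included, the step you call ``transparent'' breaks down: for $\overline{\Phi}(\rho(\mathcal{L}_{d}))$ the relevant functional is $B^{e,\rho e'_q,-\rho\omega_q}$, and in the representation of remark \ref{baba2} the Hamiltonian it induces on $X^{-}$ is not $H^{+}_q((\rho\varphi)|_{X^{+}})$ but $\overline{H^{+}_q}$ evaluated at the \emph{anti-linear} reflection $\varphi\mapsto\overline{\varphi\circ\rho}$ (the sign of $\omega$ flips under conjugation). The ``vector-valued version of (ii)'' does not cover this twisted reflection: the cross-covariance of $(\phi_1,\phi_2)|_{X^+}$ with $(\phi_1\circ\rho,-\phi_2\circ\rho)|_{X^+}$ is $\mathrm{diag}(G^{\pm},-G^{\pm})$, and positivity actually fails for simple functionals (e.g. $\Sigma=\phi_2^{x}$ gives $-G^{\pm}_{x,x}\leq0$), so positivity for the specific class of functionals $e^{-\frac12 H_j}$ needs a separate argument. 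This is precisely where the paper's proof does its real work: it shows directly that the matrix $(\mathcal{Z}_{e_{j,q},\omega_j-\rho(\omega_q)})$ is nonnegative definite by writing $\det(D(j,q))^{-1}$ in block form, expanding $\det(I-M)^{-1}$ in $\alpha$-permanents, and using that the permanent of an off-diagonal block matrix factors as $Per(A)\,\overline{Per(B)}$ (then raising to the $d$-th Hadamard power, which is where integrality of $d$ enters). Your proposal, as written, proves i) only on the smaller $\sigma$-field generated by $\widehat{\mathcal{L}_{d}}$ and the symmetrized traversal numbers, and does not reproduce or replace this factorization argument for the full statement.
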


\begin{proof}
The property ii) is well known in a slightly different context and is named
reflexion positivity: Cf for example \cite{Sim2}, \cite{Gaw} and their
references. Reflection positivity is a keystone in the bridge between
statistical and quantum mechanics.

To prove i), we use the fact that the $\sigma$-algebra is generated by the
algebra of random variables of the form $\Phi=\sum\lambda_{j}B_{(d)}%
^{e,e_{j},\omega_{j}}$ with $C^{(e_{j})}=C$ and $\omega_{j}=0$\ except on
$X^{+}\times X^{+}$, $C^{(e_{j})}\leq C$ on $X^{+}\times X^{+}$,
$\lambda^{(e_{j})}=\lambda$ on $X^{-}$ and $\lambda^{(e_{j})}\geq\lambda$ on
$X^{+}$.\newline Then
\[
\mathbb{E}(\Phi(\mathcal{L}_{d})\overline{\Phi}(\rho(\mathcal{L}%
_{d})))=\mathbb{E}(\sum\lambda_{j}\overline{\lambda}_{q}B_{(d)}^{e,e_{j,q}%
,\omega_{j}-\rho(\omega_{q})})=\sum\lambda_{j}\overline{\lambda}%
_{q}(\frac{\mathcal{Z}_{e_{j,q},\omega_{j}-\rho(\omega_{q})}}{\mathcal{Z}_{e}%
})^{d}%
\]
with $e_{j,q}=e_{j}+\rho(e_{q})-e$.

We have to prove this is non negative. It is enough to prove it for $d=1$, as
the Hadamard product of two nonnegative definite Hermitian matrices is
nonnegative definite.

Let us first assume that the nonnegative definite matrix $C^{\pm}$ is positive
definite. We will see that the general case can be reduced to this one.

Now note that $\mathcal{Z}_{e_{j}+\rho(e_{q})-e,\omega_{j}-\rho(\omega_{q})}$
is the inverse of the determinant of a positive definite matrix of the form:
\[
D(j,q)=\left[
\begin{array}
[c]{cc}%
A(j) & -C^{\pm}\\
-C^{\pm} & A(q)^{\ast}%
\end{array}
\right]
\]
with $[A(j)]_{u,v}=\lambda_{u}^{(e_{j})}\delta_{u,v}-C_{u,v}^{(e_{j}%
)}e^{i\omega_{j}^{u,v}}$ and $C_{u,v}^{\pm}=C_{u,\rho(v)}$.\newline It is
enough to show that $\det(D(j,k))^{-1}$ can be expanded in series of products
$\sum q_{n}(j)\overline{q_{n}}(k)$ with $\sum\left|  q_{n}(j)\right|
^{2}<\infty$.\newline As
\begin{multline*}
D(j,q)=\\
\left[
\begin{array}
[c]{cc}%
\lbrack C^{\pm}]^{\frac{1}{2}} & 0\\
0 & [C^{\pm}]^{\frac{1}{2}}%
\end{array}
\right]  \left[
\begin{array}
[c]{cc}%
\lbrack C^{\pm}]^{-\frac{1}{2}}A(j)[C^{\pm}]^{-\frac{1}{2}} & -I\\
-I & [C^{\pm}]^{-\frac{1}{2}}A(q)^{\ast}[C^{\pm}]^{-\frac{1}{2}}%
\end{array}
\right]  \left[
\begin{array}
[c]{cc}%
\lbrack C^{\pm}]^{\frac{1}{2}} & 0\\
0 & [C^{\pm}]^{\frac{1}{2}}%
\end{array}
\right]
\end{multline*}
the inverse of this determinant can be written%
\[
\det(C^{\pm})^{-2}\det(F(j))\det(F(q)^{\ast})\det(I-\left[
\begin{array}
[c]{cc}%
0 & F(j)\\
F(q)^{\ast} & 0
\end{array}
\right]  )^{-1}%
\]
with $F(j)=[C^{\pm}]^{\frac{1}{2}}A(j)^{-1}[C^{\pm}]^{\frac{1}{2}}$, or more
simply:
\[
F(j)=\det(A(j))^{-1}\det(A(q)^{\ast})^{-1}\det(I-\left[
\begin{array}
[c]{cc}%
0 & F(j)\\
F(q)^{\ast} & 0
\end{array}
\right]  )^{-1}.
\]

Note that $A(j)^{-1}$ is also the Green function of the restriction to $X^{+}$
of the Markov chain associated with $e_{j}$, twisted by $\omega_{j}$.
Therefore $A(j)^{-1}C^{\pm}=[C^{\pm}]^{-\frac{1}{2}}F(j)[C^{\pm}]^{\frac{1}%
{2}}$ is the balayage kernel on $X^{-}$\ defined by this Markov chain with an
additional phase under the expectation produced by $\omega_{j}$. It is
therefore clear that the eigenvalues of the matrices $A(j)^{-1}C^{\pm}$ and
$F(j)$ are of modulus less than one and it follows that
\[
\left[
\begin{array}
[c]{cc}%
0 & F(j)\\
F(q)^{\ast} & 0
\end{array}
\right]  =\left[
\begin{array}
[c]{cc}%
0 & I\\
I & 0
\end{array}
\right]  \left[
\begin{array}
[c]{cc}%
F(q)^{\ast} & 0\\
0 & F(j)
\end{array}
\right]
\]
is a contraction. We can always assume it is a strict contraction, by adding a
killing term we can let converge to zero once the inequality is proved.

If $X^{+}$ has only one point, $(1-F(j)F(q)^{\ast})^{-1}=\sum F(j)^{-n}%
\overline{F(q)}^{-n}$ which allows to conclude. Let us now treat the general case.

For any $(n,m)$ matrix $N$, and $k=(k_{1},...,k_{m})\in\mathbb{N}^{m}$,
$l=(l_{1},...,l_{n})\in\mathbb{N}^{n}$, let $N^{\{k,l\}}$ denote the $(\left|
k\right|  ,\left|  l\right|  )$ matrix obtained from by repeating $k_{i}$
times\ each line $i$; then $l_{j}$ times each column $j$.

We use the expansion
\[
\det(I-M)^{-1}=1+\sum\frac{1}{\left|  k\right|  !}Per(M^{\{k,k\}})
\]
valid for any strict contraction $M$ (Cf \cite{VJ1} and \cite{VJ2}).

Note that if $X$ has $2d$ points, if we denote $(k_{1},...,k_{2d})$ by
$(k^{+},k^{-})$, with $k^{+}=(k_{1},...,k_{d})$ and $k^{-}=(k_{d+1}%
,...,k_{2d})$,
\[
\left[
\begin{array}
[c]{cc}%
0 & F(j)\\
F(q)^{\ast} & 0
\end{array}
\right]  ^{\{k,k\}}=\left[
\begin{array}
[c]{cc}%
0 & F(j)^{\{k^{+},k^{-}\}}\\
\lbrack F(q)^{\ast}]^{\{k^{-},k^{+}\}} & 0
\end{array}
\right]  .
\]

But the all terms in the permanent of a $(2n,2n)$\ matrix of the form
$\left[
\begin{array}
[c]{cc}%
0 & A\\
B^{\ast} & 0
\end{array}
\right]  $\ vanish unless the submatrices $A$ and $B$ are square matrices (not
necessarily of equal ranks). Hence in our case, we necessary have $\left|
k^{+}\right|  =\left|  k^{-}\right|  $, so that, $A$ and $B$ are $(n,n)$ matrices.

Then, the non zero terms in the permanent come from permutations exchanging
$\{1,2,...,n\}$ and $\{n+1,...,2n\}$, which can be decomposed into a pair of
permutations of $\{1,2,...,n\}$. Therefore:%

\[
Per(\left[
\begin{array}
[c]{cc}%
0 & A\\
B^{\ast} & 0
\end{array}
\right]  )=Per(A)Per(B^{\ast})
\]
which concludes the proof in the positive definite case as%

\[
Per(B^{\ast})=\sum_{\tau\in\mathcal{S}_{n}}\prod_{1}^{n}B_{i,\tau(i)}^{\ast
}=\sum_{\tau\in\mathcal{S}_{n}}\prod_{1}^{n}\overline{B}_{\tau(i),i}%
=\overline{Per(B)}.
\]

To treat the general case where $C^{\pm}$ is only nonnegative definite., we
can use use a passage to the limit or alternatively, the proposition \ref{dec}
(or more precisely its extension including a current) to reduce the sets
$X^{+}$ and $X^{-}$ to the support of $C^{\pm}$.

To prove ii) let us first show the assumptions imply that the $X^{+}\times
X^{+}$\ matrix $G_{x,y}^{\pm}=G^{x,\rho(y)}\ $is also nonnegative definite.
Let us write $G$ in the form $\left[
\begin{array}
[c]{cc}%
A & -C^{\pm}\\
-C^{\pm} & A
\end{array}
\right]  ^{-1}$ with $A=M_{\lambda}-C$. Then
\[
G=\left[
\begin{array}
[c]{cc}%
A^{-\frac{1}{2}} & 0\\
0 & A^{-\frac{1}{2}}%
\end{array}
\right]  \left[
\begin{array}
[c]{cc}%
I & -A^{-\frac{1}{2}}C^{\pm}A^{-\frac{1}{2}}\\
-A^{-\frac{1}{2}}C^{\pm}A^{-\frac{1}{2}} & I
\end{array}
\right]  ^{-1}\left[
\begin{array}
[c]{cc}%
A^{-\frac{1}{2}} & 0\\
0 & A^{-\frac{1}{2}}%
\end{array}
\right]  .
\]
$A^{-\frac{1}{2}}C^{\pm}A^{-\frac{1}{2}}$ is non negative definite and as
before, we can check it is a contraction since $A^{-1}C^{\pm}$ is a balayage kernel.

Note that if a symmetric nonnegative definite matrix $K$\ has eigenvalues
$\mu_{i}$, the eigenvalues of the symmetric matrix $E$ defined by
\[
\left[
\begin{array}
[c]{cc}%
I & -K\\
-K & I
\end{array}
\right]  ^{-1}=\left[
\begin{array}
[c]{cc}%
D & E\\
E & D
\end{array}
\right]
\]
are easily seen (exercise)\ to be $\frac{\mu_{i}}{1-\mu_{i}^{2}}$. Taking
$K=A^{-\frac{1}{2}}C^{\pm}A^{-\frac{1}{2}}$, it follows that the symmetric
matrix $E$ , (and in our particular case $G^{\pm}=A^{-\frac{1}{2}%
}EA^{-\frac{1}{2}}$)\ is nonnegative definite.

To finish the proof, let us take $\Sigma$ of the form $\sum\lambda
_{j}e^{\left\langle \phi,\chi_{j}\right\rangle }$. Then
\begin{multline*}
\mathbb{E}_{\phi}(\Sigma(\phi)\overline{\Sigma}(\rho(\phi))=\sum\lambda
_{j}\lambda_{q}\mathbb{E}_{\phi}(e^{\left\langle \phi,\chi_{j}\right\rangle
+\left\langle \phi,\rho(\chi_{q})\right\rangle })\\
=\sum\lambda_{j}e^{\frac{1}{2}\left\langle \chi_{j},G^{++}\chi_{j}%
\right\rangle }\lambda_{k}e^{\frac{1}{2}\left\langle \chi_{q},G^{++}\chi
_{q}\right\rangle }e^{\left\langle \chi_{j},G^{\pm}\chi_{q}\right\rangle }%
\end{multline*}
(using that $G^{\pm}$ is symmetric).

As $G^{\pm}$ is positive definite, we can conclude since $e^{\frac{1}%
{2}\left\langle \chi_{j},G^{\pm}\chi_{q}\right\rangle }=\mathbb{E}%
_{w}(e^{\left\langle w,\chi_{j}\right\rangle }e^{\left\langle w,\chi
_{q})\right\rangle })$, $w$ denoting the Gaussian field on $X^{+}$ with
covariance $G^{\pm}$.

To prove iii), note that the transfer impedance matrix can be decomposed as
$G$. In particular, $K_{i,j}=-(K_{\xi_{i},\xi_{j}}^{\pm})^{2}$, with
\[
K_{(x,y),(u,v)}^{\pm}=K^{(x,y),(\rho(u),\rho(v))}=G^{\pm}(x,u)+G^{\pm
}(y,v)-G^{\pm}(x,v)-G^{\pm}(y,u).
\]
Then, using again the Gaussian vector $w$, and the Wick squares of its
components:
\[
(K_{(x,y),(u,v)}^{\pm})^{2}=E(:(w_{u}-w_{v})^{2}::(w_{x}-w_{y})^{2}:).
\]
\end{proof}

\begin{remark}
\begin{enumerate}

\item[a)] If $U_{j}$ are unitary representations with $d_{U}=d$ and such that
$U_{j}^{x,y}$ is the identity outside $X^{+}\times X^{+}$, i) can be extended
to variables of the form $\sum\lambda_{j}B_{(d)}^{e,e_{j},U_{j}}$ and to the
$\sigma$-field they generate.

\item[b)] The property i) can be also derived from the reflection positivity
of the free field ii) and by remark \ref{baba2}. Then it can also be proved
that for any set of points $\{x_{i}\}$ in $X^{+}$, the matrix $\mathbb{E}%
(\Phi(\mathcal{L}_{d})\overline{\Phi}(\rho(\mathcal{L}_{d})N_{x_{i},\rho
x_{j}})$ is non-negative definite.

\item[c)] In the case where $\alpha$ is a half integer, by remark \ref{baba},
the reflection positivity of the free field ii), implies i) holds also for any
\emph{half integer} $\alpha$ provided that $\Phi\in\sigma(\widehat
{\mathcal{L}_{\alpha}}^{x},x\in X^{+})\vee\sigma(N_{x,y}^{(\alpha)}%
+N_{y,x}^{(\alpha)},x,y\in X^{+})$.
\end{enumerate}
\end{remark}

\begin{exercise}
Prove the above remarks.
\end{exercise}

\begin{remark}
\label{gem}If there exists a partition of $X$: $X=X^{+}\cup X^{-}\cup X^{0}$,
and an involution $\rho$ on $X$ such that:

\begin{enumerate}
\item[a)] $e$ and $X^{0}$ are $\rho$-invariant.

\item[b)] $\rho(X^{\pm})=X^{\mp}$

\item[c)] $X^{+}$ and $X^{-}$ are disconnected.
\end{enumerate}

Then the assumptions of the previous theorem are satisfied for the trace on
$X^{+}\cup X^{-}$.
\end{remark}

Moreover, if $X^{0}\times X^{0}$ does not contain any edge of the graph, the
assertion i) of theorem \ref{RF}\ holds for the non disjoint sets $X^{+}\cup
X^{0}$ and $X^{-}\cup X^{0}$. More precisely, i), holds for $\Phi$\ in
$\sigma(\widehat{\mathcal{L}_{d}}^{x},x\in X^{+}\cup X^{0})\vee\sigma
(N_{x,y}^{(d)},\;x,y\in X^{+}\cup X^{0})$. It is enough to apply the theorem to the graph
obtained by duplication of each point $\ x_{0}$ in $X^{0}$ into $(x_{0}^{+},
x_{0}^{-})$, with $x_{0}^{\pm}$ connected to points in $X^{\pm}$ and connected
together by conductances $C_{x_{0}^{+}, x_{0}^{-}}$\ we can let increase to infinity.

\section{A counter example}

Let show that the reflexion positivity does not hold under $\mu$ for loop
functionals. Therefore, it will be clear it does not hold for small $\alpha$.
We will consider functionals of the occupation field.

Consider the graph formed by a cube $\pm a$, $\pm b$, $\pm c$, $\pm d$\ and
the mid-points $\pm\alpha$, $\pm\beta$, $\pm\gamma$, $\pm\delta$ of the sides
$\pm ab$, $\pm cd$, $\pm ac$, $\pm bd$. The edges are given by the sides of
the cube, as in the picture.%

\begin{figure}
[ptb]
\begin{center}
\includegraphics[
height=2.9784in,
width=3.0658in
]%
{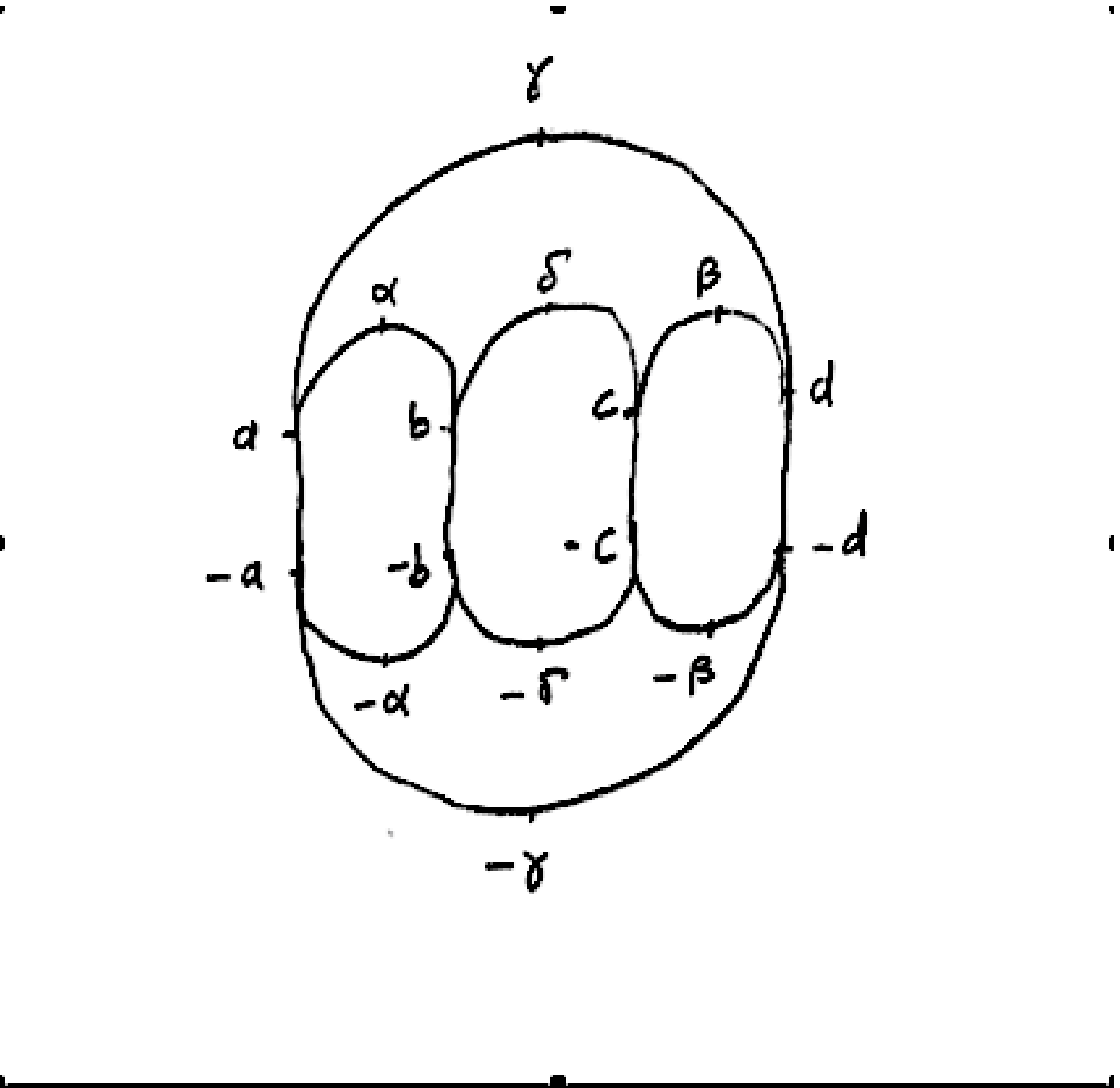}%
\caption{A counter example}%
\end{center}
\end{figure}

We can take for example all conductances and killing rates to be equal. Then
the symmetry $\rho:x\rightarrow-x$ defines an involution satisfying the
assumption of theorem \ref{RF}. Define the set of loops $A=\{l,\ \widehat
{l}^{\alpha}\widehat{l}^{\beta}>0\}$, $A^{\prime}=\{l,\ \widehat{l}^{\alpha
}=\widehat{l}^{\beta}=0\}$, $B=\{l,\ \widehat{l}^{\gamma}\widehat{l}^{\delta
}>0\}$ and $B^{\prime}=\{l,\ \widehat{l}^{\gamma}=\widehat{l}^{\delta}=0\}$.
Note that $A\cap B^{\prime}\cap\rho(A)\cap\rho(B^{\prime})$, $A^{\prime}\cap
B\cap\rho(A^{\prime})\cap\rho(B)$ are empty. But $A^{\prime}\cap B\cap
\rho(A)\cap\rho(B^{\prime})$ and $A\cap B^{\prime}\cap\rho(A^{\prime})\cap
\rho(B)$ are not (consider the loop $a\alpha b(-b)(-\delta)(-c)c\beta
d(-d)(-\gamma)(-a)a$).

Then, if we set $\Phi=1_{A\cap B^{\prime}}-1_{A^{\prime}\cap B}$, it is clear
that
\[
\mu(\Phi.\Phi\circ\rho)=-2\mu(A\cap B^{\prime}\cap\rho(A^{\prime})\cap
\rho(B))<0.
\]

\section{Physical Hilbert space and time shift:}

We will now work under the assumptions of remark \ref{gem}, namely, without
assuming that $X=X^{+}\cup X^{-}$.

The following results and terminology are inpired by methods of constructive
Quantum field theory (Cf \cite{Sim2} and \cite{Gaw}).

Let $\mathcal{H}^{+}$ be the space of square integrable functions in
$\sigma((\widehat{\mathcal{L}_{1}}^{x},x\in X^{+})\vee\sigma(N_{x,y}%
^{(1)},x,y\in X^{+})$, equipped with the scalar product $\left\langle
\Phi,\Psi\right\rangle _{\mathcal{H}}=\mathbb{E(}\Phi(\mathcal{L}_{1}%
)\Psi(\rho(\mathcal{L}_{1}))$ Note that $\left\langle \Phi,\Phi\right\rangle
_{\mathcal{H}}\leq$\ $\mathbb{E(}\Phi^{2}(\mathcal{L}_{1}))$ by
Cauchy-Schwartz inequality.

Let $\mathcal{N}$ be the subspace $\{\Psi\in\mathcal{H}^{+},\mathbb{E(}%
\Psi(\mathcal{L}_{1})\Psi(\rho(\mathcal{L}_{1}))=0\}$ and $\mathcal{H}$ the
closure (for the topology induced by this scalar product) of the quotient
space $\mathcal{H}^{+}/\mathcal{N}$ (which can be called the physical Hilbert
space). We denote $\Phi^{\sim}$ the equivalence class of $\Phi$. $\mathcal{H}$
is equipped with the scalar product defined unambiguously by $\left\langle
\Phi^{\sim},\Psi^{\sim}\right\rangle _{\mathcal{H}}=\left\langle \Phi
,\Psi\right\rangle _{\mathcal{H}}$.

Assume $X$ is of the form $X_{0}\times\mathbb{Z}$ (space $\times$ time)) and
let $\theta$ be the natural time shift. We assume $\theta$ preserves
$e$,\ i.e. that conductances and $\kappa$ are $\theta$-invariant. We define
$\rho$ by $\rho(x_{0},n)=(x_{0},-n)$ and assume $e$ is $\rho$-invariant. Note
that $\theta(X^{+})\subseteq X^{+}$ and $\rho\theta=\theta^{-1}\rho$. The
transformations $\rho$ and $\theta$ induce a transformations on loops that
preserves $\mu$, and $\theta$ induces a linear transformation of
$\mathcal{H}^{+}$. Moreover, given any $F$ in $\mathcal{N}$, $F\circ\theta
\in\mathcal{N}$, as $\left\langle F\circ\theta,F\circ\theta\right\rangle
_{\mathcal{H}}$ is nonegative and equals
\begin{align*}
\mathbb{E(}F\circ\theta(\mathcal{L}_{1})F\circ\theta(\rho(\mathcal{L}_{1}))
&  =\mathbb{E(}F(\theta(\mathcal{L}_{1}))F(\rho\circ\theta^{-1}(\mathcal{L}%
_{1}))=\mathbb{E(}F(\theta^{2}(\mathcal{L}_{1}))F(\rho(\mathcal{L}_{1}))\\
&  =\left\langle F\circ\theta^{2},F\right\rangle _{\mathcal{H}}\leq
\sqrt{\left\langle F\circ\theta^{2},F\circ\theta^{2}\right\rangle
_{\mathcal{H}}\left\langle F,F\right\rangle _{\mathcal{H}}}%
\end{align*}
which vanishes.

\begin{proposition}
There exist a self adjoint contraction of $\mathcal{H}$, we will denote
$\Pi^{(\theta)}$ such that $[\Phi\circ\theta]^{\sim}=\Pi^{(\theta)}(\Phi
^{\sim})$.
\end{proposition}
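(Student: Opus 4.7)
The plan has four steps. First I would verify well-definedness on the quotient: we already observed in the paragraph preceding the proposition that $F\in\mathcal N$ forces $F\circ\theta\in\mathcal N$ (using Cauchy--Schwarz together with $\rho\theta=\theta^{-1}\rho$ and the $\mu$-invariance of $\theta$ applied to $\mathcal L_1$). This same computation shows that for $\Phi\in\mathcal H^+$ the class $[\Phi\circ\theta]^{\sim}$ depends only on $\Phi^{\sim}$, so the formula $T\Phi^{\sim}:=[\Phi\circ\theta]^{\sim}$ defines a linear map $T$ on the dense subspace $\mathcal H^+/\mathcal N\subset\mathcal H$.

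Next I would prove symmetry of $T$ for the physical scalar product. The computation is direct:
\[
\langle T\Phi^{\sim},\Psi^{\sim}\rangle_{\mathcal H}
=\mathbb E\!\left(\Phi(\theta\mathcal L_1)\,\Psi(\rho\mathcal L_1)\right)
=\mathbb E\!\left(\Phi(\mathcal L_1)\,\Psi(\rho\theta^{-1}\mathcal L_1)\right)
=\mathbb E\!\left(\Phi(\mathcal L_1)\,(\Psi\circ\theta)(\rho\mathcal L_1)\right)
=\langle\Phi^{\sim},T\Psi^{\sim}\rangle_{\mathcal H},
\]
where the second equality uses that the law of $\mathcal L_1$ is $\theta$-invariant (because $\mu$ is, and the distribution of a Poisson ensemble is determined by its intensity), and the third equality uses $\rho\theta^{-1}=\theta\rho$.

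The main obstacle is the contraction estimate $\|T\Phi^{\sim}\|_{\mathcal H}\le\|\Phi^{\sim}\|_{\mathcal H}$. I would use the classical self-adjoint iteration trick. From symmetry,
\[
\|T\Phi^{\sim}\|_{\mathcal H}^{2}=\langle T^{2}\Phi^{\sim},\Phi^{\sim}\rangle_{\mathcal H}\le\|T^{2}\Phi^{\sim}\|_{\mathcal H}\,\|\Phi^{\sim}\|_{\mathcal H},
\]
and iterating yields $\|T\Phi^{\sim}\|_{\mathcal H}^{2^{n}}\le\|T^{2^{n}}\Phi^{\sim}\|_{\mathcal H}\,\|\Phi^{\sim}\|_{\mathcal H}^{2^{n}-1}$ for every $n\ge1$. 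It therefore suffices to bound $\|T^{n}\Phi^{\sim}\|_{\mathcal H}$ uniformly in $n$. For this,
\[
\|T^{n}\Phi^{\sim}\|_{\mathcal H}^{2}=\langle T^{2n}\Phi^{\sim},\Phi^{\sim}\rangle_{\mathcal H}
=\mathbb E\!\left((\Phi\circ\theta^{2n})(\mathcal L_1)\,(\Phi\circ\rho)(\mathcal L_1)\right)
\le\|\Phi\circ\theta^{2n}\|_{L^{2}(\mathbb P)}\,\|\Phi\circ\rho\|_{L^{2}(\mathbb P)}
=\|\Phi\|_{L^{2}(\mathbb P)}^{2},
\]
by Cauchy--Schwarz in $L^{2}(\mathbb P)$ and the $\theta$- and $\rho$-invariance of $\mathbb P$; this is finite since $\Phi\in\mathcal H^+\subset L^{2}(\mathbb P)$. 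Letting $n\to\infty$ in the iterated bound gives $\|T\Phi^{\sim}\|_{\mathcal H}\le\|\Phi^{\sim}\|_{\mathcal H}$.

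Finally, having a densely defined symmetric contraction on $\mathcal H^+/\mathcal N$, I would extend it by uniform continuity to a bounded operator $\Pi^{(\theta)}$ on the completion $\mathcal H$; the contraction bound and the symmetry relation pass to the closure, so $\Pi^{(\theta)}$ is a self-adjoint contraction satisfying $[\Phi\circ\theta]^{\sim}=\Pi^{(\theta)}(\Phi^{\sim})$ for every $\Phi\in\mathcal H^{+}$, as required.
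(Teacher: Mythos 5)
Your proposal is correct and follows essentially the same route as the paper: well-definedness on the quotient from the preceding observation that $\mathcal{N}$ is $\theta$-stable, symmetry via $\rho\theta=\theta^{-1}\rho$ and the $\theta$-invariance of the law of $\mathcal{L}_{1}$, and the contraction bound by the same iterated Cauchy--Schwarz argument in which the constant $\|\Phi\|_{L^{2}(\mathbb{P})}$ (the paper's $\mathbb{E}(F^{2}(\mathcal{L}_{1}))$) is raised to the power $2^{-n}$ and sent to $1$. The only difference is cosmetic (you phrase the iteration through powers of $T$ rather than through $F\circ\theta^{2^{n}}$), so there is nothing to add.
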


\begin{proof}
The existence of $\Pi^{(\theta)}$ follows from the last observation made
above. As $\theta$ preserves $\mathbb{\mu}$, it follows from the identity
$\rho\theta=\theta^{-1}\rho$ that
\begin{align*}
\left\langle F\circ\theta,G\right\rangle _{\mathcal{H}}  &  =\mathbb{E(}%
F(\theta(\mathcal{L}_{1}))G(\rho(\mathcal{L}_{1})=\mathbb{E(}F(\mathcal{L}%
_{1})G(\rho\circ\theta^{-1}(\mathcal{L}_{1}))=\mathbb{E(}F(\mathcal{L}%
_{1})G(\theta\circ\rho(\mathcal{L}_{1}))\\
&  =\mathbb{E(}F(\rho(\mathcal{L}_{1}))G(\theta(\mathcal{L}_{1}))=\left\langle
G\circ\theta,F\right\rangle _{\mathcal{H}}.
\end{align*}
Therefore, $\Pi^{(\theta)}$ is self adjoint on $\mathcal{H}^{+}/\mathcal{N}$.
To prove that it is a contraction, it is enough to show that $\left\langle
F\circ\theta,F\circ\theta\right\rangle _{\mathcal{H}}\leq\left\langle
F,F\right\rangle _{\mathcal{H}}$ for all $F\in\mathcal{H}^{+}$.\newline But as
shown above, $\left\langle F\circ\theta,F\circ\theta\right\rangle
_{\mathcal{H}}=\left\langle F\circ\theta^{2},F\right\rangle _{\mathcal{H}}%
\leq\sqrt{\left\langle F\circ\theta^{2},F\circ\theta^{2}\right\rangle
_{\mathcal{H}}\left\langle F,F\right\rangle _{\mathcal{H}}}.$\newline By
recursion, it follows that:
\[
\left\langle F\circ\theta,F\circ\theta\right\rangle _{\mathcal{H}}%
\leq\left\langle F\circ\theta^{2^{n}},F\circ\theta^{2^{n}}\right\rangle
_{\mathcal{H}}^{2^{-n}}\left\langle F,F\right\rangle _{\mathcal{H}}^{1-2^{-n}}%
\]
As $\left\langle F\circ\theta^{2^{n}},F\circ\theta^{2^{n}}\right\rangle
_{\mathcal{H}}^{2^{-n}}\leq(\mathbb{E(}F^{2}(\mathcal{L}_{1}))^{2^{-n}}$
converges to $1$ as $n\rightarrow\infty$, the inequality follows.
\end{proof}

\medskip

For all $n\in\mathbb{Z}$, the symmetry $\rho^{(n)}=\theta^{-n}\rho\theta^{n}$
allows to define spaces $\mathcal{H}^{(n)}$ isometric to $\mathcal{H}$. These
isometries can be denoted by the shift $\theta^{n}$. For $n>m$, $j_{n,m}%
=\theta^{m}[\Pi^{(\theta)}]^{n-m}\theta^{-n}$ is a contraction from
$\mathcal{H}^{(n)}$ into $\mathcal{H}^{(m)}$.

\chapter{The case of general symmetric Markov processes}
\section{Overview}
We now explain briefly how some of the above results can be extended to
symmetric Markov processes on continuous spaces. The construction of the loop
measure as well as a lot of computations can be performed quite generally,
using Markov processes or Dirichlet space theory (Cf for example
\cite{Fukutak}). It works as soon as the bridge or excursion measures
$\mathbb{P}_{t}^{x,y}$\ can be properly defined. The semigroup should have a
density with repect to the duality measure given by a locally integrable
kernel $p_{t}(x,y)$. This is very often the case in examples of interest,
especially in finite dimensional spaces.

The main issue is to determine wether the results which have been developped
in the previous chapters still hold, and precisely in what sense..

\paragraph{\textbf{Loop hitting distributions}}

An interesting result is formula \ref{F1F2}, and its reformulation in
proposition \ref{F1FF}.

The expression on the lefthand side is well defined but the determinants
appearing in \ref{F1F2}\ are not. In the example of Brownian motion killed at
the exit of a bounded domain, Weyl asymptotics show that the divergences
appearing on the righthand side of \ref{F1F2} may cancel. And in fact, the
righthand side in \ref{F1FF} can be well defined in terms of the densities of
the hitting distributions of $F_{1}$ and $F_{2}$ with repect to their
capacitary measures, which allow to take the trace. A direct proof, using
Brownian motion and classical potential theory, should be easy to provide,
along the lines of the solution of exercise \ref{bfk}.

\paragraph{\textbf{Determinantal processes}}

Another result of interest involves the point process defined by the points
connected to the root of a random spanning tree.  In the case of an interval of $\mathbb{Z}$, we get a process with independent spacings. For one dimensional
diffusions, this point process with independent spacings has clearly an
analogue which is the determinantal process with independent spacings (See
\cite{Sosh})\ defined by the kernel $\sqrt{k(x)}G(x,y)\sqrt{k(y)}$ ($k$ beeing
the killing rate and $G$ the Green function). For one dimensional Brownian
motion killed at a constant rate, we recover Macchi point process (Cf
\cite{Mach}).

It suggests that this process (together with the loop ensemble $\mathcal{L}%
_{1}$) can be constructed by various versions of Wilson algorithm adapted to
the real line. A similar result holds on $\mathbb{Z}$ or $\mathbb{N}$ , where
the natural ordering can be used to construct the spanning tree by Wilson
algorithm, starting at $0$.

For constant killing rate, $\sqrt{k(x)}G(x,y)\sqrt{k(y)}$ can be expressed as

 $\rho\exp(-\left|x-y\right|  /a)$, with $a,\rho>0$ and $2\rho a<1$, the law of the spacings has
therefore a density proportional to $e^{-\frac{x}{a}}\sinh(\sqrt{1-2\rho
a}\frac{x}{a})$ (Cf \cite{Mach}), which appears to be the convolution of two
exponential distributions of parameters $\frac{1}{a}(\sqrt{1-2\rho a}+1)$ and
$\frac{1}{a}(-\sqrt{1-2\rho a}+1)$. A similar result holds on $\mathbb{Z}$
with geometric distributions. The spanning forest obtained by removing the
cemetery point is composed of trees made of pair of intervals joining at
points directly connected to the cemetery, whose length are independent with
laws given by these (different!) exponential distributions. The separating
points between these trees form a determinantal process intertwinned with the
previous one (the roots directly connected to the cemetery point), with the
same distribution. There are two equally probable intertwinning configurations
on $\mathbb{R}$, and only one in $\mathbb{R}^{+}$ or $\mathbb{R}^{-}$.

\paragraph{Occupation field and continuous branching}

Let us consider more closely the occupation field $\widehat{l}$. The extension
is rather straightforward when points are not polar. We can start with a
Dirichlet space of continuous functions and a measure $m$ such that there is a
mass gap. Let $P_{t}$ denote the associated Feller semigroup. Then the Green
function  $G(x,y)$ is well defined as the mutual energy of the Dirac measures
$\delta_{x}$ and $\delta_{y}$ which have finite energy. It is the covariance
function of a Gaussian free field $\phi(x)$, and the field $\frac{1}{2}%
\phi(x)^{2}$ will have the same distribution as the field $\widehat{\mathcal{L}}_{\frac{1}{2}}^{x}$ of local times of the Poisson
process of random loops whose intensity is given by the loop measure defined
by the semigroup $P_{t}$. This will applies to examples related to
one-dimensional Brownian motion (or to Markov chains on countable spaces).

\begin{remark}
When we consider Brownian motion on the half line, the associated occupation
field $\widehat{\mathcal{L}_{\alpha}}$\ is a continuous branching process with
immigration, as in the simple random walk case considered above.
\end{remark}

\paragraph{Generalized fields and renormalization}

When points are polar, one needs to be more careful. We will consider only the
case of the two and three dimensional Brownian motion in a bounded domain
$D$\ killed at the boundary, i.e. associated with the classical energy with
Dirichlet boundary condition. The Green function does not induce a trace class
operator but it is still Hilbert-Schmidt which allows us to define
renormalized determinants $\det_{2}$ (Cf \cite{Sim}).

If $A$ is a symmetric Hilbert Schmidt operator, $\det_{2}(I+A)$ is defined as
$\prod(1+\lambda_{i})e^{-\lambda_{i}}$ where $\lambda_{i}$ are the eigenvalues
of $A$.

The Gaussian field (called free field) whose covariance function is the Green
function is now a generalized field: Generalized fields are not defined
pointwise but have to be smeared by a compactly supported continuous test
function $f$. Still $\phi(f)$ is often denoted $\int\phi(x)f(x)dx.$

The Wick powers $:\phi^{n}:$\ of the free field can be defined as generalized
fields by approximation as soon as the $2n$-th power of the Green function,
$G(x,y)^{2n}$ is locally integrable (Cf \cite{Sim2}). This is the case for all
$n$ for the two dimensional Brownian motion killed at the exit of an open set,
as the Green function has only a logarithmic singularity on the diagonal, and
for $n=2$ in dimension three as the singularity is of the order of
$\frac{1}{\left\|  x-y\right\|  }$. More precisely, taking for example
$\pi_{\varepsilon}^{x}(dy)$ to be the normalized area measure on the sphere of
radius $\varepsilon$ around $x$, $\phi(\pi_{\varepsilon}^{x})$ is a Gaussian
field with variance $\sigma_{\varepsilon}^{x}=\int G(z,z^{\prime}%
)\pi_{\varepsilon}^{x}(dz)\pi_{\varepsilon}^{x}(dz^{\prime})$. Its Wick powers
are defined with Hermite polynomials as we did previously:

$:\phi(\pi_{\varepsilon}^{x})^{n}:\;=(\sigma_{\varepsilon}^{x})^{\frac{n}{2}%
}H_{n}(\frac{\phi(\pi_{\varepsilon}^{x})}{\sqrt{\sigma_{\varepsilon}^{x}}})$.
Then one can see that, for any compactly supported continuous function $f$,
$\int f(x):\phi(\pi_{\varepsilon}^{x})^{n}:dx$ converges in $L^{2}$ towards a
limit called the $n$-th Wick power of the free field evaluated on $f$ and
denoted $:\phi^{n}:(f)$. Moreover, $\mathbb{E}(:\phi^{n}:(f):\phi
^{n}:(h))=\int G^{2n}(x,y)f(x)h(y)dxdy$.

In these cases, we can extend the statement of theorem \ref{iso} to the
renormalized occupation field $\widetilde{\mathcal{L}}_{\frac{1}{2}}^{x}$\ and
the Wick square $:\phi^{2}:$\ of the free field.

\bigskip

\section{Isomorphism for the renormalized occupation field}

Let us explain this in more detail in the Brownian motion case. Let $D$ be an
open subset of $\mathbb{R}^{d}$ such that the Brownian motion killed at the
boundary of $D$ is transient and has a Green function.\ Let $p_{t}(x,y)$ be
its transition density and $G(x,y)=\int_{0}^{\infty}p_{t}(x,y)dt$\ the
associated Green function. The loop measure $\mu$ was defined in \cite{LW} as
\[
\mu=\int_{D}\int_{0}^{\infty}\frac{1}{t}\mathbb{P}_{t}^{x,x}dt
\]
where $\mathbb{P}_{t}^{x,x}$ denotes the (non normalized) 
bridge measure of duration $t$ such that if $0\leq t_{1}\leq...\leq t_{h}\leq t$,%
\[
\mathbb{P}_{t}^{x,x}(\xi(t_{1})\in dx_{1},...,\xi(t_{h})\in dx_{h})=p_{t_{1}%
}(x,x_{1})p_{t_{2}-t_{1}}(x_{1},x_{2})...p_{t-t_{h}}(x_{h},x)dx_{1}...dx_{h}%
\]
(the mass of $\mathbb{P}_{t}^{x,x}$ is $p_{t}(x,x)$). Note that $\mu$ is a
priori defined on based loops but it is easily seen to be shift-invariant.

\bigskip

For any loop $l$ indexed by $[0\;T(l)]$, define the measure $\widehat{l}%
=\int_{0}^{T(l)}\delta_{l(s)}ds$: for any Borel set $A$, $\widehat{l}%
(A)=\int_{0}^{T(l)}1_{A}(l_{s})ds$.

\begin{lemma}
For any non-negative function $f$,
\[
\mu(\left\langle \widehat{l},f\right\rangle ^{n})=(n-1)!\int G(x_{1}%
,x_{2})f(x_{2})G(x_{2},x_{3})f(x_{3})...G(x_{n},x_{1})f(x_{1})\prod_{1}%
^{n}dx_{i}.
\]
\end{lemma}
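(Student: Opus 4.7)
The plan is to mirror the argument used earlier in the discrete setting for the identity $\mu(\widehat{l}^{x_{1},\ldots,x_{n}})=G^{x_{1},x_{2}}\cdots G^{x_{n},x_{1}}$, exploiting only the semigroup property and cyclic symmetry. Nothing deeper is needed because $f\geq 0$ makes every interchange of integrals automatic by Tonelli.

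First I will expand
\[
\langle\widehat{l},f\rangle^{n}=n!\int_{0<s_{1}<\ldots<s_{n}<T(l)}\prod_{i=1}^{n}f(l(s_{i}))\,ds_{1}\cdots ds_{n},
\]
plug this into
\[
\mu(\langle\widehat{l},f\rangle^{n})=\int_{D}dx\int_{0}^{\infty}\frac{dt}{t}\,\mathbb{P}_{t}^{x,x}\bigl(\langle\widehat{l},f\rangle^{n}\bigr),
\]
and use the explicit formula for the bridge marginals to write the integrand in terms of the transition density $p_{s}(\cdot,\cdot)$. The base point $x$ is then integrated out using the semigroup property together with the symmetry $p_{s}(u,v)=p_{s}(v,u)$: indeed
\[
\int_{D}p_{s_{1}}(x,x_{1})\,p_{t-s_{n}}(x_{n},x)\,dx=p_{t-s_{n}+s_{1}}(x_{n},x_{1}).
\]

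Second I will perform the change of variables $v_{1}=t-s_{n}+s_{1}$, $v_{i}=s_{i}-s_{i-1}$ for $2\leq i\leq n$, and $v=s_{1}$, with unit Jacobian, which maps the region $\{0<s_{1}<\ldots<s_{n}<t,\ t>0\}$ bijectively onto $\{v_{i}>0,\ 0<v<v_{1}\}$; the weight $1/t$ becomes $1/(v_{1}+\cdots+v_{n})$ and the $v$-integration produces the factor $v_{1}/(v_{1}+\cdots+v_{n})$. At this point
\[
\mu(\langle\widehat{l},f\rangle^{n})=n!\int\!\!\int\frac{v_{1}}{v_{1}+\cdots+v_{n}}\,p_{v_{1}}(x_{n},x_{1})p_{v_{2}}(x_{1},x_{2})\cdots p_{v_{n}}(x_{n-1},x_{n})\prod_{i}f(x_{i})\,dv_{i}\,dx_{i}.
\]
A simultaneous cyclic relabeling of the pairs $(x_{i},v_{i})$ leaves the integrand invariant, so averaging the $n$ equivalent forms replaces $v_{1}/(v_{1}+\cdots+v_{n})$ by $1/n$. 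The $v_{i}$'s then decouple and each integrates to $\int_{0}^{\infty}p_{v}(x_{i-1},x_{i})\,dv=G(x_{i-1},x_{i})$, leaving the constant $n!/n=(n-1)!$ and the cyclic product of Green functions asserted in the lemma.

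The hard part is really only the bookkeeping: aligning the cyclic shift of the $x_{i}$'s with the cyclic shift of the $v_{i}$'s so that the symmetrization is legitimate, and confirming that the resulting integral is finite. Finiteness is not automatic because $G(x,y)$ is singular on the diagonal; however, by Tonelli (recall $f\geq 0$) the value may be $+\infty$ on both sides of the identity, and the computation above shows the two sides are \emph{equal} as elements of $[0,\infty]$, which is all that is claimed. No renormalization is needed at this stage, since $\langle\widehat{l},f\rangle$ is an honest nonnegative random variable under $\mu$; the delicacy of defining $\widehat{\mathcal{L}}_{\alpha}$ as a distribution-valued field in dimensions where points are polar will only enter in the subsequent discussion of Wick powers.
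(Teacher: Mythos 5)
Your proposal is correct and follows essentially the same route as the paper: expand the $n$-th power as $n!$ times an ordered time integral, integrate out the base point by the semigroup property, perform the same change of variables $v_{1}=t-t_{n}+t_{1},\,v_{i}=t_{i}-t_{i-1}$, and use cyclic invariance to replace $v_{1}/(v_{1}+\cdots+v_{n})$ by $1/n$ before integrating each $v_{i}$ to produce the Green functions. The additional remark that both sides are equal as elements of $[0,\infty]$ by Tonelli is a harmless (and sensible) supplement to the paper's argument.
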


\begin{proof}
From the definition of $\mu$ and $\widehat{l}$, $\mu(\left\langle \widehat
{l},f\right\rangle ^{n})$ equals:
\begin{multline*}
n!\int\int_{\{0<t_{1}<...<t_{n}<t\}}\frac{1}{t}f(x_{1})...f(x_{n})p_{t_{1}%
}(x,x_{1})...p_{t-t_{n}}(x_{n},x)\prod dt_{i}dx_{i}dtdx\\
=n!\int\int_{\{0<t_{1}<...<t_{n}<t\}}\frac{1}{t}f(x_{1})...f(x_{n}%
)p_{t_{2}-t_{1}}(x_{1},x_{2})...p_{t_{1}+t-t_{n}}(x_{n},x_{1})\prod
dt_{i}dx_{i}dt.
\end{multline*}
Performing the change of variables $v_{2}=t_{2}-t_{1},...,v_{n}=t_{n}%
-t_{n-1},v_{1}=t_{1}+t-t_{n}$, and $v=t_{1}$, we obtain:
\begin{align*}
n!\int_{\{0<v<v_{1},0<v_{i}\}}  &  \frac{1}{v_{1}+...+v_{n}}f(x_{1}%
)...f(x_{n})p_{v_{2}}(x_{1},x_{2})...p_{v_{1}}(x_{n},x_{1})\prod dv_{i}%
dx_{i}dv\\
&  =n!\int_{\{0<v_{i}\}}\frac{v_{1}}{v_{1}+...+v_{n}}f(x_{1})...f(x_{n}%
)p_{v_{2}}(x_{1},x_{2})...p_{v_{1}}(x_{n},x_{1})\prod dv_{i}dx_{i}\\
&  =(n-1)!\int_{\{0<v_{i}\}}f(x_{1})...f(x_{n})p_{v_{2}}(x_{1},x_{2})\ldots
p_{v_{1}}(x_{n},x_{1})\prod dv_{i}dx_{i}\\
&  (\mbox{as we get the same formula
with any }  v_{i} \mbox{ instead of }
v_{1})\\
&  =(n-1)!\int G(x_{1},x_{2})f(x_{2})G(x_{2},x_{3})f(x_{3})...G(x_{n}%
,x_{1})f(x_{1})\prod_{1}^{n}dx_{i}.
\end{align*}
\end{proof}

One can define in a similar way the analogous of multiple local times, and get
for their integrals with respect to $\mu$ a formula analogous to the one
obtained in the discrete case.\bigskip

Let $G$ denote the operator on $L^{2}(D,dx)$ defined by $G$. Let $f$ be a
non-negative continuous function with compact support in $D$.

Note that $\left\langle \widehat{l},f\right\rangle $ is $\mu$-integrable only
in dimension one as then, $G$\ is locally trace class. In that case, using for
all $x$ an approximation of the Dirac measure at $x$, local times $\widehat
{l}^{x}$ can be defined in such a way that $\left\langle \widehat
{l},f\right\rangle =\int\widehat{l}^{x}f(x)dx$.

$\left\langle \widehat{l},f\right\rangle $ is $\mu$-square integrable in
dimensions one, two and three, as $G$\ is Hilbert-Schmidt if $D$ is bounded,
since $\int\int_{D\times D}G(x,y)^{2}dxdy<\infty$, and otherwise locally Hilbert-Schmidt.

\textbf{N.B.:} Considering distributions $\chi$ such that $\int\int
(G(x,y)^{2}\chi(dx)\chi(dy)<\infty$, we could see that $\left\langle
\widehat{l},\chi\right\rangle $ can be defined by approximation as a square
integrable variable and $\mu\left(  \left\langle \widehat{l},\chi\right\rangle
^{2}\right)  =\int(G(x,y)^{2}\chi(dx)\chi(dy)$.

\bigskip

Let $z$ be a complex number such that $\operatorname{Re}(z)>0$.

Note that$\ e^{-z\left\langle \widehat{l},f\right\rangle }+z\left\langle
\widehat{l},f\right\rangle -1$ is bounded by $\frac{\left|  z\right|  ^{2}}%
{2}\left\langle \widehat{l},f\right\rangle ^{2}$ and expands as an alternating
series $\sum_{2}^{\infty}\frac{z^{n}}{n!}\left(  -\left\langle \widehat
{l},f\right\rangle \right)  ^{n}$, with $\left|  e^{-z\left\langle \widehat
{l},f\right\rangle }-1-\sum_{1}^{N}\frac{z^{n}}{n!}\left(  -\left\langle
\widehat{l},f\right\rangle \right)  ^{n}\right|  \leq\frac{\left|
z\left\langle \widehat{l},f\right\rangle \right|  ^{N+1}}{(N+1)!}.$ Then, for
$\left|  z\right|  $ small enough., it follows from the above lemma that
\[
\mu\left(  e^{-z\left\langle \widehat{l},f\right\rangle }+z\left\langle
\widehat{l},f\right\rangle -1\right)  =\sum_{2}^{\infty}\frac{z^{n}}%
{n}Tr(-(M_{\sqrt{f}}GM_{\sqrt{f}})^{n}).
\]
As $M_{\sqrt{f}}GM_{\sqrt{f}}$ is Hilbert-Schmidt the renormalized determinant
$\det_{2}(I+zM_{\sqrt{f}}GM_{\sqrt{f}})$ is well defined and the second member
writes -$\log(\det_{2}(I+zM_{\sqrt{f}}GM_{\sqrt{f}}))$.\newline Then the
identity%
\[
\mu(e^{-z\left\langle \widehat{l},f\right\rangle }+z\left\langle \widehat
{l},f\right\rangle -1)=-\log(\det{}_{2}(I+zM_{\sqrt{f}}GM_{\sqrt{f}})).
\]
extends, as both sides are analytic as locally uniform limits of analytic
functions, to all complex values with positive real part.

\bigskip

\index{renormalized occupation field}The renormalized occupation field $\ \widetilde{\mathcal{L}_{\alpha}}$ is
defined as the compensated sum of all $\widehat{l}$ in $\mathcal{L}_{\alpha}$
(formally, $\ \widetilde{\mathcal{L}_{\alpha}}=\widehat{\mathcal{L}_{\alpha}%
}-\int\int_{0}^{T(l)}\delta_{l_{s}}ds\mu(dl)).\ $More precisely, we apply a
standard argument used for the construction of Levy processes, setting:%
\[
\left\langle \widetilde{\mathcal{L}_{\alpha}},f\right\rangle =\lim
_{\varepsilon\rightarrow0}\left\langle \widetilde{\mathcal{L}_{\alpha
,\varepsilon}},f\right\rangle
\]
with by definition%

\[
\left\langle \widetilde{\mathcal{L}_{\alpha,\varepsilon}},f\right\rangle
=\sum_{\gamma\in\mathcal{L}_{\alpha}}\left(  1_{\{T>\varepsilon\}}\int_{0}%
^{T}f(\gamma_{s})ds-\alpha\mu(1_{\{T>\varepsilon\}}\int_{0}^{T}f(\gamma
_{s})ds)\right)  .
\]

The convergence holds a.s. and in $L^{2}$, as
\begin{multline*}
\mathbb{E((}\sum_{\gamma\in\mathcal{L}_{\alpha}}(1_{\{\varepsilon^{\prime
}>T>\varepsilon\}}\int_{0}^{T}f(\gamma_{s})ds)-\alpha\mu(1_{\{\varepsilon
^{\prime}>T>\varepsilon\}}\int_{0}^{T}f(\gamma_{s})ds))^{2})\\
=\alpha\int(1_{\{\varepsilon^{\prime}>T>\varepsilon\}}\int_{0}^{T}f(\gamma
_{s})ds)^{2}\mu(dl)
\end{multline*}

and $\mathbb{E}(\left\langle \widetilde{\mathcal{L}_{\alpha}},f\right\rangle
^{2})=Tr((M_{\sqrt{f}}GM_{\sqrt{f}})^{2})$. Note that if we fix $f$, $\alpha$
can be considered as a time parameter and $\left\langle \widetilde
{\mathcal{L}_{\alpha,\varepsilon}},f\right\rangle $ are Levy processes with
discrete positive jumps approximating a Levy process with positive jumps
$\left\langle \widetilde{\mathcal{L}_{\alpha}},f\right\rangle $. The Levy
exponent $\mu(1_{\{T>\varepsilon\}}(e^{-\left\langle \widehat{l}%
,f\right\rangle }+\left\langle \widehat{l},f\right\rangle -1))$ of
$\left\langle \widetilde{\mathcal{L}_{\alpha,\varepsilon}},f\right\rangle )$
converges towards the L\'{e}vy exponent of $\left\langle \widetilde
{\mathcal{L}_{\alpha}},f\right\rangle )$ which is $\mu((e^{-\left\langle
\widehat{l},f\right\rangle }+\left\langle \widehat{l},f\right\rangle -1))$
and, from the identity $E(e^{-\left\langle \widetilde{\mathcal{L}_{\alpha}%
},f\right\rangle })=e^{-\alpha\mu(e^{-\left\langle \widehat{l},f\right\rangle
}+\left\langle \widehat{l},f\right\rangle -1)}$, we get the

\begin{theorem}
\label{detdeux}Assume $d\leq3$. Denoting $\widetilde{\mathcal{L}_{\alpha}}$ the compensated
sum of all $\widehat{l}$ in $\mathcal{L}_{\alpha}$, we have
\[
\mathbb{E}(e^{-\left\langle \widetilde{\mathcal{L}_{\alpha}},f\right\rangle
})=\det\text{\/}_{2}(I+M_{\sqrt{f}}GM_{\sqrt{f}}))^{-\alpha}.
\]
\end{theorem}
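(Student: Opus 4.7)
The plan is to follow the Lévy process construction of $\widetilde{\mathcal{L}_\alpha}$ already set up in the paragraph preceding the theorem, and then invoke the renormalized-determinant identity derived from the moment lemma.

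First I would work with the truncated, compensated field $\widetilde{\mathcal{L}_{\alpha,\varepsilon}}$. Because $\mathcal{L}_\alpha$ restricted to $\{T(l)>\varepsilon\}$ is an honest Poisson point process with finite intensity on loops of duration greater than $\varepsilon$ (as $\mu(T>\varepsilon)<\infty$), the exponential formula for Poisson point processes applies and gives
\begin{equation*}
\mathbb{E}\bigl(e^{-\langle \widetilde{\mathcal{L}_{\alpha,\varepsilon}},f\rangle}\bigr)
= \exp\Bigl(\alpha\, \mu\bigl(1_{\{T>\varepsilon\}}(e^{-\langle \widehat{l},f\rangle}+\langle\widehat{l},f\rangle-1)\bigr)\Bigr).
\end{equation*}
This is the standard Campbell/Lévy--Khintchine identity; here the integrand is bounded by $\tfrac{1}{2}\langle\widehat{l},f\rangle^2$, which is $\mu$-integrable by the lemma since $G$ is (locally) Hilbert--Schmidt in dimensions $d\leq 3$, so the exponent is finite.

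Next I would pass to the limit $\varepsilon\to 0$. On the left-hand side, the $L^2$ convergence $\langle\widetilde{\mathcal{L}_{\alpha,\varepsilon}},f\rangle\to\langle\widetilde{\mathcal{L}_{\alpha}},f\rangle$ established in the paragraph above the theorem, together with the a priori bound on the second moment, gives convergence of the Laplace transforms by dominated convergence (the random variables $e^{-\langle\widetilde{\mathcal{L}_{\alpha,\varepsilon}},f\rangle}$ are uniformly bounded by $1$ since $f\geq 0$, but one must be careful about the compensator; alternatively one uses the Lévy-process interpretation in $\alpha$ and the continuity of characteristic functions). On the right-hand side, monotone convergence in $\varepsilon\downarrow 0$ gives
\begin{equation*}
\mu\bigl(1_{\{T>\varepsilon\}}(e^{-\langle\widehat{l},f\rangle}+\langle\widehat{l},f\rangle-1)\bigr)
\;\longrightarrow\;
\mu\bigl(e^{-\langle\widehat{l},f\rangle}+\langle\widehat{l},f\rangle-1\bigr),
\end{equation*}
and by the identity established just before the theorem this limit equals $-\log\det_2(I+M_{\sqrt f}GM_{\sqrt f})$.

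Combining the two limits yields the claimed formula. The main technical point is the simultaneous control of both limits: one needs the Lévy exponent to be finite (which is where Hilbert--Schmidt integrability, i.e.\ $d\leq 3$, enters), and one needs the compensated sum to actually converge to a genuine random variable whose Laplace transform is the exponential of the Lévy exponent. Both ingredients are in place --- the former via the moment lemma, the latter via the $L^2$ construction. A small subtlety worth verifying is that the identification of the Laplace transform of a (possibly) non-integrable, compensated Poisson integral with $\exp$ of the compensated Lévy exponent is legitimate for $f\geq 0$ bounded with compact support; this is standard but I would write it out by first taking $f$ bounded so all moments behave, then extending by monotone/density arguments inside $L^2_+$-valued test functions.
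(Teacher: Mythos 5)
Your proposal is correct and follows essentially the same route as the paper: truncate at $T>\varepsilon$, apply the Poisson exponential formula to the compensated sum $\left\langle \widetilde{\mathcal{L}_{\alpha,\varepsilon}},f\right\rangle$, let $\varepsilon\to 0$ using the $L^{2}$ construction and monotone (or dominated) convergence of the L\'evy exponent, and invoke the $\det_{2}$ identity established just before the theorem. The only slip is the parenthetical claim that $e^{-\left\langle \widetilde{\mathcal{L}_{\alpha,\varepsilon}},f\right\rangle}\leq 1$ (false, since the compensator makes the exponent signed), but as you yourself flag, this is repaired by uniform integrability, e.g.\ the uniform bound on second moments obtained from the same exponential formula applied to $2f$, whose exponent is controlled by $2\left\langle \widehat{l},f\right\rangle^{2}$.
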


Moreover $e^{-\left\langle \widetilde{\mathcal{L}_{\alpha,\varepsilon}%
},f\right\rangle }$ converges a.s. and in $L^{1}$\ towards $e^{-\left\langle
\widetilde{\mathcal{L}_{\alpha}},f\right\rangle }$.

Considering distributions of finite $G^{2}$-energy $\chi$ (i.e. such that
${\int(G(x,y)^{2}\chi(dx)\chi(dy)<\infty}$), we can see that $\left\langle
\widetilde{\mathcal{L}_{\alpha}},\chi\right\rangle $ can be defined by
approximation as $\lim_{\lambda\rightarrow\infty}(\left\langle \widetilde
{\mathcal{L}_{\alpha}},\lambda G_{\lambda}\chi\right\rangle )$ and
\[
\mathbb{E}(\left\langle \widetilde{\mathcal{L}_{\alpha}},\chi\right\rangle
^{2})=\alpha\int(G(x,y))^{2}\chi(dx)\chi(dy).
\]
Specializing to $\alpha=\frac{k}{2}$, $k$ being any positive integer we have:

\begin{corollary}
The renormalized occupation field $\widetilde{\mathcal{L}_{\frac{k}{2}}}$ and
the Wick square $\frac{1}{2}:\sum_{1}^{k}\phi_{l}^{2}:$ have the same distribution.
\end{corollary}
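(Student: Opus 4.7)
The plan is to identify the two generalized fields by computing and matching their Laplace functionals on a determining class of test functions, namely nonnegative continuous $f$ with compact support in $D$. On the loop side this is already done: Theorem \ref{detdeux} gives directly
\[
\mathbb{E}(e^{-\langle \widetilde{\mathcal{L}_{k/2}},\,f\rangle})
\;=\; {\det}_{2}\!\bigl(I+M_{\sqrt f}\,G\,M_{\sqrt f}\bigr)^{-k/2}.
\]
So the real task is to show that
\[
\mathbb{E}_{\phi}\!\Bigl(\exp\bigl(-\tfrac12\,:\!\textstyle\sum_{1}^{k}\phi_{l}^{2}\!:(f)\bigr)\Bigr)
\;=\;{\det}_{2}\!\bigl(I+M_{\sqrt f}\,G\,M_{\sqrt f}\bigr)^{-k/2},
\]
and since the $\phi_{l}$ are independent this factorises as the $k$-th power of the same identity for a single free field $\phi$, so everything reduces to proving $\mathbb{E}(e^{-\frac12:\phi^{2}:(f)})={\det}_{2}(I+M_{\sqrt f}GM_{\sqrt f})^{-1/2}$.

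To establish this I would use the regularisation already used in the text to define $:\phi^{2}:(f)$. Set $\phi_{\varepsilon}(x)=\phi(\pi_{\varepsilon}^{x})$; this is a genuine smooth centred Gaussian field with covariance $G_{\varepsilon}(x,y)=\iint G(z,z')\pi_{\varepsilon}^{x}(dz)\pi_{\varepsilon}^{y}(dz')$ and diagonal $\sigma_{\varepsilon}^{x}$, and the integral operator $G_{\varepsilon}$ truncated by $M_{\sqrt f}$ is trace class. The classical Gaussian Laplace identity gives, for $f$ small enough (i.e.\ $\|M_{\sqrt f}G_{\varepsilon}M_{\sqrt f}\|<1$),
\[
\mathbb{E}\bigl(e^{-\frac12\int f(x)\phi_{\varepsilon}(x)^{2}dx}\bigr)
=\det\!\bigl(I+M_{\sqrt f}G_{\varepsilon}M_{\sqrt f}\bigr)^{-1/2}.
\]
Since the Wick correction is $:\phi_{\varepsilon}(x)^{2}:=\phi_{\varepsilon}(x)^{2}-\sigma_{\varepsilon}^{x}$ and $\int f\,\sigma_{\varepsilon}^{\cdot}=\mathrm{Tr}(M_{\sqrt f}G_{\varepsilon}M_{\sqrt f})$, multiplying by $\exp(\tfrac12\mathrm{Tr}(M_{\sqrt f}G_{\varepsilon}M_{\sqrt f}))$ and using the very definition ${\det}_{2}(I+A)=\det(I+A)e^{-\mathrm{Tr}(A)}$ for trace class $A$ yields the regularised identity
\[
\mathbb{E}\bigl(e^{-\frac12\,:\phi_{\varepsilon}^{2}:(f)}\bigr)
={\det}_{2}\!\bigl(I+M_{\sqrt f}G_{\varepsilon}M_{\sqrt f}\bigr)^{-1/2}.
\]

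The next step is to let $\varepsilon\to 0$. Because $f$ has compact support in $D$ and $d\le 3$, the kernels $M_{\sqrt f}G_{\varepsilon}M_{\sqrt f}$ converge to $M_{\sqrt f}G M_{\sqrt f}$ in Hilbert--Schmidt norm (the local $L^{2}$ bound on $G$ dominates the pointwise convergence of $G_{\varepsilon}$); ${\det}_{2}$ is Hilbert--Schmidt continuous, so the right hand side converges to ${\det}_{2}(I+M_{\sqrt f}GM_{\sqrt f})^{-1/2}$. On the left, $:\phi_{\varepsilon}^{2}:(f)$ converges to $:\phi^{2}:(f)$ in $L^{2}(\mathbb{P}_{\phi})$ (this is precisely the $L^{2}$ definition of the Wick square recalled in the text, since $G^{2}$ is locally integrable for $d\le3$). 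Combined with uniform integrability of the exponentials -- which I would first secure by restricting to small enough $f$ so that $\|M_{\sqrt f}GM_{\sqrt f}\|<1$, then extend in $f$ by analyticity of both sides in the parameter $t\ge 0$ of $tf$ -- this passes the Laplace identity to the limit, giving the claimed equality of Laplace transforms. Raising to the $k$-th power by independence matches Theorem \ref{detdeux} and, since the collection of compactly supported nonnegative continuous $f$ separates the laws of these generalised fields, the two fields have the same distribution.

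The main obstacle is the passage to the limit for the Gaussian side: neither $\phi^{2}$ nor $\det(I+M_{\sqrt f}GM_{\sqrt f})$ makes sense individually, and both must be controlled simultaneously through the ${\det}_{2}$ renormalisation and the $L^{2}$ construction of $:\phi^{2}:(f)$. Handling the uniform integrability needed to lift $L^{2}$ convergence of $:\phi_{\varepsilon}^{2}:(f)$ to $L^{1}$ convergence of its exponential -- i.e.\ working first in a regime where $M_{\sqrt f}GM_{\sqrt f}$ has small norm and then extending by analyticity -- is the delicate point that mirrors exactly the analyticity argument used to prove Theorem \ref{detdeux} itself.
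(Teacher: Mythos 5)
Your proposal is correct and follows essentially the same route the paper intends: the corollary is obtained by matching Laplace functionals, with the loop side given by Theorem \ref{detdeux} and the Gaussian side being the standard identity $\mathbb{E}_{\phi}(e^{-\frac12 :\phi^{2}:(f)})={\det}_{2}(I+M_{\sqrt f}GM_{\sqrt f})^{-1/2}$, which the paper treats as known (it is the continuous analogue of the Gaussian computation behind Theorem \ref{iso}). Your regularisation of the Gaussian side via $\phi(\pi_{\varepsilon}^{x})$, the trace-class determinant identity, Hilbert--Schmidt continuity of ${\det}_{2}$, and the small-$f$/analyticity (or, more simply, uniform integrability from the bounded ${\det}_{2}$ expression at a slightly larger coupling) correctly fills in the details the paper leaves implicit.
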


\bigskip

If $\Theta$ is a conformal map from $D$ onto $\Theta(D)$, it follows from the
conformal invariance of the Brownian trajectories that a similar property
holds for the Brownian''loop soup''(Cf \cite{LW}). More precisely, if
$c(x)=Jacobian_{x}(\Theta)$ and, given a loop $l$, if $T^{c}(l)$ denotes the
reparametrized loop $l_{\tau_{s}}$, with $\int_{0}^{\tau_{s}}c(l_{u})du=s$,
the configuration $\Theta T^{c}(\mathcal{L}_{\alpha})$ is a Brownian loop soup
of intensity parameter $\alpha$ on $\Theta(D)$. Then we have the following:

\begin{proposition}
$\Theta(c\widetilde{\mathcal{L}_{\alpha}})$ is the renormalized occupation
field on $\Theta(D)$.
\end{proposition}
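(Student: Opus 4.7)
\bigskip

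\noindent\textbf{Proof proposal.} The plan is to identify both sides as generalized random fields through their Laplace functionals and then invoke Theorem~\ref{detdeux} together with the conformal invariance of the planar Green function.

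\emph{Step 1 (interpreting the statement).} For a test function $g\geq 0$ of compact support on $\Theta(D)$, a single-loop computation using the time change $\int_{0}^{\tau_{s}}c(l_{u})\,du=s$ gives
\[
\int_{0}^{T(T^{c}l)} g\bigl(\Theta T^{c}(l)(s)\bigr)\,ds
= \int_{0}^{T(l)} g(\Theta(l_{u}))\,c(l_{u})\,du
= \langle \widehat{l},\, c\cdot(g\circ\Theta)\rangle.
\]
Thus the pairing $\langle \Theta(c\,\widetilde{\mathcal L_{\alpha}}),g\rangle$ is to be read as $\langle \widetilde{\mathcal L_{\alpha}},\, c\cdot(g\circ\Theta)\rangle$, and this pairing is well defined since $c\cdot(g\circ\Theta)$ is bounded, continuous and compactly supported in $D$, so lies in the class of test functions for which $\widetilde{\mathcal L_{\alpha}}$ was constructed.

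\emph{Step 2 (Laplace functional via Theorem~\ref{detdeux}).} Applying Theorem~\ref{detdeux} with $f=c\cdot(g\circ\Theta)$ gives
\[
\mathbb{E}\!\left(e^{-\langle \Theta(c\,\widetilde{\mathcal L_{\alpha}}),\, g\rangle}\right)
= \det{}_{2}\!\Bigl(I+M_{\sqrt{c\,g\circ\Theta}}\,G_{D}\,M_{\sqrt{c\,g\circ\Theta}}\Bigr)^{-\alpha}.
\]

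\emph{Step 3 (matching the determinants).} Define $V:L^{2}(\Theta(D),du)\to L^{2}(D,dx)$ by $(Vf)(x)=\sqrt{c(x)}\,f(\Theta(x))$. A direct change of variables $u=\Theta(x)$, $du=c(x)\,dx$ shows $V$ is unitary. Using the conformal invariance of the Green function, $G_{D}(x,y)=G_{\Theta(D)}(\Theta(x),\Theta(y))$ (a classical fact for planar Brownian motion killed on the boundary), a short calculation, which I outline as
\[
M_{\sqrt{c\,g\circ\Theta}}\,G_{D}\,M_{\sqrt{c\,g\circ\Theta}}\,V
= V\,M_{\sqrt{g}}\,G_{\Theta(D)}\,M_{\sqrt{g}},
\]
yields a unitary equivalence of the two Hilbert--Schmidt operators. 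Since $\det{}_{2}$ depends only on the spectrum, this gives
\[
\det{}_{2}\!\Bigl(I+M_{\sqrt{c\,g\circ\Theta}}\,G_{D}\,M_{\sqrt{c\,g\circ\Theta}}\Bigr)
= \det{}_{2}\!\Bigl(I+M_{\sqrt{g}}\,G_{\Theta(D)}\,M_{\sqrt{g}}\Bigr).
\]

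\emph{Step 4 (conclusion).} Combining Steps 2 and 3 with another application of Theorem~\ref{detdeux}, this time for the loop soup on $\Theta(D)$, gives
\[
\mathbb{E}\!\left(e^{-\langle \Theta(c\,\widetilde{\mathcal L_{\alpha}}),\, g\rangle}\right)
= \mathbb{E}\!\left(e^{-\langle \widetilde{\mathcal L_{\alpha}^{\,\Theta(D)}},\, g\rangle}\right)
\]
for every admissible $g$. Since these Laplace functionals, considered on a cone of test functions separating the distributions of the two generalized fields, determine the law, the two renormalized occupation fields have the same distribution.

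\emph{Main obstacle.} The delicate point is not the algebraic identity on $\det_{2}$ but the coherence between the two renormalization schemes: the compensation of $\widetilde{\mathcal L_{\alpha}^{\,D}}$ is carried out with a cut-off on the $D$-duration $T(l)$, whereas the natural compensation for $\widetilde{\mathcal L_{\alpha}^{\,\Theta(D)}}$ uses a cut-off on the $\Theta(D)$-duration $\int_{0}^{T(l)}c(l_{u})\,du$. The Laplace-transform route I have taken circumvents this by never comparing the $\varepsilon$-approximations directly: the limits are uniquely specified by their Laplace functionals, and the two cut-offs differ only by jumps in a truncated compensated Poisson sum whose $L^{2}$ norms vanish as $\varepsilon\downarrow 0$. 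If one prefers a pathwise argument, the alternative is to prove that $\sum_{l\in\mathcal L_{\alpha}}\bigl(1_{\{T(l)>\varepsilon\}}\Theta_{*}(c\widehat l)-\alpha\mu(1_{\{T(l)>\varepsilon\}}\Theta_{*}(c\widehat l))\bigr)$ and its analogue with the cut-off $\int_{0}^{T(l)}c(l_{u})\,du>\varepsilon$ differ by a term whose second moment tends to $0$, using the $L^{2}$ isometry underlying the construction of $\widetilde{\mathcal L_{\alpha}}$.
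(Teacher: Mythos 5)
Your Steps 1--3 are correct as far as they go: the intertwining $M_{\sqrt{c\,g\circ\Theta}}\,G_{D}\,M_{\sqrt{c\,g\circ\Theta}}\,V=V\,M_{\sqrt{g}}\,G_{\Theta(D)}\,M_{\sqrt{g}}$ with your unitary $V$, together with the conformal invariance of the planar Green function, does give $\det{}_{2}(I+M_{\sqrt{c\,g\circ\Theta}}G_{D}M_{\sqrt{c\,g\circ\Theta}})=\det{}_{2}(I+M_{\sqrt{g}}G_{\Theta(D)}M_{\sqrt{g}})$. But this Laplace-functional route only yields equality \emph{in law}, and that is not what the proposition asserts. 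The configuration $\Theta T^{c}(\mathcal{L}_{\alpha})$ is a Brownian loop soup on $\Theta(D)$ defined on the \emph{same} probability space as $\mathcal{L}_{\alpha}$, and the statement is that the renormalized occupation field built from it --- the compensated sum performed \emph{after} the map, with cut-off on the image duration $\int_{0}^{T}c(l_{u})\,du$ --- coincides almost surely with $\Theta(c\widetilde{\mathcal{L}_{\alpha}})$, the compensated sum performed \emph{before} the map, with cut-off on $T$. Equality of Laplace functionals of two random fields living on the same space identifies their distributions, not the fields themselves, so your Step 4 proves only the (weaker, and essentially immediate once Steps 2--3 are in place) conformal covariance in distribution.

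The actual content of the proposition is exactly the point you relegate to your final paragraph as an ``alternative'' and do not carry out: one must show that the two compensated sums, with cut-offs $\{T>\varepsilon\}$ and $\{\int_{0}^{T}c(l_{u})du>\eta\}$, have the same $L^{2}$ limit. By the $L^{2}$ isometry for compensated Poisson sums, the discrepancy coming from loops in the events $\{T\leq\varepsilon,\ \int_{0}^{T}c\,du>\eta\}$ and $\{T>\varepsilon,\ \int_{0}^{T}c\,du\leq\eta\}$ has second moment $\alpha\int(\cdot)^{2}d\mu$, so it suffices to prove that $\int\bigl[1_{\{T\leq\varepsilon\}}\langle\widehat{l},f\rangle\bigr]^{2}\mu(dl)$ and $\int\bigl[1_{\{\int_{0}^{T}c\,du\leq\eta\}}\langle\widehat{l},f\rangle\bigr]^{2}\mu(dl)$ tend to $0$ (with $f=c\,(g\circ\Theta)$); the second estimate reduces to the first when $c$ is bounded away from zero, and the general case follows by exhausting $D$ by relatively compact open subsets. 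This $L^{2}$ comparison is precisely the paper's proof; your unproved assertion that ``the two cut-offs differ only by jumps \dots whose $L^{2}$ norms vanish'' is the whole point, so as written the argument has a genuine gap.
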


\begin{proof}
We have to show that the compensated sum is the same if we perform it after or
before the time change. For this it is enough to check that
\begin{align*}
&  \mathbb{E}([\sum_{\gamma\in\mathcal{L}_{\alpha}}(1_{\{\tau_{T}>\eta
\}}1_{\{T\leq\varepsilon\}}\int_{0}^{T}f(\gamma_{s})ds-\alpha\int
(1_{\{\tau_{T}>\eta\}}1_{\{T\leq\varepsilon\}}\int_{0}^{T}f(\gamma_{s}%
)ds)\mu(d\gamma)]^{2})\\
&  =\alpha\int(1_{\{\tau_{T}>\eta\}}1_{\{T\leq\varepsilon\}}\int_{0}%
^{T}f(\gamma_{s})ds)^{2}\mu(d\gamma)
\end{align*}
and%
\begin{align*}
&  \mathbb{E}([\sum_{\gamma\in\mathcal{L}_{\alpha}}(1_{\{T>\varepsilon
\}}1_{\tau_{T}\leq\eta}\int_{0}^{T}f(\gamma_{s})ds-\alpha\int
(1_{\{T>\varepsilon\}}1_{\tau_{T}\leq\eta}\int_{0}^{T}f(\gamma_{s}%
)ds)\mu(d\gamma)]^{2})\\
&  \alpha\int(1_{\{T>\varepsilon\}}1_{\tau_{T}\leq\eta}\int_{0}^{T}%
f(\gamma_{s})ds)^{2}\mu(d\gamma)\
\end{align*}
converge to zero as $\varepsilon$ and $\eta$ go to zero. It follows from the
fact that:
\[
\int[1_{\{T\leq\varepsilon\}}\int_{0}^{T}f(\gamma_{s})ds]^{2}\mu(d\gamma)
\]
and
\[
\int[1_{\tau_{T}\leq\eta}\int_{0}^{T}f(\gamma_{s})ds]^{2}\mu(d\gamma)
\]
converge to $0$. The second follows easily from the first\ if $c$ is bounded
away from zero. We can always consider the ''loop soups'' in an increasing
sequence of relatively compact open subsets of $D$ to reduce the general case
to that situation.
\end{proof}

As in the discrete case (see corollary \ref{mom}), we can compute product
expectations. In dimension $\leq3$, for $f_{j}$ continuous functions with
compact support in $D$:
\begin{equation}
\mathbb{E(}\left\langle \widetilde{\mathcal{L}_{\alpha}},f_{1}\right\rangle
...\left\langle \widetilde{\mathcal{L}_{\alpha}},f_{k}\right\rangle )=\int
Per_{\alpha}^{0}(G(x_{l},x_{m}),1\leq l,m\leq k)\prod f_{j}(x_{j})dx_{j}.
\label{prodl2}%
\end{equation}

\section{Renormalized powers}

In dimension one, as in the discrete case, powers of the occupation field can
be viewed as integrated self intersection local times. In dimension two,
renormalized powers of the occupation field, also called \textsl{renormalized
self intersections local times }can be defined, using renormalization
polynomials derived from the polynomials $Q_{k}^{\alpha,\sigma}$\ defined in
section \ref{momts}. The polynomials $Q_{k}^{\alpha,\sigma}$ cannot be used
directly as pointed out to me by Jay Rosen. See Dynkin \cite{Dynkin1},
\cite{Dynkin2}, \cite{Legall}, \cite{MarcusRosen} for such
definitions and proofs of convergence in the case of paths.

Assume $d=2$. Let $\pi_{\varepsilon}^{x}(dy)$ be the normalized arclength on
the circle of radius $\varepsilon$ around $x$, and set $\sigma_{\varepsilon
}^{x}=\int G(y,z)\pi_{\varepsilon}^{x}(dy)\pi_{\varepsilon}^{x}(dz)$.

As the distance between $x$ and $y$ tends to $0$, $G(x,y)$ is equivalent to
$G_{0}(x,y)=\frac{1}{\pi}\log(\left\|  x-y\right\|  )$ and moreover,
$G(x,y)=G_{0}(x,y)-H^{D^{c}}(x,dz)G_{0}(z,y)$, $H^{D^{c}}$ denoting the
Poisson kernel on the boundary of $D$.

Let $G_{x,x}^{(\varepsilon)}$ (respectively $G_{y,y}^{(\varepsilon^{\prime})}%
$, $G_{x,y}^{(\varepsilon,\varepsilon^{\prime})}$, $G_{y,x}^{(\varepsilon
^{\prime},\varepsilon)}$) denote the operator from $L^{2}(\pi_{\varepsilon
}^{x})$ into $L^{2}(\pi_{\varepsilon}^{x})$ (respectively $L^{2}%
(\pi_{\varepsilon^{\prime}}^{y})$ into $L^{2}(\pi_{\varepsilon^{\prime}}^{y}%
)$, $L^{2}(\pi_{\varepsilon}^{x})$ into $L^{2}(\pi_{\varepsilon^{\prime}}%
^{y})$, $L^{2}(\pi_{\varepsilon^{\prime}}^{y})$ into $L^{2}(\pi_{\varepsilon
}^{x})$) induced by the restriction of the Green functions to the the circle
pairs. Let $\iota_{x,y}^{(\varepsilon,\varepsilon^{\prime})}$ be the isometry
$L^{2}(\pi_{\varepsilon}^{x})$ into $L^{2}(\pi_{\varepsilon^{\prime}}^{y})$
induced by the natural map between the circles.

$G_{x,x}^{(\varepsilon)}$ and $G_{y,y}^{(\varepsilon)}$ are clearly Hilbert
Schmidt operators, while the products $G_{x,y}^{(\varepsilon,\varepsilon
^{\prime})}\iota_{y,x}^{(\varepsilon^{\prime},\varepsilon)}$ and
$G_{y,x}^{(\varepsilon^{\prime}\varepsilon)}\iota_{x,y}^{(\varepsilon
,\varepsilon^{\prime})}$ are trace-class.

We define the renormalization polynomials via the following generating
function:
\[
\mathbf{q}_{x,\varepsilon,\alpha}(t,u)=e^{\frac{tu}{1+t\sigma_{\varepsilon
}^{x}}}\det\text{\/}_{2}(I-\frac{t}{1+t\sigma_{\varepsilon}^{x}}%
G_{x,x}^{(\varepsilon)})^{\alpha}%
\]

This generating function is new to our knowledge but one should note that the
generating functions of the polynomials $Q_{k}^{\alpha,\sigma}$ can be written
$e^{\frac{tu}{1+t\sigma}}(1-\frac{t\sigma}{1+t\sigma})^{\alpha}e^{\alpha
\frac{t\sigma}{1+t\sigma}}$ and therefore has the same form$.$

Define the renormalisation polynomials $Q_{k}^{x,\varepsilon,\alpha}$ by:%
\[
\sum t^{k}Q_{k}^{x,\varepsilon,\alpha}(u)=\mathbf{q}_{x,\varepsilon,\alpha
}(t,u)
\]

The coefficients of $Q_{k}^{x,\varepsilon,\alpha}$ involve products of terms
of the form

$Tr([G_{x,x}^{(\varepsilon)}]^{m})=\int G(y_{1},y_{2})G(y_{2},y_{3}%
)...G(y_{m},y_{1})\prod_{1}^{m}\pi_{\varepsilon}^{x}(dy_{i})$ which are
different from $(\sigma_{\varepsilon}^{x})^{m}$ (but both are equivalent to
$\left[  \frac{-\log(\varepsilon)}{\pi}\right]  ^{m}$ as $\varepsilon
\rightarrow0$).

We have the following

\begin{theorem}
For any bounded continuous function $f$ with compact support,

$\int f(x)Q_{k}^{x,\varepsilon,\alpha}(\left\langle \widetilde{\mathcal{L}%
_{\alpha}},\pi_{\varepsilon}^{x}\right\rangle )dx$ converges in $L^{2}$
towards a limit denoted $\left\langle \widetilde{\mathcal{L}_{\alpha}^{k}%
},f\right\rangle $ and
\[
\mathbb{E}(\left\langle \widetilde{\mathcal{L}_{\alpha}^{k}},f\right\rangle
\left\langle \widetilde{\mathcal{L}_{\alpha}^{l}},h\right\rangle
)=\delta_{l,k}\frac{\alpha(\alpha+1)...(\alpha+k-1)}{k!}\int G^{2k}%
(x,y)f(x)h(y)dxdy.
\]
\end{theorem}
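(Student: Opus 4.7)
The plan is to compute the generating function of two-point correlations $\mathbb{E}[\mathbf{q}_{x,\varepsilon,\alpha}(t,\langle\widetilde{\mathcal{L}_\alpha},\pi_\varepsilon^x\rangle)\,\mathbf{q}_{y,\varepsilon',\alpha}(u,\langle\widetilde{\mathcal{L}_\alpha},\pi_{\varepsilon'}^y\rangle)]$ in closed form, identify coefficients of $t^k u^l$ to obtain the pointwise limit of $\mathbb{E}[Q_k^{x,\varepsilon,\alpha} Q_l^{y,\varepsilon',\alpha}]$, and then lift this to $L^2$-convergence of the $f$-integrals by a Cauchy-criterion argument with dominated convergence. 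The key first observation is that the rotational symmetry of $\pi_\varepsilon^x$ forces $\mathbf{1}$ to be an eigenvector of $G^{(\varepsilon)}_{x,x}$ with eigenvalue $\sigma_\varepsilon^x$; the extension of Theorem~\ref{detdeux} to the finite $G^2$-energy distribution $\pi_\varepsilon^x$ then gives $\mathbb{E}[e^{-s\langle\widetilde{\mathcal{L}_\alpha},\pi_\varepsilon^x\rangle}]=\det_2(I+sG^{(\varepsilon)}_{x,x})^{-\alpha}$. Substituting $s=-t/(1+t\sigma_\varepsilon^x)$ produces a factor that exactly cancels the $\det_2$ in the definition of $\mathbf{q}_{x,\varepsilon,\alpha}$, yielding $\mathbb{E}[\mathbf{q}_{x,\varepsilon,\alpha}(t,\cdot)]=1$ and $\mathbb{E}[Q_k^{x,\varepsilon,\alpha}]=0$ for $k\ge 1$.

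For the two-point function, the same approach applied to the signed combination $s_1\pi_\varepsilon^x+s_2\pi_{\varepsilon'}^y$ gives the joint Laplace transform as $\det_2(I+M\mathbf{G})^{-\alpha}$, with $M\mathbf{G}$ the $2\times 2$ block operator on $L^2(\pi_\varepsilon^x)\oplus L^2(\pi_{\varepsilon'}^y)$ having diagonal blocks $s_1 G^{(\varepsilon)}_{x,x}$, $s_2 G^{(\varepsilon')}_{y,y}$ and off-diagonal blocks built from $G^{(\varepsilon,\varepsilon')}_{x,y}$ and $G^{(\varepsilon',\varepsilon)}_{y,x}$. Setting $s_1=-t/(1+t\sigma_\varepsilon^x)$, $s_2=-u/(1+u\sigma_{\varepsilon'}^y)$, one finds $\mathrm{Tr}(M\mathbf{G})=s_1\sigma_\varepsilon^x+s_2\sigma_{\varepsilon'}^y$, so the three $e^{-\mathrm{Tr}}$ correction factors in
\[
\mathbb{E}[\mathbf{q}_{x,\varepsilon,\alpha}(t,\cdot)\mathbf{q}_{y,\varepsilon',\alpha}(u,\cdot)]= \det\nolimits_2(I+s_1 G^{(\varepsilon)}_{x,x})^{\alpha}\det\nolimits_2(I+s_2 G^{(\varepsilon')}_{y,y})^{\alpha}\det\nolimits_2(I+M\mathbf{G})^{-\alpha}
\]
cancel exactly, leaving a ratio of ordinary Fredholm determinants. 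The Schur complement identity applied to the block $\det(I+M\mathbf{G})$ then collapses this ratio to
\[
\det\bigl(I - s_1 s_2\,G^{(\varepsilon,\varepsilon')}_{x,y}(I+s_1 G^{(\varepsilon)}_{x,x})^{-1}G^{(\varepsilon',\varepsilon)}_{y,x}(I+s_2 G^{(\varepsilon')}_{y,y})^{-1}\bigr)^{-\alpha}.
\]
Using once more that $\mathbf{1}$ is an eigenvector of $G^{(\varepsilon)}_{x,x}$ and $G^{(\varepsilon')}_{y,y}$, one computes $s_1(I+s_1 G^{(\varepsilon)}_{x,x})^{-1}\mathbf{1}=-t\mathbf{1}$ and analogously for $s_2,u$. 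For $x\ne y$ and small $\varepsilon,\varepsilon'$ the two cross blocks converge in trace norm to the rank-one operators $G(x,y)\,\mathbf{1}\otimes\mathbf{1}^{*}$, so the composite Schur operator converges in trace norm to rank-one multiplication by $tuG(x,y)^2$, and the determinant to $(1-tuG(x,y)^2)^{-\alpha}$. Identifying coefficients of $t^k u^l$ yields $\lim_{\varepsilon,\varepsilon'\to 0}\mathbb{E}[Q_k^{x,\varepsilon,\alpha}Q_l^{y,\varepsilon',\alpha}]=\delta_{k,l}\,\frac{\alpha(\alpha+1)\cdots(\alpha+k-1)}{k!}\,G(x,y)^{2k}$ pointwise.

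To promote this pointwise statement to $L^2$-convergence of $\Xi_k^\varepsilon(f):=\int f(x)Q_k^{x,\varepsilon,\alpha}(\langle\widetilde{\mathcal{L}_\alpha},\pi_\varepsilon^x\rangle)\,dx$, I write $\mathbb{E}|\Xi_k^\varepsilon(f)-\Xi_k^{\varepsilon''}(f)|^2$ by Fubini as the $f(x)f(y)$-integral of the combination $\mathbb{E}_{\varepsilon,\varepsilon}-\mathbb{E}_{\varepsilon,\varepsilon''}-\mathbb{E}_{\varepsilon'',\varepsilon}+\mathbb{E}_{\varepsilon'',\varepsilon''}$ of two-point correlations, whose integrand tends to $0$ pointwise. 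Uniform local-$L^1$ domination follows from the generating-function identity together with the two-dimensional estimate $G^{(\varepsilon,\varepsilon')}(x,y):=\iint G\,d\pi_\varepsilon^x d\pi_{\varepsilon'}^y\le C\log\tfrac{1}{\max(|x-y|,\varepsilon,\varepsilon')}+C'$, whose $2k$-th power is locally integrable uniformly in $\varepsilon,\varepsilon'$ (the contribution of $|x-y|<\varepsilon+\varepsilon'$ is of measure $O(\varepsilon^2+\varepsilon'^2)$ and is negligible in the limit). Dominated convergence then delivers both the $L^2$-limit $\langle\widetilde{\mathcal{L}_\alpha^k},f\rangle$ and the claimed covariance formula.

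The main obstacle will be the algebraic simplification in the first step producing the clean Schur identity: it is precisely the exact cancellation of the divergent $e^{-s_i\sigma_{\varepsilon_i}^{\cdot}}$-corrections that justifies the definition of $\mathbf{q}_{x,\varepsilon,\alpha}$ through an operator-valued $\det_2$ rather than through the scalar polynomials $Q_k^{\alpha,\sigma}$ of Section~\ref{momts} evaluated at $\sigma=\sigma_\varepsilon^x$. In two dimensions the higher traces $\mathrm{Tr}((G^{(\varepsilon)}_{x,x})^m)$, $m\ge 2$, grow at a different rate from $(\sigma_\varepsilon^x)^m$, so any scalar-in-$\sigma$ renormalization would fail to compensate the singularities in the loop soup — this is the point flagged in the acknowledgement to J.~Rosen. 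Packaging the whole divergent counterterm tower inside one $\det_2$ of an operator on $L^2(\pi_\varepsilon^x)$ is the structural trick that enables the cancellation and leaves a bounded cross-operator whose spectrum is governed by $G(x,y)^2$ in the limit.
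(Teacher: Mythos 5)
Your overall architecture coincides with the paper's: compute the joint generating function $\mathbb{E}\bigl(\mathbf{q}_{x,\varepsilon,\alpha}(t,\cdot)\,\mathbf{q}_{y,\varepsilon',\alpha}(s,\cdot)\bigr)$ by extending Theorem \ref{detdeux} to the pair of circles, reduce it algebraically to a single determinant built only from the cross operators $G_{x,y}^{(\varepsilon,\varepsilon')}$, $G_{y,x}^{(\varepsilon',\varepsilon)}$, identify the $t^{k}s^{l}$ coefficients, and integrate against $f$. However, two of your pivotal claims are not correct as stated. First, $\mathbf{1}$ is \emph{not} an exact eigenvector of $G_{x,x}^{(\varepsilon)}$: the Dirichlet Green function is $G_{0}$ minus a harmonic correction, and only the $G_{0}$ part is rotation invariant around $x$; one only has $\bigl|\int G(y_{1},y_{2})\pi_{\varepsilon}^{x}(dy_{2})-\sigma_{\varepsilon}^{x}\bigr|\leq C\varepsilon$. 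Second, the diagonal blocks $G_{x,x}^{(\varepsilon)}$ are Hilbert--Schmidt but not trace class (the kernel diverges logarithmically on the diagonal), so the identity $\mathrm{Tr}(M\mathbf{G})=s_{1}\sigma_{\varepsilon}^{x}+s_{2}\sigma_{\varepsilon'}^{y}$ and the asserted ``exact cancellation of the $e^{-\mathrm{Tr}}$ corrections'' are not meaningful; the cancellation must be run through the $\det{}_2$ algebra (using $\det{}_2(I+A)\det{}_2(I+B)=e^{-\mathrm{Tr}(AB)}\det{}_2((I+A)(I+B))$ arranged so that only off-diagonal, genuinely trace-class products are ever traced), which is how the paper reduces the ratio to $\det(I-V^{\ast}V)^{-\alpha}$. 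These points are repairable (an approximate eigenvector with $O(\varepsilon)$ error suffices for the pointwise limit at fixed $x\neq y$), but they cannot be invoked as exact identities.

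The genuine gap is the passage from pointwise convergence of $\mathbb{E}(Q_{k}^{x,\varepsilon,\alpha}Q_{l}^{y,\varepsilon',\alpha})$ to the $L^{2}$ Cauchy property of $\int f(x)Q_{k}^{x,\varepsilon,\alpha}(\langle\widetilde{\mathcal{L}_{\alpha}},\pi_{\varepsilon}^{x}\rangle)dx$. You assert a uniform local domination of the two-point correlation by $\bigl(C\log\frac{1}{\max(\|x-y\|,\varepsilon,\varepsilon')}+C'\bigr)^{2k}$ ``from the generating-function identity'', but this does not follow: the region in $(t,s)$ on which the determinant expansion is controlled shrinks as $x$ and $y$ approach each other, and extracting the $t^{k}s^{l}$ coefficient reintroduces, through the expansion of $(I+t\sigma_{\varepsilon}^{x}-tG_{x,x}^{(\varepsilon)})^{-1}$, individually divergent diagonal quantities; one must first observe that no pure diagonal traces $\mathrm{Tr}([G_{x,x}^{(\varepsilon)}]^{m})$ survive (every cycle in the $\alpha$-permanent visits both circles) and then estimate the surviving mixed traces uniformly in $(x,y)$ and $\varepsilon$. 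This is exactly the content of the paper's key lemma (the $\sqrt{\varepsilon}$-splitting bound $\Delta\leq C\sqrt{\varepsilon}\log(\varepsilon)^{N}$ away from the diagonal, together with $\int G^{k}\,d\pi_{\varepsilon_{1}}^{x_{1}}d\pi_{\varepsilon_{2}}^{x_{2}}\leq C|\log\varepsilon|^{k}$ near it), which yields the uniform estimate (\ref{maj}) and hence, after integration against $f(x)h(y)$ and the choice $\varepsilon_{n}=2^{-n}$, both the covariance formula and the $L^{2}$ convergence. Without proving such a quantitative, $\varepsilon$-uniform bound, your dominated-convergence step — and with it the theorem — is not established.
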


\begin{proof}
The idea of the proof can be understood by trying to prove that
\[
\mathbb{E((}\int f(x)Q_{k}^{x,\varepsilon,\alpha}(\left\langle \widetilde
{\mathcal{L}_{\alpha}},\pi_{\varepsilon}^{x}\right\rangle )dx)^{2})
\]
remains bounded as $\varepsilon$ decreases to zero. One should expand this
expression in terms of sums of integrals of product of Green functions and
check that cancellations analogous to the the combinatorial identities
(\ref{null})\ imply the cancelation of the logarithmic divergences.

These cancellations become apparent if we compute%

\[
(1)=\mathbb{E(}\mathbf{q}_{x,\varepsilon,\alpha}(t,\left\langle \widetilde
{\mathcal{L}_{\alpha}},\pi_{\varepsilon}^{x}\right\rangle )\mathbf{q}%
_{y,\varepsilon^{\prime},\alpha}(s,\left\langle \widetilde{\mathcal{L}%
_{\alpha}},\pi_{\varepsilon^{\prime}}^{y}\right\rangle ))
\]

which is well defined for $s$ and $t$ small enough. As the measures
$\pi_{\varepsilon}^{x}$ and $\pi_{\varepsilon^{\prime}}^{y}$ are mutually
singular $L^{2}(\pi_{\varepsilon}^{x}+\pi_{\varepsilon^{\prime}}^{y})$ is the
direct sum of $L^{2}(\pi_{\varepsilon}^{x})$\ and $L^{2}(\pi_{\varepsilon
^{\prime}}^{y})$, and any operator on $L^{2}(\pi_{\varepsilon}^{x}%
+\pi_{\varepsilon^{\prime}}^{y})$ can be written as a matrix $\left(
\begin{array}
[c]{cc}%
A & B\\
C & D
\end{array}
\right)  $ where $A$ (respectively $D,B,C)$ is an operator from $L^{2}%
(\pi_{\varepsilon}^{x})$ into $L^{2}(\pi_{\varepsilon}^{x})$ (respectively
$L^{2}(\pi_{\varepsilon^{\prime}}^{y})$ into $L^{2}(\pi_{\varepsilon^{\prime}%
}^{y})$, $L^{2}(\pi_{\varepsilon}^{x})$ into $L^{2}(\pi_{\varepsilon^{\prime}%
}^{y})$, $L^{2}(\pi_{\varepsilon^{\prime}}^{y})$ into $L^{2}(\pi_{\varepsilon
}^{x})$).

Theorem \ref{detdeux} can be proved in the same way for the Brownian motion
time changed by the inverse of the sum of the additive functionals defined by
$\pi_{\varepsilon}^{x}$ and $\pi_{\varepsilon^{\prime}}^{y}$ (its Green
function is the restriction of $G$ to the union of the two circles.
Alternatively, one can extend theorem \ref{detdeux} to measures to get the
same result). Applying this to the function equal to $t$ (respectively $s$) on
the circle of radius $\varepsilon$ around $x$ (respectively the circle of
radius $\varepsilon^{\prime}$ around $y$) yields%
\begin{align*}
(1) &  =\det\text{\/}_{2}(I-\frac{t}{1+t\sigma_{\varepsilon}^{x}}%
G_{x,x}^{(\varepsilon)})^{\alpha}\det\text{\/}_{2}(I-\frac{s}{1+s\sigma
_{\varepsilon^{\prime}}^{y}}G_{y,y}^{(\varepsilon^{\prime})})^{\alpha}.\\
&  \left[  \det\text{\/}_{2}\left(
\begin{array}
[c]{cc}%
I-\frac{t}{1+t\sigma_{\varepsilon}^{x}}G_{x,x}^{(\varepsilon)} &
-\frac{\sqrt{st}}{\sqrt{(1+t\sigma_{\varepsilon}^{x})(1+t\sigma_{\varepsilon
^{\prime}}^{y})}}G_{x,y}^{(\varepsilon,\varepsilon^{\prime})}\\
-\frac{\sqrt{st}}{\sqrt{(1+t\sigma_{\varepsilon}^{x})(1+t\sigma_{\varepsilon
^{\prime}}^{y})}}G_{y,x}^{(\varepsilon^{\prime},\varepsilon)} & I-\frac{s}%
{1+t\sigma_{\varepsilon^{\prime}}^{y}}G_{y,y}^{(\varepsilon^{\prime})}%
\end{array}
\right)  \right]  ^{-\alpha}%
\end{align*}%

\begin{align*}
&  =\left[  \det\text{\/}_{2}\left(
\begin{array}
[c]{cc}%
I-\frac{t}{1+t\sigma_{\varepsilon}^{x}}G_{x,x}^{(\varepsilon)} &
-\frac{\sqrt{st}}{\sqrt{(1+t\sigma_{\varepsilon}^{x})(1+s\sigma_{\varepsilon
^{\prime}}^{y})}}G_{x,y}^{(\varepsilon,\varepsilon^{\prime})}\\
-\frac{\sqrt{st}}{\sqrt{(1+t\sigma_{\varepsilon}^{x})(1+s\sigma_{\varepsilon
^{\prime}}^{y})}}G_{y,x}^{(\varepsilon^{\prime},\varepsilon)} & I-\frac{s}%
{1+s\sigma_{\varepsilon^{\prime}}^{y}}G_{y,y}^{(\varepsilon^{\prime})}%
\end{array}
\right)  \right]  ^{-\alpha}\\
&  .\left[  \det\text{\/}_{2}\left(
\begin{array}
[c]{cc}%
I-\frac{t}{1+t\sigma_{\varepsilon}^{x}}G_{x,x}^{(\varepsilon)} & 0\\
0 & I-\frac{s}{1+s\sigma_{\varepsilon^{\prime}}^{y}}G_{y,y}^{(\varepsilon
^{\prime})}%
\end{array}
\right)  \right]  ^{\alpha}%
\end{align*}

Note that if, $A$ and $B$ are Hilbert-Schmidt\ operators, $\det_{2}%
(I+A)\det_{2}(I+B)=e^{-Tr(AB)}\det_{2}((I+A)(I+B))$. It follows that if, $A$
and $B^{\prime}$ are Hilbert-Schmidt\ operators such that $B^{\prime\prime
}=(I+A)^{-1}(I+B^{\prime})-I$ is trace class with zero trace and
$AB^{\prime\prime}$ has also zero trace,%
\begin{align*}
\left[  \det\text{\/}_{2}(I+A)\right]  ^{-1}\det\text{\/}_{2}(I+B^{\prime}) &
=\det\text{\/}_{2}(I+B^{\prime\prime})=\det(I+B^{\prime\prime})\\
&  =\det((I+A)^{-1}(I+B^{\prime}))=\det((I+A)^{-\frac{1}{2}}(I+B^{\prime
})(I+A)^{-\frac{1}{2}}).
\end{align*}
Taking now $A=\left(
\begin{array}
[c]{cc}%
-\frac{t}{1+t\sigma_{\varepsilon}^{x}}G_{x,x}^{(\varepsilon)} & 0\\
0 & -\frac{s}{1+t\sigma_{\varepsilon^{\prime}}^{y}}G_{y,y}^{(\varepsilon
^{\prime})}%
\end{array}
\right)  $

and $B^{\prime}=\left(
\begin{array}
[c]{cc}%
-\frac{t}{1+t\sigma_{\varepsilon}^{x}}G_{x,x}^{(\varepsilon)} & -\frac{\sqrt
{st}}{\sqrt{(1+t\sigma_{\varepsilon}^{x})(1+t\sigma_{\varepsilon^{\prime}}%
^{y})}}G_{x,y}^{(\varepsilon,\varepsilon^{\prime})}\\
-\frac{\sqrt{st}}{\sqrt{(1+t\sigma_{\varepsilon}^{x})(1+t\sigma_{\varepsilon
^{\prime}}^{y})}}G_{y,x}^{(\varepsilon^{\prime},\varepsilon)} & -\frac{s}%
{1+t\sigma_{\varepsilon^{\prime}}^{y}}G_{y,y}^{(\varepsilon^{\prime})}%
\end{array}
\right)  $ we obtain easily that $A-B^{\prime}$ is trace class

as $Tr\left(  \left|  \left(
\begin{array}
[c]{cc}%
0 & -G_{x,y}^{(\varepsilon,\varepsilon^{\prime})}\\
-G_{y,x}^{(\varepsilon^{\prime},\varepsilon)} & 0
\end{array}
\right)  \right|  \right)  =2Tr(\left|  G_{x,y}^{(\varepsilon,\varepsilon
^{\prime})}\iota_{y,x}^{(\varepsilon^{\prime},\varepsilon)}\right|  )$.
Therefore, $B^{\prime\prime}$ and $AB^{\prime\prime}$ are trace class and it
is clear they have zero trace, as both are of the form $\left(
\begin{array}
[c]{cc}%
0 & S\\
R & 0
\end{array}
\right)  $.

Therefore,setting $V=\sqrt{st}(I+t\sigma_{\varepsilon}^{x}-tG_{x,x}%
^{(\varepsilon)})^{-\frac{1}{2}}G_{x,y}^{(\varepsilon,\varepsilon^{\prime}%
)}(I+s\sigma_{\varepsilon^{\prime}}^{y}-sG_{y,y}^{(\varepsilon^{\prime}%
)})^{-\frac{1}{2}}$,%

\[
(1)=\det\left(
\begin{array}
[c]{cc}%
I & -V\\
-V^{\ast} & I
\end{array}
\right)  ^{-\alpha}.
\]

Hence,%

\begin{align*}
(1)  &  =\det\left(
\begin{array}
[c]{cc}%
I & -V\\
0 & I-V^{\ast}V
\end{array}
\right)  ^{-\alpha}=\det(I-V^{\ast}V)^{-\alpha}\\
&  =\det(I-st(I+s\sigma_{\varepsilon^{\prime}}^{y}-sG_{y,y}^{(\varepsilon
^{\prime})})^{-1}G_{y,x}^{(\varepsilon^{\prime},\varepsilon)}(I+t\sigma
_{\varepsilon}^{x}-tG_{x,x}^{(\varepsilon)})^{-1}G_{x,y}^{(\varepsilon
,\varepsilon^{\prime})})^{-\alpha}.
\end{align*}

This quantity can be expanded. Setting, for any trace class kernel
$K(z,z^{\prime})$ acting on $L^{2}(\pi_{\varepsilon}^{y})$,%
\[
Per_{\alpha}(K^{(n)})=\int Per_{\alpha}(K(z_{i},z_{j}),1\leq i,j\leq
n)\prod_{1}^{n}\pi_{\varepsilon}^{y}(dz_{i})
\]

it equals:%
\[
1+\sum_{1}^{\infty}\frac{1}{k!}Per_{\alpha}(([stG_{y,x}^{(\varepsilon^{\prime
},\varepsilon)}(I+t\sigma_{\varepsilon}^{x}-tG_{x,x}^{(\varepsilon)}%
)^{-1}G_{x,y}^{(\varepsilon,\varepsilon^{\prime})}(I+s\sigma_{\varepsilon
^{\prime}}^{y}-sG_{y,y}^{(\varepsilon^{\prime})})^{-1}]^{(k)})=(2)
\]

Identifying the coeficients of $t^{k}s^{l}$ in $(1)$ and $(2)$ yields the
identity%
\[
\mathbb{E(}Q_{k}^{x,\varepsilon,\alpha}(\left\langle \widetilde{\mathcal{L}%
_{\alpha}},\pi_{\varepsilon}^{x}\right\rangle )Q_{l}^{x,\varepsilon^{\prime
},\alpha}(\left\langle \widetilde{\mathcal{L}_{\alpha}},\pi_{\varepsilon
^{\prime}}^{y}\right\rangle ))=\delta_{l,k}\frac{1}{k!}Per_{\alpha}%
([G_{y,x}^{(\varepsilon^{\prime},\varepsilon)}G_{x,y}^{(\varepsilon
,\varepsilon^{\prime})}]^{(k)})G^{2k}(x,y)+R_{k,l}%
\]

where $R_{k,l}$ is the (finite) sum of the $t^{k}s^{l}$\ coefficients
appearing in

$\sum_{1}^{\sup(k,l)}\frac{1}{k!}Per_{\alpha}(([stG_{y,x}^{(\varepsilon
^{\prime},\varepsilon)}(I+t\sigma_{\varepsilon}^{x}I-tG_{x,x}^{(\varepsilon
)})^{-1}G_{x,y}^{(\varepsilon,\varepsilon^{\prime})}(I+s\sigma_{\varepsilon
^{\prime}}^{y}I-sG_{y,y}^{(\varepsilon^{\prime})})^{-1}]^{(k)})$ (except of
course, for $k=l$, the term $\frac{1}{k!}Per_{\alpha}([G_{y,x}^{(\varepsilon
^{\prime},\varepsilon)}G_{x,y}^{(\varepsilon,\varepsilon^{\prime})}%
]^{(k)})G^{2k}(x,y)$)

The remarkable fact is that the coefficients of $Q_{k}^{x,\varepsilon,\alpha}$
are such that this expression involves no term of the form $Tr([G_{x,x}%
^{(\varepsilon)}]^{m})$ or $Tr([G_{y,y}^{(\varepsilon^{\prime})}]^{m})$.
Decomposing the permutations which appear in the expression of the $\alpha
$-permanent into cycles, we see all the terms are products of traces of
operators of the form $\int G(y_{1},y_{2})...G(y_{n},y_{1})\pi_{\varepsilon
_{1}}^{x_{1}}(dy_{1})...\pi_{\varepsilon_{n}}^{x_{n}}(dy_{n})$ in which at
least two $x_{j}$'s are distinct. It is also clear from the expression $(2)$
above\ that if we replace $G_{x,x}^{(\varepsilon)}$ and $G_{y,y}%
^{(\varepsilon^{\prime})}$ by $\sigma_{\varepsilon}^{x}I$ and $\sigma
_{\varepsilon^{\prime}}^{y}I$, the expansion becomes very simple and all terms
vanish except for $l=k$, the term $\frac{1}{k!}Per_{\alpha}([G_{y,x}%
^{(\varepsilon^{\prime},\varepsilon)}G_{x,y}^{(\varepsilon,\varepsilon
^{\prime})}]^{(k)})$ which will be proved to converge towards $\frac{\alpha
(\alpha+1)...(\alpha+k-1)}{k!}G^{2k}(x,y)=\frac{G^{2k}(x,y)}{k!}\sum_{1}%
^{k}d(k,l)\alpha^{l}$ (see remark \ref{stirl} on Stirling numbers).

To prove this convergence, and also that $R_{k,l}\rightarrow0$ as
$\varepsilon,\varepsilon^{\prime}\rightarrow0$, it is therefore enough to
prove the following:
\end{proof}

\begin{lemma}
Consider for any $x_{1},x_{2},...,x_{n}$, $\varepsilon$ small enough and
$\varepsilon\leq\varepsilon_{1},...,\varepsilon_{n}\leq2\varepsilon$, with
$\varepsilon_{i}=\varepsilon_{j}$ if $x_{i}=x_{j}$, an expression of the
form:
\[
\Delta=\left|  \prod_{i,x_{i-1}\neq x_{i}}G(x_{i-1},x_{i})(\sigma
_{\varepsilon_{i}}^{x_{i}})^{m_{i}}-\int G(y_{1},y_{2})...G(y_{n},y_{1}%
)\pi_{\varepsilon_{1}}^{x_{1}}(dy_{1})...\pi_{\varepsilon_{n}}^{x_{n}}%
(dy_{n})\right|
\]
in which we define $m_{i}$ as $\sup(h,\ x_{i+h}=x_{i})$ and in which at least
two $x_{j}$'s are distinct.Then for some positive integer $N$, and $C>0$, on
$\cap\{\left\|  x_{i-1}-x_{i}\right\|  \geq\sqrt{\varepsilon}\}$
\[
\Delta\leq C\sqrt{\varepsilon}\log(\varepsilon)^{N}%
\]
\newline 
\end{lemma}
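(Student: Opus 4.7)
\emph{Proof plan.} The strategy is to isolate, inside the integral defining $\Delta$, the main contribution matching the left-hand product, and to bound the resulting remainder factor by factor. Write $G(y,z) = -\frac{1}{\pi}\log|y-z| + h(y,z)$ where $h$ is bounded and Lipschitz on compact subsets of $D \times D$, and group the indices $1,\ldots,n$ cyclically into $r$ runs of consecutive equal $x_i$'s, with common values $x^{(s)}$, common radii $\varepsilon^{(s)} \in [\varepsilon, 2\varepsilon]$, and lengths $l_s$. The hypothesis that at least two of the $x_j$ are distinct ensures $r \geq 2$; this is the only place that hypothesis enters, and it is what allows me to cut the cyclic integral at the cross-run transitions, replacing the trace of a single operator by a product of bilinear forms.

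For each $i$ with $x_{i-1} \neq x_i$, the plan is to approximate $G(y_{i-1},y_i)$ by the constant $G(x_{i-1},x_i)$. Using $\|x_{i-1}-x_i\| \geq \sqrt{\varepsilon}$ together with $|y_j - x_j| \leq 2\varepsilon$, the elementary estimate $|\log a - \log b| \leq |a-b|/\min(a,b)$ and the Lipschitz continuity of $h$ give
$$|G(y_{i-1},y_i) - G(x_{i-1},x_i)| = O(\sqrt{\varepsilon}), \qquad |G(x_{i-1},x_i)| = O(|\log\varepsilon|).$$
Expanding $\prod_{i\,\text{cross}}(G(x_{i-1},x_i) + E_i)$ over the $r$ cross-run factors then separates a single main term from $2^r - 1$ remainders, each of the latter containing at least one factor $E_i$ of size $O(\sqrt{\varepsilon})$.

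The remaining intra-run integral for run $s$ is $J_s = \bigl((T^{(s)})^{l_s - 1}\mathbf{1},\mathbf{1}\bigr)_{L^2(\pi^{x^{(s)}}_{\varepsilon^{(s)}})}$, where $T^{(s)}$ is the self-adjoint integral operator with kernel $G$ on the circle of radius $\varepsilon^{(s)}$ around $x^{(s)}$. The key observation is that $\mathbf{1}$ is an approximate eigenvector of $T^{(s)}$: by the mean-value property for the harmonic function $z \mapsto \log|y-z|$ (applied when $y$ lies on the same circle of integration), the logarithmic part of $(T^{(s)}\mathbf{1})(y)$ is exactly the constant $-\tfrac{1}{\pi}\log\varepsilon^{(s)}$, while the smooth part differs from $\sigma^{x^{(s)}}_{\varepsilon^{(s)}}$ by $O(\varepsilon)$ uniformly in $y$ by Lipschitz continuity of $h$. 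Thus $T^{(s)}\mathbf{1} = \sigma^{x^{(s)}}_{\varepsilon^{(s)}}\mathbf{1} + \eta_s$ with $\|\eta_s\|_{\infty} = O(\varepsilon)$, and combined with the Hilbert--Schmidt bound $\|T^{(s)}\| \leq \|T^{(s)}\|_{HS} = O(|\log\varepsilon|)$, a one-line induction on $k$ yields $|J_s - (\sigma^{x^{(s)}}_{\varepsilon^{(s)}})^{l_s - 1}| = O(\varepsilon|\log\varepsilon|^{l_s - 2})$.

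Putting the two steps together, the main term of the cross-run expansion is $\prod_s G(x^{(s-1)}, x^{(s)}) \prod_s J_s$, and replacing each $J_s$ by $(\sigma^{x^{(s)}}_{\varepsilon^{(s)}})^{l_s - 1}$ produces the left-hand product of $\Delta$ modulo an error of order $\varepsilon|\log\varepsilon|^{n-1}$. Each of the cross-run remainders contains at least one $E_i$ of size $O(\sqrt{\varepsilon})$, at most $r-1$ further cross-run factors of size $O(|\log\varepsilon|)$, and residual intra-run integrals bounded by $O(|\log\varepsilon|^{n-r})$ via the same operator estimate, so each is $O(\sqrt{\varepsilon}|\log\varepsilon|^{n-1})$, and their sum is of the same order. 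This yields the lemma with $N = n-1$. The main obstacle is exactly this amplification of pointwise $O(\sqrt{\varepsilon})$ errors through a cyclic product of $n$ factors that each diverge like $|\log\varepsilon|$; using Hilbert--Schmidt (rather than naive sup) norms for the intra-run operators $T^{(s)}$ is what keeps the exponent of $|\log\varepsilon|$ polynomial in $n$.
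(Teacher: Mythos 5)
Your proof is correct, and its skeleton is the same as the paper's: freeze the cross-run factors $G(y_{i-1},y_i)$ at $G(x_{i-1},x_i)$, then replace the intra-run chain integrals by powers of $\sigma_{\varepsilon_i}^{x_i}$ using the fact that $\int G(y,z)\,\pi_{\varepsilon}^{x}(dz)$ differs from $\sigma_{\varepsilon}^{x}$ by $O(\varepsilon)$ uniformly for $y$ on the circle (your ``approximate eigenvector'' statement is exactly the paper's inequality $\vert\int G(y_1,y_2)\pi_\varepsilon^x(dy_2)-\sigma_\varepsilon^x\vert<C\varepsilon$, which indeed rests on the exact averaging identity for the logarithm over the circle plus Lipschitz continuity of the harmonic correction). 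Where you differ is in how the cross-run replacement error is controlled: the paper proceeds by triangle and Cauchy--Schwarz inequalities, reducing everything to the $L^2$ circle estimates $\int(G(y_1,y_2)-G(x_1,x_2))^2\pi_{\varepsilon_1}^{x_1}(dy_1)\pi_{\varepsilon_2}^{x_2}(dy_2)\le C\varepsilon$ (on the separated event) and $\int G^k\,d\pi\otimes d\pi\le C\vert\log\varepsilon\vert^k$, whereas you exploit the hypothesis $\Vert x_{i-1}-x_i\Vert\ge\sqrt\varepsilon$ to get the uniform pointwise bound $\vert G(y_{i-1},y_i)-G(x_{i-1},x_i)\vert=O(\sqrt\varepsilon)$ on the supports of the $\pi$'s, and you package the intra-run step as an operator-norm/Hilbert--Schmidt induction rather than the paper's one-integration-at-a-time substitution. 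Your pointwise route is simpler and suffices for the lemma as stated (which only claims the bound on the separated event, and the nonnegativity of $G$ lets you factor the residual intra-run integrals run by run); the paper's $L^2$ formulation has the side benefit that its estimate 1') also covers the complementary case $\Vert x_1-x_2\Vert<\sqrt\varepsilon$, which is what feeds the two-case bound used immediately afterwards in the proof of the theorem. Only minor caveats: the averaging identity for $\log$ with the singularity on the circle is a boundary case of the mean-value property (justified by the explicit computation $\int_0^{2\pi}\log\vert1-e^{i\theta}\vert\,d\theta=0$ rather than by harmonicity alone), and your reading of $m_i$ as the length of the run of consecutive equal points is the intended one.
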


\begin{proof}

In the integral term, we first replace progressively $G(y_{i-1},y_{i})$ by
$G(x_{i-1},x_{i})$ whenever $x_{i-1}\neq x_{i}$, using triangle, then Schwartz
inequalities, to get an upper bound of the absolute value of the difference
made by this substitution in terms of a sum $\Delta^{\prime}$\ of expressions
of the form
\[
\prod_{l}G(x_{l},x_{l+1})\sqrt{\int(G(y_{1},y_{2})-G(x_{1},x_{2}))^{2}%
\pi_{\varepsilon_{1}}^{x_{1}}(dy_{1})\pi_{\varepsilon_{2}}^{x_{2}}(dy_{2}%
)\int\prod G^{2}(y_{k},y_{k+1})\prod\pi_{\varepsilon_{k}}^{x_{k}}(dy_{k})}.
\]
The expression obtained after these substitutions can be written
\[
W=\prod_{i,x_{i-1}\neq x_{i}}G(x_{i-1},x_{i})\int G(y_{1},y_{2}%
)...G(y_{m_{i-1}},y_{m_{i}})\pi_{\varepsilon_{i}}^{x_{i}}(dy_{1}%
)...\pi_{\varepsilon_{i}}^{x_{i}}(dy_{m_{i}})
\]
and we see the integral terms could be replaced by $(\sigma_{\varepsilon
}^{x_{i}})^{m_{i}}$ if $G$ was translation invariant. But as the distance
between $x$ and $y$ tends to $0$, $G(x,y)$ is equivalent to $G_{0}%
(x,y)=\frac{1}{\pi}\log(\left\|  x-y\right\|  )$ and moreover, $G(x,y)=G_{0}%
(x,y)-H^{D^{c}}(x,dz)G_{0}(z,y)$. As our points lie in a compact inside $D$,
it follows that for some constant $C$, for $\left\|  y_{1}-x\right\|
\leq\varepsilon$, $\left|  \int(G(y_{1},y_{2})\pi_{\varepsilon}^{x}%
(dy_{2})-\sigma_{\varepsilon}^{x}\right|  <C\varepsilon$. \newline Hence, the
difference $\Delta^{\prime\prime}$ between $W$ and $\prod_{i,x_{i-1}\neq
x_{i}}G(x_{i-1},x_{i})(\sigma_{\varepsilon}^{x_{i}})^{m_{i}}$ can be bounded
by $\varepsilon W^{\prime}$, where $W^{\prime}$ is an expression similar to
$W$.

To get a good upper bound on $\Delta$, using the previous observations, by
repeated applications of H\"{o}lder inequality. it is enough to show that for
$\varepsilon$ small enough and $\varepsilon\leq\varepsilon_{1},\varepsilon
_{2}\leq2\varepsilon$, (with $C$ and $C^{\prime}$\ denoting various constants):

\begin{enumerate}
\item[1)] $\int(G(y_{1},y_{2})-G(x_{1},x_{2})^{2}\pi_{\varepsilon_{1}}^{x_{1}%
}(dy_{1})\pi_{\varepsilon_{2}}^{x_{2}}(dy_{2})$\newline $<C(\varepsilon
1_{\{\left\|  x_{1}-x_{2}\right\|  \geq\sqrt{\varepsilon}\}}+(G(x_{1}%
,x_{2})^{2}+\log(\varepsilon)^{2})1_{\{\left\|  x_{1}-x_{2}\right\|
<\sqrt{\varepsilon}\}})\label{loli}$,

\item[2)] $\int G(y_{1},y_{2})^{k}\pi_{\varepsilon}^{x}(dy_{1})\pi
_{\varepsilon}^{x}(dy_{2})<C\left|  \log(\varepsilon)\right|  ^{k}$ and more generally

\item[3)] $\int G(y_{1},y_{2})^{k}\pi_{\varepsilon_{1}}^{x_{1}}(dy_{1}%
)\pi_{\varepsilon_{2}}^{x_{2}}(dy_{2})<C\left|  \log(\varepsilon)\right|  ^{k}.$
\end{enumerate}

As the main contributions come from the singularities of $G$, they follow from
the following simple inequalities:

\begin{enumerate}
\item[1')]
\begin{multline*}
\int\left|  \log(\varepsilon^{2}+2R\varepsilon\cos(\theta)+R^{2}%
)-\log(R)\right|  ^{2}d\theta\\
=\int\left|  \log((\varepsilon/R)^{2}+2(\varepsilon/R)\cos(\theta)+1)\right|
^{2}d\theta<C((\varepsilon1_{\{R\geq\sqrt{\varepsilon}\}\}}+\log
^{2}(R/\varepsilon)1_{\{R<\sqrt{\varepsilon}\}\}})
\end{multline*}
(considering separately the cases where $\frac{\sqrt{\varepsilon}}{R}$ is
large or small)

\item[2')] $\int\left|  \log(\varepsilon^{2}(2+2\cos(\theta)))\right|
^{k}d\theta\leq C\left|  \log(\varepsilon)\right|  ^{k}$

\item[3')] $\int\left|  \log((\varepsilon_{1}\cos(\theta_{1})+\varepsilon
_{2}\cos(\theta_{2})+r)^{2}+(\varepsilon_{1}\sin(\theta_{1})+\varepsilon
_{2}\sin(\theta_{2}))^{2}\right|  ^{k}d\theta_{1}d\theta_{2}\leq C(\left|
\log(\varepsilon)\right|  )^{k}$. It can be proved by observing that for
$r\leq\varepsilon_{1}+\varepsilon_{2}$, we have near the line of singularities
(i.e. the values $\theta_{1}(r)$ and $\theta_{2}(r)$ for which the expression
under the $\log$\ vanishes) to evaluate an integral which can be bounded
(after a change of variable) by an integral of the form $C\int_{0}^{1}%
(-\log(\varepsilon u))^{k}du\leq C^{\prime}(-\log(\varepsilon))^{k}$ for
$\varepsilon$ small enough.
\end{enumerate}

To finish the proof of the theorem, let us note that by the lemma above, and
the estimate \ref{loli}\ in its proof, for $\varepsilon\leq\varepsilon
_{1},\varepsilon_{2}\leq2\varepsilon$, we have, for some integer $N^{l,k}$%

\begin{multline}
\left|  \mathbb{E}\Big(  Q_{k}^{x,\varepsilon_{1,}\alpha}(\langle
\widetilde{\mathcal{L}_{\alpha}},\pi_{\varepsilon_{1}}^{x}\rangle
)Q_{l}^{y,\varepsilon_{2},\alpha}(\langle\widetilde{\mathcal{L}_{\alpha}}%
,\pi_{\varepsilon_{2}}^{y}\rangle)\Big)  -\delta_{l,k}G(x,y)^{2k}%
\frac{\alpha(\alpha+1)...(\alpha+k-1)}{k!}\right| \\
\leq C\log(\varepsilon)^{N_{l,k}}(\sqrt{\varepsilon}+G(x,y)^{l+k}1_{\{\left\|
x-y\right\|  <\sqrt{\varepsilon}}). \label{maj}%
\end{multline}

The bound (\ref{maj}) is uniform in $(x,y)$ only away from the diagonal as
$G(x,y)$ can be arbitrarily large but we conclude from it\ that for any
bounded integrable $f$ and $h$,
\begin{multline*}
\left|  \int(\mathbb{E}(Q_{k}^{x,\varepsilon_{1,}\alpha}(\langle
\widetilde{\mathcal{L}_{\alpha}},\pi_{\varepsilon_{1}}^{x}\rangle
)Q_{l}^{y,\varepsilon_{2},\alpha}(\langle\widetilde{\mathcal{L}_{\alpha}}%
,\pi_{\varepsilon_{2}}^{y}\rangle))-\delta_{l,k}G(x,y)^{2k}\frac{\alpha
...(\alpha+k-1)}{k!})f(x)h(y)dxdy\right| \\
\leq C^{\prime}\sqrt{\varepsilon}\log(\varepsilon)^{N_{l,k}}%
\end{multline*}
(as $\int\int G(x,y)^{2k}1_{\{\left\|  x-y\right\|  <\sqrt{\varepsilon}}dxdy$
can be bounded by $C\varepsilon^{\frac{2}{3}}$, for example).

Taking $\varepsilon_{n}=2^{-n}$, it is then straightforward to check that
$\int f(x)Q_{k}^{x,\varepsilon_{1,}\alpha}(\left\langle \widetilde
{\mathcal{L}_{\alpha}},\pi_{\varepsilon_{n}}^{x}\right\rangle )dx$\ is a
Cauchy sequence in $L^{2}$. The theorem follows.
\end{proof}

\bigskip

Specializing to $\alpha=\frac{k}{2}$, $k$ being any positive integer\ as
before, it follows that Wick powers of $\sum_{j=1}^{k}\phi_{j}^{2}$\ are
associated with self intersection local times of the loops. More precisely, we have:

\begin{proposition}
The renormalized self intersection local times $\widetilde{\mathcal{L}%
_{\frac{k}{2}}^{n}}$ and the Wick powers $\frac{1}{2^{n}n!}:(\sum_{1}^{k}%
\phi_{l}^{2})^{n}:$ have the same joint distribution.
\end{proposition}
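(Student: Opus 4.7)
The plan is to extend the equality in law from the corollary to Theorem \ref{detdeux} (the case $n=1$) to the entire family of renormalized powers by a joint-moment argument. Write $\Phi := \tfrac{1}{2}{:}\textstyle\sum_{l=1}^k\phi_l^2{:}$. Since the corollary applies to any finite system of test measures of finite $G^2$-energy, the joint laws of $(\langle\widetilde{\mathcal{L}_{k/2}},\pi_{\varepsilon}^{x_j}\rangle)_j$ and $(\langle\Phi,\pi_{\varepsilon}^{x_j}\rangle)_j$ coincide for every finite family $\{x_j\}$. Hence for bounded continuous test functions $f_j$ of compact support in $D$ and positive integers $n_j$,
\[
\Bigl(\!\int\! f_j(x)\,Q_{n_j}^{x,\varepsilon,k/2}\!\bigl(\langle\widetilde{\mathcal{L}_{k/2}},\pi_\varepsilon^x\rangle\bigr)\,dx\Bigr)_{\!\!j} \stackrel{d}{=} \Bigl(\!\int\! f_j(x)\,Q_{n_j}^{x,\varepsilon,k/2}\!\bigl(\langle\Phi,\pi_\varepsilon^x\rangle\bigr)\,dx\Bigr)_{\!\!j}.
\]

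On the left, the $L^2$ limit as $\varepsilon\downarrow 0$ is $(\widetilde{\mathcal{L}_{k/2}^{n_j}}(f_j))_j$ by definition. The identity above transports $L^2$-Cauchiness to the right-hand side, whose limit therefore exists in $L^2$ and has the same joint law as $(\widetilde{\mathcal{L}_{k/2}^{n_j}}(f_j))_j$. I would then identify this Gaussian-side limit with $(\tfrac{1}{2^{n_j}n_j!}\langle{:}(\textstyle\sum_l\phi_l^2)^{n_j}{:},f_j\rangle)_j$ by a covariance computation in the Wiener chaos of $\phi$. For each fixed $\varepsilon$, expanding $Q_n^{x,\varepsilon,k/2}$ as a polynomial and applying formula \eqref{polywick} (which links Laguerre-type polynomials of the half-Wick-square to Wick powers of $\sum_l\phi_l^2$) expresses the right-hand integrand as a combination of Wick products whose diagonal divergences are subtracted by the $\det_2$-factor of $\mathbf{q}_{x,\varepsilon,k/2}$; in the limit $\varepsilon\to 0$, exactly the same subtractions occur as in the standard mollifier definition of ${:}(\sum\phi_l^2)^n{:}$, so the two limits should coincide.

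The main obstacle lies in this identification step: the polynomial $Q_n^{x,\varepsilon,k/2}$ is not the Hermite polynomial associated with the Gaussian variable $\phi(\pi_\varepsilon^x)$, and its divergent coefficients involve the operator traces $\mathrm{Tr}([G_{x,x}^{(\varepsilon)}]^m)$ rather than the scalars $(\sigma_\varepsilon^x)^m$. Nevertheless, both regularizations lie in the Wiener chaoses of $\phi$ of orders $\leq 2n$, and their cross-covariances can be computed by the same operator-determinant expansion used in the preceding theorem's proof; the lemma bounding $\Delta$ by $C\sqrt{\varepsilon}|\log\varepsilon|^N$ on the off-diagonal, together with the local integrability of $G^{2n}(x,y)$ in $d=2$, ensures that both converge to the same limiting covariance
\[
\delta_{n,m}\,\tfrac{\alpha(\alpha+1)\cdots(\alpha+n-1)}{n!}\int G^{2n}(x,y)\,f(x)\,h(y)\,dx\,dy\qquad(\alpha=k/2).
\]
Since random variables in a fixed Wiener chaos of a Gaussian space are determined by their inner products against a total family — and the Wick powers of the $\phi_l$ supply such a family — the two $L^2$ limits must coincide, which yields the proposition.
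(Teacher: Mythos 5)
Your proposal is correct and follows essentially the same route as the paper: transfer to the Gaussian side through the $n=1$ isomorphism, use the preceding theorem (via that law transfer) for the second moment of $\int f(x)\,Q_n^{x,\varepsilon,\frac k2}\bigl(\tfrac12:\sum_l\phi_l^2:(\pi_\varepsilon^x)\bigr)dx$, and identify its $L^2$ limit with $\tfrac{1}{2^n n!}:(\sum_l\phi_l^2)^n:(f)$ by matching the two square terms and the mixed term, i.e.\ by showing the $L^2$ norm of the difference tends to $0$. The only real difference is in that mixed term: instead of rerunning the determinant expansion with one factor renormalized by the scalar polynomials $Q_n^{\alpha,\sigma_{\varepsilon}^y}$ (whose generating function differs from $\mathbf{q}_{y,\varepsilon,\alpha}$ by a deterministic factor with logarithmically divergent coefficients, so this step needs a little extra care), the paper simply observes that by chaos orthogonality only the highest-degree term $\tfrac{u^n}{n!}$ of $Q_n^{x,\varepsilon,\frac k2}$ contributes against the order-$2n$ Wick power and then concludes with the combinatorial identity (\ref{freud}).
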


\begin{proof}
The proof is just a calculation of the $L^{2}$-norm of
\[
\int[\frac{1}{2^{n}n!}:(\sum_{1}^{k}\phi_{l}^{2})^{n}:(x)-Q_{n}^{x,\varepsilon
,\frac{k}{2}}(\frac{1}{2}:\sum_{1}^{k}\phi_{l}^{2}:(\pi_{\varepsilon}%
^{x}))]f(x)dx
\]
which converges to zero with $\varepsilon$.

The expectation of the square of this difference is the sum of two square
expectations which both converge towards $\frac{k(k+2)...(k+2(n-1))}{2^{n}%
n!}\int G^{2n}(x,y)f(x)f(y)dxdy$ and a middle term which converges towards
twice the opposite value. The difficult term $\mathbb{E((}Q_{n}^{x,\varepsilon
,\frac{k}{2}}(:\sum_{1}^{k}\phi_{l}^{2}:(\pi_{\varepsilon}^{x}))]f(x)dx)^{2})$
is given by the previous theorem. The two others come from simple Gaussian
calculations (note that only highest degree term $\frac{u^{n}}{n!}$\ of the
polynomial $Q_{n}^{x,\varepsilon,\frac{k}{2}}(u)$\ contributes to the
expectation of the middle term) using identity \ref{freud}.
\end{proof}

In the following exercise, we study and compare the polynomials $Q_{N}%
^{\alpha,\sigma}$ and $Q_{N}^{x,\varepsilon,\alpha}$.

\begin{exercise}
Let $d_{n,k}^{0}$ be the number of $n$-permutations with no fixed points and
$k$ cycles. If $k_{j},\;2\leq j\leq n$\ are integers such that $\sum_{j}%
jk_{j}=m$ and $\sum_{j}k_{j}=k$, let $C_{m,k}(k_{j},\;2\leq j\leq n)$ be the
number of $m$-permutations with no fixed points and $k_{j}$ cycles of length
$j$. Note that $\sum C_{m,k}(k_{j},\;2\leq j\leq n)=d_{m,k}^{0}$. Show the
following identities:

a) $\sum_{0}^{n}%
\genfrac{(}{)}{}{}{n}{m}%
d_{m,k}^{0}=d(n,k)$ (the number of $n$-permutations with $k$ cycles).

b) $C_{m,k}(k_{j},\;2\leq j\leq n)=\frac{m!}{\prod k_{j}!j^{k_{j}}}$

c) $\sum t^{N}Q_{N}^{\alpha,\sigma}(u)=e^{\frac{t(u+\alpha\sigma)}{1+t\sigma}%
}(1-\frac{t\sigma}{1+t\sigma})^{\alpha}=e^{\frac{tu}{1+t\sigma}}(1+\sum
_{m=1}^{\infty}\sum_{1\leq k\leq m}d_{m,k}^{0}\frac{(-\alpha)^{k}}%
{m!}(\frac{t\sigma}{1+t\sigma})^{m})$

$=\sum_{l=0}^{\infty}\frac{t^{l}u^{l}}{l!}(1+t\sigma)^{-l}+\sum_{l=0}^{\infty
}\sum_{m=1}^{\infty}\frac{t^{l+m}u^{l}}{m!l!}(1+t\sigma)^{-m-l}\sum_{1\leq
k\leq m}d_{m,k}^{0}(-\alpha)^{k}$

d) $Q_{N}^{\alpha,\sigma}(u)=\sum_{0\leq l\leq N}\sum_{k\leq N-l}%
a_{N,l,k}u^{l}\sigma^{N-l}\alpha^{k}$ 

with $a_{N,l,k}=\sum_{m=k}%
^{N-l}(-1)^{N-l-k-m}\frac{(N-1)!}{l!m!(N-l-m)!(m+l-1)!}d_{m,k}^{0}$ for
$k\geq1$,

$a_{N,l,0}=(-1)^{N-l-k}\frac{(N-1)!}{l!(N-l)!(l-1)!}$ and $a_{N,0,0}=0$.

e) $\mathbf{q}_{x,\varepsilon,\alpha}(t,u)=\sum_{l=0}^{\infty}\frac{t^{l}%
u^{l}}{l!}(1+t\sigma_{x}^{\varepsilon})^{-l}+\sum_{l=0}^{\infty}\sum
_{m=1}^{\infty}\frac{u^{l}t^{l+m}}{l!m!}(1+t\sigma_{x}^{\varepsilon}%
)^{-(m+l)}Per_{-\alpha}(\left[  G_{x,x}^{\varepsilon}\right]  ^{(m)})$

f) $Q_{N}^{x,\varepsilon,\alpha}(u)=\sum_{0\leq l\leq N}\sum_{k\leq
N-l}A_{N,l,k}u^{l}(\sigma_{x}^{\varepsilon})^{N-l}\alpha^{k}$ 

with
$A_{N,l,k}=\sum_{m=k}^{N-l}(-1)^{N-l-k-m}\frac{(N-1)!}{l!m!(N-l-m)!(m+l-1)!}%
D_{m,k}^{0}$ and $D_{m,k}^{0}=\sum C_{m,k}(k_{j},\;2\leq j\leq n)\prod
(Tr([\frac{G_{x,x}^{(\varepsilon)}}{\sigma_{x}^{\varepsilon}}]^{j}))^{k_{j}}$,
for $k\geq1$,

$A_{N,l,0}=(-1)^{N-l-k}\frac{(N-1)!}{l!(N-l)!(l-1)!}$ and $A_{N,0,0}=0$.

In particular, $Q_{2}^{x,\varepsilon,\alpha}(u)=\frac{1}{2}(u^{2}-2\sigma
_{x}^{\varepsilon}u-\alpha Tr([\frac{G_{x,x}^{(\varepsilon)}}{\sigma
_{x}^{\varepsilon}}]^{2}))$ and $Q_{3}^{x,\varepsilon,\alpha}(u)=\frac{1}%
{6}(u^{3}-6\sigma_{x}^{\varepsilon}u^{2}+6u(\sigma_{x}^{\varepsilon}%
)^{2}-3\alpha uTr([G_{x,x}^{(\varepsilon)}]^{2})+6\alpha\sigma_{x}%
^{\varepsilon}Tr([G_{x,x}^{(\varepsilon)}]^{2})-2\alpha Tr([G_{x,x}%
^{(\varepsilon)}]^{3}))$
\end{exercise}

\begin{exercise}
Prove that $\lim_{\varepsilon\rightarrow0}\frac{D_{m,k}^{0}}{d_{m,k}^{0}}=1$
\end{exercise}

\subsubsection*{Final remarks:}

\begin{enumerate}
\item[a)] These generalized fields have two fundamental properties:

Firstly they are local fields (or more precisely local functionals of the
field $\widetilde{\mathcal{L}_{\alpha}}$ in the sense that their values on
functions supported in an open set $D$ depend only on the trace of the loops
on $D$.

Secondly, note we could have used\ a conformally covariant regularization to
define $\widetilde{\mathcal{L}_{\alpha}^{k}}$, (along the same lines but with
slightly different estimates), by taking $\pi_{\varepsilon}^{x}$ to be the
capacitary measure of the compact set $\{y,G^{x,y}\geq-\log\varepsilon\}$ and
$\sigma_{\varepsilon}^{x}$\ its capacity. Then it appears that the action of a
conformal transformation $\Theta$\ on these fields is given by \emph{the }%
$k$\emph{-th power of the conformal factor }$c=\text{Jacobian}(\Theta)$. More
precisely, $\Theta(c^{k}\widetilde{\mathcal{L}_{\alpha}^{k}})$ is the
renormalized $k$-th power of the occupation field in $\Theta(D)$.

\item[b)] It should be possible to derive from the above remark \ and from
hyperconrtactive type estimates the existence of exponential moments and
introduce non trivial local interactions as in the constructive field theory
derived from the free field (Cf \cite{Sim2}).

\item[c)] Let us also briefly consider currents. We will restrict our
attention to the one and two dimensional Brownian case, $X$ being an open
subset of the line or plane. Currents can be defined by vector fields, with
compact support.

Then, if we now denote by $\phi$ the complex valued free field (its real and
imaginary parts being two independent copies of the free field), $\int
_{l}\omega$ and $\int_{X}(\overline{\phi}\partial_{\omega}\phi-\phi
\partial_{\omega}\overline{\phi})dx$ are well defined square integrable
variables in dimension 1 (it can be checked easily by Fourier series). The
distribution of the centered occupation field of the loop process ''twisted''
by the complex exponential $\exp(\sum_{l\in\mathcal{L}_{\alpha}}\int
_{l}i\omega+\frac{1}{2}\widehat{l}(\left\|  \omega\right\|  ^{2}))$ appears to
be the same as the distribution of the field $:\phi\overline{\phi}%
:$\ ''twisted'' by the complex exponential $\exp(\int_{X}(\overline{\phi
}\partial_{\omega}\phi-\phi\partial_{\omega}\overline{\phi})dx)$ (Cf\cite{LJ2}).

In dimension 2, logarithmic divergences occur.

\item[d)] There is a lot of related investigations. The extension of the
properties proved here in the finite framework has still to be completed,
though the relation with spanning trees should follow from the remarkable
results obtained on SLE processes, especially \cite{LSW}. Note finally that
other essential relations between SLE processes, loops and free fields appear
in \cite{WW}, \cite{ScSh}\, \cite{Dub}, and more recently in \cite{WS1}\ and
\cite{WS2}.

\end{enumerate}

\end{document}